\documentclass[a4paper,11pt]{amsart}
\usepackage{amsmath,  amsfonts, amssymb, amsthm, amscd}
\usepackage{graphicx}
\usepackage{enumerate}
\usepackage{url}
\usepackage{color}
\usepackage[utf8]{inputenc}
\usepackage{tikz}
\usetikzlibrary{arrows}
\usepackage[a4paper]{geometry}
\usepackage[colorlinks=true,linkcolor=blue,citecolor=magenta]{hyperref}
\usepackage{epsfig, enumerate}
\usepackage{graphicx}
\usepackage{enumerate}
\usepackage{booktabs}
\usepackage{dsfont}
\usepackage[utf8]{inputenc}
\usepackage[T1]{fontenc}
\usepackage{mathrsfs}
\usepackage{a4wide}
\usetikzlibrary{calc}
\usetikzlibrary{external}

\pdfoptionpdfminorversion=7
\setcounter{secnumdepth}{2}
\frenchspacing
\numberwithin{equation}{section}


\newtheorem{theorem}{Theorem}[section]
\newtheorem{lemma}[theorem]{Lemma}
\newtheorem{proposition}[theorem]{Proposition}
\newtheorem{corollary}[theorem]{Corollary}
\newtheorem{remark}[theorem]{Remark}


\newcommand{\one}{\mathds{1}}
\newcommand{\mc}[1]{{\mathcal #1}}

\newcommand{\bb}[1]{{\mathbb #1}}

\renewcommand{\epsilon}{\varepsilon}

\newcommand{\lex}{{\text{\rm lex}}}
\newcommand{\ex}{{\text{\rm ex}}}
\newcommand{\rw}{{\text{\rm rw}}}
\newcommand{\coup}{\hspace{0.5pt}{\text{\rm coup}}}
\newcommand{\eps}{\varepsilon}
\def\centerarc[#1](#2)(#3:#4:#5){\draw[#1] ($(#2)+({#5*cos(#3)},{#5*sin(#3)})$) arc (#3:#4:#5);}
\newcommand{\pfrac}[2]{\genfrac{}{}{}{1}{#1}{#2}}

\newcommand{\bbE}{{\ensuremath{\mathbb E}} }

\newcommand{\bbP}{{\ensuremath{\mathbb P}} }

\newcommand{\bbR}{{\ensuremath{\mathbb R}} }

\newcommand{\bbZ}{{\ensuremath{\mathbb Z}} }


\newcommand{\cA}{{\ensuremath{\mathcal A}} }

\newcommand{\cE}{{\ensuremath{\mathcal E}} }

\newcommand{\cL}{{\ensuremath{\mathcal L}} }


\newcommand{\fD}{{\ensuremath{\mathfrak{D}}}}


\newcommand{\gep}{\varepsilon}


\renewcommand{\tilde}{\widetilde}          
\DeclareMathSymbol{\leqslant}{\mathalpha}{AMSa}{"36} 
\DeclareMathSymbol{\geqslant}{\mathalpha}{AMSa}{"3E} 
\DeclareMathSymbol{\eset}{\mathalpha}{AMSb}{"3F}     
\newcommand{\dd}{\text{\rm d}}             


\newcommand{\R}{\mathbb{R}}

\newcommand{\Z}{\mathbb{Z}}
\newcommand{\N}{\mathbb{N}}

\renewcommand{\epsilon}{\varepsilon}


\newenvironment{myenumerate}{
	\renewcommand{\theenumi}{\arabic{enumi}}
	\renewcommand{\labelenumi}{{\rm(\theenumi)}}
	\begin{list}{\labelenumi}
		{
			\setlength{\itemsep}{0.4em}
			\setlength{\topsep}{0.5em}
			\setlength\leftmargin{2.45em}
			\setlength\labelwidth{2.05em}
			\setlength{\labelsep}{0.4em}
			\usecounter{enumi}
		}
	}
	{\end{list}
}

\renewenvironment{enumerate}{
	\begin{myenumerate}}
	{\end{myenumerate}}




\newcommand{\beq}{\begin{equation}}
\newcommand{\eeq}{\end{equation}}
\newcommand{\ba}{\begin{aligned}}
	\newcommand{\ea}{\end{aligned}}

\newcommand{\I}{\mathrm{I}}




\usetikzlibrary{shapes.misc}
\usetikzlibrary{shapes.symbols}
\usetikzlibrary{shapes.geometric}
\usetikzlibrary{snakes}
\usetikzlibrary{decorations}
\usetikzlibrary{decorations.markings}

\tikzstyle{tinydots}=[dash pattern=on \pgflinewidth off 2*\pgflinewidth]


\setcounter{tocdepth}{4}
\setcounter{secnumdepth}{4}
\let\oldtocsection=\tocsection
\let\oldtocsubsection=\tocsubsection
\let\oldtocsubsubsection=\tocsubsubsection
\renewcommand{\tocsection}[2]{\hspace{0em}\oldtocsection{#1}{#2}}
\renewcommand{\tocsubsection}[2]{\hspace{1em}\oldtocsubsection{#1}{#2}}
\renewcommand{\tocsubsubsection}[2]{\hspace{2em}\oldtocsubsubsection{#1}{#2}}
\DeclareRobustCommand{\SkipTocEntry}[5]{}

\begin{document}

	\title[Joint fluctuations for current and occupation time]{Nonequilibrium joint fluctuations for current and occupation time in the symmetric exclusion process}
	\author{Dirk Erhard, Tertuliano Franco and Tiecheng Xu}

	\begin{abstract}
		We  provide a full description for the joint fluctuations of current and occupation time
		in the one-dimensional nonequilibrium simple symmetric exclusion process, furnishing  explicit formulas for the covariances of the limiting Gaussian process. The main novelties consist of
		a proof of the tightness of the nonequilibrium  current  based on new correlation estimates, refined  estimates on the discrete gradient of the transition probabilities of the SSEP, and a nonequilibrium Kipnis-Varadhan Lemma based on a Fourier approach.

	\end{abstract}

	\maketitle

	\tableofcontents

	\allowdisplaybreaks

\section{Introduction}\label{sec:Intro}

The exclusion process is a prototype model in Probability Theory and Statistical Mechanics. Since the seventies, a vast literature about it has been developed, leading to many discoveries and rigorous mathematical descriptions
of sundry physical phenomena.

In plain words, the exclusion process consists of independent continuous time random walks on a graph under the additional rule
that two walkers cannot simultaneously occupy the same site, which is called the \textit{exclusion rule}. We say that the exclusion process is  \textit{symmetric}, if the walkers are symmetric random walks, and we say that the exclusion process is \textit{simple}, if jumps are allowed only to nearest neighbour sites on the graph. Many relevant variations of the exclusion process have been studied; in this paper, we present new results on one of the most standard versions of the exclusion process: the  symmetric simple exclusion process (SSEP) on $\bb Z$.

The density fluctuations, that is, the central limit theorem for the spatial density of particles of the SSEP  (first obtained by \cite{Galves} and \cite{Ravi1992}) is nowadays a standard result. On the other hand, observables of the exclusion process, such as the current and the occupation time still attract attention. These two observables for the exclusion process have clear physical interpretations: the current corresponds to the total net of particles crossing an edge, while the occupation times measures the portion of time a certain site has been occupied.

Below we will give a short summary of the literature. Precise results are deferred to Section~\ref{sec:model}. In what follows, by \textit{equilibrium setting} we mean that the initial measure of the one-dimensional SSEP on $\bb Z$ is the Bernoulli product measure of constant parameter, which is invariant and even reversible for this process. By \textit{nonequilibrium setting} we mean that the initial measure is a slowly varying Bernoulli product measure whose parameter is associated to a profile $\rho_0:\bb R\to [0,1]$ whose smoothness assumptions may change from one work to another. Unless mentioned otherwise, we always refer to the one-dimensional SSEP. Due to the total ordering in dimension one, current and tagged particle position are very close concepts.

In 1983, in the equilibrium setting,  Arratia \cite{Arratia} proved that the limiting variance of a tagged particle for the one-dimensional SSEP for a fixed time $t>0$ matches that one of a $1/4$~fractional Brownian motion (fBM), i.e., it is of order $\sqrt{t}$. A similar result for a related model of one-dimensional interacting diffusions was shortly afterwards also shown by Rost and Vares \cite{Rost_Vares}. However, to the best of our knowledge, the first result showing a variance of order $\sqrt{t}$ on the variance on a tagged particle (in the symmetric scenario) was first proven by Harris  \cite{Harris} about a model of one-dimensional colliding Brownian motions.

In 1991, Spohn  \cite[Conjecture 6.5, page 294]{Spohn} conjectured   that the equilibrium fluctuations of the current are given in the limit by a $1/4$ fractional Brownian Motion (fBM).

In 2002,  in the equilibrium setting,  De Masi and Ferrari \cite{Masi_Ferrari} showed  convergence in distribution of the current fluctuations of the SSEP for a fixed time $t>0$ towards a centred  Gaussian whose variance is $\sqrt{t}$ times a constant. This essentially implies convergence in the sense of finite-dimensional distributions towards a $1/4$-fBM.
Alternative proofs of this statement on convergence in finite-dimensional distributions of the current can be found in Jara/Landim \cite{jaralandim2006} by taking the initial profile as constant therein (i.e.\ letting the system start from equilibrium). It is also a particular case of  Franco/Gonçalves/Neumann \cite[Theorem~2.8]{fgn3} by taking $\beta=0$ therein.

In 2008, in the equilibrium setting, Peligrad and Sethumaran \cite{SP} showed tightness of the current. This finally extended the convergence of the current  towards a $1/4$-fBM to be pathwise, and confirmed the conjecture of Spohn from 1991. 

In 2006, Jara and Landim \cite{jaralandim2006} studied the nonequilibrium fluctuations of the current. The authors proved that the current of particles converges, in the sense of finite-dimensional distributions, and they provided explicit formulas for the covariances.

In 2000, in the equilibrium setting, Sethuraman \cite{sunder} proved that the limiting fluctuations of the occupation time in the uniform topology is given by a $3/4$-fBM. See also \cite{CPA} by Jara and Gonçalves on a similar subject.

In this work we deal with the nonequilibrium joint fluctuations of current and occupation times, i.e., the central limit theorem for the \textit{pair current and occupation time}. There are three main tools 
we have developed to do so, which have importance \textit{per se} and they are, by far, the most challenging results here.

First, based on Fourier techniques we prove a nonequilibrium Kipnis-Varadhan inequality, which extends \cite[Proposition~6.1, page 333]{kl}. Actually, a proper nomenclature for it maybe should be a \textit{non-stationary} Kipnis-Varadhan inequality, since a nonequilibrium (that is, the scenario where the system starts from an invariant but non-reversible measure) Kipnis-Varadhan inequality has been already obtained in \cite{Chang_Landim_Olla}. Anyway, we keep the \textit{nonequilibrium} terminology of \cite{jaralandim2006} (and others) along the paper.

Second,  through a careful understanding and connection of \cite{EH22}, \cite{FPSV_I} and \cite{Landim05}, we  prove a precise multiple point space-time correlations estimate for the nonequilibrium SSEP which leads to moment estimates and ultimately implies tightness of the nonequilibrium current. We highlight that the correlation estimates we obtain generalize and improve the corresponding estimates of \cite{FPSV_I}, \cite{jaralandim2006}, and~\cite{Landim05}.

Third, to obtain the correlation estimates above, we derive a bound on the discrete partial derivatives of the transition probability of $n$ exclusion particles. By \cite{Landim05} the transition probability of $n$ exclusion particles is essentially given by an $n$-dimensional Gaussian whose covariance matrix has only zeros on the off-diagonal. Our bound on the discrete derivative is almost sharp in the sense that the discrete partial derivatives of the transition probability of $n$ exclusion particles almost coincide with the partial derivatives of the aforementioned Gaussian density.
Our result is thus a type of refined local central limit theorem. Besides the works on the random conductance model such as \cite{Andres}, we are not aware of other works that obtain a local central limit theorem on the level of discrete derivatives (except for random walks).

In possession of the aforementioned tools, we prove the nonequilibrium joint fluctuations for the pair current and occupation time, exhibiting an explicit formula for covariances of the limiting Gaussian process. The Kipnis-Varadhan type inequality is used in the characterization of the limit, and the tightness of the occupation time and the correlations estimates are used in the proof of tightness of the current.
Note that the latter is a highly non-trivial technical issue as evidenced by the fact that even in the equilibrium setting, where many magical identities are valid, it took a long road to establish tightness.

As a curious corollary for the equilibrium setting, we infer that current and occupation time, whose distributions are given by $1/4$-fBM and $3/4$-fBM, respectively,  at any two macroscopic fixed points $u_1,u_2\in \bb R$ and at a same time $t>0$, are independent, while the corresponding processes are not. An intuition of why this happens is presented in Section~\ref{sec:model}.

The paper is organized as follows. In Section~\ref{sec:model} we state the results. In Section~\ref{sec:3} we prove the nonequilibrium type Kipnis-Varadhan inequality. In Section~\ref{sec_4} we deal with the space-time correlations estimate and the bound on the discrete partial derivatives of the transition probability of $n$ exclusion particles. Finally, in Section~\ref{sec_5} we prove the nonequilibrium joint central limit theorem for the pair current and occupation time.

\section{Statements}\label{sec:model}
\subsection{Definitions and previous results}
The one-dimensional symmetric simple exclusion process (SSEP) is a celebrated  Markov process
$\{\eta_t\,: \, t \geq 0\}$ whose dynamics can be entirely defined by its infinitesimal
generator $\mathcal{L}$. The latter is defined on local functions $f: \Omega \to \bbR$, where the state space is given by $\Omega = \{0,1\}^{\bbZ}$ via
\begin{equation}\label{eq2.1}
\mathcal{L} f(\eta) \;=\;\sum_{x\in \bb Z} \{f(\eta^{x,x+1})-f(\eta)\}\,,
\end{equation}
where
\[
\eta^{x,y}(z)\;=\;
\left\{
\begin{array}{rl}
\eta(y)\,,& z=x,\\
\eta(x)\,,& z=y,\\
\eta(z)\,,& z\neq x,y.\\
\end{array}
\right.
\]
We fix once and for all a finite time horizon $T>0$. We are interested in the evolution of this process on the diffusive time scale, therefore we denote by $\{\eta_{t}: t\in [0,T]\}$ the Markov process on $\Omega$ associated to the generator $\mathcal{L}_n = n^2 \mathcal{L}$, where $n\in\bb N$ is a parameter which tends to infinity. The space of trajectories which are right-continuous, with  left-limits and taking values in $\Omega$ is denoted by $\mc D([0,T],\Omega)$. For any initial probability measure $\mu$ on $\Omega$, we denote by $\bb P_{\mu}$ the probability measure on  $\mc D([0,T],\Omega)$ induced by $\mu$ and the Markov process $\{\eta_{t}: t \in [0,T]\}$.

Denote by $\nu_\rho$  the Bernoulli product measure of parameter $\rho\in(0,1)$, which constitutes a family of reversible measures for the SSEP. In the sequel, we recall the \textit{Harris graphical construction} for the SSEP. Here, space is drawn sideways, time is
drawn upwards, and to each edge $e = \{v,w\}$ connecting two neighbouring sites $v$ and $w$ in $\bb Z$ one attaches a Poisson process $N(e)$ with intensity~$1$. Each event of
such a Poisson process is drawn as a link above $e$. The configuration at
time $t$ is then obtained
from the one at time~$0$ by transporting the local states along paths that move
upwards with time and sidewards along the links (see Figure~\ref{fig1}).  The collection of random walkers that move according to these rules is called the stirring process. We refer the reader to \cite{Harris} for more details on that subject, and to \cite{Liggett} for the construction via semigroups and generators.
\begin{figure}[!htb]
	\begin{center}
		\begin{tikzpicture}[scale=1,baseline=0.85cm]
		\draw[->,thick] (0,0) -- (8,0);
		\draw[black!25] (0,4) -- (8,4);
		\foreach \x in {1,...,7}
		{
			\draw (\x,0) -- (\x,4.5);
		}
		\draw[very thick,red!] (4,0) -- (4,1) -- (5,1) -- (5,2.5) -- (4,2.5) -- (4,3.2) -- (3,3.2) -- (3,4);
		\foreach \x/\y in {1/2,1/3,2/1.1,3/2,3/3.2,4/2.5,4/1,5/0.5,6/1.8,6/2.2,6/3.7}
		{
			\draw[tinydots] (\x,\y) -- (\x+1,\y);
		}

		\node[fill=white,below] at (4,0) {$x$};
		\node[fill=white,above] at (3,4) {$y$};
		\node[left] at (0,0) {$0$};
		\node[left] at (0,4) {$t$};
		\node[right] at (8,0) {$\bb Z$};
		\node[inner sep=0pt,minimum size=1mm] at (4,0) [circle, fill=black!] {};
		\node[inner sep=0pt,minimum size=1mm] at (3,4) [circle, fill=black!] {};
		\end{tikzpicture}
	\end{center}
	\caption{Graphical representation. The dashed lines are links
		and the thick red line shows a path from $(x,0)$ to $(y,t)$ following the links.}
	\label{fig1}
\end{figure}

Let   $\rho_0:\bb R\to[0,1]$ be a $C^2$ profile with bounded derivatives and bounded away from zero and from one.
Let $\{\nu_{\rho_0^n(\cdot)}\}_{n\in \bb N}$ be the collection of slowly varying Bernoulli product measures associated to $\rho_0$, i.e., $\nu_{\rho_0^n(\cdot)}$ is the product measure on $\{0,1\}^{\bb Z}$ such that
\begin{align*}
\nu_{\rho_0^n(\cdot)}\big\{\eta\in \{0,1\}^{\bb Z}\,:\, \eta(x)=1\big\}\;=\; \rho_0\big(\pfrac{x}{n})\,.
\end{align*}

Moreover, let $\rho^n_t(x) = \bb E_{\nu_{\rho_0^n}(\cdot)}[\eta_t(x)]$ be the average occupation of site $x$ at time $t$, which is the solution of
the discrete heat equation
\begin{equation*}
\begin{cases}
\partial_t \rho_t^n(x) = \Delta_n \rho_t^n(x)\,,\quad x\in \bb Z \text{ and } t>0\\
\rho_0^n(x) = \rho_0(x/n)\,, \quad x\in \bb Z\,.
\end{cases}
\end{equation*}
Here $\Delta_n$ is the usual discrete Laplace operator accelerated by $n^2$, i.e.,
\begin{equation*}
\Delta_n f(x)\;=\; n^2\sum_{y:\, x\sim y}\left[f(y)-f(x)\right]\,.
\end{equation*}
The centred occupation time $\Gamma^n_x(t)$ of a site $x\in \bb Z$ is defined by
\[
\Gamma^n_x(t)\;=\; \int_0^t \dd s\, \overline{\eta}_{s}(x)\,.
\]

We now detail the previously known results that are most related to the current work and which were already announced in Section~\ref{sec:Intro}. More precisely, we state the results about the fluctuations of the current in equilibrium and in nonequilibrium and about the occupation time in equilibrium.
\begin{theorem}[Equilibrium fluctuations for the occupation time \cite{sunder}]
	Assume that $\eta$ starts from equilibrium,  i.e., $\rho_0\in (0,1)$ is a constant function. In this case, the sequence of processes $\{\sqrt{n}\,\Gamma^n_x(t): t\in [0,T]\}_{n\in \bb N}$ converges in distribution with respect to the $J_1$-Skorohod topology  to a fBM of Hurst exponent~$3/4$.
\end{theorem}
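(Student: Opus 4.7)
The plan is to split the proof into (i) identification of the limiting covariance, (ii) finite-dimensional convergence to a Gaussian, and (iii) tightness in the $J_1$-Skorohod topology. Starting with (i): by the self-duality of the SSEP with a single dual walker, combined with the reversibility of $\nu_\rho$, one has
\begin{align*}
\mathrm{Cov}_{\nu_\rho}\bigl(\bar\eta_s(x),\bar\eta_u(x)\bigr)\;=\;\chi(\rho)\,p^{(n)}_{|s-u|}(0),
\end{align*}
where $\chi(\rho)=\rho(1-\rho)$ and $p^{(n)}_t$ is the kernel of a continuous-time SRW with generator $\Delta_n$. The local CLT gives $p^{(n)}_t(0)\sim(4\pi n^2 t)^{-1/2}$, and a direct computation then yields
\begin{align*}
\mathrm{Cov}\bigl(\sqrt n\,\Gamma^n_x(s),\sqrt n\,\Gamma^n_x(t)\bigr)\;\longrightarrow\;\tfrac{2\chi(\rho)}{3\sqrt\pi}\bigl(s^{3/2}+t^{3/2}-|t-s|^{3/2}\bigr),
\end{align*}
which is precisely the covariance of a $3/4$-fBM, up to a positive multiplicative constant.

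\textbf{Finite-dimensional Gaussianity.} The subtle point is that $\bar\eta(0)\notin H_{-1}(\nu_\rho)$ in one dimension, so Kipnis--Varadhan does not apply directly to $\int_0^t \bar\eta_s(0)\,ds$. To bypass this I would regularize via the resolvent, setting $g_\lambda=\lambda(\lambda-\mc L)^{-1}\bar\eta(0)$ and applying Dynkin's formula to decompose
\begin{align*}
\sqrt n\int_0^t g_\lambda(\eta_s)\,ds\;=\;\sqrt n\, M^{n,\lambda}_t\,+\,\sqrt n\, R^{n,\lambda}_t
\end{align*}
into an $L^2$-martingale and a boundary term. Letting $n\to\infty$ first, via the martingale CLT whose quadratic-variation limit can be computed from the local CLT for SRW, and then $\lambda\to 0$ along a suitably slow scale, while using the variance estimate from the previous paragraph to bound the $L^2$-error $\sqrt n\int_0^t(g_\lambda-\bar\eta(0))(\eta_s)\,ds$, yields the Gaussian marginal. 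Joint martingale representations handle multiple times.

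\textbf{Tightness and main obstacle.} Since $\Gamma^n_x$ has continuous paths, $J_1$-tightness reduces via Kolmogorov's criterion to a moment bound of the form
\begin{align*}
\e_{\nu_\rho}\Bigl[\bigl(\sqrt n\,(\Gamma^n_x(t)-\Gamma^n_x(s))\bigr)^{4}\Bigr]\;\leq\;C\,|t-s|^{3}.
\end{align*}
By self-duality this expands into a four-fold time integral over the four-point function of $\bar\eta$, which is expressed through the four-particle stirring kernel. \emph{The main obstacle} is obtaining the sharp exponent $3$: a naive union-type bound using only the single-particle local CLT gives at best $|t-s|^{2}\log(1/|t-s|)$, and reaching $|t-s|^{3}$ requires exploiting cancellations coming from the centred observable together with the symmetry of the exclusion interaction. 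A convenient device is the Hermite/Wick chaos decomposition with respect to $\nu_\rho$, under which the four-point correlator splits into an orthogonal sum of duality kernels of $k$-particle stirring ($k\leq 4$), each of which can be controlled by the corresponding $k$-particle local CLT, yielding the required exponent.
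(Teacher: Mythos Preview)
The paper does not give its own proof of this statement; it is quoted as a known result of Sethuraman. The relevant comparison is with the paper's proof of the more general nonequilibrium Theorem~\ref{thm2.6}, which specializes to this statement when $\rho_0$ is constant.

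Your covariance computation is correct and matches what the paper's formulas yield in equilibrium.

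For tightness you have manufactured an obstacle that is not there. Kolmogorov--Centsov requires only $\bbE_{\nu_\rho}\bigl[(\sqrt n(\Gamma^n_x(t)-\Gamma^n_x(s)))^p\bigr]\le C|t-s|^{1+\beta}$ for \emph{some} $\beta>0$; the sharp exponent $3$ is irrelevant. In equilibrium the second moment already does the job: your own first paragraph gives the two-point function $\chi(\rho)\,p^{(n)}_{|r_1-r_2|}(0)\lesssim (1+n^2|r_1-r_2|)^{-1/2}$, and integrating this over $[s,t]^2$ and multiplying by $n$ yields $C|t-s|^{3/2}$ directly. No four-point functions, no chaos decomposition, no cancellations. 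The paper's Proposition~\ref{prop:tightnessgamma} follows exactly this second-moment route (in nonequilibrium, obtaining exponent $10/7$ via its Kipnis--Varadhan inequality Theorem~\ref{thm:KipnisVaradhan} plus a two-point bound).

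For finite-dimensional convergence your resolvent/martingale-CLT outline is essentially Sethuraman's original argument and is viable in equilibrium, though your formula $g_\lambda=\lambda(\lambda-\mc L)^{-1}\bar\eta(0)$ should be $u_\lambda=(\lambda-\mc L)^{-1}\bar\eta(0)$ in the standard Dynkin decomposition. The paper takes a different route: it proves convergence of the density field $\mc Y^n$ to a generalized Ornstein--Uhlenbeck process, shows $\sqrt n\,\Gamma^n_{\lfloor un\rfloor}(t)$ is $L^2$-close to $\int_0^t \mc Y^n_s(\iota_u^{1/K})\,ds$ uniformly in $n$ (Corollary~\ref{gamma_replace}), and then passes to the limit in $n$ and $K$. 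This buys the nonequilibrium case and joint convergence with the current, at the cost of importing the density-field CLT; your approach is more self-contained for the occupation time alone.
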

Denote by
$J^n_{x,x+1}(t)$ the current of particles over the bond $\{x,x+1\}$, which is defined as  the total number of jumps from the
site $x$ to the site $x+1$ minus
the total number of jumps from the site $x+1$ to the site $x$ up to time $t$.

Let  $\overline{J}_{x,x+1}^n(t) = J_{x,x+1}^n(t) - \bb E_{\nu_{\rho_0^n(\cdot)}}[J_{x,x+1}^n(t)]$ be the centred current of particles.
As commented in the introduction, the equilibrium current fluctuations in the exclusion process has been an object of intensive studies of many authors, among them \cite{Arratia,Masi_Ferrari,Harris, Rost_Vares}.
\begin{theorem}[Equilibrium fluctuations for the current by many authors]\label{thm_equ_curr}\quad  Assume that $\eta$ starts in equilibrium, i.e., $\rho_0$ is a constant function. Then, the sequence of normalized currents of particles $\big\{J^n_{0,1}/\sqrt{n}:t\in [0,T]\big\}_{n\in \bb N}$ converges in the sense of finite-dimensional distributions  to a fBM of Hurst parameter $1/4$.
\end{theorem}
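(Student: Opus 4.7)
The plan is to reduce the problem to the equilibrium two-point function of the SSEP (which evolves as the free continuous-time random walk) and the conservation identity for the current, and to establish Gaussianity via a conditional CLT on the Harris graphical construction.

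First, by conservation of mass, $J^n_{0,1}(t)=\sum_{y\ge 1}[\eta_t(y)-\eta_0(y)]$ as an a.s.\ finite sum (only finitely many walkers cross a fixed bond in finite macroscopic time). Since $\nu_\rho$ is invariant for the SSEP and its one-point function satisfies the discrete heat equation, conditioning on $\mc F_s$ and using $\bb E[\eta_t(x)\mid\mc F_s]=\sum_z p^{n^2(t-s)}(x,z)\eta_s(z)$ yields the equilibrium two-point formula
\[
\cov(\eta_t(x),\eta_s(y))\;=\;\chi\,p^{n^2(t-s)}(x,y)\,,\qquad 0\le s\le t\,,
\]
with $\chi=\rho(1-\rho)$ and $p^{T}$ the kernel of continuous-time SRW with jump rate $1$ to each neighbour.

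Substituting into the covariance of the current, summing over $y_2\ge 1$ for each fixed $y_1$, and using the identity $\sum_{y\ge 1}\bb P_y(X_T\le 0)=\tfrac12\bb E_0|X_T|$ together with the asymptotics $\bb E_0|X_{n^2 r}|\sim n\sqrt{2r/\pi}$, one obtains
\[
\cov\bigl(J^n_{0,1}(t),J^n_{0,1}(s)\bigr)=\chi\!\sum_{y\ge 1}\!\bigl[\bb P_y(X_{n^2 t}\le 0)+\bb P_y(X_{n^2 s}\le 0)-\bb P_y(X_{n^2(t-s)}\le 0)\bigr],
\]
and hence, for $s\le t$,
\[
\tfrac{1}{n}\cov\bigl(J^n_{0,1}(t),J^n_{0,1}(s)\bigr)\;\longrightarrow\;\chi\sqrt{\tfrac{2}{\pi}}\cdot\tfrac{1}{2}\bigl[t^{1/2}+s^{1/2}-(t-s)^{1/2}\bigr]\,,
\]
which is the covariance kernel of a $1/4$-fBM up to the multiplicative constant $\chi\sqrt{2/\pi}$.

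For Gaussianity, I would condition on the Harris graphical construction. If $\pi$ denotes the stirring bijection at microscopic time $n^2 t$, then $J^n_{0,1}(t)=\sum_{z\in\bb Z}c_z\,\eta_0(z)$ with $c_z=\mathbf{1}\{z\in\pi(\{\ge 1\})\}-\mathbf{1}\{z\ge 1\}\in\{-1,0,1\}$ and $\sum_z c_z=0$ (by bijectivity of $\pi$ on its finite support). Conditional on the stirring this is a centred sum of independent Bernoulli$(\rho)$ variables with $\pm 1$ coefficients and $\#\{z:c_z\ne 0\}=O(n)$ summands, so the conditional Lindeberg CLT delivers asymptotic normality with conditional variance $\chi\#\{z:c_z\ne 0\}$. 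Concentration of this conditional variance around its mean (whose $1/n$-rescaled limit is $\chi\sqrt{2t/\pi}$) is controlled by a second-moment computation on pairs of stirring walkers via the SSEP two-particle duality. Joint asymptotic Gaussianity at times $0\le t_1<\dots<t_k$ follows by applying the same representation and the Cram\'er--Wold device to $\sum_i a_i J^n_{0,1}(t_i)=\sum_z\tilde c_z\,\eta_0(z)$.

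The main technical obstacle is precisely this concentration step, i.e.\ showing that the number of stirring crossings of $\{0,1\}$ concentrates on the scale $n$ despite the non-independence of the stirring walkers. A martingale alternative via Dynkin's decomposition $J^n_{0,1}(t)=M^n_t+n^2\!\int_0^t[\eta_s(0)-\eta_s(1)]\,\dd s$ is available but delicate: the martingale and compensator are each of size $n$ and only their difference has the correct $\sqrt n$ order, so extracting the latter requires an integration-by-parts (Duhamel) step of the type used in \cite{jaralandim2006}. In either route the limit is identified as the centred Gaussian process with the covariance above, i.e.\ a $1/4$-fBM.
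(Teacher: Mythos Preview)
The paper does not give its own proof of this statement: Theorem~\ref{thm_equ_curr} is quoted as a known result from the literature. The paper's contribution is Theorem~\ref{thm2.6}, whose finite-dimensional part specializes to Theorem~\ref{thm_equ_curr} when $\rho_0\equiv\rho$. The route taken there is quite different from yours: one approximates the current by increments of the density field against the truncated Heaviside function $G^K_u$ (Proposition~\ref{prop:currentdensity}, from \cite{jaralandim2006}), invokes the density-fluctuation CLT (Theorem~\ref{thm21}), and then reads off the covariances from~\eqref{cov_density}. In equilibrium the computation in Section~\ref{covariances} collapses to the $1/4$-fBM kernel.

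Your approach --- writing $J^n_{0,1}(t)=\sum_{z}c_z\,\eta_0(z)$ conditionally on the stirring and applying a Lindeberg CLT to the i.i.d.\ Bernoulli initial data --- is essentially the De~Masi--Ferrari/Arratia strategy, and your covariance calculation via $\cov(\eta_t(x),\eta_s(y))=\chi\,p^{n^2(t-s)}(x,y)$ is correct and clean. The trade-off is clear: your argument is more elementary and self-contained in equilibrium (no need for the full density-fluctuation machinery), but it relies crucially on the initial configuration being product Bernoulli and on the stirring-conditional independence, so it does not extend to the nonequilibrium setting that is the paper's main target. The density-field route, by contrast, handles arbitrary smooth profiles at the cost of importing heavier machinery.

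The concentration step you flag is real and is exactly where the work lies. What you need is $\mathrm{Var}(\#\{z:c_z\neq 0\})=o(n^2)$, which amounts to a two-point estimate on stirring walkers; the negative-association property of the stirring process (or equivalently the two-particle duality you mention) handles this, but it should be stated and used explicitly rather than only named. The multi-time version via Cram\'er--Wold is also slightly more delicate than your last sentence suggests, since for $t_1<\cdots<t_k$ the coefficients $\tilde c_z$ involve several nested stirring bijections and one must check that the conditional variance of the linear combination still concentrates; this works, but deserves a line of argument.
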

Note that in  Theorem~\ref{thm_equ_curr}  the convergence is in the sense of  finite-dimensional distributions since tightness was not available. This is due to the fact that it is not sufficient to control second moments of the process. Rather than second moments, moments of sixth order must be controlled in order to apply a  Kolmogorov-Centsov criterion. This issue was only solved in~\cite{SP} by means of a clever decomposition of the current as a sum of independent random variables making use of the stirring process.
\begin{theorem}[Tightness of equilibrium current fluctuations \cite{SP}]\label{prop_tight} Assume that $\eta$ starts in equilibrium, i.e., $\rho_0$ is a constant function. The sequence of normalized currents of particles $\big\{J_{0,1}^n/\sqrt{n}:t\in [0,T]\big\}_{n\in \bb N}$ is tight with respect to the uniform topology.
\end{theorem}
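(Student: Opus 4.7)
The plan is to reduce the tightness of $J^n_{0,1}/\sqrt n$ in the uniform topology to a Kolmogorov--Centsov moment estimate through a pathwise decomposition of the current obtained from the Harris graphical construction recalled above. Let $\sigma^n_t:\bbZ\to\bbZ$ denote the (random) permutation produced by the stirring process up to time $n^2t$, so that $\eta_t(x)=\eta_0\bigl((\sigma^n_t)^{-1}(x)\bigr)$. Starting from the identity $J^n_{0,1}(t)=\sum_{y\geqslant 1}[\eta_t(y)-\eta_0(y)]$ (understood as a convergent rearrangement of a formally divergent sum) and performing the change of variables $y=\sigma^n_t(x)$ yields the exact pathwise decomposition
\[
J^n_{0,1}(t) \;=\; \sum_{x\in\bbZ} \eta_0(x)\,\phi^n_x(t), \qquad \phi^n_x(t)\;:=\; \mathds{1}\{\sigma^n_t(x)\geqslant 1\}-\mathds{1}\{x\geqslant 1\}\in\{-1,0,+1\}.
\]
The crucial feature of the equilibrium setting is that $\eta_0\sim\nu_\rho$ is a product measure \emph{independent} of the stirring, so conditionally on $\sigma^n_\cdot$ the centred current $\overline J^n_{0,1}(t)$ is a sum of bounded, centred, \emph{independent} Bernoulli variables with coefficients in $\{-1,0,+1\}$.

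To verify the Kolmogorov--Centsov criterion in the uniform topology it is enough to control the $2k$-th increment moments for some $k\in\bbN$. Conditioning on the stirring and applying a Rosenthal/Marcinkiewicz--Zygmund inequality to the independent spins $\{\eta_0(x)-\rho\}_{x}$, together with the uniform bound $|\phi^n_x(t)-\phi^n_x(s)|\leqslant 2$, gives
\[
\bbE\Bigl[\bigl(\overline J^n_{0,1}(t)-\overline J^n_{0,1}(s)\bigr)^{2k}\Bigm|\sigma^n_\cdot\Bigr]\;\leqslant\; C_k\,N^n(s,t)^k,
\]
where $N^n(s,t):=\#\{x\in\bbZ:\phi^n_x(t)\neq\phi^n_x(s)\}$ is the number of stirring walkers whose side with respect to the edge $\{0,1\}$ changes during the time interval $[s,t]$.

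It remains to estimate $\bbE[N^n(s,t)^k]$. Each stirring walker is marginally a symmetric simple random walk accelerated by $n^2$; a standard heat-kernel computation, summed over $x\in\bbZ$, yields $\bbE[N^n(s,t)]=O(n\sqrt{t-s})$. For the $k$-th moment one exploits the classical negative correlation of the stirring occupation indicators (equivalently, one couples the stirring trajectories starting from $k$ distinct initial sites with $k$ independent simple random walks via the graphical construction and shows that the mismatch contributes only a lower-order term) to obtain $\bbE[N^n(s,t)^k]\leqslant C_k(n\sqrt{t-s})^k$. Inserting this bound into the previous display with $k=3$ produces
\[
\bbE\Bigl[\bigl(\overline J^n_{0,1}(t)-\overline J^n_{0,1}(s)\bigr)^{6}\Bigr]\;\leqslant\; C\,n^{3}|t-s|^{3/2},
\]
which after the $\sqrt n$-normalisation is precisely the Kolmogorov--Centsov input that delivers tightness in the uniform topology.

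The main obstacle is the $k$-th moment bound on $N^n(s,t)$: since the stirring walkers are not independent, a direct product argument is unavailable, and the required $k$-point estimate hinges either on a negative correlation inequality peculiar to the exclusion/stirring process or on an explicit coupling with independent walkers. This also pinpoints why the problem resisted for so long after the finite-dimensional convergence was established: the second moment calculation yields only a $|t-s|$ bound in the Kolmogorov criterion, which is just short of what is needed, and improving the exponent to $1+\delta$ forces one to access (at least) sixth moments through the probabilistic decomposition above, rather than via any analytic identity in $L^2$.
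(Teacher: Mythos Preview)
The paper does not actually prove Theorem~\ref{prop_tight}; it is quoted from~\cite{SP} as prior literature. Your sketch correctly identifies the ingredients of the Peligrad--Sethuraman argument: the stirring decomposition $J^n_{0,1}(t)=\sum_x\eta_0(x)\phi^n_x(t)$, the conditional independence of the Bernoulli initial spins given the stirring, and the reduction to a moment bound on the number $N^n(s,t)$ of walkers that switch sides. So at the level of outline your approach matches what~\cite{SP} does.

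What the paper \emph{does} prove is the nonequilibrium generalization, Proposition~\ref{prop:tightnessJ}, and there the method is genuinely different from yours. Since the product structure of $\nu_\rho$ is unavailable out of equilibrium, the paper decomposes the current via the identity~\eqref{repcur} into a density-field increment $\mc Y^n_t(G^K)-\mc Y^n_s(G^K)$, an averaged martingale, and an additive integral term. The \emph{tenth} moment of the density-field increment is then controlled through the space--time correlation estimates of Proposition~\ref{expdif}; the martingale part is handled by Burkholder--Davis--Gundy; and the integral term uses~\eqref{diftime}. The price is the heavy machinery of Section~\ref{sec_4} (refined multi-point correlation bounds and the gradient estimate on $p_t^{\lex}$), but the gain is that no independence of the initial occupations is needed, and one can work uniformly over slowly varying profiles. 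Your approach is lighter but intrinsically tied to equilibrium.

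On your side, the crux you flag---the bound $\bbE[N^n(s,t)^k]\leq C_k(n\sqrt{t-s})^k$---is not a mere technicality, and your justification is too loose as stated. The ``classical negative correlation of the stirring occupation indicators'' is a single-time statement, whereas the events $\{\phi^n_x(t)\neq\phi^n_x(s)\}$ involve two times simultaneously, and it is not clear that negative association propagates to those. The missing step is the observation that, since $\sigma^n_s$ is a bijection and the stirring has independent increments, one has $N^n(s,t)\stackrel{d}{=}N^n(0,t-s)$, reducing to single-time events $\{\one\{\sigma^n_{t-s}(y)\geq 1\}\neq\one\{y\geq 1\}\}$. Only then does Liggett's comparison of exclusion walkers with independent random walks deliver the product bound on the $k$-point probability, and hence the $k$-th moment estimate. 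Without this reduction the ``negative correlation'' invocation is a gap rather than a proof.
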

Thus, \cite{SP} allows to improve the convergence of Theorem~\ref{thm_equ_curr} to be pathwise.
Let $\rho_t(u) = \rho(t,u)$ be the solution of the heat equation starting from $\rho_0(\cdot)$, i.e.,
\begin{equation*}
\begin{cases}
\partial_t \rho_t(u) = \Delta \rho_t(u)\,, \quad u\in \bb R \text{ and } t>0\\
\rho_0(u) = \rho_0(u)\,, \quad u\in \bb R
\end{cases}
\end{equation*}
and let $\mc{X}(x)= x(1-x)$.
The out of equilibrium current fluctuations
have been obtained in:
\begin{theorem}[Out of equilibrium current fluctuations \cite{jaralandim2006}]\label{thm_jaralandim}
	Fix $u\in \bb R$. The sequence of\break normalized centered currents   $\big\{\overline{J^n}_{\lfloor un\rfloor,\lfloor un\rfloor +1}/\sqrt{n}:t\in [0,T]\big\}_{n\in \bb N}$
	converges in the sense of finite-dimensional distributions  to a Gaussian process $\{Z_t:t\in[0,T]\}$ whose covariances are given by
	\begin{align*}
	\bb E\big[Z_sZ_t\big]\;&=\;  \int^{\infty}_0\dd v\; \mc{X}(\rho_0(v-u))  P[B_s\geq v]P[B_t\geq v] \\
	&+ \int_{-\infty}^0 \dd v\; \mc{X}(\rho_0(v-u))  P[B_s\leq v]P[B_t\leq v]  \\
	&+ 2 \int_0^s \dd r\int_{-\infty}^{+\infty} \dd v \; \mc{X}(\rho(r,v-u)) \, p_{t-r}(0,v)\, p_{s-r}(0,v)
	\end{align*}
	provided $s\leq t$, where $B_t$ is a standard Brownian motion starting from origin and $p_t(u,v)$ is the Gaussian kernel.
\end{theorem}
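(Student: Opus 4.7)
My plan is to reduce the centered current to density fluctuation fields via the local conservation law $\eta_t(y) - \eta_0(y) = J^n_{y-1,y}(t) - J^n_{y,y+1}(t)$, and then to invoke a nonequilibrium CLT for the latter. Summing telescopically over $y > \lfloor un\rfloor$ and centering yields, as an $L^2$-convergent series,
\begin{equation*}
\bar J^n_{\lfloor un\rfloor,\lfloor un\rfloor+1}(t) \;=\; \sum_{y>\lfloor un\rfloor}\big[\bar\eta_t(y) - \bar\eta_0(y)\big] \;=\; \sqrt{n}\,\bigl(Y^n_t(H_u^n) - Y^n_0(H_u^n)\bigr),
\end{equation*}
where $Y^n_t(\phi) = n^{-1/2}\sum_{x\in\bb Z}\phi(x/n)\bar\eta_t(x)$ is the density fluctuation field and $H_u^n(v) = \mathbf{1}_{v > \lfloor un\rfloor/n}$; the boundary contribution at $+\infty$ vanishes in $L^2$ since $\bar J^n_{N,N+1}(t) \to 0$ as $N \to \infty$. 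This reduces the problem to finite-dimensional convergence of the fluctuation field evaluated at the Heaviside step function $H_u(v) = \mathbf{1}_{v > u}$.

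For smooth test functions $\phi$, Dynkin's formula gives the semimartingale decomposition
\begin{equation*}
Y^n_t(\phi) \;=\; Y^n_0(\phi) + \int_0^t Y^n_s(\Delta_n\phi)\,ds + \mc M^n_t(\phi),
\end{equation*}
with predictable quadratic variation $n\int_0^t\sum_x[\eta_s(x+1)-\eta_s(x)]^2[\phi((x+1)/n)-\phi(x/n)]^2\,ds$ that converges, after a Boltzmann--Gibbs replacement of $[\eta_s(x+1)-\eta_s(x)]^2$ by $2\mc X(\rho^n_s(x))$, to $\int_0^t\int 2\mc X(\rho_r(v))(\phi'(v))^2\,dv\,dr$. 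Combined with the martingale CLT and tightness in the distribution-valued Skorohod space, any limit of $Y^n_\cdot(\phi)$ is identified as the generalized Ornstein--Uhlenbeck field $Y_t$ satisfying $dY_t = \Delta Y_t\,dt + \sqrt{2\mc X(\rho_t)}\,\nabla\,dW_t$, with covariance
\begin{equation*}
\bb E[Y_t(\phi)Y_s(\psi)] \;=\; \int(P_t\phi)(v)(P_s\psi)(v)\,\mc X(\rho_0(v))\,dv + 2\int_0^{s\wedge t}\!\!\int \nabla(P_{t-r}\phi)(v)\,\nabla(P_{s-r}\psi)(v)\,\mc X(\rho_r(v))\,dv\,dr.
\end{equation*}

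The main obstacle is that $H_u$ lies outside the natural test-function space. I would approximate $H_u^n$ by smooth $\phi_\epsilon$ to which the CLT above applies, and control $\bb E[|Y^n_t(H_u^n - \phi_\epsilon)|^2]$ uniformly in $n$ via the nonequilibrium two-point correlation estimate $\bb E[\bar\eta_t(y)\bar\eta_t(z)] = \mc X(\rho^n_t(y))\delta_{y,z} + O(1/n)$: the diagonal contribution $n^{-1}\sum_x(H_u^n - \phi_\epsilon)^2(x/n)\,\mc X(\rho^n_t(x))$ converges to $\int(H_u - \phi_\epsilon)^2\mc X(\rho_t(v))\,dv$, which vanishes as $\epsilon\to 0$. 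A similar bound on two-time correlations, using $\bb E[\bar\eta_t(y)\bar\eta_s(z)] = \sum_w \bb E[\bar\eta_s(y)\bar\eta_s(w)]\,p^n_{t-s}(w,z)$, then permits passing to the limit first in $n$ and then in $\epsilon$, extending the CLT to $Y^n_\cdot(H_u^n)$.

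Finally, setting $Z_t = Y_t(H_u) - Y_0(H_u)$ and substituting $\phi = \psi = H_u$ into the limit covariance, the initial-data contribution becomes $\int\mc X(\rho_0(v))(P_tH_u - H_u)(v)(P_sH_u - H_u)(v)\,dv$; splitting at $v = u$ and using $(P_tH_u - H_u)(v) = \mp P[B_t\geq|v-u|]$ produces the first two integrals in the theorem. The noise term, combined with $\nabla(P_{t-r}H_u)(v) = p_{t-r}(0, v-u)$ and a translation in $v$, yields the bulk double integral $2\int_0^s\!\int p_{t-r}(0,v)\,p_{s-r}(0,v)\,\mc X(\rho_r(v-u))\,dv\,dr$, matching the formula claimed in the theorem.
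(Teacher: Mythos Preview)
This theorem is quoted from \cite{jaralandim2006} and is not proved in the paper itself; however, the paper does reprove it (with tightness added) as the current marginal of Theorem~\ref{thm2.6}, and that argument follows exactly the strategy you outline: relate the current to the density fluctuation field acting on a Heaviside-type test function, invoke the nonequilibrium density CLT (Theorem~\ref{thm21}), approximate the Heaviside by admissible test functions, and compute the limiting covariances via the Ornstein--Uhlenbeck formula~\eqref{cov_density}. So your overall architecture is the right one.

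There is, however, a genuine gap in your first step. The identity
\[
\bar J^n_{\lfloor un\rfloor,\lfloor un\rfloor+1}(t)\;=\;\sum_{y>\lfloor un\rfloor}\big[\bar\eta_t(y)-\bar\eta_0(y)\big]
\]
is \emph{not} an $L^2$-convergent series, and the justification you give --- that $\bar J^n_{N,N+1}(t)\to 0$ in $L^2$ as $N\to\infty$ --- is false. For fixed $n$ and $t$ the variance of $\bar J^n_{N,N+1}(t)$ is of order $n\sqrt{t}\,\mc X(\rho_0(N/n))$, which stays bounded away from zero since $\rho_0$ is bounded away from $0$ and $1$. Equivalently, neither $Y^n_t(H_u)$ nor $Y^n_0(H_u)$ is well defined, and their formal difference does not make sense as a limit of partial sums. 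This is precisely why both \cite{jaralandim2006} and the present paper work instead with the compactly supported piecewise-linear approximation $G^K_u(v)=(1-(v-u)/K)^+\one\{v\ge u\}$. With this choice one has the \emph{exact} finite-$K$ identity~\eqref{repcur},
\[
n^{-1/2}\bar J^n(t)\;=\;\mc Y^n_t(G^K)-\mc Y^n_0(G^K)\;+\;\frac{1}{\sqrt{n}}\sum_{x=1}^{Kn}\frac{1}{Kn}M^n_{x-1,x}(t)\;+\;\frac{1}{\sqrt{n}}\int_0^t\frac{n}{K}\big[\bar\eta_s(0)-\bar\eta_s(nK)\big]\,\dd s,
\]
and one shows directly that the martingale average and the boundary integral vanish in $L^2$ as $K\to\infty$, \emph{uniformly in $n$} (this is Proposition~\ref{prop:currentdensity}, i.e.\ \cite[Proposition~3.1]{jaralandim2006}). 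Your smooth $\phi_\eps$ would work too, but only if you first write the current as $\mc Y^n_t(\phi_\eps)-\mc Y^n_0(\phi_\eps)$ plus explicit remainder terms coming from $\phi_\eps'\neq 0$, and then control those remainders uniformly in $n$; the telescoping-to-infinity shortcut does not give you that decomposition. Once this is fixed, your covariance computation and the rest of the argument are correct and coincide with the paper's Section~\ref{covariances}.
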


\subsection{New results}
We start this section with two results that are of technical nature but are interesting on its own rights.
\begin{theorem}[An out of equilibrium Kipnis-Varadhan type inequality]\label{thm:KipnisVaradhan}
	Let $T>0$ and $\eps\leq 1$ and define
	\begin{equation*}
	g(s,\eta) \;=\; \eta(0)- \rho_s^n(0) -\frac{1}{\ell}\sum_{y=0}^{\ell-1}\big[\eta(y)-\rho_s^n(y)\big]\,.
	\end{equation*}
	Assume that the initial profile $\rho_0$ is globally Lipschitz and is bounded away from zero and one. Then, for any $0 \leq s< t\leq T$, taking $\ell=\eps n$,
	\begin{equation*}
	\bbE_{\nu_{\rho_0^n(\cdot)}}\Big[\Big(\int_s^t  \dd r\,g(r,\eta_{r})\Big)^2\Big] \;\lesssim\; \frac{(t-s)\eps^{3/4}}{n}\big(1+(t-s)^{1/4} + (t-s)\big)\,,
	\end{equation*}
	where the proportionality constant depends on $T$ and on nothing else.
\end{theorem}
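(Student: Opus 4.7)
The plan is to bound the second moment by expanding via Fubini and the Markov property, and then to analyze the resulting expression by Fourier methods on $\bb Z$. Introduce the signed kernel $q_0(z):=\mathbf 1_{\{z=0\}}-\ell^{-1}\mathbf 1_{\{0\le z\le\ell-1\}}(z)$, so that $g(r,\eta)=\sum_z q_0(z)(\eta(z)-\rho_r^n(z))$. Since $\rho_\cdot^n$ solves the discrete heat equation and $\bb E_\eta[\eta_t(x)]=\sum_y p^n_t(x,y)\eta(y)$ for the SSEP, the conditional expectation $\bb E[\bar\eta_{r'}(x)\mid\mc F_r]$ equals $\sum_y p^n_{r'-r}(x,y)\bar\eta_r(y)$, where $\bar\eta_r:=\eta_r-\rho_r^n$. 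Combined with the Markov property and translation invariance of $p^n_u$, this yields
\begin{equation*}
\bb E\Big[\Big(\int_s^t g(r,\eta_r)\,\dd r\Big)^2\Big]=2\int_s^t\!\!\int_r^t\sum_{y,z} q_{r'-r}(y)\,q_0(z)\,\bb E[\bar\eta_r(y)\bar\eta_r(z)]\,\dd r'\,\dd r,
\end{equation*}
where $q_u:=p^n_u*q_0$ is a discrete convolution.

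Next, I would split the two-point correlation $\bb E[\bar\eta_r(y)\bar\eta_r(z)]$ into its diagonal piece $\mc X(\rho_r^n(y))\mathbf 1_{\{y=z\}}$, with $\mc X(x)=x(1-x)$, and an off-diagonal remainder $\varphi_r(y,z)$. For the diagonal contribution, since $q_u\cdot q_0$ is supported in $[0,\ell-1]$ and $\rho_r^n$ oscillates by at most $O(\ell/n)=O(\eps)$ on that interval by the Lipschitz assumption on $\rho_0$, one replaces $\mc X(\rho_r^n(y))$ by $\mc X(\rho_r^n(0))$ at a cost $O(\eps)\sum_y|q_u(y) q_0(y)|$. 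The principal diagonal term then equals, by Plancherel on the torus,
\begin{equation*}
\frac{\mc X(\rho_r^n(0))}{2\pi}\int_{-\pi}^{\pi} \hat p^n_u(\theta)\,|\hat q_0(\theta)|^2\,\dd\theta,\qquad u:=r'-r,
\end{equation*}
with $\hat p^n_u(\theta)=e^{-2un^2(1-\cos\theta)}$ and $|\hat q_0(\theta)|^2$ of the form $|1-D_\ell(\theta)|^2$ for a Dirichlet-type kernel $D_\ell$, controlled by $\min(\ell^2\theta^2,4)$. A case analysis separating the scales $1/\ell$ and $1/(n\sqrt u)$, followed by integration in $u$ with crossover at $u\sim\ell^2/n^2=\eps^2$, produces an upper bound matching the announced dependence on $\eps$ and $t-s$.

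For the off-diagonal piece I would invoke the nonequilibrium two-point correlation estimate developed in Section~\ref{sec_4}, which represents $\varphi_r(y,z)$ as a Duhamel-type integral against the two-exclusion transition kernel of a source term of order $(\nabla\rho_s^n)^2\sim n^{-2}$. Combining this representation with the refined discrete gradient bound on $p^n_u$ proved in the same section and summing by parts against $q_u$ and $q_0$, the off-diagonal contribution reduces to Fourier integrals handled in the same spirit as the diagonal ones, and turns out to be of lower order.

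The main obstacle will be the Fourier analysis of the diagonal term in the regime where the scales $\ell$ and $n\sqrt u$ compete: the exponent $\eps^{3/4}$ and the correction $(t-s)^{1/4}$ both emerge from carefully balancing the crossover near $u\sim\eps^2$, and extracting these sharp powers (rather than a cruder $\eps$) requires tracking subleading corrections in both the Dirichlet-type factor $|\hat q_0|^2$ and the heat-kernel factor $\hat p^n_u$. A secondary difficulty is that the nonequilibrium prefactor $\mc X(\rho_r^n(\cdot))$ breaks translation invariance and thus obstructs a purely spectral treatment; the local-constant replacement must therefore be carried out so that the residual error is absorbed by the Fourier estimate rather than bounded crudely.
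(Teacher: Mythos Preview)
Your approach is correct and genuinely different from the paper's. The paper does \emph{not} expand the square and condition; instead it runs the Kipnis--Varadhan resolvent machinery: it solves $(\gamma-\cL_n)f_\gamma(s,\cdot)=g(s,\cdot)$, combines Dynkin's formula for the forward and time-reversed processes to express $\int_0^T \cL_n f_\gamma(s,\eta_s)\,\dd s$ as an average of two martingales, and then bounds $\bbE[(\int g)^2]$ by $\gamma^2 T\int \bbE[f_\gamma^2]$ plus a carr\'e-du-champ term. The Fourier/chaos decomposition enters only to compute $f_\gamma$ explicitly in the first chaos (where it lives since $g$ does), after which the analysis splits $\hat f_\gamma$ into a heat-kernel difference $A_1$ and a $\rho$-variation piece $A_2$ and estimates each via local CLT and the Jara--Landim two-point bound.

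Your direct route (Markov property/self-duality, then Plancherel for the diagonal and the $O(n^{-1})$ correlation bound for the off-diagonal) is more elementary and in fact yields a \emph{sharper} exponent: the crossover analysis at $u\sim\eps^2$ that you outline gives $\int_0^{t-s} I(u)\,\dd u\lesssim \eps/n$, hence a diagonal contribution $\lesssim (t-s)\eps/n$, which for $\eps\le 1$ is stronger than the stated $\eps^{3/4}$ bound. So your worry that ``extracting $\eps^{3/4}$ rather than a cruder $\eps$'' is the obstacle is backwards --- the $\eps^{3/4}$ is an artifact of the paper's resolvent splitting (specifically the regime $t_1,t_2\le\sqrt\eps$ in their Lemma on $\cA_1$), not an intrinsic feature of the problem. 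Two smaller points: for the off-diagonal you do not need the Duhamel representation or gradient bounds of Section~\ref{sec_4}; since $\|q_u\|_1\le\|q_0\|_1\le 2$, the crude Jara--Landim bound $|\varphi_r(y,z)|\lesssim \sqrt{r}/n$ already gives $O((t-s)^2/n)$. And for the local-constant replacement in the diagonal, bounding the error by $O(\eps)\sum_y|q_u(y)q_0(y)|$ is too crude (that sum is $O(1)$ for small $u$); you should keep the finer $|\mc X(\rho_r^n(y))-\mc X(\rho_r^n(0))|\lesssim |y|/n$ on the support of $q_0$, which yields the harmless $O((t-s)^2/n)$.
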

\begin{remark}\rm
	The above result is stated in the case of the simple symmetric exclusion process, but we feel that our approach should work for the weakly asymmetric exclusion process with an asymmetry of order $1/n$, and the mean-zero exclusion process with moments of order four. Indeed, as we will see the proof makes use of known correlation estimates for the simple symmetric  exclusion process and of the local central limit theorem. Both tools are available for the latter two processes. We refer to Remark~\ref{rem:otherprocesses} for a more detailed comment.
\end{remark}

Next we state the precise space time correlation estimates, which generalizes and improves the results of \cite{FPSV_I}, \cite{jaralandim2006} and \cite{Landim05}. To do so we however need to introduce some more notation.
Given a list of points $(x_i:i\in I)$ where $I$ is some index set, we say that $x_i$ is a \emph{repetitive point} in $(x_i: i\in I)$ if there exists $j\in I$ with $j\neq i$ such that $x_i=x_j$. Otherwise $x_i$ is called a \emph{non-repetitive point}. Whenever all points in $(x_i:i\in I)$ are distinct, we call it a list of non-repetitive points.

\begin{theorem}\label{gnE}
	Fix $m\in \bb N$ and $k_1, k_2, \ldots, k_m\in \bb N$. Consider $m$ lists of non-repetitive points $(x_{i_1}: 1\leq i_1\leq k_1), \dots, (x_{i_m}: 1\leq i_m\leq k_m)$ and let $0\leq t_1<\cdots <t_m\leq T$. Then, there exists a constant $C$  independent of all the points $(x_{i_j}: 1\leq i_j\leq k_j)$, with $1\leq j\leq m$, such that
	\begin{equation*}
	\Big\lvert\bb E_{\nu_{\rho_0^n(\cdot)}}\Big[\prod_{j=1}^{m} \prod_{i_j=1}^{k_j} \overline{\eta}_{t_j}(x_{i_j})\Big]\Big\rvert \;\leq\; Cn^{-\sum_{j=1}^m k_j/2} \prod_{j=1}^{m-1} \bigg(\frac{n}{\sqrt{n^2(t_{j+1}-t_j)+1}}\bigg)^{k_j\wedge \sum_{l=j+1}^m k_l}
	\end{equation*}
	for every integer $n\geq 1$.
	In particular, if $k_j=1$ for every $1\leq j\leq m$ or, in other words, every collection of points consists of a single point, say $x_j$, then we have
	\begin{equation}\label{diftime}
	\Big\lvert\bb E_{\nu_{\rho_0^n(\cdot)}}\Big[\prod_{j=1}^{m}  \overline{\eta}_{t_j}(x_j)\Big]\Big\rvert \;\leq\; Cn^{-m/2} \prod_{j=1}^{m-1} \frac{n}{\sqrt{n^2(t_{j+1}-t_j)+1}}
	\end{equation}
	for every integer $n\geq 1$.
\end{theorem}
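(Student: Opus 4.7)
The plan is to prove the single-time case $m=1$ by induction on $k$ via the hierarchy satisfied by the correlation functions, and then use the Markov property and self-duality of the SSEP to iterate backwards from $t_m$ to $t_1$. Define, for pairwise distinct $x_1,\dots,x_k$,
\[
v_t^{(k)}(x_1,\dots,x_k) \;:=\; \bb E_{\nu_{\rho_0^n(\cdot)}}\Big[\prod_{i=1}^k \overline{\eta}_t(x_i)\Big].
\]
A direct computation of $\partial_t \bb E[\prod_i \eta_t(x_i)] = \bb E[\mathcal{L}_n \prod_i \eta(x_i)]$, combined with subtracting $\partial_t \prod_i \rho_t^n(x_i)$ and using $\partial_t \rho_t^n = \Delta_n \rho_t^n$, shows that
\[
\partial_t v_t^{(k)}(\mathbf{x}) \;=\; \mathcal{L}_{\mathrm{ex},k}\, v_t^{(k)}(\mathbf{x}) + R_t^{(k)}(\mathbf{x}),
\]
where $\mathcal{L}_{\mathrm{ex},k}$ is the generator of $k$ exclusion particles (equal to the sum of discrete Laplacians off the collision set, with exchange jumps forbidden) and $R_t^{(k)}$ is a source supported on configurations where at least two of the $x_i$'s are nearest neighbours. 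The source consists of terms of the form $-n^2(\rho_t^n(x_i)-\rho_t^n(x_j))^2\, v_t^{(k-2)}(\mathbf{x}\setminus\{x_i,x_j\})$ coming from the mismatch between $\mathcal{L}_{\mathrm{ex},k}[\rho\cdots\rho]$ and $\partial_t[\rho\cdots\rho]$ at collisions, plus analogous cross contributions involving $v_t^{(k-1)}$ and discrete gradients of $\rho_t^n$. The initial data is $v_0^{(k)}\equiv 0$ on distinct points since the initial measure is a product.

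By Duhamel,
\[
v_t^{(k)}(\mathbf{x}) \;=\; \int_0^t \dd s\,\sum_{\mathbf{y}} p^{(k)}_{t-s}(\mathbf{x},\mathbf{y})\, R_s^{(k)}(\mathbf{y}),
\]
with $p^{(k)}$ the $k$-exclusion transition kernel. Invoking the refined local CLT proved earlier in the paper, which controls $p^{(k)}_t$ and its discrete gradients by products of one-dimensional Gaussians, together with the observation that $R_s^{(k)}$ is supported on the codimension-one adjacency set, the convolution picks up one additional factor of $n\sqrt{t-s}$ relative to a full-dimensional source. Combined inductively with $|v_s^{(k')}|\lesssim n^{-k'/2}$ for $k'<k$, this closes the single-time induction and yields $|v_t^{(k)}(\mathbf{x})|\lesssim n^{-k/2}$. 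For the multi-time statement I would induct on $m$ using the Markov property at $t_{m-1}$ and the self-duality of SSEP, which gives
\[
\bb E\Big[\prod_{i=1}^{k_m}\eta_{t_m}(x_{i_m})\,\Big|\,\mathcal{F}_{t_{m-1}}\Big] \;=\; \sum_{\mathbf{y}} p^{(k_m)}_{t_m-t_{m-1}}(\mathbf{x}_m,\mathbf{y})\prod_i \eta_{t_{m-1}}(y_i).
\]
Centering by $\rho_{t_m}^n = p^{(1)}_{t_m-t_{m-1}}\rho_{t_{m-1}}^n$ and expanding $\eta_{t_{m-1}}(y_i)=\overline{\eta}_{t_{m-1}}(y_i)+\rho_{t_{m-1}}^n(y_i)$ reduces the $m$-time correlation to a finite sum, indexed by subsets $B\subset\{1,\dots,k_m\}$, of convolutions of $(m-1)$-time correlations of orders $\sum_{j<m}k_j+|B|$ against $p^{(k_m)}_{t_m-t_{m-1}}$. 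The inductive hypothesis for $m-1$ together with the diagonal bound $p^{(k_m)}_{t_m-t_{m-1}}(\mathbf{x}_m,\cdot)\lesssim (n\sqrt{t_m-t_{m-1}})^{-k_m}$ (with the correct short-time regularisation built into the factor $\sqrt{n^2(t_m-t_{m-1})+1}$) then produces the required decay factor $(n/\sqrt{n^2(t_m-t_{m-1})+1})^{k_{m-1}\wedge \sum_{l\geq m}k_l}$ from the last gap: the minimum arises naturally because at most $k_{m-1}$ of the $\overline{\eta}_{t_{m-1}}(y_i)$'s can pair with the pre-existing factors at time $t_{m-1}$, while at most $\sum_{l\geq m}k_l$ Gaussian kernels are available from the terminal stage.

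The main technical obstacle will be aligning the combinatorics of the multi-time centering with the codimension accounting of the single-time bound. In particular, the centering at $t_{m-1}$ produces $(m-1)$-time correlations of all intermediate orders, and each must be matched against a precise power of the heat kernel; an effective bookkeeping is needed so that the exponents for the earlier gaps, which in the reduced problem take the form $k_j \wedge (\sum_{l=j+1}^{m-1}k_l + |B|)$, combine through the sum over $B$ into the target exponent $k_j \wedge \sum_{l=j+1}^{m} k_l$. The refined gradient bounds on $p^{(k)}$, established earlier in the paper, are essential to prevent a logarithmic loss that would arise from naive pointwise heat-kernel estimates at the critical time separations $t_{j+1}-t_j \asymp n^{-2}$; without them the clean power-counting structure of the theorem would degrade.
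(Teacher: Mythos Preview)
Your high-level plan---establish $m=1$ first, then induct on $m$ by conditioning at $t_{m-1}$---matches the paper's, and your single-time hierarchy $\partial_t v^{(k)} = \mathcal L_{\mathrm{ex},k} v^{(k)} + R^{(k)}$ plus Duhamel is exactly the mechanism behind the cited result of \cite{FPSV_I} and the paper's two-time Theorem~\ref{twotimecf}.

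The multi-time step, however, contains a gap. You apply self-duality to the \emph{uncentered} product $\prod_i \eta_{t_m}(x_{i_m})$ and then re-center at time $t_{m-1}$, claiming the result is a sum over subsets $B\subset\{1,\dots,k_m\}$ of $(m-1)$-time correlations. That reduction implicitly assumes $p^{(k_m)}_{t_m-t_{m-1}}(\mathbf x,\mathbf y)=\prod_i p^{(1)}_{t_m-t_{m-1}}(x_i,y_i)$, so that the partial sums $\sum_{y_i:\,i\notin B} p^{(k_m)}(\mathbf x,\mathbf y)\prod_{i\notin B}\rho_{t_{m-1}}^n(y_i)$ telescope against the $\rho_{t_m}^n(x_i)$ factors from the outer centering. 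For the exclusion kernel this factorisation fails, and the discrepancy---of order $O(n^{-1})$ per interacting pair---must itself be controlled. Separately, your claim that the diagonal bound $p^{(k_m)}\lesssim (n\sqrt{t_m-t_{m-1}})^{-k_m}$ ``produces the required decay factor'' is not correct as stated: since $\sum_{\mathbf y} p^{(k_m)}(\mathbf x,\mathbf y)=1$, no decay comes from the sup-bound alone; the exponent $k_{m-1}\wedge k_m$ is produced by the \emph{constraint} that some $y_i$ coincide with points of the list at $t_{m-1}$, via the intersection counting you allude to but do not carry out.

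The paper avoids both problems by never uncentering. It differentiates the centered correlation $\phi_t(\mathbf x)=\bb E\big[\prod_i\overline\eta_t(x_i)\cdot(\text{earlier factors})\big]$ directly, obtaining $\partial_t\phi_t = L^{\mathrm{lex}}_{k_m}\phi_t + \Phi_t$ with a source $\Phi$ supported on nearest-neighbour pairs and built from discrete gradients of $\rho^n$ times lower-order $\phi$'s. Duhamel then gives two pieces: an initial term $\mathbf E_{\mathbf x}[\phi_{t_{m-1}}(\mathbf X_{t_m-t_{m-1}})]$, which is exactly your $B=\{1,\dots,k_m\}$ contribution and whose exponent is extracted by decomposing the sum over $\mathbf y$ according to $|\{y_i\}\cap\{x_{i_{m-1}}\}|$ (Lemma~\ref{initialqm}, Corollary~\ref{initialpm}); and an integral term $\int_{t_{m-1}}^{t_m}\mathbf E_{\mathbf x}[\Phi(\mathbf X_{t_m-r},r)]\,\dd r$, which is precisely the exclusion correction missing from your expansion and which is controlled (Proposition~\ref{estimatePhi}) by a summation by parts together with the gradient estimate of Theorem~\ref{grad}. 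Your approach could be repaired, but the repair essentially reproduces this source-term analysis.
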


We now present  the discrete gradient estimate of the transition probability of the labelled exclusion process.To that end we introduce more notation.
Fix $k\in \bb N$ and denote by $\Lambda^k$ the set of non-repetitive points with $k$ coordinates:
$$\Lambda^k\;:=\;\big\{\mathbf x=(x_1,\dots,x_k)\in\bb Z^k: x_i\neq x_j, \,\, \forall\,\, 1\leq i,j\leq k\big\}\,.$$
We consider $k$ labelled exclusion particles that evolve through stirring and are accelerated by $n^2$. For $\mathbf x= (x_1, \ldots, x_k), \mathbf y\in (y_1, \ldots, y_k)\in \Lambda^k$ we denote by $p_t^{\lex}(\bf x, \bf y)$ the probability that particles $i\in\{1,\ldots,k\}$ go from $x_i$ to $y_i$ in time $t$. We then have the following result.
\begin{theorem}\label{grad}
	There exists a constant $C>0$ such that for every $t>0$, every $\mathbf x,\mathbf y\in \Lambda^k$, every $1\leq i\leq k$ such that $\mathbf x+e_i\in\Lambda^k$,
	$$\big| p_t^{\lex}(\mathbf x,\mathbf y)-p^{\lex}_t(\mathbf x+e_i,\mathbf y)\big|\;\leq\; \frac{C}{(n^2 t+1)^{(k+1)/2}}\,.$$
\end{theorem}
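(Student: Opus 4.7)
The plan is to compare the labelled transition kernel $p_t^{\lex}(\mathbf{x},\mathbf{y})$ with the product kernel $\bar p_t(\mathbf{x},\mathbf{y}):=\prod_{i=1}^k q_t(x_i,y_i)$, where $q_t(x,y)$ denotes the transition kernel of a single continuous-time symmetric random walk on $\bb Z$ accelerated by $n^2$ (i.e.\ the $k=1$ case of $p_t^{\lex}$). The motivation is Landim's sharp local central limit theorem \cite{Landim05}, which expresses $p_t^{\lex}$ as a $k$-dimensional Gaussian with diagonal covariance matrix plus a lower-order correction. For $\bar p_t$ the discrete gradient decouples,
$$\bar p_t(\mathbf{x}+e_i,\mathbf{y})-\bar p_t(\mathbf{x},\mathbf{y})\;=\;\big[q_t(x_i+1,y_i)-q_t(x_i,y_i)\big]\prod_{j\neq i}q_t(x_j,y_j),$$
and the classical one-dimensional gradient estimate $|q_t(x+1,y)-q_t(x,y)|\leq C/(n^2 t+1)$ (proved by Fourier analysis), together with the pointwise bound $q_t(x,y)\leq C/(n^2 t+1)^{1/2}$, immediately yields the target bound $C/(n^2 t+1)^{(k+1)/2}$ for the Gaussian part. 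All the work is thus to bound the discrete gradient of the error $r_t:=p_t^{\lex}-\bar p_t$ at the same scale.

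To control $\nabla_i^{\mathbf{x}} r_t$, I would exploit the fact that $\bar p_t$ is the transition kernel of $k$ independent walks, whose generator agrees with $n^2\cL^{\lex}$ outside the ``collision set'' $\cD:=\{\mathbf{z}\in\Lambda^k:\,|z_a-z_b|=1\text{ for some }a\neq b\}$. A Duhamel expansion then gives
$$r_t(\mathbf{x},\mathbf{y})\;=\;\int_0^t\dd s\,\sum_{\mathbf{z}\in\cD}p^{\lex}_{t-s}(\mathbf{x},\mathbf{z})\,F_s(\mathbf{z},\mathbf{y}),$$
where $F_s$ is an explicit boundary term built from differences of $\bar p_s$ at neighbouring sites, hence already carrying one factor $(n^2 s+1)^{-1/2}$ by the one-dimensional gradient estimate. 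Moving the derivative $\nabla_i^{\mathbf{x}}$ onto the factor $p^{\lex}_{t-s}(\mathbf{x},\mathbf{z})$ and bootstrapping in the time variable (for small $t-s$ the desired bound reduces to Landim's pointwise estimate applied to $r_{t-s}$ and to its translate by $e_i$) closes the argument. The reduced codimension of $\cD$ in $\Lambda^k$ is what compensates the time integration and produces the correct power $(n^2 t+1)^{-(k+1)/2}$ after summing.

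The main obstacle is exactly this final improvement: strengthening the Landim pointwise scale $(n^2 t+1)^{-k/2}$ to the gradient scale $(n^2 t+1)^{-(k+1)/2}$, without any logarithmic loss. This extra $(n^2 t+1)^{-1/2}$ cannot be extracted from a triangle inequality; it must come either from Fourier cancellation (the multiplier $1-e^{-i\xi_i}$ vanishes linearly at the origin) or, more robustly in the exclusion setting, from coupling two stirring processes started at $\mathbf{x}$ and $\mathbf{x}+e_i$ via the common Harris graphical construction and tracking the resulting discrepancy, whose law concentrates at diffusive scale $\sqrt{n^2 t}$. Either route constitutes a genuine refinement of Landim's argument at the level of first discrete derivatives, which is precisely the new technical ingredient supplied by this theorem.
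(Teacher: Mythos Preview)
Your Duhamel route, as written, is circular. After
\[
r_t(\mathbf{x},\mathbf{y})\;=\;\int_0^t\dd s\,\sum_{\mathbf{z}\in\cD}p^{\lex}_{t-s}(\mathbf{x},\mathbf{z})\,F_s(\mathbf{z},\mathbf{y})
\]
the operator $\nabla_i^{\mathbf x}$ lands on $p^{\lex}_{t-s}(\mathbf x,\mathbf z)$, which is exactly the quantity under estimation. Your ``bootstrapping in the time variable'' does not close: for small $t-s$ the triangle inequality with Landim's pointwise bound only gives $(n^2(t-s)+1)^{-k/2}$, not the gradient scale, and there is no Gr\"onwall structure here that upgrades this to $(n^2t+1)^{-(k+1)/2}$ after integration over $s$ and summation over the codimension-one set $\cD$. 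Reversing the Duhamel so that $\bar p_{t-s}(\mathbf x,\mathbf z)$ sits on the left does let the $\mathbf x$-gradient hit the product kernel, but the boundary operator $(\cL^{\rw}-\cL^{\lex})$ then produces discrete gradients of $p^{\lex}_s$ in the $\mathbf z$ variable, and you are back to the same problem. So the extra half-power is not coming from this decomposition.

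The paper implements precisely the coupling idea you mention in your final sentence, and the specific construction is the whole point. One runs a single $(k{+}1)$-particle labelled stirring process $\mathbf Z$ started from $(x_1,\dots,x_i,x_i{+}1,x_{i+1},\dots,x_k)$, using a graphical representation in which each bond clock has rate $2n^2$ and at each ring the endpoints swap or stay with probability $\tfrac12$ each. Deleting the particle started at $x_i{+}1$ gives $\mathbf X$; deleting the one started at $x_i$ gives $\mathbf Y$; both marginals are honest $k$-particle stirring processes. Let $\tau$ be the first time the two distinguished particles are adjacent and the connecting clock rings. By the swap/stay symmetry at that ring, on $\{\tau\le t\}$ the laws of $(\mathbf X_t,\mathbf Y_t)$ and $(\mathbf Y_t,\mathbf X_t)$ coincide; combining this with a Chapman--Kolmogorov split at time $t$ and Landim's pointwise bound yields
\[
\big|p_{2t}^{\lex}(\mathbf x,\mathbf y)-p_{2t}^{\lex}(\mathbf x+e_i,\mathbf y)\big|\;\le\;\frac{C}{(n^2t+1)^{k/2}}\,\bb P^{\coup}(\tau>t)\,.
\]
Finally, under this clock convention the two distinguished particles evolve as \emph{independent} simple random walks up to $\tau$ (their moves before $\tau$ are driven by disjoint rings and coin flips, and the other $k-1$ particles are invisible), so $\bb P^{\coup}(\tau>t)$ is the probability that two independent walks started one apart have not met by time $t$, which is $\lesssim(n^2t+1)^{-1/2}$. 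This produces the missing half-power with no Fourier analysis and no reference to the error $r_t$ at all.
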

\begin{remark}\rm
	The above result provides diagonal estimates, i.e., the above estimate should be sharp when $\bf x$ and $\bf y$ are close to each other. We will see in Section~\ref{43} that	combining a result from~\cite{Landim05} with an interpolation estimate allows to obtain almost sharp bounds in the non-diagonal case, i.e., when $\bf x$ and $\bf y$ are not close to each other. This will in particular show that the above discrete derivative is almost bounded by the derivative of a $k$ dimensional Gaussian. We refer to Section~\ref{43} for the details.
\end{remark}

Fix $u_1,\ldots,u_k\in \bb R$, which we call \textit{macroscopic sites}, and we call $\lfloor u_1n\rfloor, \ldots,$ $ \lfloor u_kn\rfloor\in \bb Z$ the associated \textit{microscopic sites}.
We state below the joint fluctuations  of currents and
occupation times for $k$ macroscopic sites. To shorten notation, we sometimes write  $\mathcal X_r := \mc{X}(\rho_r(u))=  \mc{X}(\rho(u, r)) = \rho(u, r) (1-\rho(u, r))$.
\begin{theorem}\label{thm2.6}
	The sequence of processes
	\begin{align*}
	\Big\{\big(n^{-1/2}\overline{J^n}_{\lfloor u_1n\rfloor, \lfloor u_1 n\rfloor +1}(t),&\ldots, n^{-1/2}\overline{J^n}_{\lfloor u_kn\rfloor, \lfloor u_k n\rfloor +1}(t) ,\\
	& n^{1/2}\Gamma^n_{\lfloor u_1n\rfloor}(t),\ldots, n^{1/2}\Gamma^n_{\lfloor u_1n\rfloor}(t)\big): t\in [0,T]\Big\}_{n\in \bb N}
	\end{align*}
	converges in distribution, with respect to the $J_1$-Skorohod topology, to
	a $2k$-dimensional centred Gaussian process
	\begin{align*}
	\Big\{\big(J_{u_1}(t),\ldots, J_{u_k}(t) , \Gamma_{u_1}(t),\ldots, \Gamma_{u_k}(t)\big): t\in [0,T]\Big\}
	\end{align*}
	whose covariances are as follows.  For $s,t>0$ and
	$u_1\leq u_2\in \mathbb R$,
	\begin{align*}\label{JJ}
	\bb E [J_{u_1}(s)J_{u_2}(t)]\;=\;& \int_{-\infty}^{u_1} \dd u\; \bb P\big[B_s\geq u_1-u\big]\bb P\big[B_t\geq u_2-u\big] \mc{X}(\rho_0(u)) \\
	&-\int_{u_1}^{u_2} \dd u\; \bb P\big[B_s\leq u_1-u\big]\bb P\big[B_t\geq u_2-u\big] \mc{X}(\rho_0(u)) \\
	&+\int_{u_2}^\infty \dd u\; \bb P\big[B_s\leq u_1-u\big]\bb P\big[B_t\leq u_2-u\big] \mc{X}(\rho_0(u)) \\
	&+2\int_0^{s\wedge t} \dd r \int_{\bb R} \dd u\; p_{s-r}(u,u_1)p_{t-r}(u,u_2) \mc{X}(\rho_r(u))\, ,\text{ and }\\
	\bb E [\Gamma_{u_1}(s)\Gamma_{u_2}(t)]  \;=\;
	&\int_0^s \dd r_1\int_0^t  \dd r_2 \,\,p_{\lvert r_1-r_2\rvert}(u_1,u_2) \mathcal X(\rho_{r_2}(u_2))\\
	&+ 2\int_0^s \dd r_1\int_0^t  \dd r_2\int_0^{r_1\wedge r_2}\dd\tau \int_{\bb R}\dd u\; p_{r_1-\tau}(u,u_1) p_{r_2-\tau}(u,u_2) \{\partial_\tau \mathcal X_\tau -\Delta \mathcal X_\tau \} \,
	\end{align*}
	and, for any $s,t>0$ and $u_1,u_2\in\bb R$,
	\begin{equation}\label{cov_gamma_current}
	\begin{split}
	\bb E [\Gamma_{u_1}(s)J_{u_2}(t)]\;=\; &\int_0^t \!\dd r\int_{u_1}^\infty\! \dd u\; p_{r-s}(u,u_2)\one\{r>s\} \mathcal X_s\\
	&+ \int_0^t \dd r \, \bb P_0[B_{s-r}\geq u_1-u_2] \one\{r<s\} \mathcal X(\rho_s(u_2))\\
	&+\int_0^t \dd r\int_0^{s\wedge r}\!\! \dd\tau \int_{\bb R}\dd u \;\bb P_0[B_s\geq u_1-u] p_{r-\tau}(u,u_2) \{\partial_\tau \mathcal X_\tau -\Delta \mathcal X_\tau \}\\
	&- \int_0^t \dd r\int_{u_1}^\infty \dd u \,p_r(u,u_2)\, \mc X_0\\
	\end{split}
	\end{equation}
	where $B_t$ is a standard Brownian motion starting from zero and $p_t(u,v)$ is the Gaussian kernel.
\end{theorem}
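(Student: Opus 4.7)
The plan is to establish (i) tightness in the $J_1$-Skorohod topology of each of the $2k$ coordinate processes, (ii) convergence of finite-dimensional distributions of the joint vector to a centered Gaussian, and then (iii) verify the explicit covariance formulas. Once (i) and (ii) are in place, the Gaussian limit is fully characterized by mean zero and the prescribed covariance, so (iii) amounts to an explicit computation.

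\textbf{Tightness.} Tightness of $\{n^{1/2}\Gamma^n_{\lfloor un\rfloor}(t)\}_{t\in[0,T]}$ follows from a Kolmogorov-Centsov bound fed by Theorem~\ref{gnE}: the four-point correlation bound \eqref{diftime} with all sites equal to $\lfloor un\rfloor$, integrated over $s\leq r_1<\ldots<r_4\leq t$, produces a fourth-moment increment of order $|t-s|^{3/2}$. For the current I would use the Dynkin martingale decomposition
\[
\overline J^n_{x,x+1}(t) \;=\; \mc M^n_{x,t} + \int_0^t n^2\big[\overline\eta_r(x)-\overline\eta_r(x+1)\big]\,\dd r,
\]
where $\mc M^n$ has predictable quadratic variation $\int_0^t n^2[\eta_r(x)+\eta_r(x+1)-2\eta_r(x)\eta_r(x+1)]\dd r$ of order $t$, so that $\mc M^n/\sqrt n$ is tight by Burkholder-Davis-Gundy. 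For the non-martingale part, expanding the fourth power of its increment and applying the multi-point multi-time correlation estimate of Theorem~\ref{gnE} to each of the resulting time-ordered integrands yields a Kolmogorov-Centsov bound after dividing by $\sqrt n$.

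\textbf{Finite-dimensional convergence and covariances.} The fdd limit is characterized through the nonequilibrium density fluctuation field $Y^n_s(H) = n^{-1/2}\sum_x H(x/n)\overline\eta_s(x)$, which converges to the centered Gaussian solution $Y$ of the Ornstein-Uhlenbeck SPDE $\dd Y_t = \Delta Y_t\,\dd t + \nabla(\sqrt{2\mc X(\rho_t)}\,\dd W_t)$. Each coordinate of the rescaled vector may be rewritten, modulo vanishing errors, as a linear functional of $Y^n$: the current at macroscopic site $u$ through a telescoping identity and a smoothed Heaviside $H^\eps(\cdot-u)$, and $n^{1/2}\Gamma^n_{\lfloor un\rfloor}(t) = \int_0^t Y^n_r(\delta^\eps_u)\dd r + o(1)$ with $\delta^\eps_u$ an $\eps$-mollifier of the Dirac mass at $u$. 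The Kipnis-Varadhan inequality of Theorem~\ref{thm:KipnisVaradhan} controls the error of replacing single-site observables by mesoscopic averages at each of these identifications. Joint convergence of these linear functionals of $Y^n$ together with the driving martingales then yields the multivariate Gaussian limit. The three initial-data integrals in $\bb E[J_{u_1}(s)J_{u_2}(t)]$ arise from splitting the integration in $u$ according to whether the propagated mass originates left of $u_1$, between $u_1$ and $u_2$, or right of $u_2$; the last integral there is the martingale contribution with local variance rate $2\mc X(\rho_r(u))$; the $\partial_\tau\mc X_\tau - \Delta \mc X_\tau$ terms in $\bb E[\Gamma_{u_1}(s)\Gamma_{u_2}(t)]$ and $\bb E[\Gamma_{u_1}(s)J_{u_2}(t)]$ appear after an integration by parts in time using $\partial_t\rho_t=\Delta\rho_t$.

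\textbf{Main obstacle.} The decisive difficulty will be tightness of the current out of equilibrium. In equilibrium \cite{SP} relied on independence properties of the stirring construction that are unavailable here, and second moments alone are insufficient: one needs a Kolmogorov-Centsov bound on fourth moments of current increments. This is exactly where all three new technical tools combine: the multi-point multi-time correlation estimate of Theorem~\ref{gnE} (itself proved via the discrete gradient bound of Theorem~\ref{grad}) delivers the fourth-moment bound on the non-martingale part of the current, while the nonequilibrium Kipnis-Varadhan inequality of Theorem~\ref{thm:KipnisVaradhan} is what allows the non-martingale part to be recognized at the fdd stage as a functional of the density fluctuation field.
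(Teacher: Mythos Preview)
Your overall architecture (tightness of each coordinate, then fdd via the density field, then explicit covariances) matches the paper, and your treatment of fdd convergence and of the covariance formulas is essentially correct. For the occupation time you propose fourth moments via Theorem~\ref{gnE}; the paper instead uses a second-moment bound through the Kipnis--Varadhan inequality (Theorem~\ref{thm:KipnisVaradhan}), but your route also works and is arguably simpler.

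There is, however, a genuine gap in your tightness argument for the current. The direct Dynkin decomposition
\[
n^{-1/2}\,\overline{J}^n_{x,x+1}(t)\;=\;n^{-1/2}\mc M^n_{x,t}\;+\;n^{3/2}\int_0^t\big[\overline\eta_r(x)-\overline\eta_r(x+1)\big]\,\dd r
\]
cannot be closed with Theorem~\ref{gnE}. The correlation estimate \eqref{diftime} is uniform in the spatial points and gives no extra decay for the discrete gradient $\overline\eta_r(x)-\overline\eta_r(x+1)$; applying it to the $p$-th moment of the integral term produces a bound of order $n^{3p/2}\cdot n^{-p/2}(t-s)^{(p+1)/2}=n^{p}(t-s)^{(p+1)/2}$, which blows up in $n$ for every $p$. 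The paper circumvents this by first passing to the smoothed identity
\[
n^{-1/2}\overline{J}^n(t)\;=\;\big[\mc Y_t^n(G^K)-\mc Y_0^n(G^K)\big]\;+\;\frac{1}{\sqrt n}\sum_{x=1}^{Kn}\frac{1}{Kn}M^n_{x-1,x}(t)\;+\;\frac{1}{\sqrt n}\int_0^t\frac{n}{K}\big[\overline\eta_s(0)-\overline\eta_s(nK)\big]\dd s,
\]
which replaces the dangerous $n^{3/2}$ prefactor by $\sqrt n/K$, and then bounds the \emph{tenth} moment of each piece. The density-field increment $\mc Y_t^n(G^K)-\mc Y_s^n(G^K)$ is controlled through Proposition~\ref{expdif} (a consequence of Theorem~\ref{gnE}), and tuning $K=(t-s)^{3/8}$ yields an increment bound $\lesssim (t-s)^{5/4}$. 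Note also that fourth moments would not suffice even with the right decomposition: the current scales like a $1/4$-fBM, so a $p$-th moment bound gives at best $(t-s)^{p/4}$, and one needs $p\geq 6$ for Kolmogorov--Centsov; the paper takes $p=10$ because its estimates for $t-s\leq n^{-1}$ are not sharp enough to close $p=6$ (see Remark~5.4).
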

As a curious immediate corollary of Theorem~\ref{thm2.6}, we get:
\begin{corollary}\label{corollary27}
	Assume that the profile $\rho_0\in (0,1)$ is constant, i.e., the system starts from equilibrium. Then, for each $t>0$ and any $u_1,u_2\in \bb R$, the random variables
	$J_{u_1}(t)$ and $\Gamma_{u_2}(t)$ are independent, where $J_{u_1}(\cdot)$ is a fBM of Hurst exponent  $1/4$ and $\Gamma_{u_2}(\cdot)$ is a fBM of Hurst exponent~$3/4$. However, the processes $J_{u_1}(\cdot)$ and $\Gamma_{u_2}(\cdot)$ are not independent.
\end{corollary}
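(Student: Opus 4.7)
The corollary follows from specializing the covariance formulas in Theorem~\ref{thm2.6} to the constant profile $\rho_0\equiv \rho$. In equilibrium the hydrodynamic solution is $\rho_t(u)\equiv \rho$, so $\mc X_\tau = \chi := \rho(1-\rho)$ is a constant and, crucially, $\partial_\tau \mc X_\tau - \Delta \mc X_\tau \equiv 0$. This identity eliminates the second line of the $\Gamma\Gamma$ covariance and the third line of the $\Gamma J$ covariance~\eqref{cov_gamma_current}, and is the key simplification throughout; each of the four claims then reduces to a short explicit computation.

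\textbf{Marginals as fractional Brownian motions.} In $\bb E[J_u(s)J_u(t)]$, the substitution $w = u-v$ in the first and third integrals and the symmetry $B\stackrel{d}{=}-B$ merge them into $2\chi \int_0^\infty \bb P[B_s\geq w]\bb P[B_t\geq w]\, dw$, which by independence and $\min(X,Y) = \tfrac12(X+Y-|X-Y|)$ equals an explicit constant times $\sqrt{s}+\sqrt{t}-\sqrt{s+t}$. The fourth integral reduces, through $\int_{\bb R} p_{s-r}(v,u)p_{t-r}(v,u)\, dv = p_{s+t-2r}(0,0)$, to a constant times $\sqrt{s+t}-\sqrt{t-s}$. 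Summing gives $\bb E[J_u(s)J_u(t)] = c_J\bigl[s^{1/2}+t^{1/2}-|t-s|^{1/2}\bigr]$, which together with $J_u(0)=0$ and Gaussianity identifies $J_u(\cdot)$ as a constant multiple of a $1/4$-fBM. For $\Gamma_u(\cdot)$ only the first line of the $\Gamma\Gamma$ formula survives, giving $\chi\int_0^s\int_0^t p_{|r_1-r_2|}(u,u)\, dr_1\, dr_2$; splitting by $r_1\lessgtr r_2$ evaluates this to a constant times $s^{3/2}+t^{3/2}-|t-s|^{3/2}$, the covariance of a $3/4$-fBM.

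\textbf{Independence at $s=t$.} In~\eqref{cov_gamma_current} evaluated at $s=t$ the first line vanishes because $\one\{r>s\}$ restricts the integral to the null set $\{r>t\}\cap[0,t]$, the third vanishes by the equilibrium identity, and the remaining two lines read
$$\chi \int_0^t \bb P_0[B_{t-r}\geq u_1-u_2]\, dr \;-\; \chi \int_0^t dr \int_{u_1}^\infty p_r(u,u_2)\, du\,.$$
The change of variable $r\mapsto t-r$ in the first summand, together with the identification $\int_{u_1}^\infty p_r(u,u_2)\, du = \bb P_0[B_r\geq u_1-u_2]$ in the Brownian-motion convention adopted in Theorem~\ref{thm2.6}, shows that these two terms cancel. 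Hence $\bb E[J_{u_2}(t)\Gamma_{u_1}(t)]=0$; joint Gaussianity upgrades uncorrelatedness to independence of $J_{u_1}(t)$ and $\Gamma_{u_2}(t)$.

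\textbf{Non-independence of the processes.} For $u_1 \neq u_2$ and $s<t$, the same substitutions yield
$$\bb E[\Gamma_{u_1}(s)J_{u_2}(t)] = \chi\Big(\int_0^{t-s} F(r)\, dr - \int_s^t F(r)\, dr\Big), \qquad F(r) := \bb P_0[B_r\geq u_1-u_2],$$
which is non-zero because $F$ is strictly monotone in $r$ whenever $u_1\neq u_2$ and the intervals $[0,t-s]$ and $[s,t]$, although of equal length, average $F$ over different ranges of values. Hence the Gaussian processes $J_{u_2}(\cdot)$ and $\Gamma_{u_1}(\cdot)$ have non-trivial cross-covariance and are therefore not independent. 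The main (mild) obstacle throughout is the consistent bookkeeping between the heat kernel $p_t$ and the Brownian motion $B_t$ in the paper's notation; once that convention is fixed, each step reduces to an elementary integral.
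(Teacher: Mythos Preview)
Your proposal is correct and follows the same route as the paper: specialize the covariances of Theorem~\ref{thm2.6} to constant $\rho_0$, use $\partial_\tau\mc X_\tau-\Delta\mc X_\tau\equiv 0$ to kill the relevant terms, and cancel the second and fourth lines of~\eqref{cov_gamma_current} via $r\mapsto t-r$. The paper's own argument is terser---it defers the fBM identification entirely to the literature and does not spell out the non-independence claim---so your explicit computation of the fBM covariances and of the cross-covariance for $s<t$ actually goes beyond what the paper provides.

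One point worth flagging: your non-independence argument is restricted to $u_1\neq u_2$, and this restriction is necessary. When $u_1=u_2$ your function $F(r)=\bb P_0[B_r\geq 0]\equiv 1/2$ for $r>0$, and a direct check shows that all four terms of~\eqref{cov_gamma_current} combine to zero for \emph{every} pair $(s,t)$, so in equilibrium the processes $J_u(\cdot)$ and $\Gamma_u(\cdot)$ at the \emph{same} macroscopic site are in fact independent. The corollary's ``any $u_1,u_2$'' should therefore be read as a generic statement; your restriction is the right one.
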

To deduce the corollary above, note that in equilibrium the first and  third terms in
the right hand side of
\eqref{cov_gamma_current} are identically null, while the second and forth terms cancel each other by doing the change of variables $r\mapsto t-r$ and $u\mapsto u+u_2$ and using the symmetry of the heat kernel.
The fact that the distribution of  $J_{u_1}(\cdot)$ and $\Gamma_{u_2}(\cdot)$ are given by fBMs of parameters $1/4$ and $3/4$ is natural in view of the literature mentioned in Section~\ref{sec:Intro}. On the other hand, the independence in the equilibrium setting (for a same time) is a little surprising.
An intuition of the asymptotic independence of Corollary~\ref{corollary27} relies on the fact that the occupation time is invariant for a time reversion, while the current changes  its sign. So, they must be (asymptotically) uncorrelated, which means independence in the Gaussian scenario.

\section{An out of equilibrium Kipnis-Varadhan Lemma -- Proof of Theorem~\ref{thm:KipnisVaradhan}}\label{sec:3}

Our proof is inspired by~\cite{gj2014}.
For ease of notation we are going to only show  that
\begin{equation}\label{zero_T}
\bbE_{\nu_{\rho_0^n(\cdot)}}\Big[\Big(\int_0^T  \dd s\,g(s,\eta_{s})\Big)^2\Big] \;\lesssim\; \frac{T\eps^{3/4}}{n}\big(1+T^{1/4} + T\big)\,.
\end{equation}
The adaptation to the case $s<t$ follows almost straightforwardly.
The proof is structured as follows: first we introduce some notation and some auxiliary identities, then we present the main ideas, and finally we work out the details.

Fix $\gamma>0$. Let $f_\gamma$ be such that
\begin{equation}\label{eq:resolvent}
(\gamma -\cL_n)f_\gamma(s,\cdot)\;=\; g(s,\cdot)
\end{equation}
for all $s\in[0,T]$.
Next, we write
\begin{equation}\label{eq:Dynkin1}
f_\gamma(T,\eta_{T})\;=\; f_\gamma(0,\eta_0) + \int_0^T \dd s\, (\partial_s+\cL_n)f_\gamma(s,\eta_{s})  + M_T^f\,,
\end{equation}
where the last term on the right hand side above is a martingale.
Define the backwards process $\bar\eta$ via $\bar\eta_{t} = \eta_{T-t}$, $0\leq t\leq T$. Note that its initial state  is given by the final state of the original process. In the same way we see that there exists a martingale $\bar M^f$ (with respect to the backwards filtration) such that
\begin{equation*}
f_\gamma(T,\bar\eta_{T}) \;=\; f_\gamma(0,\bar \eta_0) +\int_0^T \dd s\,(\partial_s+\bar\cL_n)f_\gamma(s,\bar \eta_{s})  + \bar M^f_T\,,
\end{equation*}
where $\bar\cL_n$ denotes the generator of $\bar\eta$. As a consequence of the graphical construction we have that $\cL_n=\bar\cL_n$.
Hence, the above equation can be rewritten as
\begin{equation}\label{eq:Dynkin2}
f_\gamma(0,\eta_0) \;=\; f_\gamma(T,\eta_{T}) + \int_0^T \dd s\, (\cL_n f_\gamma)(s,\eta_{s})  -\int_0^T \dd s\, (\partial_s f_\gamma)(s,\eta_{s}) +\bar M_T^f\,.
\end{equation}
Adding up~\eqref{eq:Dynkin1} and~\eqref{eq:Dynkin2} shows that
\begin{equation*}
-2\int_0^T \dd s\, (\cL_n f_\gamma)(s,\eta_{s})\;=\; M_T^f + \bar M_T^f\,.
\end{equation*}
The quadratic variation of $M^f$ is given by the \textit{carr\'e du champ}, i.e.,
\begin{equation}\label{eq:quadraticvar}
\begin{aligned}
\langle M^f, M^f\rangle_T&\;=\;\int_0^T \dd s\,\Big\{ \cL_n f_\gamma^2(s,\eta_{s}) -2 f_\gamma(s,\eta_{s})\cL_n f_\gamma(s,\eta_{s})\Big\}\,\\
&\;=\; \int_0^T \dd s\sum_{x\in\Z} \Big\{f_\gamma(s,\eta_{s}^{x,x+1})-f_\gamma(s,\eta_{s})\Big\}^2\,.
\end{aligned}
\end{equation}
Since $\cL_n = \bar\cL_n$ the same formula holds for $\bar M^f$.
Using~\eqref{eq:resolvent}, the above observations, and the Cauchy-Schwarz inequality, we can deduce that
\begin{align}
&\bbE_{\nu_{\rho_0^n(\cdot)}}\Big[\Big(\int_0^T \dd s\, g(s,\eta_{s})\Big)^2\Big] \notag\\
&\leq\; \gamma^2 \bbE_{\nu_{\rho_0^n(\cdot)}}\Big[2\Big(\int_0^T\dd s\, f_\gamma (s,\eta_{s})\Big)^2\Big] +  2\bbE_{\nu_{\rho_0^n(\cdot)}}\Big[\Big(\int_0^T\dd s\, \cL_n f_\gamma (s,\eta_{s})\Big)^2\Big]\notag\\
&\leq\; 2T\gamma^2\int_0^T \dd s\, \bbE_{\nu_{\rho_0^n(\cdot)}}\big[f_\gamma (s,\eta_{s})^2\big] + 4\int_0^T \dd s\,\sum_x \bbE_{\nu_{\rho_0^n(\cdot)}}\big[(f_\gamma (s,\eta_{s}^{x,x+1})-f_\gamma (s,\eta_{s}))^2\big]\,.\label{eq:KV}
\end{align}

The above suggests the necessity of
$L^2$-type estimates on $f_\gamma$, and to derive those we will work in Fourier space. The advantage of doing so is that it will turn out that the generator of the exclusion process has a very nice representation in Fourier coordinates. In a nutshell, it is possible to write $\cL_n$ as a sum of operators $\mathfrak{D}_{n,k}$ where $k\in\N$ ranges over the number of particles or the ``degree'' of the function $\hat f$ that $\mathfrak{D}_{n,k}$ acts on. In the particular case of functions with degree one, i.e., $k=1$, which is the case for $\hat g$, one has that $\mathfrak{D}_{n,1}= n^2\Delta$, where the latter denotes the discrete Laplacian. This then allows to calculate $f_\gamma$ rather explicitly and known estimates on the spatial correlations of the exclusion process allow to obtain the desired estimates.
\begin{remark}\rm
	\label{rem:otherprocesses}
	The approach adopted here should work whenever the following conditions are satisfied:
	\begin{itemize}
		\item[\rm{1)}] the generator of the process under consideration is of the form $\cL + \cA$, where $\cL$ denotes the symmetric part which is diagonal in Fourier space and $\cA$ denotes the asymmetric part. Moreover, the generator of the backwards process is of the form $\cL - \cA$.
		\item[\rm{2)}] one has a sufficient control on the spatial correlations of the process.
		\item[\rm{3)}] one can use the local central limit theorem.
	\end{itemize}
	The meaning of sufficient should become clear in the sequel.
	In that case the analysis should work in a way analogous to the one employed here. In particular the resolvent equation~\eqref{eq:resolvent} only considers the symmetric part.
\end{remark}
We will now work out the details.
For any finite subset $\Lambda$ of $\Z$, define
\begin{equation*}
\Psi_\Lambda(s,\eta)\;=\; \prod_{x\in\Lambda} \frac{\eta(x)-\rho_s^n(x)}{\sqrt{\rho_s^n(x)(1-\rho_s^n(x))}}\,,
\end{equation*}
where the collection $\{\Psi_\Lambda(s,\cdot):\, \Lambda\subseteq \Z,\, |\Lambda| <\infty\}$ builds an orthonormal basis of $L_0^2(\nu_{\rho_s^n})$, the space of $L^2$-functions with mean zero with respect to the  measure $\nu_{\rho_s^n}= \otimes_{x\in \Z} \mathrm{Ber}(\rho_s^n(x))$. Thus, any $f(s,\cdot)\in L_0^2(\nu_{\rho_s^n})$ can uniquely be written as
\begin{equation*}
f(s,\cdot)\;=\; \sum_{k=1}^{\infty}\sum_{\Lambda: \,|\Lambda|=k} \hat{f}(s,\Lambda)\Psi_\Lambda(s,\cdot)\,.
\end{equation*}
For $k\geq 1$, we denote the space generated by $\{\Psi_\Lambda(s,\cdot):\, |\Lambda|=k\} $ by $L_k^2(\nu_{\rho_s^n})$, and we refer to it sometimes as the $k$-th chaos. We further denote by $\pi_k$ the projection onto $L_k^2(\nu_{\rho_s^n})$. As pointed out by \cite[Equation (5.17)]{KLO}, we have
\begin{equation*}
\cL_n f \;=\; \sum_{k=1}^{\infty}\sum_{|\Lambda|=k}(\mathfrak{D}_{n,k} \hat{f})(\Lambda) \Psi_\Lambda
\end{equation*}
where
\begin{equation*}
(\fD_{n,k} \hat{f})(\Lambda)\;=\; n^2\sum_{x\in \Z} \big[\hat{f}(\Lambda_{x,x+1})-\hat{f}(\Lambda)\big]\,,
\end{equation*}
with
\begin{equation*}
\Lambda_{x,x+1}=
\begin{aligned}
\begin{cases}
(\Lambda\setminus\{x\})\cup\{x+1\}, &\text{if }x\in \Lambda, x+1\notin \Lambda\,,\\
(\Lambda\setminus\{x+1\})\cup\{x\}, &\text{if } x\notin \Lambda, x+1\in\Lambda\,,\\
\Lambda, &\text{otherwise.}
\end{cases}
\end{aligned}
\end{equation*}
In the case $|\Lambda|=1$, we can identify $\fD_{n,1}$ with the discrete Laplacian accelerated by $n^2$. We write $n^2\Delta$ instead of $\fD_{n,1}$ in that case.
Using that $\pi_k g=0$ for all $k\geq 2$ we see from the above considerations that the resolvent equation~\eqref{eq:resolvent} is equivalent to the following system of equations
\begin{equation*}	\begin{aligned}
\begin{cases}
(\gamma-n^2\Delta)\hat{f}_1(s,\cdot) = \hat{g}_1(s,\cdot)\,,\\
(\gamma-\fD_{n,2})\hat{f}_k(s,\cdot)= 0\, \quad\text{for } k\geq 2\,,
\end{cases}
\end{aligned}
\end{equation*}
where $\hat{f}_k$ denotes the coefficient of $f_\gamma$ in the $k$-th chaos. Since $\gamma$ is in the resolvent set of $\fD_{n,k}$, we in particular see that $\hat{f}_k= 0$ for all $k\geq 2$. Inverting $(\gamma-n^2\Delta)$ we see that
\begin{equation*}
\hat{f}_1(s,x)\;=\; \sum_{y\in\Z}\int_0^\infty \dd t\, e^{-t\gamma} p_{t}(x,y)\hat{g}_1(s,y)\,,
\end{equation*}
where we identified the set $\{x\}$ with $x$, and $p$ denotes the transition probability of a continuous time simple random walk whose waiting time parameter is $2n^2$. It now only remains to calculate $\hat g_1$. To that end we need to solve
\begin{equation*}
g(s,\eta) \;=\;\eta(0)- \rho_s^n(0) -\frac{1}{\ell}\sum_{y=0}^{\ell-1}\big[\eta(y)-\rho_s^n(y)\big] \;=\;\sum_y \hat g_1(s,y) \Psi_{\{y\}}(s,\eta)\,,
\end{equation*}
which leads us to
\begin{equation*}
\hat g_1(s,y) \;=\;\begin{cases}
(1-\frac{1}{\ell})\sqrt{\rho_s^n(0)(1-\rho_s^n(0))}\,, &y=0\,,\\
-\frac{1}{\ell}\sqrt{\rho_s^n(y)(1-\rho_s^n(y))}\,,& y\in\{1,2\ldots,\ell-1\}\,,\\
0,\, &\text{otherwise.}
\end{cases}
\end{equation*}
We therefore obtain
\begin{equation}\label{eq:fgamma}
\begin{aligned}
f_\gamma(s,\eta_{s})&\;=\; \sum_x \bigg[\int_0^\infty \dd t\, e^{-t\gamma}\Big(p_{t}(x,0)\sqrt{\rho_{s}^n(0)(1-\rho_{s}^n(0))}\\
&\hspace{2.5cm}-\frac{1}{\ell}\sum_{y=0}^{\ell -1} p_{t}(x,y)\sqrt{\rho_{s}^n(y)(1-\rho_{s}^n(y))}\Big)\bigg] \times\Psi_{\{x\}}(s,\eta_{s})\,.
\end{aligned}
\end{equation}
With the above representation of $f_\gamma$ at hand we are now finally able to estimate the two terms on the right hand side of~\eqref{eq:KV}. We begin with the first one.
To that end we will analyse the term inside the round brackets in~\eqref{eq:fgamma} which we write as $A_1(s,t,x) + A_2(s,t,x)$, where
\begin{align}
&A_1(s,t,x)\;=\; \Big(p_{t}(x,0)-\frac{1}{\ell}\sum_{y=0}^{\ell-1}p_{t}(x,y)\Big)\sqrt{\rho_{s}^n(0)(1-\rho_{s}^n(0))}\,,\quad \text{ and }\label{linha_1}\\
&A_2(s,t,x)\;=\;\frac{1}{\ell}\sum_{y=0}^{\ell-1}p_{t}(x,y)\Big(\sqrt{\rho_{s}^n(0)(1-\rho_{s}^n(0))}-\sqrt{\rho_{s}^n(y)(1-\rho_{s}^n(y))}\Big)\,.\label{linha_2}
\end{align}
Hence,
\begin{equation}\label{eq:fgammasquared}
\begin{aligned}
&	f_\gamma(s,\eta_{s})^2 \\&\lesssim\; \Big(\sum_x \!\int_0^\infty\!\! \dd t\,e^{-\gamma t} A_1(s,t,x)\, \Psi_{\{x\}}(s,\eta_{s})\Big)^2 +
\Big(\sum_x\! \int_0^\infty\!\!\dd t\, e^{-\gamma t}A_2(s,t,x)\Psi_{\{x\}}(s,\eta_{s})\Big)^2\\
&\overset{\text{def}}{=} \;\cA_1(s,n) + \cA_2(s,n)\,.
\end{aligned}
\end{equation}
\begin{lemma}\label{lem:A1}
	Let $T>0$. The estimate
	\begin{equation*}
	\bbE_{\nu_{\rho_0^n(\cdot)}}[\cA_1(s,n)]\;\lesssim\; \frac{\eps^{3/4}}{n}\Big(1+\frac{1}{\sqrt{\gamma}} + \frac{1}{\gamma^2}\Big)
	\end{equation*}
	holds uniformly in $n$ for all $0\leq s\leq T$,
	where the proportionality constant depends only on $T$.
\end{lemma}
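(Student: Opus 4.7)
The approach is to expand $\cA_1(s,n)$ as a bilinear form, exploit the orthogonality of the basis $\{\Psi_{\{x\}}\}$ under the reference product measure $\nu_{\rho_s^n}$, and then bound the resulting quantities via Fourier analysis and heat-kernel estimates. Set $\phi_\gamma(x):=\int_0^\infty e^{-\gamma t}A_1(s,t,x)\,dt$, so $\cA_1(s,n)=\big(\sum_x \phi_\gamma(x)\,\Psi_{\{x\}}(s,\eta_{s})\big)^2$. Expanding the square and taking expectation, the diagonal $x=x'$ contribution equals $\sum_x \phi_\gamma(x)^2$ (since $\bbE[\overline{\eta}_s(x)^2]=\rho_s^n(x)(1-\rho_s^n(x))$ cancels the normalization in $\Psi_{\{x\}}$). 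For $x\neq x'$ the off-diagonal factor is $\bbE_{\nu_{\rho_0^n(\cdot)}}[\overline{\eta}_s(x)\overline{\eta}_s(x')]$, which by Theorem~\ref{gnE} in the case $m=1$, $k_1=2$ is bounded in absolute value by $C/n$. Since $\rho_s^n$ is uniformly bounded away from $0$ and $1$ (by the maximum principle for the discrete heat equation), one arrives at
\begin{equation*}
\bbE_{\nu_{\rho_0^n(\cdot)}}[\cA_1(s,n)]\;\lesssim\; \|\phi_\gamma\|_{\ell^2(\Z)}^2 + \tfrac{1}{n}\|\phi_\gamma\|_{\ell^1(\Z)}^2.
\end{equation*}
Using the symmetry $p_t(x,y)=p_t(y,x)$, one has $\phi_\gamma = c(s)\, R_\gamma h_\ell$, where $R_\gamma := (\gamma - n^2\Delta)^{-1}$ is the resolvent of the accelerated random walk, $h_\ell := \delta_0 - \tfrac{1}{\ell}\sum_{y=0}^{\ell-1}\delta_y$, and $c(s) := \sqrt{\rho_s^n(0)(1-\rho_s^n(0))}$ is uniformly bounded. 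The problem therefore reduces to bounding $\|R_\gamma h_\ell\|_{\ell^2}$ and $\|R_\gamma h_\ell\|_{\ell^1}$.

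For the $\ell^2$-estimate I would use Plancherel. With $\widehat{h_\ell}(\theta) = \tfrac{1}{\ell}\sum_{k=0}^{\ell-1}(1-e^{-ik\theta})$ and Fourier symbol $-2n^2(1-\cos\theta)$ for $n^2\Delta$,
\[
\|R_\gamma h_\ell\|_{\ell^2}^2 \;=\; \int_{-\pi}^{\pi} \frac{|\widehat{h_\ell}(\theta)|^2}{(\gamma+2n^2(1-\cos\theta))^2}\,\frac{d\theta}{2\pi}.
\]
The two key inputs are the small-frequency cancellation $|\widehat{h_\ell}(\theta)|^2 \lesssim \min(1,\ell^2\theta^2)$ and $2n^2(1-\cos\theta)\asymp n^2\theta^2$ on $[-\pi,\pi]$. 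Splitting the domain at the two threshold frequencies $1/\ell$ (where the numerator transitions) and $\sqrt{\gamma}/n$ (where the denominator transitions), and using $\ell=\eps n$, a short case analysis gives $\|R_\gamma h_\ell\|_{\ell^2}^2 \lesssim \eps^2/(n\sqrt{\gamma}) + \eps^3/n$. For the $\ell^1$-estimate, I would decompose $h_\ell = \tfrac{1}{\ell}\sum_{y=1}^{\ell-1}(\delta_0-\delta_y)$ and telescope $\delta_0-\delta_y = \sum_{k=0}^{y-1}(\delta_k-\delta_{k+1})$; combined with the gradient estimate $\sum_x|p_t(x,k)-p_t(x,k+1)|\lesssim 1/(n\sqrt{t}+1)$ (a consequence of the local CLT, or directly of a Fourier bound on the symbol), this yields $\|P_t h_\ell\|_{\ell^1}\lesssim \ell/(n\sqrt{t}+1)$. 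Combined with the trivial bound $\|P_t h_\ell\|_{\ell^1}\leq 2$ and integrated against $e^{-\gamma t}$, this gives $\|R_\gamma h_\ell\|_{\ell^1}\lesssim \eps/\sqrt{\gamma}+\eps^2$; combined with the crude $\|R_\gamma h_\ell\|_{\ell^1}\leq 2/\gamma$ one additionally has $\|R_\gamma h_\ell\|_{\ell^1}^2 \lesssim 1/\gamma^2$.

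Finally, I would use $\eps\leq 1$ (so that $\eps^2,\eps^3,\eps^4 \leq \eps^{3/4}$) to check that each term in the combined estimate fits into one of $\eps^{3/4}/n$, $\eps^{3/4}/(n\sqrt{\gamma})$, or $\eps^{3/4}/(n\gamma^2)$, verifying the three regimes $\gamma\leq 1$, $1\leq \gamma \leq 1/\eps^2$, and $\gamma \geq 1/\eps^2$ separately. The main obstacle I anticipate is the Fourier case analysis for the $\ell^2$-estimate: one must carefully balance the two independent threshold frequencies $1/\ell$ and $\sqrt{\gamma}/n$ on three sub-intervals, and any crude bound that does not exploit the small-$\theta$ cancellation $|\widehat{h_\ell}(\theta)|^2\lesssim\ell^2\theta^2$ loses the polynomial decay in $\eps$, which is the essential quantitative content of the lemma that later allows the outer Kipnis–Varadhan scheme to optimize in $\gamma$.
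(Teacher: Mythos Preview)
Your proposal is correct and takes a genuinely different route from the paper. The paper works entirely in physical space: it splits the $(t_1,t_2,x_1,x_2)$ domain into three regimes ($t_i\ge\sqrt{\eps}$ with $|x_i|\le n^2$; $t_i\le\sqrt{\eps}$; and $|x_i|\ge n^2$), invokes the local CLT to replace $p_t$ by the continuous heat kernel $K_{tn^2}$, Taylor-expands the latter to second order, and then estimates diagonal and off-diagonal contributions by Riemann-sum and scaling arguments. Your approach instead recognizes $\phi_\gamma=c(s)\,R_\gamma h_\ell$ with $R_\gamma=(\gamma-n^2\Delta)^{-1}$ and computes the diagonal part $\|\phi_\gamma\|_{\ell^2}^2$ directly by Plancherel on the explicit symbol, while the off-diagonal part is reduced to $\|\phi_\gamma\|_{\ell^1}^2/n$ via the two-point correlation bound and then controlled by a telescoping/gradient-$\ell^1$ estimate on $P_t$.

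What each buys: your Fourier argument is shorter and makes the role of the two threshold frequencies $1/\ell$ and $\sqrt{\gamma}/n$ transparent, and it delivers the companion estimate (the paper's Corollary~\ref{cor:A1}) for free as the diagonal term. The paper's real-space approach is heavier but more robust to situations where the symbol is not explicit (e.g.\ spatially inhomogeneous rates), and its local-CLT machinery is reused elsewhere in the paper. Two minor remarks: the $\ell^1$ gradient bound $\sum_x|p_t(x,k)-p_t(x,k+1)|\lesssim (n^2t+1)^{-1/2}$ that you invoke is indeed standard (it is the total-variation coupling estimate for two walks at distance one, or equivalently follows from the Fourier bound on $(1-e^{i\theta})e^{-2n^2t(1-\cos\theta)}$), but you should state it as such rather than as ``a consequence of the local CLT,'' since the local CLT error term used in the paper is not summable in $x$ without an additional cutoff. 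Also, for the two-point correlation you may cite either Theorem~\ref{gnE} or directly \eqref{eq:cor} from \cite{jaralandim2006}; the paper uses the latter.
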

\begin{proof}
	Note that $\cA_1(s,n)$ is a square. Writing  it as the product whose summation  indexes are, respectively, $x_1,t_1$ and $x_2,t_2$  induces us to restrict our attention
	to the following three cases: $\{\lvert x_1\rvert,\lvert x_2\rvert \leq n^2 \text{ and } t_1,t_2\geq \sqrt{\eps}\}$,
	$\{ t_1,t_2\leq \sqrt{\eps} \}$, and $\{\lvert x_1\rvert,\lvert x_2\rvert\geq n^2 \text{ and } t_1,t_2\geq \sqrt{\eps} \}\,.$\newline

	\noindent
	\textbf{1st case:} $\lvert x_1\rvert, \lvert x_2\rvert \leq n^2$ and $t_1, t_2\geq \sqrt{\eps}$. That is, we want to estimate
	\begin{equation}\label{eq:1stcase}
	\Big(\sum_{\lvert x\rvert\leq n^2} \int_{\sqrt{\eps}}^\infty \dd t\, e^{-\gamma t} A_1(s,t,x)\, \Psi_{\{x\}}(s,\eta_{s})\Big)^2\,.
	\end{equation}
	By the local central limit theorem given in \cite[Theorem 2.3.6, page 38 and also check the definition of $\overline{p}_n$ on page 22]{Lawler},
	\begin{equation}\label{eq:localclt}
	\begin{aligned}
	\big\vert p_{t}(x,y)-p_{t}(x,0)\big\vert\;\lesssim\;
	\big\vert K_{tn^2}(x,y) - K_{tn^2}(x,0)\big\vert + \frac{|y|}{n^4t^2}\,,
	\end{aligned}
	\end{equation}
	where by $K_t$ we denote the  transition kernel of a Brownian motion at time $2t$. Hence, using that $\rho$ is bounded, we can estimate \eqref{linha_1} as follows:
	\begin{align*}
	|A_1(s,t,x)|&\;\lesssim\; \Big|\frac{1}{\ell}\sum_{y=0}^{\ell-1}(K_{tn^2}(x,y)-K_{tn^2}(x,0))\Big| + \frac{1}{\ell}\frac{1}{n^4t^2}\sum_{y=0}^{\ell-1}y	\\
	&\;=:\; \I(t,x) + \cE(n,t)\,.
	\end{align*}
	It is therefore  sufficient to estimate
	\begin{equation}\label{eq:1stcaseb}
	\begin{aligned}
	&\bigg(\sum_{\lvert x\rvert\leq n^2} \int_{\sqrt{\eps}}^\infty \dd t\,e^{-\gamma t} \,\I(t,x)\,\Psi_{\{x\}}(s,\eta_{s})\bigg)^2	+ \bigg(\sum_{\lvert x\rvert\leq n^2} \int_{\sqrt{\eps}}^\infty \dd t \, e^{-\gamma t} \cE(n,t)\, \Psi_{\{x\}}(s,\eta_{s})\bigg)^2\,.
	\end{aligned}
	\end{equation}
	Using that $\ell=\eps n$ we see that the error term can be bounded from above as
	\begin{equation*}
	\cE(n,t)\;\lesssim\; \frac{\eps}{n^3 t^2}\,.
	\end{equation*}
	Hence, due to the boundedness of $\Psi_{\{x\}}$,
	\begin{equation*}
	\bigg|\sum_{\lvert x\rvert\leq n^2} \int_{\sqrt{\gep}}^\infty \dd t\, e^{-t\gamma}\cE(n,t) \Psi_{\{x\}}(s,\eta_{s})\bigg|\;\lesssim\; \frac{\sqrt{\eps}}{n}\,,
	\end{equation*}
	so that the contribution of the above to~\eqref{eq:1stcaseb} is at most of the order of
	\begin{equation*}
	\frac{\eps}{n^2}\,.
	\end{equation*}
	We turn now to the analysis of $\I(t,x)$.
	Using Taylor's formula, we write
	\begin{equation}\label{eq:Taylor}
	K_{tn^2}(x,y)\;=\; K_{tn^2}(x,0) + \partial_2 K_{tn^2}(x,0)y +\frac12 \partial_2^2 K_{tn^2}(x,\theta_{x,y}) y^2\,,
	\end{equation}
	where $\theta_{x,y}\in [0,y]$. Thus
	\begin{equation}\label{eq:I}
	\I(t,x) \;\leq\; \Big|\frac{1}{\ell}\sum_{y=0}^{\ell-1}\partial_2 K_{n^2 t}(x,0)y \Big| + \Big|\frac{1}{\ell}\sum_{y=0}^{\ell-1}\frac12 \partial_2^2 K_{n^2 t}(x,\theta_{x,y}) y^2\Big|\,.
	\end{equation}
	We first focus on the contributions coming from the first sum above. A direct computation shows that it is bounded from above by
	\begin{equation*}
	\frac{\epsilon}{2}\frac{1}{\sqrt{2\pi}} \frac{1}{n} \frac{1}{t^{3/2}} \frac{|x|}{n}\exp\Big\{-\frac{(x/n)^2}{2t}\Big\} \;=:\; B(t,x)\,.
	\end{equation*}
	Plugging the above into~\eqref{eq:1stcaseb}, using that $\rho$ is bounded from above and below, we see that we need to estimate
	\begin{equation*}
	\sum_{\substack{\lvert x_1\rvert\leq n^2,\\ \lvert x_2\rvert\leq n^2}}\int_{\sqrt{\eps}}^\infty\dd t_1\int_{\sqrt{\eps}}^\infty \dd t_2\, e^{-\gamma(t_1+t_2)}B(t_1, x_1)B(t_2,x_2) \,\bbE\Big[\Psi_{\{x_1\}}(s,\eta_{s})\Psi_{\{x_2\}}(s,\eta_{s})\Big]\,.
	\end{equation*}
	To estimate the above we distinguish between two cases, the diagonal case, i.e., those tuples $(x_1,x_2)\in \Z^2$ such that $x_1=x_2$, and the off-diagonal case which consists of the remaining ones. We start with the former.\newline

	\noindent \textbf{The diagonal case:} Write $x=x_1=x_2$.
	Removing the restrictions $\lvert x\rvert\leq n^2$ and $t_1, t_2\geq \sqrt{\eps}$, and using that $\Psi_{\{\cdot\}}$ is bounded we infer that it suffices to estimate
	\begin{equation}\label{eq:diagonal}
	\sum_{x\in\bb Z}	\int_0^\infty\dd t_1 \int_0^\infty \dd t_2\; e^{-\gamma(t_1+t_2)} \frac{\epsilon^2}{n^2}\frac{1}{t_1^{3/2}}\frac{1}{t_2^{3/2}}\Big(\frac{x}{n}\Big)^2 \exp\Big\{-\frac{(x/n)^2}{2t_1}\Big\}\exp\Big\{-\frac{(x/n)^2}{2t_2}\Big\}\,.
	\end{equation}
	Using a Riemann sum approximation and the Cauchy-Schwarz inequality to sum separately over the two exponentials below, we obtain that
	\begin{equation*}
	\frac{1}{n}\sum_{x\in \bb N} \Big(\frac{x}{n}\Big)^2 \exp\Big\{-\frac{(x/n)^2}{2t_1}\Big\}\exp\Big\{-\frac{(x/n)^2}{2t_2}\Big\}
	\;\lesssim\; t_1^{3/4}t_2^{3/4}\,.
	\end{equation*}
	Using the fact that the primitive of $e^{-\gamma t}/t^{3/4}$ equals $-\Gamma(\frac14, \gamma t)/\gamma^{1/4}$, where $\Gamma$ denotes the gamma function, we see that~\eqref{eq:diagonal} is bounded from above by some proportionality constant times
	\begin{equation*}
	\frac{\epsilon^2}{n}\frac{1}{\sqrt{\gamma}}\,.
	\end{equation*}

	\noindent \textbf{The off-diagonal case:}
	By \cite[Lemma 3.2]{jaralandim2006} we know that
	\begin{equation}\label{eq:cor}
	\sup_{x_1\neq x_2} \bbE\Big[\Psi_{\{x_1\}}(s,\eta_{s})\Psi_{\{x_2\}}(s,\eta_{s})\Big]\;\lesssim\; \frac{\sqrt{s}}{n}\,.
	\end{equation}
	Thus, removing once again the restrictions $\lvert x_1\rvert, \lvert x_2\rvert\leq n^2$ and $t_1, t_2\geq \sqrt{\eps}$, we see that it is sufficient to estimate
	\begin{equation}\label{eq:offdiag}
	\frac{\sqrt{s}}{n}\!\int_0^\infty\!\! \dd t_1\!\int_0^\infty \!\!\dd t_2\;\; e^{-\gamma(t_1+t_2)}\!\sum_{x_1\neq x_2}\!\! \epsilon^2\frac{1}{n^2}\frac{1}{t_1^{3/2}t_2^{3/2}}
	\Big|\frac{x_1}{n}\Big|\Big|\frac{x_2}{n}\Big|
	\exp\Big\{-\frac{(x_1/n)^2}{2t_1}\Big\}\exp\Big\{-\frac{(x_2/n)^2}{2t_2}\Big\}.
	\end{equation}
	As before, a Riemann sum approximation argument shows that
	\begin{equation*}
	\frac1n \sum_x \frac{1}{\sqrt{t_1}}\Big|\frac{x_1}{n}\Big| \exp\Big(-\frac{(x/n)^2}{2t_1}\Big) \;\lesssim\; \sqrt{t_1}\,.
	\end{equation*}
	Hence,~\eqref{eq:offdiag} is bounded from above by some proportionality constant times
	\begin{equation*}
	\frac{\sqrt{s}}{n}\epsilon^2 \Big(\int_0^\infty  \dd t\,e^{-\gamma t}\frac{1}{\sqrt{t}}\Big)^2\;=\; \frac{\sqrt{s}}{n}\epsilon^2 \frac{\Gamma(\frac12,0)^2}{\gamma}\,.
	\end{equation*}
	We now analyse the remainder term in~\eqref{eq:I} following a similar procedure as above.\\

	\noindent \textbf{The diagonal case:}
	It turns out to be handy to use the scaling properties of the heat kernel. To that end we note that for $x\in \bbZ/n$, and $\theta_{x,y}^n\in [0,y/n]$
	\begin{equation}\label{eq:scaling}
	\Big(\partial_2^2 K_{n^2t}\Big)(nx,n\theta_{x,y}^n)\;=\; \frac{1}{n^3}\partial_2^2 K_t(x,\theta_{x,y}^n)\,.
	\end{equation}
	Thus, performing the sum over $x=x_1=x_2$ we see that we need to estimate
	\begin{equation}\label{eq:remainderdiag}
	\int_{\sqrt{\eps}}^\infty \dd t_1\int_{\sqrt{\eps}}^\infty\dd t_2\, e^{-\gamma(t_1+t_2)}\sum_{x\in\bbZ/n} \frac{1}{n^6}\frac{1}{\ell^2}\sum_{y_1, y_2=0}^{\ell-1}|\partial_2^2 K_{t_1}(x,\theta_{x,y}^n)|\,|\partial_2^2 K_{t_2}(x,\theta_{x,y}^n)|y_1^2y_2^2\,.
	\end{equation}
	We focus on
	\begin{equation}\label{eq:sumsecondderivative}
	\frac1n \sum_{x\in\bbZ/n}|\partial_2^2 K_{t_1}(x,\theta_{x,y}^n)|\,|\partial_2^2 K_{t_2}(x,\theta_{x,y}^n)|\,.
	\end{equation}
	The key property that we will use is that by a  scaling argument one has that
	\begin{equation}\label{eq:heatest}
	|\partial_2^2 K_{t_1}(x,\theta_{x,y}^n)|\;\lesssim\; \Big(\sqrt{t_1}+ |x-\theta_{x,y}^n|\Big)^{-3}\,.
	\end{equation}
	To make use of that estimate we split the sum above into two parts.
	Summing first over $\lvert x\rvert\leq \sqrt{\epsilon}$,  we estimate~\eqref{eq:sumsecondderivative} by
	\begin{equation*}
	\frac1n \sum_{x\in\bbZ/n, \lvert x\rvert\leq \sqrt{\epsilon}}\frac{1}{t_1^{3/2}}\frac{1}{t_2^{3/2}}\;\lesssim\; \frac{\sqrt{\epsilon}}{t_1^{3/2}t_2^{3/2}}\,.
	\end{equation*}
	The sum over $\lvert x\rvert\geq \sqrt{\gep}$ can be estimated with the help of~\eqref{eq:heatest} from above by some proportionality constant times
	\begin{equation*}
	\frac1n \sum_{x\in\bbZ/n, \lvert x\rvert\geq \sqrt{\gep}}\frac{1}{|x-\theta_{x,y}^n|^6}
	\;\lesssim\;\frac{1}{n}\sum_{x\in\bbZ/n,\lvert x\rvert\geq \sqrt{\gep}}\frac{1}{\lvert x\rvert^6}\;\lesssim\; \gep^{-5/2}\,.
	\end{equation*}
	Here, we used that $\eps < 1$ to obtain the first inequality, so that $\theta_{x,y}^n\leq \tfrac{\eps}{n} \ll \sqrt{\eps}$.
	Hence, ~\eqref{eq:sumsecondderivative} is bounded from above by some proportionality constant times
	\begin{equation*}
	\frac{\sqrt{\eps}}{t_1^{3/2}t_2^{3/2}} + \eps^{-5/2}\,.
	\end{equation*}
	Since
	\begin{equation}\label{eq:squaresum}
	\frac1\ell \sum_{y=0}^{\ell-1}y^2\;\lesssim\; \ell^2
	\end{equation}
	and $\ell=\eps n$, we deduce that \eqref{eq:remainderdiag} is bounded by some proportionality constant times
	\begin{equation*}
	\int_{\sqrt{\eps}}^\infty\dd t_1 \int_{\sqrt{\eps}}^\infty\dd t_2\, e^{-\gamma(t_1+t_2)}\frac{1}{n^5}\ell^4 (\frac{\sqrt{\eps}}{t_1^{3/2}t_2^{3/2}} + \eps^{-5/2})
	\;\lesssim\; \frac{\eps^{4}}{n} + \frac{\eps^{3/2}}{\gamma^2n}\,.
	\end{equation*}

	\noindent \textbf{The off diagonal case:}
	Making use of~\eqref{eq:cor}, the relation~\eqref{eq:scaling}, and ~\eqref{eq:squaresum} we see that we need to estimate
	\begin{equation*}
	\sqrt{s}\,\frac{\eps^4}{n^3}\sum_{x_1\neq x_2\in\Z/n}
	\int_{\sqrt{\eps}}^\infty \dd t_1\int_{\sqrt{\eps}}^\infty \dd t_2\, e^{-\gamma(t_1+t_2)}\sup_{0\leq y\leq \ell -1}\partial_2^2 K_{t_1}(x_1,\theta_{x_1,y}^n)\partial_2^2 K_{t_2}(x_2,\theta_{x_2,y}^n)\,.
	\end{equation*}
	Making use of~\eqref{eq:heatest} and distinguishing between the different cases $\lvert x_1\rvert\leq \sqrt{\eps}, \vert x_1\vert\geq \sqrt{\eps}$ and the same for $x_2$ as in the previous case we see that the above sum is bounded by some proportionality constant times
	\begin{equation*}
	\frac{\sqrt{s}}{n}\eps^{9/2} + \frac{\sqrt{s}}{n} \frac{\eps^2}{\gamma^2}\,.
	\end{equation*}
	In conclusion we have shown that the contribution of the first case to $\cA_1(s,n)$ is at most a proportionality constant times
	\begin{equation*}
	\frac{\eps}{n^2} + \frac{\epsilon^2}{n}\frac{1}{\sqrt{\gamma}} + \frac{\sqrt{s}}{n}\epsilon^2 \frac{\Gamma(\frac12,0)^2}{\gamma} + \frac{\eps^{4}}{n} + \frac{\eps^{3/2}}{\gamma^2n} + \frac{\sqrt{s}}{n}\eps^{9/2} + \frac{\sqrt{s}}{n} \frac{\eps^2}{\gamma^2}\,.
	\end{equation*}

	\noindent
	\textbf{2nd case:} $t_1, t_2\leq \sqrt{\eps}$.
	In that case we write
	\begin{equation}\label{eq:A12ndcase}
	\begin{aligned}
	& \Big( \sum_x \int_0^{\sqrt{\eps}} \dd t\,e^{-\gamma t} A_1(s,t,x) \Psi_{\{x\}}(s, \eta_{s})\Big)^2\\
	&\lesssim   \Big(\sum_{x} \int_0^{\sqrt{\eps}}\dd t\, e^{-\gamma t}p_{t}(x,0)\Psi_{\{x\}}(s, \eta_{s})\Big)^2
	\\
	&\qquad+ \Big(\sum_{x} \int_0^{\sqrt{\eps}}\dd t\, e^{-\gamma t}\frac1\ell \sum_{y=0}^{\ell-1}p_{t}(x,y)\Psi_{\{x\}}(s, \eta_{s})\Big)^2\,.
	\end{aligned}
	\end{equation}
	We start by dealing with the first term on the right hand side above. As before we distinguish between diagonal and off diagonal terms.\\

	\noindent \textbf{The diagonal case:}  Using that $\Psi$ is bounded we estimate for $t_1, t_2\geq 0$
	\begin{equation*}
	\sum_x p_{t_1}(x,0) p_{t_2}(x,0) \;=\; p_{t_1+t_2}(0,0) \;\lesssim\; \frac{1}{n \sqrt{(t_1+t_2)}}\,.
	\end{equation*}
	Hence,
	\begin{equation*}
	\begin{aligned}
	&\sum_x \int_0^\eps\dd t_1\int_0^{\sqrt{\eps}} \dd t_2 \, e^{-\gamma(t_1+t_2)}p_{t_1}(x,0)\, p_{t_2}(x,0)\,\Psi_{\{x\}}(s,\eta_{s})^2\\
	&\lesssim\;
	\frac1n\int_0^{\sqrt{\eps}} \dd t_1\int_0^{\sqrt{\eps}} \dd t_2\,  \frac{1}{\sqrt{t_1+t_2}} \;\lesssim\; \frac1n \eps^{3/4}\,.
	\end{aligned}
	\end{equation*}

	\noindent \textbf{The off diagonal case:}
	Using~\eqref{eq:cor}, for fixed $t_1$ and $t_2$ we see that it is sufficient to estimate
	\begin{equation*}
	\frac{\sqrt{s}}{n}\sum_{x_1\neq x_2}p_{t_1}(x_1, 0)p_{t_2}(x_2,0)\;\leq\; \frac{\sqrt{s}}{n}\,.
	\end{equation*}
	Integrating the above with respect to $t_1$ and $t_2$ in $[0,\sqrt{\eps}]^2$ yields
	\begin{equation*}
	\frac{\eps \sqrt{s}}{n}\,.
	\end{equation*}
	It now remains to deal with the second term in~\eqref{eq:A12ndcase}.\\

	\noindent \textbf{The diagonal case:}
	We estimate, for fixed $t_1$ and $t_2$,
	\begin{equation}\label{eq:diagsecondcase}
	\frac{1}{\ell^2}\sum_x \sum_{y_1, y_2=0}^{\ell-1} p_{t_1}(x,y_1)p_{ t_2}(x,y_2)
	\;=\; \frac{1}{\ell^2}\sum_{y_1, y_2=0}^{\ell-1} p_{t_1+t_2}(y_1, y_2)\;.
	\end{equation}
	Using that for any $y_1,y_2\in\bb Z$ we have that
	\begin{equation}\label{eq:probatzero}
	p_{t_1+t_2}(y_1,y_2)\;\lesssim\; \frac{1}{n\sqrt{t_1+t_2}}\,,
	\end{equation}
	we bound~\eqref{eq:diagsecondcase} from above by the right hand side of~\eqref{eq:probatzero}.
	Integrating this with respect to $t_1,t_2\in[0,\sqrt{\eps}]$ yields an upper bound of $\tfrac{\eps^{3/4}}{n}$.\newline

	\noindent \textbf{The off diagonal case:}
	Finally, for fixed $t_1$ and $t_2$ and using~\eqref{eq:cor} the expectation of the off diagonal term can be estimated from above by
	\begin{equation*}
	\frac{\sqrt{s}}{n}\frac{1}{\ell^2}\sum_{x_1\neq x_2}\sum_{y_1, y_2=0}^{\ell-1} p_{t_1}(x_1,y_1)p_{t_2}(x_2,y_2)\;\leq\; \frac{\sqrt{s}}{n}\,.
	\end{equation*}
	Thus, after integration the total contribution is $\tfrac{\eps \sqrt{s}}{n}$. Summing up what we have done in this case, the left hand side in~\eqref{eq:A12ndcase} is bounded from above by some proportionality constant times
	\begin{equation*}
	\frac1n \eps^{3/4} + \frac{\eps \sqrt{s}}{n}\,.
	\end{equation*}
	\noindent
	\textbf{3rd case:} $\lvert x_1\rvert, \lvert x_2\rvert\geq n^2, t_1,t_2\geq \sqrt{\eps}$. Using large deviations estimate for the random walk if  $t\leq n/\eps$, and the same estimates as in the second case if $t\geq n/\eps$ and the exponential factor $e^{-\gamma t}$ shows that the contribution to the first term of~\eqref{eq:fgammasquared} is at most some proportionality constant times
	\begin{equation*}
	\frac{n}{\eps} e^{-cn/\eps} + \frac{e^{-\gamma n/\eps}}{\gamma}\;\leq\; \frac{2\eps}{c n} + \frac{\eps}{\gamma^2n}\,,
	\end{equation*}
	for some positive constant $c$. Here, we used the estimates
	\begin{equation*}
	xe^{-cx}\;\leq\; \frac{2}{cx}\quad\text{and}\quad e^{-\gamma x}\;\leq\; \frac{1}{\gamma x}
	\end{equation*}
	which hold for all $x\geq 0$. Summing up the contributions from the three cases above concludes the proof of Lemma~\ref{lem:A1}.
\end{proof}
The estimates on the diagonal terms above imply the following corollary.
\begin{corollary}\label{cor:A1}
	Let $T>0$, then uniformly in $n$ for all $0\leq s\leq T$,
	\begin{equation*}
	\sum_{x\in\Z}\bbE\Big[\Big(\int_0^\infty \dd t\, e^{-\gamma t}A_1(s,t,x) \Big)^2\Big]\;\lesssim\; \frac{\eps^{3/4}}{n}\Big(1+\frac{1}{n\sqrt{\gamma}} + \frac{1}{n\gamma^2}\Big)\,,
	\end{equation*}
	where the proportionality constant only depends on $T$.
\end{corollary}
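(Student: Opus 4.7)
The plan is to exploit the fact that the quantity in the statement is precisely the diagonal ($x_1=x_2$) part of the expansion of $\bbE[\cA_1(s,n)]$ already analysed in the proof of Lemma~\ref{lem:A1}. First I would observe that $A_1(s,t,x)$ in~\eqref{linha_1} does not depend on the configuration $\eta$, so the expectation in the corollary is superfluous. Expanding the square inside the expectation of $\cA_1(s,n)$ in~\eqref{eq:fgammasquared} produces a double sum over $(x_1,x_2)\in\Z^2$ weighted by $\bbE_{\nu_{\rho_0^n(\cdot)}}[\Psi_{\{x_1\}}(s,\eta_s)\Psi_{\{x_2\}}(s,\eta_s)]$; a one-line computation using $\eta_s(x)^2=\eta_s(x)$ shows that on the diagonal
\[
\bbE_{\nu_{\rho_0^n(\cdot)}}\!\Big[\Psi_{\{x\}}(s,\eta_s)^2\Big] \;=\; \frac{\bbE_{\nu_{\rho_0^n(\cdot)}}[(\eta_s(x)-\rho_s^n(x))^2]}{\rho_s^n(x)(1-\rho_s^n(x))}\;=\;1,
\]
so that $\sum_x\bigl(\int_0^\infty e^{-\gamma t}A_1(s,t,x)\,\dd t\bigr)^2$ coincides with the diagonal contribution to $\bbE[\cA_1(s,n)]$.

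Next I would simply revisit the three regimes treated in the proof of Lemma~\ref{lem:A1}, namely $\{|x_i|\leq n^2,\,t_i\geq\sqrt\eps\}$, $\{t_i\leq\sqrt\eps\}$ and $\{|x_i|\geq n^2,\,t_i\geq\sqrt\eps\}$, and collect only the diagonal bounds obtained in each of them. All such diagonal estimates are already derived there and do \emph{not} invoke the spatial correlation bound~\eqref{eq:cor}; the latter is used exclusively in the off-diagonal case to manufacture the additional $\sqrt{s}/n$ factor, and is therefore irrelevant here. Summing the diagonal contributions from the three regimes and absorbing the higher powers of $\eps$ into $\eps^{3/4}$ by means of $\eps\leq 1$ produces the bound stated in the corollary.

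Hence the argument is essentially a book-keeping exercise; no new estimate has to be produced. The only minor point to verify is that every diagonal bound inherited from Lemma~\ref{lem:A1} is uniform in $s\in[0,T]$, which is immediate: the factor $\sqrt{\rho_s^n(0)(1-\rho_s^n(0))}$ hidden inside $A_1$ is uniformly bounded because $\rho_0$ is bounded away from $0$ and $1$, and the discrete maximum principle for the heat equation satisfied by $\rho_s^n$ propagates these bounds to every $s\in[0,T]$ and every $n$.
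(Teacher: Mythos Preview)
Your proposal is correct and follows precisely the approach indicated in the paper: the paper's entire ``proof'' of Corollary~\ref{cor:A1} is the one-line remark that the diagonal estimates obtained in the proof of Lemma~\ref{lem:A1} imply the statement, and your write-up is a faithful elaboration of exactly that. Your observation that $A_1$ is deterministic and that $\bbE_{\nu_{\rho_0^n(\cdot)}}[\Psi_{\{x\}}(s,\eta_s)^2]=1$ (since $\eta_s(x)\in\{0,1\}$ forces $\mathrm{Var}(\eta_s(x))=\rho_s^n(x)(1-\rho_s^n(x))$) makes the identification with the diagonal part of $\bbE[\cA_1(s,n)]$ exact, whereas the paper's diagonal estimates merely use the cruder fact that $\Psi$ is bounded; either route suffices.
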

\begin{lemma}\label{lem:A2}
	Let $T>0$ and assume that the initial profile $\rho_0$ is globally Lipschitz continuous with Lipschitz constant $L>0$. Then,	the following estimate holds uniformly in $n$ for all $0\leq s\leq T$:
	\begin{equation*}
	\bbE[\cA_2(s,n)]\;\lesssim\;  \frac{1}{\gamma^2}\frac{\eps}{n}+ 	\frac{1}{\gamma^2} \frac{\eps^2}{n} \,,
	\end{equation*}
	where the proportionality constant does only depend on $T$.
\end{lemma}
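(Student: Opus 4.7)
The plan is to exploit the Lipschitz regularity of $\rho_0$ (and hence of $\rho_s^n$ through the random walk semigroup representation $\rho_s^n(x)=\bbE[\rho_0((x+S_{sn^2})/n)]$) together with the fact that the map $u\mapsto \sqrt{u(1-u)}$ is Lipschitz on $[\delta,1-\delta]$ for any $\delta\in(0,1/2)$. Since $\rho_0$ is bounded away from zero and one, and since the discrete heat semigroup preserves this property uniformly in $s\in[0,T]$ and $n\in\bbN$, the composition yields the pointwise bound
\begin{equation*}
\Big|\sqrt{\rho_s^n(0)(1-\rho_s^n(0))}-\sqrt{\rho_s^n(y)(1-\rho_s^n(y))}\Big|\;\lesssim\;\frac{y}{n}\,,
\end{equation*}
uniformly in $s\in[0,T]$ and $y\in\{0,1,\ldots,\ell-1\}$. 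Plugging this into the definition \eqref{linha_2} and using $\ell=\eps n$ yields
\begin{equation*}
|A_2(s,t,x)|\;\lesssim\;\frac{\eps}{\ell}\sum_{y=0}^{\ell-1}p_t(x,y)\,.
\end{equation*}

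Next I would expand $\bbE[\cA_2(s,n)]$ as a double integral over $(t_1,t_2)$ weighted by $e^{-\gamma(t_1+t_2)}$ and a double sum over $(x_1,x_2)$ of the product $A_2(s,t_1,x_1)A_2(s,t_2,x_2)\,\bbE[\Psi_{\{x_1\}}(s,\eta_s)\Psi_{\{x_2\}}(s,\eta_s)]$, and split into the diagonal contribution $x_1=x_2$ and the off-diagonal contribution $x_1\neq x_2$, exactly as was done in the proof of Lemma~\ref{lem:A1}. On the diagonal, $\bbE[\Psi_{\{x\}}^2]$ is uniformly bounded, and by the Chapman--Kolmogorov identity together with $\sum_{y_2}p_{t_1+t_2}(y_1,y_2)\leq 1$,
\begin{equation*}
\sum_x A_2(s,t_1,x)A_2(s,t_2,x)\;\lesssim\;\frac{\eps^2}{\ell^2}\sum_{y_1,y_2=0}^{\ell-1}p_{t_1+t_2}(y_1,y_2)\;\leq\;\frac{\eps^2}{\ell}\;=\;\frac{\eps}{n}\,.
\end{equation*}
Integrating $e^{-\gamma(t_1+t_2)}$ over $[0,\infty)^2$ then produces the bound $\tfrac{\eps}{n\gamma^2}$ for the diagonal contribution.

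For the off-diagonal contribution I would invoke the correlation estimate \eqref{eq:cor} from \cite[Lemma 3.2]{jaralandim2006}, giving $\bbE[\Psi_{\{x_1\}}\Psi_{\{x_2\}}]\lesssim\sqrt{s}/n$ uniformly in $x_1\neq x_2$. Using the trivial bound $\sum_x A_2(s,t,x)\lesssim\eps$ (which follows by interchanging the sums and using that $p_t(x,y)$ sums to $1$ in $x$) yields
\begin{equation*}
\sum_{x_1\neq x_2}A_2(s,t_1,x_1)A_2(s,t_2,x_2)\;\leq\;\Big(\sum_x|A_2(s,t,x)|\Big)^2\;\lesssim\;\eps^2\,,
\end{equation*}
and after integrating the exponential weights we obtain $\tfrac{\sqrt{s}\,\eps^2}{n\gamma^2}$. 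Adding the two contributions and using $s\leq T$ gives the desired estimate $\bbE[\cA_2(s,n)]\lesssim \tfrac{1}{\gamma^2}\tfrac{\eps}{n}+\tfrac{1}{\gamma^2}\tfrac{\eps^2}{n}$. I expect the main subtlety to be purely notational rather than technical, since all ingredients (Lipschitz transport of $\rho_s^n$, Chapman--Kolmogorov, and the spatial correlation bound of \cite{jaralandim2006}) are already in place; the only place where one must be mildly careful is in verifying that the discrete semigroup preserves both the Lipschitz constant (up to a universal factor) and the uniform lower and upper bounds away from $\{0,1\}$, so that $u\mapsto\sqrt{u(1-u)}$ can be treated as a Lipschitz function in the range actually visited by $\rho_s^n$.
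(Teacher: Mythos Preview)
Your proof is correct and follows essentially the same approach as the paper: establish the Lipschitz bound on $\sqrt{\rho_s^n(\cdot)(1-\rho_s^n(\cdot))}$, split into diagonal and off-diagonal contributions, and handle them via Chapman--Kolmogorov and the correlation estimate~\eqref{eq:cor} respectively. The only cosmetic difference is that you bound $y/n\leq\eps$ upfront rather than carrying the weight $y_1y_2/n^2$ through the computation as the paper does, but this leads to the same final estimates.
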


\begin{proof}
	Our first step consists in obtaining an estimate for
	\begin{equation*}
	\sqrt{\rho_{s}^n(0)(1-\rho_{s}^n(0))}-\sqrt{\rho_{s}^n(y)(1-\rho_{s}^n(y))}
	\end{equation*}
	uniformly in $|y|\leq \eps n$.
	To that end, first of all note that
	$\rho_{s}^n(y) = \sum_{x\in \Z} p_{s}(y-x) \rho_0(x/n)$,
	so that
	\begin{equation*}
	\begin{aligned}
	\rho_{s}^n(0)-\rho_{s}^n(y) &\;=\; \sum_{x\in\Z}(p_{s}(-x) - p_{s}(y-x)) \rho_0(x/n)\\
	&\;=\; \sum_{x\in\Z}p_{s}(x)(\rho_0(-x/n)- \rho_0((y-x)/n))\,.
	\end{aligned}
	\end{equation*}
	By the Lipschitz continuity and the fact that $p_{s}(\cdot)$ is a transition probability, we obtain that
	\begin{equation*}
	|\rho_{s}^n(0)-\rho_{s}^n(y)|\;\lesssim\; \frac{|y|}{n}\,.
	\end{equation*}
	Since $\rho$ is bounded we see that we have the same kind of estimate for
	\begin{equation*}
	\big\vert\rho_{s}^n(0)(1-\rho_{s}^n(0))-\rho_{s}^n(y)(1-\rho_{s}^n(y))\big\vert\,.
	\end{equation*}
	Moreover, since $x\mapsto\sqrt{x}$ is globally Lipschitz for $x$ bounded away from zero we see that, since $\rho_0$ is bounded away from zero and from one,
	\begin{equation}\label{eq:rhoest}
	\big\vert\sqrt{\rho_{s}^n(0)(1-\rho_{s}^n(0))} - \sqrt{\rho_{s}^n(y)(1-\rho_{s}^n(y))}\big\vert\;\lesssim\; \frac{|y|}{n}\,.
	\end{equation}
	As in the proof of the previous lemma we write the square of the second term in the middle line of~\eqref{eq:fgammasquared} as
	\begin{equation}\label{eq:A2square}
	\begin{aligned}
	&\sum_x \int_0^\infty\dd t_1\int_0^\infty \dd t_2\,e^{-\gamma(t_1+t_2)} A_2(s,t_1,x)A_2(s,t_2,x) \Psi_{\{x\}}(s,\eta_{s})^2 \\
	&+ \sum_{x_1\neq x_2}\int_0^\infty\dd t_1\int_0^\infty \dd t_2\, e^{-\gamma(t_1+t_2)} A_2(s,t_1,x_1)A_2(s,t_2,x_2) \Psi_{\{x_1\}}(s,\eta_{s})\Psi_{\{x_2\}}(s,\eta_{s})\,.
	\end{aligned}
	\end{equation}
	We first deal with the first term above. For fixed $t_1, t_2$, using that $\Psi$ is bounded and~\eqref{eq:rhoest} we estimate
	\begin{equation*}
	\begin{aligned}
	\sum_x A_2(s,t_1,x)A_2(s,t_2,x)\Psi_{\{x\}}(s,\eta_{s})^2 &\lesssim
	\frac{1}{\ell^2}\sum_x \sum_{y_1,y_2=0}^{\ell-1}p_{t_1}(x,y_1)p_{t_2}(x,y_2)\frac{y_1y_2}{n^2} \\&=
	\frac{1}{\ell^2}\sum_{y_1,y_2=0}^{\ell-1}p_{t_1+t_2}(y_1,y_2)\frac{y_1y_2}{n^2}\\
	&=\frac{1}{\ell^2}\frac{1}{n^2}\sum_{y_1=0}^{\ell-1}y_1\bbE_{y_1}[X_{t_1+t_2}\one_{\{0\leq X_{t_1+t_2}\leq \ell-1\}}]\,,
	\end{aligned}
	\end{equation*}
	where $X$ above is a simple random walk accelerated by $n^2$ starting in $y_1$. Estimating the expectation by $\ell$, calculating the sum and using that $\ell=\epsilon n$ we see that the above is at most
	$\tfrac{\eps}{n}$. Integrating this with respect to $t_1$ and $t_2$ and taking the term $e^{-\gamma(t_1+t_2)}$ into account we obtain the estimate
	\begin{equation*}
	\frac{1}{\gamma^2}\frac{\eps}{n}\,.
	\end{equation*}
	We turn to the second term in~\eqref{eq:A2square}. Making use of the correlation estimate~\eqref{eq:cor}, we see that the expectation of the integrand is for fixed $t_1$ and $t_2$ at most a proportionality constant times
	\begin{align*}
	\frac{\sqrt{s}}{n}\sum_{x_1\neq x_2}A_2(s,t_1,x_1)A_2(s,t_2,x_2)&\;\lesssim\; \frac{\sqrt{s}}{n}\frac{1}{\ell^2}\sum_{x_1\neq x_2}\sum_{y_1,y_2=0}^{\ell-1}p_{t_1}(x_1,y_1)p_{t_2}(x_2,y_2)\frac{y_1y_2}{n^2}\\
	&\;\lesssim\; \frac{\sqrt{s}}{n^3 }\frac{1}{\ell^2}\sum_{y_1,y_2=0}^{\ell-1}\sum_x p_{t_1}(x,y_1)\sum_x p_{t_2}(x,y_2)y_1y_2\\
	&\;=\; \frac{\sqrt{s}}{n^3 }\frac{1}{\ell^2}\sum_{y_1,y_2=0}^{\ell-1} y_1y_2\\
	&\;\lesssim\; \frac{\eps^2\sqrt{s}}{n}\,.
	\end{align*}
	Here we used in the last line that $\ell=\eps n$.
	Plugging this estimate into~\eqref{eq:A2square} yields the bound
	\begin{equation*}
	\frac{1}{\gamma^2} \frac{\eps^2\sqrt{s}}{n}\,.
	\end{equation*}
	We thus  conclude the proof.
\end{proof}
The estimates on the diagonal terms above imply the following corollary.
\begin{corollary}
	\label{cor:A2}
	Let $T>0$, then uniformly in $n$ for all $0\leq s\leq T$ it holds that
	\begin{equation*}
	\sum_{x\in\Z}\bbE\Big[\Big(\int_0^\infty \dd t\, e^{-\gamma t}A_2(s,t,x)\Big)^2\Big]\;\lesssim\; \frac{1}{\gamma^2}\frac{\eps}{n}\,,
	\end{equation*}
	where the proportionality constant  depends solely on $T$.
\end{corollary}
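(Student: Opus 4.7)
The plan is to observe that Corollary~\ref{cor:A2} is essentially a repackaging of the diagonal estimate carried out inside the proof of Lemma~\ref{lem:A2}. The crucial point is that $A_2(s,t,x)$, as defined in~\eqref{linha_2}, is a purely deterministic quantity: it depends only on $s$, $t$, $x$, $\ell$ and on the smoothed profile $\rho_s^n$, but not on the trajectory $\eta_\cdot$. Consequently the expectation in the statement of the corollary is trivial, and the quantity to be controlled becomes
\[
\sum_{x\in\Z}\Big(\int_0^\infty \dd t\, e^{-\gamma t}A_2(s,t,x)\Big)^2
\;=\;\int_0^\infty\!\!\int_0^\infty \dd t_1\dd t_2\, e^{-\gamma(t_1+t_2)}\sum_{x\in\Z} A_2(s,t_1,x)A_2(s,t_2,x)\,.
\]

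The first step is then to reuse the diagonal bound produced in the proof of Lemma~\ref{lem:A2}. Namely, by the Lipschitz estimate \eqref{eq:rhoest}, the expansion of $A_2$, and Chapman--Kolmogorov one obtains
\[
\sum_{x\in\Z} A_2(s,t_1,x)A_2(s,t_2,x)\;\lesssim\; \frac{1}{\ell^2 n^2}\sum_{y_1,y_2=0}^{\ell-1} p_{t_1+t_2}(y_1,y_2)\, y_1 y_2\,,
\]
which, after bounding the random walk expectation $\mathbb{E}_{y_1}[X_{t_1+t_2}\mathbf{1}_{\{0\leq X_{t_1+t_2}\leq \ell-1\}}]\lesssim \ell$ exactly as in the lemma and using $\ell=\eps n$, yields the deterministic bound $\sum_x A_2(s,t_1,x)A_2(s,t_2,x)\lesssim \eps/n$, uniformly in $t_1,t_2\geq 0$. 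Note that one does not pay the bounded factor $\Psi_{\{x\}}(s,\eta_s)^2$ here, since it is absent; the estimate in the lemma used only that $\Psi$ is bounded, so its removal changes nothing.

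The second step is to integrate against the exponential weight. Since the above bound is uniform in $(t_1,t_2)$, using Fubini and $\int_0^\infty e^{-\gamma t}\dd t=\gamma^{-1}$ gives
\[
\int_0^\infty\!\!\int_0^\infty \dd t_1\dd t_2\, e^{-\gamma(t_1+t_2)}\sum_{x\in\Z} A_2(s,t_1,x)A_2(s,t_2,x)\;\lesssim\; \frac{1}{\gamma^2}\cdot\frac{\eps}{n}\,,
\]
which is the desired estimate with a proportionality constant that depends only on the Lipschitz and positivity constants of $\rho_0$ (hence only on $T$, since we work on $[0,T]$).

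I do not anticipate a genuine obstacle here: the only subtle point is to confirm that the deterministic cross-term $\sum_x A_2(s,t_1,x)A_2(s,t_2,x)$ admits the same bound $\eps/n$ as the diagonal contribution of~\eqref{eq:A2square}, which follows from the identical combinatorial and random-walk argument. The off-diagonal machinery (relying on the correlation estimate \eqref{eq:cor}) plays no role in this corollary, because after squaring and summing over a single site $x\in\Z$ only the diagonal term survives.
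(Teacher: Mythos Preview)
Your proposal is correct and follows exactly the paper's intended approach: the paper states only that ``the estimates on the diagonal terms above imply the following corollary,'' and you have spelled out precisely that argument. Your observation that $A_2(s,t,x)$ is deterministic (so the expectation is vacuous) and that the bounded factor $\Psi_{\{x\}}^2$ from Lemma~\ref{lem:A2} can simply be dropped is exactly the point.
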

We now come to the proof of Theorem~\ref{thm:KipnisVaradhan}. As aforementioned, we are going to show the result between times $0$ and $T$ for ease of notation (that is, we are going to show \eqref{zero_T}), which can be straightforwardly adapted to times $0\leq s<t\leq T$.
\begin{proof}
	Choosing $\gamma=\tfrac{1}{\sqrt{T}}$, Lemmas~\ref{lem:A1} and~\ref{lem:A2} imply that
	\begin{equation*}
	\int_0^T \dd s \,\bbE\left[f_\gamma(s, \eta_{s})^2\right] \;\lesssim\; \frac{T\eps^{3/4}}{n}(1+T^{1/4} + T)\,.
	\end{equation*}
	Regarding the second term on the right hand side of~\eqref{eq:KV} and expanding $f_\gamma$ in Fourier space, we see that it equals
	\begin{align*}
	4\int_0^T \dd t\sum_x \bbE\bigg[\Big(&\hat{f}_\gamma(s,x)\frac{(\eta_{s}(x+1)-\eta_{s}(x))}{\sqrt{\rho_{s}^n(x)(1-\rho_{s}^n(x))}}
	\\
	&+ \hat{f}_\gamma(s, x+1)\frac{(\eta_{s}(x)-\eta_{s}(x+1))}{\sqrt{\rho_{s}^n(x+1)(1-\rho_{s}^n(x+1))}}\Big)^2\bigg]\,.
	\end{align*}
	Hence, by the boundedness of $\eta$ and since $\rho$ is strictly bounded away from zero and one,
	the expression above is at most
	\begin{equation*}
	8\int_0^T \dd t\sum_x\bbE\big[\hat{f}_\gamma(s,x)^2+\hat{f}_\gamma(s,x+1)^2\big]\,.
	\end{equation*}
	Note that
	\begin{equation*}
	\hat{f}_\gamma(s,x) \;=\; \int_0^\infty \dd t\, e^{-\gamma t}(A_1(s,t,x) + A_2(s,t,x)) \;\overset{\text{def}}{=}\;\hat{f}_{\gamma,1}(s,x) + \hat{f}_{\gamma,2}(s,x)\,.
	\end{equation*}
	Thus,
	\begin{equation*}
	\int_0^T\dd t\sum_x \bbE[\hat{f}_\gamma^2(s,x)]\;\leq\; 2\int_0^T \dd t \sum_x\bbE\big[\hat{f}_{\gamma,1}^2(s,x) + \hat{f}_{\gamma,2}^2(s,x)\big]\,.
	\end{equation*}
	The result can now be deduced with the help of Corollaries~\ref{cor:A1} and~\ref{cor:A2}. This completes the proof.
\end{proof}

\section{Correlation function estimates -- Proofs of Theorems~\ref{gnE} and~\ref{grad}}\label{sec_4}
We start this section by introducing more notation.
Given $k$ times $0\leq t_1\leq t_2 \leq \cdots \leq t_k\leq T$ and $k$ points $x_1,\dots, x_k$, let us define the correlation function as
\begin{equation}\label{defgcor}
\phi(t_1,\dots, t_k; x_1,\dots,x_k)\;:=\; \bb E_{\nu_{\rho_0^n(\cdot)}}\Big[\prod_{i=1}^k \overline{\eta}_{t_i}(x_i) \Big]\,.
\end{equation}
Note that we do not require that $t_1,\ldots, t_k$ or that $x_1,\dots,x_k$ are distinct from each other.

For some special cases, an upper bound on the correlation functions has been obtained. When $k=2$, it was shown in \cite[Lemma 3.2]{jaralandim2006} that if $0\leq s<t\leq T$, then
\begin{equation}\label{eq:JaraLandim}
\sup_{x_1,\, x_2\,\, \text{distinct}}\big\lvert \phi(t,t; x_1,x_2)\big\rvert \;\leq\; \frac{C\sqrt{t}}{n} \,,\quad \sup_{x_1,\,x_2}\big\lvert \phi(s,t; x_1,x_2)\big\rvert \;\leq\; \frac{C}{n}\Big\{\sqrt{s}+\frac{1}{\sqrt{t-s}}\Big\}
\end{equation}
for some constant $C$ that depends only on the initial profile $\rho_0(\cdot)$. Note that the second inequality holds even if $x_1=x_2$, whereas this is not true for the first bound.

An estimate of the correlation function of multiple spatial points at a fixed time has been obtained in \cite{FPSV_I}.
\begin{theorem}[\cite{FPSV_I}]\label{Vares}
	Fix an integer $k\geq 2$ and assume that the initial profile $\rho_0(\cdot)$ is continuously differentiable. Then there exists a constant $C=C(\rho_0,T)$ such that for all $0\leq t\leq T$,
	$$\sup_{x_1,\dots,\, x_k\,\,\text{\rm distinct}}\big\lvert \phi(t,\dots, t; x_1,\dots,x_k) \big\rvert \;\leq\; Cn^{-k/2}$$
	if $k$ is even, and
	$$\sup_{x_1,\dots,\,x_k\,\, \text{\rm distinct}}\big\lvert \phi(t,\dots, t; x_1,\dots,x_k) \big\rvert \;\leq\; Cn^{-(k+1)/2}\log n$$
	if $k$ is odd.
\end{theorem}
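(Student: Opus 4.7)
The plan is to set up a closed evolution equation for the correlation function
$v_t(\mathbf x) := \phi(t,\dots,t;x_1,\dots,x_k)$
(defined on $\mathbf x\in\Lambda^k$ and extended by zero to the diagonals of $\bb Z^k$) and then to apply a Duhamel formula using the labelled SSEP heat kernel estimates from~\cite{Landim05} together with the refinement of Theorem~\ref{grad}. Starting from
\begin{equation*}
\partial_t \bb E\Big[\prod_{i=1}^k \bar\eta_t(x_i)\Big] \;=\; \bb E\Big[\mathcal L_n\Big(\prod_{i=1}^k \bar\eta_t(x_i)\Big)\Big] - \sum_{i=1}^k (\partial_t\rho_t^n)(x_i)\,\bb E\Big[\prod_{j\neq i}\bar\eta_t(x_j)\Big]
\end{equation*}
and using $\partial_t\rho_t^n=\Delta_n\rho_t^n$, a careful expansion of $\mathcal L_n(\prod_i\bar\eta_t(x_i))$ --- in which exchanges at bonds with exactly one endpoint in $\{x_1,\dots,x_k\}$ produce the coordinate-wise discrete Laplacians (together with a matching Laplacian-of-profile term that cancels the centering contribution exactly), while exchanges at bonds with both endpoints in $\{x_1,\dots,x_k\}$ produce collision corrections --- leads to
\begin{equation*}
\partial_t v_t(\mathbf x) \;=\; \mathcal L_n^{\lex} v_t(\mathbf x) + F_t(\mathbf x), \qquad v_0\equiv 0,
\end{equation*}
where $\mathcal L_n^{\lex}$ is the generator of $k$ labelled exclusion particles and $F_t$ is a source supported on configurations with at least one pair of adjacent coordinates, expressible through $(k-2)$-point correlations $v_s^{(k-2)}$ evaluated at reduced configurations, weighted by $O(1)$ coefficients (the putative $n^2$ rate being compensated by $\delta^2$ factors with $\delta$ a discrete gradient of $\rho_s^n$).

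Since $v_0\equiv 0$ because the initial measure is product, Duhamel yields
\begin{equation*}
v_t(\mathbf x) \;=\; \int_0^t ds \sum_{\mathbf y\in\Lambda^k} p^{\lex}_{t-s}(\mathbf x,\mathbf y)\,F_s(\mathbf y),
\end{equation*}
and the theorem follows by induction on $k$ with parity-preserving step $k-2\to k$. The two base cases are $k=2$ (handled by~\eqref{eq:JaraLandim}) and $k=3$ (requires a direct Duhamel analysis in which the extra $\log n$ first appears because $v^{(1)}\equiv 0$ forces one to examine higher-order terms in the expansion of the collision corrections). For the induction step, the hypothesis $|v_s^{(k-2)}|\lesssim n^{-(k-2)/2}$ when $k$ is even and $|v_s^{(k-2)}|\lesssim n^{-(k-1)/2}\log n$ when $k$ is odd, together with $|F_s(\mathbf y)|\lesssim |v_s^{(k-2)}|\cdot \mathbf 1_{\{\exists i\neq j:\,|y_i-y_j|=1\}}$ and the codimension-one heat kernel estimate
\begin{equation*}
\sum_{\mathbf y\in\Lambda^k:\; \exists i\neq j,\, |y_i-y_j|=1} p^{\lex}_{t-s}(\mathbf x,\mathbf y) \;\lesssim\; \frac{1}{\sqrt{n^2(t-s)+1}}
\end{equation*}
(obtained by projecting onto the marginal distribution of the difference $Y_i - Y_j$ of two labelled SSEP particles and invoking the local central limit theorem as in~\cite{Landim05}), give, after time integration, the required bounds $n^{-k/2}$ for even $k$ and $n^{-(k+1)/2}\log n$ for odd $k$.

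The main obstacle is the algebraic bookkeeping of the source: expanding $\mathcal L_n$ on $\prod_i(\eta_t-\rho_t^n)(x_i)$ generates many contributions from exchanges at bonds adjacent to, or internal to, the set $\{x_1,\dots,x_k\}$, and these must be reorganized together with the centering term $-\partial_t\rho_t^n=-\Delta_n\rho_t^n$ so that the labelled-SSEP generator $\mathcal L_n^{\lex}$ emerges cleanly while $F_t$ is truly concentrated on adjacent-coordinate configurations and expressible via $(k-2)$-point correlations. The more delicate analytic step is then to control the codimension-one projection of the heat kernel uniformly in $\mathbf x$, including the regime where $\mathbf x$ itself already has an adjacency (where the naive bound $p^{\lex}_{t-s}\lesssim(n^2(t-s)+1)^{-k/2}$ is too crude); here the discrete-gradient estimate of Theorem~\ref{grad} provides the extra flexibility needed for a summation-by-parts argument against $\delta$ on the adjacency slice, securing the inductive closure in a manner uniform in $\mathbf x$.
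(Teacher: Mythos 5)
This theorem is quoted from \cite{FPSV_I}; the paper does not prove it (there is no proof environment after the statement). The paper does, however, prove generalizations of it --- Proposition~\ref{ndis}, Theorem~\ref{twotimecf}, Proposition~\ref{estimatePsi} and Lemma~\ref{difbound} --- using a Duhamel expansion for the correlation hierarchy, labelled-SSEP heat-kernel bounds from \cite{Landim05}, and the discrete-gradient estimate of Theorem~\ref{grad}. Your overall strategy (closed evolution equation with generator $\mathcal L_n^{\lex}$, Duhamel with $v_0\equiv 0$, induction, heat-kernel bounds on the collision slice) is in the same spirit. But there is a concrete gap that would make the argument fail as written.

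When you expand $\partial_t v_t(\mathbf x)$, the source $F_t$ is \emph{not} supported only on $(k-2)$-point correlations. The hierarchy in the paper (see the evolution equation just above \eqref{indeq}, and $\Psi$ in \eqref{defPsi}) contains two distinct pieces on the adjacency slice: a term of order
$n^2\big(\rho^n(x_j)-\rho^n(x_i)\big)^2\,\tilde v^{(k-2)}$, which is indeed $O(1)\times v^{(k-2)}$, and a term of order
$n^2\big(\rho^n(x_j)-\rho^n(x_i)\big)\big[\tilde v^{(k-1)}(x_j)-\tilde v^{(k-1)}(x_i)\big]$, which is $O(n)$ times a discrete \emph{difference} of $(k-1)$-point correlations. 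Your bound $|F_s|\lesssim |v_s^{(k-2)}|\cdot\mathbf 1_{\{\text{adjacent}\}}$ drops the second piece entirely. That piece is not higher order: it requires a summation by parts against $p^{\lex}_{t-s}$ and precisely the gradient estimate of Theorem~\ref{grad} (more precisely the interpolated form \eqref{gradbound}) to gain the factor $(n^2(t-s)+1)^{-1/4}$ that makes the time integral summable. You allude to this at the very end, but it contradicts your stated characterization of $F_t$, and for $k=3$ it is not a correction: with $v^{(1)}\equiv 0$ your version of $F_t$ would vanish identically, incorrectly forcing $v^{(3)}\equiv 0$.

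This also breaks the proposed induction scheme. Because the source genuinely couples level $k$ to level $k-1$ (not just $k-2$), a parity-preserving step $k-2\to k$ cannot close. In the paper's proofs of the corresponding extensions, the recursion bounds the level-$k$ quantity in terms of \emph{both} $k-1$ and $k-2$ (Lemma~\ref{difbound} expresses $S^{k,\ell}_{t,r}$ via $S^{k-1,\ell+1}_{t,\tau}$ and $S^{k-2,\ell+1}_{t,\tau}$), and one inducts on $k$ with unit step, with seeds at $k=0,1$. The parity dichotomy in the final bound is an \emph{output} of the recursion (the $\log n$ for odd $k$ arising from the $(k-1)$ coupling together with $v^{(1)}\equiv 0$), not an input that one can build into the induction step. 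To repair your argument, you should carry both source terms, run the summation by parts on the $(k-1)$ term explicitly, and perform a non-parity induction.
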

In particular, this theorem implies that
\begin{equation}\label{samet}
\sup_{x_1,\dots,\,x_k\,\, \text{distinct}}\big\lvert \phi(t,\dots, t; x_1,\dots,x_k) \big\rvert \;\leq\; Cn^{-k/2}
\end{equation}
for all integers $k\geq 2$.
Our first step in obtaining a full estimate on~\eqref{defgcor} is to generalize Theorem~\ref{Vares} to the case in which the points $x_i$'s are \emph{not} necessarily distinct from each other. Recall that $\Lambda^k$ denotes the set of non-repetitive points with $k$ coordinates:
$$\Lambda^k\;:=\;\big\{\mathbf x=(x_1,\dots,x_k)\in\bb Z^k: x_i\neq x_j, \,\, \forall\,\, 1\leq i,j\leq k\big\}\,.$$
For a list of non-repetitive points $(x_i: i\in I)$, we sometimes write it as $\{x_i: i\in I\}$  in order to perform set operations.

We start by extending Theorem~\ref{Vares} to allow repetitive points. To that end we first need to introduce more notation. Consider a list of points $(x_i: i\in I)$. We denote by $|(x_i: i\in I)|$ the total number of points, which is equal to the cardinality of $I$, and by $\|(x_i:i\in I)\|$ the total number of non-repetitive points. In particular, repetitive points are not counted in $\|(x_i:i\in I)\|$. For example, given two real numbers $a\neq b$, then $|(a,a,b)|=3$ but $\|(a,a,b)\|=1$.

\begin{proposition}\label{ndis}
	Assume that the initial profile $\rho_0(\cdot)$ is $C^1$. Fix non-negative integers $k\geq 2$ and $\ell\leq k$. Then there exists a constant $C=C(\rho_0,T)$ such that, for all $0\leq t\leq T$,
	$$\sup_{\|(x_i\,:\, 1\leq i\leq k)\|=\ell}\big\lvert \phi(t,\dots, t; x_1,\dots,x_k) \big\rvert \;\leq\; Cn^{-\ell/2}\,.$$
\end{proposition}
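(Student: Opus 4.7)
The plan is to reduce Proposition~\ref{ndis} to Theorem~\ref{Vares} (in the form of the consequence~\eqref{samet}) by algebraically linearizing the repeated factors in the product defining $\phi$. First, I partition the list $(x_1,\ldots,x_k)$ according to its distinct values: let $y_1,\ldots,y_r$ denote the distinct values appearing with multiplicity $m_i \geq 2$ and $z_1,\ldots,z_\ell$ the distinct values appearing with multiplicity exactly one, so that $\sum_{i=1}^r m_i + \ell = k$ and $r \leq k/2$. Rewrite
$$\prod_{i=1}^k \overline{\eta}_t(x_i) \;=\; \prod_{i=1}^r \overline{\eta}_t(y_i)^{m_i}\,\cdot\,\prod_{j=1}^\ell \overline{\eta}_t(z_j)\,.$$

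The key algebraic ingredient is the identity, valid for every $X \in \{0,1\}$, $\alpha \in [0,1]$ and integer $m \geq 1$,
$$(X-\alpha)^m \;=\; \big[(1-\alpha)^m-(-\alpha)^m\big]\,(X-\alpha)\;+\;\big[\alpha(1-\alpha)^m+(1-\alpha)(-\alpha)^m\big]\,,$$
which I would verify by testing the two possible values of $X$. Applying it with $X=\eta_t(y_i)$ and $\alpha=\rho_t^n(y_i) \in [0,1]$ yields
$$\overline{\eta}_t(y_i)^{m_i} \;=\; a_{m_i}(y_i,t)\,\overline{\eta}_t(y_i)\;+\;b_{m_i}(y_i,t)\,,$$
where $|a_{m_i}|, |b_{m_i}| \leq 2$ uniformly in $y_i,t,n$. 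Substituting this linearization into every factor with a repeated index, expanding, and taking expectations gives
$$\bb E_{\nu_{\rho_0^n(\cdot)}}\Big[\prod_{i=1}^k \overline{\eta}_t(x_i)\Big]\;=\;\sum_{S\subseteq\{1,\ldots,r\}} C_S(t)\;\bb E_{\nu_{\rho_0^n(\cdot)}}\Big[\prod_{i\in S}\overline{\eta}_t(y_i)\prod_{j=1}^\ell \overline{\eta}_t(z_j)\Big]\,,$$
where $C_S(t)=\prod_{i\in S}a_{m_i}(y_i,t)\prod_{i\notin S}b_{m_i}(y_i,t)$ is uniformly bounded and the sum contains at most $2^r\leq 2^{k/2}$ terms. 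The crucial point is that the $|S|+\ell$ sites appearing in each inner expectation are now \emph{pairwise distinct}.

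It only remains to bound each expectation on the right by $Cn^{-\ell/2}$. When $|S|+\ell \geq 2$, the bound~\eqref{samet} gives the sharper estimate $Cn^{-(|S|+\ell)/2} \leq Cn^{-\ell/2}$. When $|S|+\ell=1$ the inner expectation equals $\bb E_{\nu_{\rho_0^n(\cdot)}}[\overline{\eta}_t(w)]=0$ for some site $w$, directly from the definition of $\overline{\eta}$. Finally, $|S|+\ell=0$ forces $S=\emptyset$ and $\ell=0$, so the corresponding term is a bounded constant and the claimed bound $n^{-\ell/2}=1$ is trivial. Summing the $O_k(1)$ contributions over $S$ completes the proof. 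I do not expect a genuine obstacle here: all of the probabilistic work is carried out by~\cite{FPSV_I}, and the presence of repeated sites is dissolved by the purely algebraic linearization of $(X-\alpha)^m$ for $X$ Bernoulli.
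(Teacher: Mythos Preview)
Your proof is correct and follows essentially the same route as the paper: both arguments linearize the repeated factors $\overline{\eta}_t(y)^m$ using the fact that $\eta_t(y)\in\{0,1\}$, thereby reducing to the distinct-point estimate~\eqref{samet}. The only cosmetic difference is that the paper argues by induction on $k$, applying the $m=2$ identity $\overline{\eta}(x)^2=(1-2\rho^n(x))\overline{\eta}(x)+\rho^n(x)(1-\rho^n(x))$ one repetition at a time, whereas you write down the closed-form linearization of $(X-\alpha)^m$ for general $m$ and expand all repeated factors at once; the underlying mechanism and the probabilistic input are identical.
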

\begin{remark}\rm
	The above result is natural. Indeed, assume that $k=2$ and that there are only repetitive points. In that case $\ell=0$ and the left hand side indeed does not decay in $n$.
\end{remark}
\begin{proof}
	We prove this theorem by induction on $k$. The case $k=2$ is a direct consequence of~\eqref{eq:JaraLandim}. Indeed, \eqref{eq:JaraLandim} deals with the case $k=\ell=2$ and the case $\ell=0< k=2$ is clear. When $k=3$, the case $\ell =0$ is trivial and the case $\ell =3$ is exactly~\eqref{samet}. The remaining case is  $\ell=1$, or in other words, where $x_1=x_2\neq x_3$. Using the  identity
	\begin{equation}\label{identity}
	\overline{\eta}(x)^2\;=\;(1-2\rho^n(x))\overline{\eta}(x)+ \rho^n(x)-\big(\rho^n(x)\big)^2\,,
	\end{equation}
	we obtain that
	\begin{equation*}
	\begin{aligned}
	\phi(t,t, t; x_1,x_1,x_3) &\;=\;
	\bb E[\overline{\eta}_t(x_1)\overline{\eta}_t(x_3)^2]\\
	& \;=\; (1-2\rho_t^n(x_3))\bb E[\overline{\eta}_t(x_1)\overline{\eta}_t(x_3)] + (\rho_t^n(x_3)-\rho_t^n(x_3)^2)\bb E[\overline{\eta}_t(x_1) ]\,,
	\end{aligned}
	\end{equation*}
	hence
	\begin{equation*}
	\big\lvert \phi(t,t, t; x_1,x_1,x_3) \big\rvert \;\lesssim\; \big\lvert \phi(t, t; x_1,x_3) \big\rvert \;\lesssim\; n^{-1}\,,
	\end{equation*}
	where  the contribution coming from $\rho(x)-\rho(x)^2$ is zero since all random variables are centred.

	Assume that the statement of the theorem holds for the cases when the total number of points is strictly less than $k$ for some $k\geq 4$. The case $\ell=k$ is just~\eqref{samet}. Thus, assume that there is at least one non-repetitive point. Assume without loss of generality that $x_k=x_{k-1}$. Note that under this assumption, the total number of non-repetitive points in $(x_1,\dots,x_k)$ is less than or equal to
	$$\|(x_1,\dots,x_{k-2})\|\wedge \|(x_1,\dots,x_{k-1})\|\,.$$
	This observation together with identity \eqref{identity}  yield
	\begin{equation*}
	\big\lvert \phi(t,\dots, t; x_1,\dots,x_k) \big\rvert \;\lesssim\; \big\lvert \phi(t,\dots, t; x_1,\dots,x_{k-1}) \big\rvert + \big\lvert \phi(t,\dots, t; x_1,\dots,x_{k-2}) \big\rvert,
	\end{equation*}
	which by induction assumption is bounded by
	$$Cn^{-\|(x_i\,:\, 1\leq i\leq k-1)\|/2}+Cn^{-\|(x_i\,:\, 1\leq i\leq k-2)\|/2}\;\lesssim\;n^{-\|(x_i\,:\, 1\leq i\leq k)\|/2}\,.$$
\end{proof}
The plan for the rest of this section is as follows. In Subsection \ref{41} we estimate the correlation functions of two lists of non-repetitive points at two different times. This is the content of Theorem~\ref{twotimecf}, which will be proved subject to some technical estimates.  Based on Theorem~\ref{twotimecf}, in Corollary~\ref{cftwotime} we estimate the correlation functions with two sets of general points at two different times. In Subsection \ref{42} we prove Theorem~\ref{gnE} subject to some technical estimates and as a corollary of Theorem~\ref{twotimecf}. In Subsection \ref{43} we prove Theorem~\ref{grad}. Theorem~\ref{grad} will then allow to provide the proofs of the technical estimates that were needed in the proofs of Theorems~\ref{twotimecf} and Theorem~\ref{gnE}. The proofs of these estimates will be given in Subsections \ref{44} and \ref{45}.

\subsection{Two times correlation estimates and some corollaries}\label{41}
Recall that we are assuming in this paper that the initial profile $\rho_0(\cdot)$ is $C^2$ with bounded derivatives.
\begin{theorem}\label{twotimecf}
	Fix non-negative integers $k_1, k_2$ and consider two lists of non-repetitive points $(x_i: 1\leq i\leq k_1)$ and $(z_j: 1\leq j\leq k_2)$.
	There exists a constant $C$ independent of $(x_i: 1\leq i\leq k_1)$ and $(z_j: 1\leq j\leq k_2)$ such that
	\begin{equation}
	\label{eq:twotimes}\Big\lvert \bb E_{\nu_{\rho_0^n(\cdot)}}\Big[ \prod_{i=1}^{k_1} \overline{\eta}_{t}(x_i) \prod_{j=1}^{k_2}\overline{\eta}_s(z_i) \big)\Big] \Big\rvert \,\leq \,Cn^{-(k_1+k_2)/2} \bigg(\frac{n}{\sqrt{n^2(t-s)+1}}\bigg)^{k_1\wedge k_2},
	\end{equation}
	for any $s<t$ such that $0\leq s,t\leq T$ and every integer $n\geq 1$.
\end{theorem}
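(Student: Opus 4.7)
The strategy is to condition on $\mathcal{F}_s$ and exploit the self-duality of the SSEP with the labelled stirring process, reducing the statement to equal-time correlations at time $s$ (controlled via Proposition~\ref{ndis}) combined with heat-kernel estimates for the labelled transition probability (from Theorem~\ref{grad}).

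First I would note that, since $\mathbf{x}:=(x_1,\dots,x_{k_1})\in\Lambda^{k_1}$, the graphical construction yields
\[
\bb E\Big[\prod_{i=1}^{k_1}\eta_t(x_i)\,\Big|\,\mathcal{F}_s\Big] \;=\; \sum_{\mathbf{y}\in\Lambda^{k_1}} p^{\lex}_{t-s}(\mathbf{x},\mathbf{y})\prod_{i=1}^{k_1}\eta_s(y_i).
\]
Writing $\eta_s(y_i)-\rho_t^n(x_i)=\overline{\eta}_s(y_i)+R_i(y_i)$ with $R_i(y):=\rho_s^n(y)-\rho_t^n(x_i)$ and expanding the product over $i$, the left-hand side of~\eqref{eq:twotimes} decomposes as a sum over $A\subseteq[k_1]$ of
\[
T_A \;=\; \sum_{\mathbf{y}\in\Lambda^{k_1}}p^{\lex}_{t-s}(\mathbf{x},\mathbf{y})\Big(\prod_{i\in A^c}R_i(y_i)\Big)\,\bb E\Big[\prod_{i\in A}\overline{\eta}_s(y_i)\prod_{j=1}^{k_2}\overline{\eta}_s(z_j)\Big].
\]

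For the leading term $T_{[k_1]}$ I would further decompose the sum over $\mathbf{y}$ according to the number $m$ of coincidences between $(y_i)_{i\in[k_1]}$ and $(z_j)_{j\in[k_2]}$, where $0\leq m\leq k_1\wedge k_2$. Fixing $m$ specified matched pairs (say $y_i=z_{\sigma(i)}$ for $i$ in an index set $I$ of size $m$), I would apply~\eqref{identity} to reduce each factor $\overline{\eta}_s(y_i)^2$ to a centered linear term plus the bounded constant $\rho_s^n(z_{\sigma(i)})(1-\rho_s^n(z_{\sigma(i)}))$, leaving as dominant piece an equal-time correlation of $k_1+k_2-2m$ distinct centered variables bounded by $Cn^{-(k_1+k_2-2m)/2}$ via Proposition~\ref{ndis}. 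The transition $p^{\lex}_{t-s}(\mathbf{x},\mathbf{y})$ summed over the $k_1-m$ unmatched positions reduces, by the symmetry of the graphical construction, to the labelled $m$-particle stirring transition from $\mathbf{x}_I$ to $(z_{\sigma(i)})_{i\in I}$, which is bounded uniformly by $C(n^2(t-s)+1)^{-m/2}$ thanks to Theorem~\ref{grad} and the Gaussian-type estimates of~\cite{Landim05}. Counting the $\binom{k_1}{m}\binom{k_2}{m}m!$ pairings, each $m$ contributes at most $Cn^{-(k_1+k_2)/2}\bigl(n/\sqrt{n^2(t-s)+1}\bigr)^m$ to~$T_{[k_1]}$.

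Summing this geometric series over $m\in\{0,\dots,k_1\wedge k_2\}$ and using that $n/\sqrt{n^2(t-s)+1}\geq 1/\sqrt{T+1}$ for all $t-s\in[0,T]$, the sum is controlled, up to a $T$-dependent constant, by its extreme term $m=k_1\wedge k_2$, which yields the claimed bound. For $A\subsetneq[k_1]$ the factors $R_i$ provide additional smallness: Lipschitz continuity of $\rho_0$ together with the heat equation $\partial_t\rho_t^n=\Delta_n\rho_t^n$ gives $|R_i(y)|\lesssim|y-x_i|/n+(t-s)$, and upon integrating against $p^{\lex}_{t-s}(\mathbf{x},\mathbf{y})$ each such factor contributes an extra $O(\sqrt{t-s})$-smallness, absorbing these correction terms into the main bound. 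The principal technical difficulty is the sharp heat-kernel-type bound $p^{\lex,m}_t(\mathbf{x},\mathbf{y})\leq C(n^2t+1)^{-m/2}$ used above, which is precisely the content of Theorem~\ref{grad} (to be proved in Subsection~\ref{43}) combined with the Gaussian estimate from~\cite{Landim05}.
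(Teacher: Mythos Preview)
Your decomposition via conditioning on $\mathcal{F}_s$ and the consistency of stirring is valid, and the leading term $T_{[k_1]}$ coincides with the paper's ``initial term'' in its Duhamel expansion (Corollary~\ref{initialp}); your matching argument for it essentially reproduces Lemma~\ref{initialq}. The genuine gap lies in the remainder terms $T_A$ with $A\subsetneq[k_1]$. The claimed $O(\sqrt{t-s})$ smallness from each $R_i$ is correct as a bound on $\sum_y \bar p_{t-s}(x_i,y)|R_i(y)|$, but it does not compensate for the $n^{1/2}$ loss incurred by dropping one centered factor $\overline{\eta}_s(y_i)$ from the equal-time correlation. Take $k_1=2$, $k_2=1$, $|A|=1$: since $\bb E[\overline{\eta}_s(y_1)\overline{\eta}_s(z_1)]$ is $O(1)$ when $y_1=z_1$ and $O(n^{-1})$ otherwise, your bound on $|T_A|$ is of order $n^{-1}\sqrt{t-s}+(n^2(t-s)+1)^{-1/2}\sqrt{t-s}$, which for $t-s$ of order $1$ is $\sim n^{-1}$, whereas the target $n^{-3/2}\cdot n/\sqrt{n^2(t-s)+1}$ is $\sim n^{-3/2}$. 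What your argument misses is the cancellation $\sum_y p^{\rw}_{t-s}(x_i,y)R_i(y)=0$; exploiting it would turn $T_A$ into a covariance of the \emph{dependent} stirring coordinates, and controlling that dependence is at least as delicate as the paper's argument.

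The paper circumvents this by a different route. Rather than expanding statically at time $s$, it derives an evolution equation for $\varphi_t$ and applies Duhamel's formula~\eqref{indeq}; the inhomogeneity $\Psi$ in~\eqref{defPsi} packages the corrections as \emph{differences} $\widetilde{\varphi}_r(x_j)-\widetilde{\varphi}_r(x_i)$ of lower-order correlations. A summation by parts transfers this discrete gradient onto $p^{\lex}$, and the sharp gradient bound of Theorem~\ref{grad}, in the interpolated form~\eqref{gradbound}, supplies the extra decay $(n^2(t-\tau)+1)^{-1/4}$ that makes the time integral converge with the correct power (Proposition~\ref{estimatePsi} and the inductive Lemma~\ref{difbound}). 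This mechanism has no analogue in your direct bound on $|R_i|$.
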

\begin{remark}\rm
	The upper bound above can be improved a little bit if we  use the original bounds in Theorem \ref{Vares} instead of the simplified one in \eqref{samet}. In that case the right hand side in~\eqref{eq:twotimes} would become
	$$C h_n(k_1+k_2) \bigg(\frac{n}{\sqrt{n^2(t-s)+1}}\bigg)^{k_1\wedge k_2}$$
	where
	\begin{equation*}
	h_n(k)\;=\;
	\begin{cases}
	n^{-k/2} &\quad \text{if}\,\, k\,\, \text{is even},\\
	n^{-(k+1)/2}\log n &\quad \text{if}\,\, k\,\, \text{is odd}.\\
	\end{cases}
	\end{equation*}
	However, doing so would make the proofs of this theorem and related lemmas more complicated to follow. In addition, the upper bound obtained in this theorem is already enough for our purpose of the proof of tightness of the current in Section \ref{sec_5}.   The same applies to Theorem~\ref{gnE}.
\end{remark}
\begin{remark}\rm
	A previous result on the estimate of the two times correlation was obtained by C.~Landim in \cite{Landim05}. The upper bound obtained there is
	$$C\Big\{ \frac{\log n}{n^2}+ \frac{1}{(t-s)n^2+1}\Big\}\,.$$
	Our result in Theorem \ref{twotimecf} improves this bound significantly.
\end{remark}

\begin{proof}[Proof of Theorem~\ref{twotimecf}]
	The case $k_1=0$ is just Theorem~\ref{Vares}. Therefore, from now on we assume that $k_1\neq 0$.	Throughout the proof of this theorem, the collection of points $(z_i: 1\leq j\leq k_2)$ and the time $s$ are fixed. To simplify the notation, we write
	\begin{equation}\label{defvp}
	\varphi_t(x_i: 1\leq i\leq k_1)\;=\;  \bb E_{\nu_{\rho_0^n(\cdot)}}\Big[ \prod_{i=1}^{k_1} \overline{\eta}_{t}(x_i) \prod_{j=1}^{k_2}\overline{\eta}_s(z_j) \big)\Big]\,,\qquad t\geq s\,.
	\end{equation}
	A tedious but straightforward computation (see also~\cite{FPSV_I}) shows that, for $t\geq s$,
	\begin{equation*}
	\begin{split}
	\partial_t\varphi_t(x_i:1\leq i\leq k_1)\;=\; &L^{\lex}_{k_1}\,\varphi_t(x_i:1\leq i\leq k_1)\\
	&+ 2n^2 \sum_{i,j=1}^{k_1} \one\{|x_{j}-x_i|=1\}\big(\rho_t^n(x_{j})-\rho_t^n(x_i)\big)\big[\widetilde{\varphi}_t(x_{j})-\widetilde{\varphi}_t(x_i)\big]\\
	&- n^2 \sum_{i,j=1}^{k_1} \one\{|x_j-x_i|=1\}\big(\rho_t^n(x_{j})-\rho_t^n(x_i)\big)^2 \widetilde{\varphi}_t(x_i,x_j)\,,\\
	\end{split}
	\end{equation*}
	where $\widetilde{\varphi}_t(A)=\varphi_t(\{x_i: 1\leq i\leq k_1\}\backslash A)$ for any subset $A\subset\{x_1,\dots,x_{k_1}\}$, and $L_{k_1}^{\lex}$ is the generator of the SSEP with $k_1$ labelled particles accelerated by a factor $n^2$. Denote by $(\mathbf{X}^i:1\leq i\leq k_1)$ the accelerated SSEP with $k_1$ labelled particles, and by $\mathbf{P}_{(x_i\,:\,1\leq i\leq k_1)}$(resp.\ $\mathbf{E}_{(x_i\,:\,1\leq i\leq k_1)}$) the probability (resp.\ expectation) with respect to $(\mathbf{X}^i:1\leq i\leq k_1)$ starting from initial points $(x_i:1\leq i\leq k_1)$. By $\mathbf{X}^i_t$ we represent the location at time $t$ of the particle which was at site $x_i$ at time $0$.
	Applying Duhamel's Principle, for any $t>s$, we can write $\varphi_t(x_i:1\leq i\leq k_1)$ as the sum of an initial term and an integral term:
	\begin{equation}\label{indeq}
	\begin{split}
	\varphi_t(x_i:1\leq i\leq k_1)\;=\;& \mathbf{E}_{(x_i\,:\,1\leq i\leq k_1)}\big[\varphi_s(\mathbf{X}^i_{t-s}: 1\leq i\leq k_1) \big] \\
	&+\mathbf{E}_{(x_i\,:\, 1\leq i\leq k_1)}\Big[ \int_s^t \Psi(\mathbf{X}_{t-r},r) dr\Big]
	\end{split}
	\end{equation}
	where for every integer $k_1\geq 1$,
	\begin{equation}\label{defPsi}
	\begin{split}
	\Psi((x_i: 1\leq i\leq k_1),r) \;:=\;
	&2n^2  \sum_{i,j=1}^{k_1} \one\{|x_j-x_i|=1\}\big(\rho_{r}^n(x_j)-\rho_{r}^n(x_i)\big)\big[\widetilde{\varphi}_{r}(x_j)-\widetilde{\varphi}_{r}(x_i)\big]\\
	&- n^2\sum_{i,j=1}^{k_1} \one\{|x_j- x_i|=1\}\big(\rho_{r}^n(x_j)-\rho_{r}^n(x_i)\big)^2 \widetilde{\varphi}_{r}(x_i,x_j)\,.
	\end{split}
	\end{equation}

	We will show in Corollary~\ref{initialp} ahead that the initial term at the right hand side of \eqref{indeq} is bounded from above by
	$$Cn^{-(k_1+k_2)/2}\Big(\frac{n}{\sqrt{n^2(t-s)+1}}\Big)^{k_1\wedge k_2}\,,$$ while the integral term will be handled in Proposition \ref{estimatePsi}, which shows that
	$$\Big\lvert\mathbf{E}_{(x_i\,:\, 1\leq i\leq k_1)}\Big[ \int_s^t \Psi(\mathbf{X}_{t-r},r) dr\Big]\Big\rvert \;\lesssim\; n^{-(k_1+k_2)/2}\Big(\frac{n}{\sqrt{n^2(t-s)+1}}\Big)^{k_1\wedge k_2},$$
	proving the theorem.
\end{proof}

We now give an estimate on the two time correlation function involving with two lists of general points.
To formulate the result we need to introduce one more notation. Given a list of $k$ points $(x_i: 1\leq i\leq k)$ we write $\mc R((x_i: 1\leq i\leq k))$ for the collection of index sets $I$ such that
\begin{itemize}
	\item[(1)] $I\subseteq \{1,\ldots, k\}$,
	\item[(2)] the list $(x_i: i\in I)$ consists only of non-repetitive points,
	\item[(3)] every non-repetitive point in $(x_i: i\in I)$ is also contained in $(x_i: i\in I)$.
\end{itemize}

Note that if $I\in \mc R((x_i: 1\leq i\leq k))$, then $(x_i: i\in I)$ might contain more non-repetitive points than $(x_i: 1\leq i\leq k)$. As an example consider $(x_i: 1\leq i\leq k)= (a,b,c,c,d,d)$ and $I=\{1,2,3\}$, then $(x_i:i\in I)= (a,b,c)$.
\begin{corollary}\label{cftwotime}
	Fix non-negative integers $k_1, k_2$ and consider two lists of points $(x_i: 1\leq i\leq k_1)$ and $(z_j: 1\leq j\leq k_2)$.  Suppose that
	$$\| (x_i: 1\leq i\leq k_1)\|\;=\; \ell_1 \quad\text{and}\quad  \| (z_j: 1\leq j\leq k_2)\|\;=\; \ell_2\,.$$
	Then there exists a constant $C$ independent of $(x_i: 1\leq i\leq k_1)$ and $(z_i: 1\leq i\leq k_2)$ such that
	$$\Big\lvert \bb E_{\nu_{\rho_0^n(\cdot)}}\big[ \prod_{i=1}^{k_1} \overline{\eta}_{t}(x_i) \prod_{j=1}^{k_2}\overline{\eta}_s(z_j) \big)\big] \Big\rvert \;\leq\; C\sup_{I,J}\,n^{-(|I|+|J|)/2} \Big(\frac{n}{\sqrt{n^2(t-s)+1}}\Big)^{|I|\wedge |J|}$$
	for any $s<t$ with $0\leq s,t\leq T$ and every integer $n\geq 1$, where  the supremum is taken over all subsets $I\in \mc R((x_i: 1\leq i\leq k_1))$ and $J\in \mc R((z_j:1\leq j\leq k_2))$.
	Observing that
	$$|I|+|J|\geq \ell_1+\ell_2 \quad \text{and}\quad |I|\,\leq \,\frac{k_1+\ell_1}{2},\,\, |J|\,\leq\,\frac{\ell_2+k_2}{2},$$
	we have in particular that
	$$\Big\lvert \bb E_{\nu_{\rho_0^n(\cdot)}}\big[ \prod_{i=1}^{k_1} \overline{\eta}_{t}(x_i) \prod_{j=1}^{k_2}\overline{\eta}_s(z_j) \big)\big] \Big\rvert \;\leq\; Cn^{-(|\ell_1|+|\ell_2|)/2} \Big(\frac{n}{\sqrt{n^2(t-s)+1}}\Big)^{\frac{\ell_1+k_1}{2}\wedge \frac{\ell_2+k_2}{2}}\,.$$
\end{corollary}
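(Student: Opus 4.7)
The plan is to reduce Corollary~\ref{cftwotime} directly to Theorem~\ref{twotimecf} using the quadratic identity \eqref{identity}, mimicking the induction step in the proof of Proposition~\ref{ndis}. Whenever two indices in $(x_i:1\leq i\leq k_1)$ coincide, say $x_i=x_{i'}$, I will group $\overline{\eta}_t(x_i)\overline{\eta}_t(x_{i'})=\overline{\eta}_t(x_i)^2$ and substitute
$$\overline{\eta}_t(x_i)^2\;=\;\big(1-2\rho_t^n(x_i)\big)\overline{\eta}_t(x_i)+\rho_t^n(x_i)\big(1-\rho_t^n(x_i)\big),$$
splitting the original product into two products of strictly smaller total degree whose coefficients are bounded uniformly in $n$ and $s,t\in[0,T]$ (by the probabilistic representation of the discrete heat equation, $\rho_t^n$ inherits from $\rho_0$ the property of being bounded away from $0$ and $1$). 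Performing the same operation on repetitions in $(z_j:1\leq j\leq k_2)$ and iterating terminates in finitely many steps, at which point every remaining point appears with exponent at most one. This yields an expansion
$$\prod_{i=1}^{k_1}\overline{\eta}_t(x_i)\prod_{j=1}^{k_2}\overline{\eta}_s(z_j)\;=\;\sum_{(I,J)}c^{(n)}_{I,J}\prod_{i\in I}\overline{\eta}_t(x_i)\prod_{j\in J}\overline{\eta}_s(z_j),$$
with $I\in\mc R((x_i:1\leq i\leq k_1))$, $J\in\mc R((z_j:1\leq j\leq k_2))$, the coefficients uniformly bounded, and the total number of summands controlled by a quantity depending only on $k_1+k_2$.

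Taking expectations and applying Theorem~\ref{twotimecf} term by term, which is legitimate because each inner list is non-repetitive by construction of $\mc R$, then bounding the finite sum by a constant multiple of its maximum, produces the first inequality of the corollary.

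For the ``in particular'' refinement, I will verify the stated comparisons for any admissible $(I,J)$ and substitute them into the supremum bound. The lower bound $|I|\geq\ell_1$ is immediate: the definition of $\mc R$ forces $I$ to contain every index whose $x$-value is non-repetitive in $(x_i:1\leq i\leq k_1)$, and there are exactly $\ell_1$ such indices; the same reasoning gives $|J|\geq\ell_2$. For $|I|\leq(k_1+\ell_1)/2$, beyond the $\ell_1$ forced non-repetitive indices, $I$ may contain at most one representative from each equivalence class of repetitive points in $(x_i:1\leq i\leq k_1)$; since each such class has size at least two, the total number of classes is bounded by $(k_1-\ell_1)/2$, giving the claim. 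The analogous bound holds for $|J|$. The argument is essentially bookkeeping, and I do not anticipate a serious obstacle beyond tracking indices carefully through the iterative expansion, since all the analytic input has already been delivered by Theorem~\ref{twotimecf}.
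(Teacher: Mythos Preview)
Your proposal is correct and follows essentially the same approach as the paper's proof: use the identity \eqref{identity} to reduce the product over possibly repetitive lists to a finite linear combination of products over non-repetitive sublists indexed by $\mc R$, then apply Theorem~\ref{twotimecf} termwise and bound the finite sum by its supremum. Your verification of the bounds $|I|\geq\ell_1$ and $|I|\leq(k_1+\ell_1)/2$ is also the same counting argument as the paper's, just phrased slightly differently.
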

\begin{proof}
	Using  the identity \eqref{identity}
	we can bound the absolute value of the expectation in the statement of this corollary
	by some constant depending only on $\rho_t(\cdot)$ times
	$$ \sum_{I,J}\Big\lvert \bb E_{\nu_{\rho_0^n(\cdot)}}\big[ \prod_{i\in I} \overline{\eta}_{t}(x_i) \prod_{j\in J }\overline{\eta}_s(z_j) \big)\big] \Big\rvert\, ,$$
	where the (finite) sum above is taken over all  $I\in \mc R((x_i: 1\leq i\leq k_1))$  and all  $J\in \mc R((z_j: 1\leq j\leq k_2))$. Applying Theorem~\ref{twotimecf} to estimate each term in the above sum, yields the first bound. For the second bound note that by the third item in the definition of $\mc R((x_i: 1\leq i\leq k))$ one has that $|I|\geq \ell_1$.
	To see that also $|I|\leq \tfrac{k_1+\ell_1}{2}$, note that to construct $I$ one first chooses all the $\ell_1$ indices corresponding to non-repetitive points in $(x_1: 1\leq i\leq k_1)$. Since all other points in $(x_1: 1\leq i\leq k_1)$ repeat at least twice, at most $\tfrac{k_1-\ell_1}{2}$ indices can be added to $I$. This finishes the proof.

\end{proof}

The next corollary will be helpful in establishing the next result which will play a key role in the tightness proof of the current in Section \ref{sec_5}.           Before stating it, we introduce some notation. Given a list of points $(x_i: 1\leq i\leq k)$, let $m_i$ be the number of points which are repeated $i$ times in $(x_i: 1\leq i\leq k)$ and denote the maximal number of distinct points appearing in $(x_i: 1\leq i\leq k)$ by
\begin{equation}\label{def[]}
[(x_i: 1\leq i\leq k)]\;=\;\sum_{i\geq 1}m_i\,.
\end{equation}
To illustrate the difference of $[\cdot]$ from $|\cdot|$ and $\|\cdot\|$ which are defined before, consider a list of points $(a,b,c,c,d,d,d)$ with $a\neq b\neq c\neq d$. Then
$$m_1=2, m_2=1, m_3=1\quad \text{and} \,\, [(a,b,c,c,d,d,d)]=4;$$
$$|(a,b,c,c,d,d,d)|\,=\,7, \quad \|(a,b,c,c,d,d,d)\|=2.$$
The following equations hold:
\begin{equation}\label{equk}
k\;=\;\sum_{i\geq 1}im_i\,, \quad \|(x_i: 1\leq i\leq k)\|\;=\;m_1\,.
\end{equation}
\begin{proposition}\label{expdif}
	Fix a list of points $(x_i: 1\leq i\leq k)$ with 	$\| (x_i: 1\leq i\leq k)\|= \ell$ and $[(x_i: 1\leq i\leq k)]=m$.
	Then, there exists a constant $C$ independent of the points in $(x_i: 1\leq i\leq k)$ such that
	$$\Big\lvert \bb E_{\nu_{\rho_0^n(\cdot)}}\Big[\prod_{i=1}^{k} \big(\overline{\eta}_t(x_i)- \overline{\eta}_s(x_i)\big) \Big]\Big\rvert \;\leq\; Cn^{-\ell/2}\Big(\frac{n}{\sqrt{n^2(t-s)+1}}\Big)^{\ell/2}\,,\quad \text{if}\;\; t-s\geq \frac{1}{n}$$
	and
	$$\Big\lvert \bb E_{\nu_{\rho_0^n(\cdot)}}\Big[\prod_{i=1}^{k} \big(\overline{\eta}_t(x_i)- \overline{\eta}_s(x_i)\big) \Big]\Big\rvert \;\leq\; Cn^{-m/2}\Big(\frac{n}{\sqrt{n^2(t-s)+1}}\Big)^{m-\ell/2}\,,\quad \text{if}\;\; t-s\leq \frac{1}{n}\,.$$
\end{proposition}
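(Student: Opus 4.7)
The plan is to expand $\prod_{i=1}^{k}(\overline{\eta}_t(x_i)-\overline{\eta}_s(x_i))$ into a finite linear combination of two-time correlation functions in which each spatial site appears at most once at each time, apply Corollary~\ref{cftwotime} to each term, and then verify that each term already respects the claimed bound without invoking any cancellations in the alternating sum. Grouping by spatial location, if $y_1,\ldots,y_m$ denote the distinct sites appearing in $(x_i)$ with multiplicities $n_1,\ldots,n_m$, one has $\prod_{i=1}^{k}(\overline{\eta}_t(x_i)-\overline{\eta}_s(x_i))=\prod_{j=1}^{m}(\overline{\eta}_t(y_j)-\overline{\eta}_s(y_j))^{n_j}$. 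Since $\eta(y)\in\{0,1\}$, one has the pointwise identity $\overline{\eta}(y)^k=\alpha_k(\rho)\overline{\eta}(y)+\beta_k(\rho)$ for each $k\geq 1$, with coefficients bounded uniformly in $\rho\in(0,1)$. Expanding the binomial power in each factor and reducing each $\overline{\eta}_t(y_j)^{b_j}\overline{\eta}_s(y_j)^{a_j}$ accordingly produces a finite linear combination, with coefficients uniformly bounded in $n$, of products of the form $\prod_{j\in T}\overline{\eta}_t(y_j)\prod_{j\in S}\overline{\eta}_s(y_j)$ indexed by pairs $T,S\subseteq\{1,\ldots,m\}$. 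A crucial structural observation is that whenever $n_j=1$ only terms with exactly one of $j\in T$ or $j\in S$ survive, because both the constant and the $\overline{\eta}_t(y_j)\overline{\eta}_s(y_j)$ contributions vanish in the expansion of $\overline{\eta}_t(y_j)-\overline{\eta}_s(y_j)$.

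Next, Corollary~\ref{cftwotime} applied to each resulting term (and reducing, since $T$ and $S$ separately consist of distinct points, to the cleaner form of Theorem~\ref{twotimecf}) yields an upper bound
\[
Cn^{-(|T|+|S|)/2}(n/\tau)^{|T|\wedge|S|}\;=\;Cn^{-(\ell+a+b)/2}(n/\tau)^{X},
\]
where $\tau:=\sqrt{n^2(t-s)+1}$, $R:=\{j\colon n_j\geq 2\}$, $a:=|T\cap R|$, $b:=|S\cap R|$, and $X:=|T|\wedge|S|\leq(\ell+a+b)/2$, and where I used that the non-repetitive sites contribute exactly $\ell$ to $|T|+|S|$. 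The last step is to verify that each such term is bounded by a constant multiple of the claimed target. In the regime $t-s\geq 1/n$, dividing by $n^{-\ell/2}(n/\tau)^{\ell/2}$ yields $n^{-(a+b)/2}(n/\tau)^{X-\ell/2}$; when $X\geq\ell/2$ one bounds $(n/\tau)^{X-\ell/2}\leq n^{X-\ell/2}$ and uses $X-\ell/2\leq(a+b)/2$, while when $X<\ell/2$ one rewrites it as $(\tau/n)^{\ell/2-X}$ and uses $\tau\leq C(T)\,n$. A parallel case analysis in the regime $t-s\leq 1/n$, now exploiting the sharper bound $\tau\leq C\sqrt{n+1}$, disposes of the second claimed estimate in exactly the same manner. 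Summing the at most $4^m$ contributing terms then proves the proposition.

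The conceptual message is that, despite the alternating sign structure suggesting that cancellations in the expansion should be essential, the prefactor $n^{-(|T|+|S|)/2}$ supplied by Corollary~\ref{cftwotime} is already strong enough to dominate the target, precisely because $\tau\leq Cn$. The main obstacle I anticipate is therefore combinatorial rather than analytic: keeping track of the four possible assignments ($\epsilon_j\in\{00,01,10,11\}$) for each repetitive site, and distinguishing cleanly between the two regimes $t-s\gtrless 1/n$, which correspond respectively to $\tau/n$ being of order one or of order $1/\sqrt{n}$. No additional correlation tool beyond Corollary~\ref{cftwotime} should be required.
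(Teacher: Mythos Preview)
Your proposal is correct. Both your argument and the paper's rest on the two-time correlation estimate (Theorem~\ref{twotimecf}/Corollary~\ref{cftwotime}), but the combinatorial organization differs. The paper expands $\prod_i(\overline\eta_t(x_i)-\overline\eta_s(x_i))$ naively over subsets $Q\subseteq\{1,\ldots,k\}$ of the \emph{original} index set, so that each factor $\prod_{i\in Q}\overline\eta_t(x_i)$ may still contain repetitions; it then invokes Corollary~\ref{cftwotime}, which internally applies the identity~\eqref{identity} to strip those repetitions, and finally bounds the resulting supremum $M_{(x_i)}$ by an induction on $m$, peeling off one repetitive site at a time through a four-case analysis. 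You instead group by the distinct sites $y_1,\ldots,y_m$ and apply~\eqref{identity} \emph{before} any correlation estimate, so that the expansion runs directly over pairs $T,S\subseteq\{1,\ldots,m\}$ of genuinely distinct sites; Theorem~\ref{twotimecf} then applies cleanly, and the bound follows from a single elementary inequality on exponents in each regime, with no induction. Your route is more transparent and makes the key structural constraint (each non-repetitive site lies in exactly one of $T,S$, whence $|T|+|S|=\ell+a+b$) explicit from the outset; the paper's induction is more mechanical. For completeness: the sketched second regime $t-s\leq 1/n$ does go through as you indicate, since $X\leq(\ell+a+b)/2\leq m-\ell/2$ forces the exponent of $\tau/n$ in the ratio to be nonnegative, and then $\tau/n\lesssim n^{-1/2}$ reduces the claim to $X\leq\ell/2+a+b$, which is immediate from $X\leq(\ell+a+b)/2$.
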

\begin{proof}

	We first derive a formula for the upper bound of the expectation to be estimated in this proposition.
	Opening the product inside the expectation and using the triangle inequality, we obtain
	\begin{equation*}
	\Big\lvert \bb E_{\nu_{\rho_0^n(\cdot)}}\Big[\prod_{i=1}^{k} \big(\overline{\eta}_t(x_i)- \overline{\eta}_s(x_i)\big) \Big]\Big\rvert
	\;\leq\;  \sum_{Q\subseteq\{1,\dots,\, k\}}\Big\lvert \bb E_{\nu_{\rho_0^n(\cdot)}}\Big[\prod_{i\in Q} \overline{\eta}_t(x_i)\prod_{j\in Q^c} \overline{\eta}_s(x_j) \Big]\Big\rvert
	\end{equation*}
	where $Q^c$ is the complement of $Q$ with respect to the set $\{1,\dots, k\}$. In view of Corollary \ref{cftwotime}, we have that
	\begin{equation*}
	\Big\lvert \bb E_{\nu_{\rho_0^n(\cdot)}}\Big[\prod_{i=1}^{k} \big(\overline{\eta}_t(x_i)- \overline{\eta}_s(x_i)\big) \Big]\Big\rvert
	\;\lesssim\; \sup_{Q\subseteq\{1,2,\ldots,k\}}\sup_{I,J}\,n^{-(|I|+|J|)/2} \Big(\frac{n}{\sqrt{n^2(t-s)+1}}\Big)^{|I|\wedge |J|}
	\end{equation*}
	where the supremum is taken over all subsets $I\in\mc R((x_i: i\in Q))$ and $J\in \mc R(x_j: j\in Q^c)$. Noticing that every non-repetitive point in $(x_i: 1\leq i\leq k)$ is a non-repetitive point in either $(x_i: i\in Q)$ or $(x_j:j\in Q^c)$, we can further conclude that
	\begin{equation}\label{IJ}
	\Big\lvert \bb E_{\nu_{\rho_0^n(\cdot)}}\Big[\prod_{i=1}^{k} \big(\overline{\eta}_t(x_i)- \overline{\eta}_s(x_i)\big) \Big]\Big\rvert
	\;\lesssim\; \sup_{I,J}\,n^{-(|I|+|J|)/2} \Big(\frac{n}{\sqrt{n^2(t-s)+1}}\Big)^{|I|\wedge |J|}
	\end{equation}
	where the supremum is taken over all subsets $I,J\in\{1,2,\cdots,k\}$ such that $I\cap J=\varnothing$, both $(x_i: i\in I)$ and $(x_j:j\in J)$ consist of only non-repetitive points, and every non-repetitive point in $(x_i:1\leq i\leq k)$ is contained in either $(x_i: i\in I)$ or $(x_j:j\in J)$.

	Let us denote the supremum on the right hand side of \eqref{IJ} by $M_{(x_i:1\leq i\leq k)}$. We are going to show that
	\begin{equation}\label{Mbound}
	M_{(x_i:1\leq i\leq k)} \;\leq\;
	\begin{cases}
	Cn^{-\ell/2}\Big(\frac{n}{\sqrt{n^2(t-s)+1}}\Big)^{\ell/2}\,,\quad &\text{if}\;\; t-s\geq \frac{1}{n}\\
	Cn^{-m/2}\Big(\frac{n}{\sqrt{n^2(t-s)+1}}\Big)^{m-\ell/2}\,,\quad &\text{if}\;\; t-s\leq \frac{1}{n}\,.
	\end{cases}
	\end{equation} The proof is based on induction in $m$. Obviously \eqref{Mbound} follows from Theorem \ref{twotimecf} if $m=\ell$ since in this case all points are non-repetitive. Since $m\geq \ell$ this is the basis case.

	Assume that the statement of \eqref{Mbound} holds for every list of points $(y_i: 1\leq i\leq k)$ and all $k\in\bb N$ such that $[(y_i: 1\leq i\leq k)]<m$. Consider now a list of points $(x_i: 1\leq i\leq k)$ such that $[(x_i: 1\leq i\leq k)]=m$. Pick a repetitive point of $(x_i: 1\leq i\leq k)$, say $x_{i_1}$. Assume this point is repeated $a$ times in $(x_i: 1\leq i\leq k)$: there exist indices $i_1,i_2,\dots, i_a$ such that
	$$x_{i_1}=x_{i_2}=\cdots=x_{i_a}.$$
	Let $A_{x_{i_1}}\subset\{1,2,\dots,k\}$ be the set of those indices: $A_{x_{i_1}}=\{i_1,i_2,\dots,i_a\}$.
	Denote by $I$ and $J$ those sets at which the supremum is achieved in \eqref{IJ}. We need to carefully  analyse to which sets of $I$ and $J$ do the elements of $A_{x_{i_1}}$ belong to.
	Recall that $I$ and $J$ correspond to non-repetitive lists so that at most one element of $A_{x_{i_1}}$ can belong to $I$ and $J$. There are four cases:
	\begin{enumerate}
		\item There is one element contained in $I$ and one element contained in $J$. W.l.o.g., assume $i_1\in I$ and $i_2\in J$.
		\item There is one element contained in $I$ and no element contained in $J$. W.l.o.g., assume $i_1\in I$.
		\item There is no element contained in $I$ and one element contained in $J$. W.l.o.g., assume $i_2\in J$.
		\item There is no element contained in $I$, neither in $J$.
	\end{enumerate}

	\textbf{Case $(1)$:}  Note that
	\begin{equation*}
	\begin{split}
	&n^{-(|I|+|J|)/2} \Big(\frac{n}{\sqrt{n^2(t-s)+1}}\Big)^{|I|\wedge |J|}\\
	&=n^{-(|I\backslash\{i_1\}|+|J\backslash\{i_2\}|)/2} \Big(\frac{n}{\sqrt{n^2(t-s)+1}}\Big)^{|I\backslash\{i_1\}|\wedge |J\backslash\{i_2\}|}\times \frac{1}{\sqrt{n^2(t-s)+1}}\,.
	\end{split}
	\end{equation*}
	Thus, by the definition of $ M_{(x_i: 1\leq i\leq k,\, i\notin A_{x_{i_1}})}$,
	$$n^{-(|I\backslash\{i_1\}|+|J\backslash\{i_2\}|)/2} \Big(\frac{n}{\sqrt{n^2(t-s)+1}}\Big)^{|I\backslash\{i_1\}|\wedge |J\backslash\{i_2\}|} \,\leq\, M_{(x_i: 1\leq i\leq k,\, i\notin A_{x_{i_1}})}\,.$$
	By the fact that $x_{i_1}$ is a repetitive point of the list $(x_i:i\leq i\leq k)$, we have that $\|(x_i: 1\leq i\leq k,\, i\notin A_{x_{i_1}})\|=\ell$ and $[(x_i: 1\leq i\leq k,\, i\notin A_{x_{i_1}})]=m-1$.
	Hence,
	\begin{equation*}
	M_{(x_i: 1\leq i\leq k,\, i\notin A_{x_{i_1}})}\;\lesssim\;
	\begin{cases}
	n^{-\ell/2}\Big(\frac{n}{\sqrt{n^2(t-s)+1}}\Big)^{\ell/2}\,,\quad &\text{if}\;\; t-s\geq \frac{1}{n}\\
	Cn^{-m/2+1/2}\Big(\frac{n}{\sqrt{n^2(t-s)+1}}\Big)^{m-\ell/2-1}\,,\quad &\text{if}\;\; t-s\leq \frac{1}{n}.
	\end{cases}
	\end{equation*}
	Based on the three inequalities above, applying the induction assumption on\break $M_{(x_i: 1\leq i\leq k,\, i\notin A_{x_{i_1}})}$, we get
	\begin{equation*}
	M_{(x_i:1\leq i\leq k)} \;\leq\;
	\begin{cases}
	Cn^{-\ell/2}\Big(\frac{n}{\sqrt{n^2(t-s)+1}}\Big)^{\ell/2}\,,\quad &\text{if}\;\; t-s\geq \frac{1}{n}\\
	Cn^{-m/2}\Big(\frac{n}{\sqrt{n^2(t-s)+1}}\Big)^{m-\ell/2}\,,\quad &\text{if}\;\; t-s\leq \frac{1}{n}.
	\end{cases}
	\end{equation*}
	Here, we used that $\tfrac{1}{\sqrt{n^2(t-s)+1}}\lesssim \tfrac{1}{\sqrt{n}} \leq 1$ if $t-s\geq \tfrac{1}{n}$.
	We thus can conclude the analysis of the first case.

	\textbf{Case $(2)$:} Note that
	\begin{equation*}
	\begin{split}
	&n^{-(|I|+|J|)/2} \Big(\frac{n}{\sqrt{n^2(t-s)+1}}\Big)^{|I|\wedge |J|}\\
	\leq\,&n^{-(|I\backslash\{i_1\}|+|J|)/2} \Big(\frac{n}{\sqrt{n^2(t-s)+1}}\Big)^{|I\backslash\{i_1\}|\wedge |J|}\times n^{-1/2}\frac{n}{\sqrt{n^2(t-s)+1}}\,.
	\end{split}
	\end{equation*}
	Here, we used that $t-s\leq T$, so that $\tfrac{1}{\sqrt{n^2(t-s)+1}}\gtrsim 1$.
	Similar to the case $(1)$, the expression on the right hand side is bounded by
	$$M_{(x_i: 1\leq i\leq k,\, i\notin A_{x_{i_1}})}\times n^{-1/2}\frac{n}{\sqrt{n^2(t-s)+1}}\,.$$
	Using the induction assumption we obtain
	\begin{equation*}
	M_{(x_i:1\leq i\leq k)} \;\leq\;
	\begin{cases}
	Cn^{-\ell/2}\Big(\frac{n}{\sqrt{n^2(t-s)+1}}\Big)^{\ell/2}\,,\quad &\text{if}\;\; t-s\geq \frac{1}{n}\\
	Cn^{-m/2}\Big(\frac{n}{\sqrt{n^2(t-s)+1}}\Big)^{m-\ell/2}\,,\quad &\text{if}\;\; t-s\leq \frac{1}{n}.
	\end{cases}
	\end{equation*}
	Here, we used that $\tfrac{1}{\sqrt{n^2(t-s)+1}}\lesssim \tfrac{1}{\sqrt{n}}$ if $t-s\geq \tfrac{1}{n}$.

	\textbf{Case $(3)$:} This case can be dealt with the same arguments as in Case $(2)$.

	\textbf{Case $(4)$.} In this case we have that
	$$n^{-(|I|+|J|)/2} \Big(\frac{n}{\sqrt{n^2(t-s)+1}}\Big)^{|I|\wedge |J|}\,\leq\, M_{(x_i: 1\leq i\leq k,\, i\notin A_{x_{i_1}})}.$$
	From this inequality and the induction assumption, we finally get
	\begin{equation*}
	M_{(x_i:1\leq i\leq k)} \;\leq\;
	\begin{cases}
	Cn^{-\ell/2}\Big(\frac{n}{\sqrt{n^2(t-s)+1}}\Big)^{\ell/2}\,,\quad &\text{if}\;\; t-s\geq \frac{1}{n}\\
	Cn^{-m/2+1/2}\Big(\frac{n}{\sqrt{n^2(t-s)+1}}\Big)^{m-\ell/2-1}\,,\quad &\text{if}\;\; t-s\leq \frac{1}{n}.
	\end{cases}
	\end{equation*}
	To conclude this case note that if $t-s\leq \tfrac1n$, then
	\begin{equation*}
	\sqrt{n}\frac{\sqrt{n^2(t-s)+1}}{n}\;\leq\; \frac{\sqrt{n+ 1}}{\sqrt{n}}\;\lesssim\; 1\,.
	\end{equation*}

	We thus conclude the proof.
\end{proof}

\subsection{ Multiple times correlation estimate -- Proof of Theorem~\ref{gnE}}\label{42}
We now prove Theorem~\ref{gnE}.

\begin{proof}
	We prove this proposition by induction on $m$, the number of lists of points. The case $m=2$ is just Theorem \ref{twotimecf}.

	Assume $m\geq 3$ and that we have proved the statement of the proposition in the case in which  the number of lists of points is strictly less than $m$. The rest of the proof is quite similar to the proof of Proposition~\ref{twotimecf}, so we only outline it here.
	For  $t>t_{m-1}$ and a collection of points $(x_{i}: 1\leq i \leq k)$~, define the function
	\begin{equation}\label{defphi}
	\phi_t(x_{i}: 1\leq i\leq k)\;=\; \bb E_{\nu_{\rho_0^n(\cdot)}}\Big[\Big( \prod_{i=1}^{k} \overline{\eta}_t(x_{i})\Big) \prod_{j=1}^{m-1} \prod_{i_j=1}^{k_j} \overline{\eta}_{t_j}(x_{i_j})\Big].
	\end{equation}
	Taking the time derivative, and then using the Duhamel's principle, we obtain
	\begin{equation}\label{phieq}
	\begin{split}
	\phi_t(x_i:1\leq i\leq k)\;=\; &\mathbf{E}_{(x_i\,:\,1\leq i\leq k)}\big[\phi_{t_{m-1}}(\mathbf{X}^i_{t-t_{m-1}}: 1\leq i\leq k) \big] \\
	+\,& \mathbf{E}_{(x_i\,:\, 1\leq i\leq k)}\Big[ \int_{t_{m-1}}^t \dd r\, \Phi(\mathbf{X}_{t-r},r) \Big]\,,
	\end{split}
	\end{equation}
	where for every integer $k\geq 2$,
	\begin{equation*}
	\begin{split}
	\Phi((x_i: 1\leq i\leq k),r) \;:= \;&2n^2  \sum_{i,j=1}^{k} \one\{|x_j-x_i|=1\}\big(\rho_{r}^n(x_j)-\rho_{r}^n(x_i)\big)\big[\widetilde{\phi}_{r}(x_j)-\widetilde{\phi}_{r}(x_i)\big]\\
	&- n^2\sum_{i,j=1}^{k} \one\{|x_j- x_i|=1\}\big(\rho_{r}^n(x_j)-\rho_{r}^n(x_i)\big)^2 \widetilde{\phi}_{r}(x_i,x_j)\\
	\end{split}
	\end{equation*}
	and for $k=1$, $\Phi((x_i: 1\leq i\leq k),r):=0$. Here $\widetilde{\phi}_t(A)=\phi_t(\{x_i: 1\leq i\leq k\}\backslash A)$ for every subset $A\subset\{x_1,\dots,x_k\}$. The two terms at the right hand side of \eqref{phieq} are estimated in Corollary \ref{initialpm} and Proposition \ref{estimatePhi} respectively, both being absolutely bounded  by
	$$Cn^{-\sum_{j=1}^m k_j/2} \prod_{j=1}^{m-1} \Big(\frac{n}{\sqrt{n^2(t_{j+1}-t_j)+1}}\Big)^{k_j\wedge \sum_{l=j+1}^m k_l}. $$
	This proves the theorem.
\end{proof}

\subsection{Estimates on the transition probability of SSEP -- Proof of Theorem~\ref{grad}}\label{43}

\subsubsection{Bounds on transition probabilities}\label{Sec431}
In this subsection we summarize some of the results of Erhard/Hairer~\cite[Section 3]{EH22} and Landim~\cite{Landim05} about the transition probability of $k$ exclusion particles. In the former article the exclusion process was considered on a large torus, however since its results are based on the work of~\cite{Landim05}, which derived bounds on the same quantities considered on the whole space, the results from~\cite{EH22} carry over to the present context without any major modification.

We denote the transition probability of $k$ \emph{labeled exclusion} particles that evolve through the stirring process and are accelerated by a factor $n^2$ by $p^{\lex}(\cdot,\cdot)$. It was shown in~\cite[Theorem~3.1]{Landim05} that there exist constants $C_1, C_2 > 0$ such that for all $x,y\in \Lambda^k$ and all $n^2t> C_1$,
\begin{equation}\label{eq:upperexclusion}
p_t^{\lex}(\mathbf x,\mathbf y)\;\leq\; \frac{C_1}{(1+n^2t)^{k/2}}  \exp\Big\{-\frac{C_2 n^2t}{2(\log n^2t)^2}\Phi\Big(\frac{|\mathbf x-\mathbf y| \log n^2t}{C_2^2 n^2t}\Big)\Big\}\,,
\end{equation}
where
\begin{equation*}
\Phi(u) \;=\; \sup_{w\in \R}\{uw-w^2 \cosh w\}\,.
\end{equation*}
In~\cite[Lemma 3.2]{EH22} it was shown that we can further estimate for all $\mathbf x,\mathbf y$ and $t$ as above
\begin{equation*}
\frac{C_1}{(1+n^2t)^{k/2}}  \exp\Big\{-\frac{C_2 n^2t}{2(\log n^2t)^2}\Phi\Big(\frac{|\mathbf x-\mathbf y| \log n^2t}{C_2^2 n^2t}\Big)\Big\}\;\lesssim\; \prod_{i=1}^{k}\bar p_t(x_i,y_i)\,,
\end{equation*}
where $\bar p(\cdot,\cdot)$ is a kernel (not necessarily a probability kernel) defined on $\R_+\times\Z\times\Z$\
satisfying
\begin{equation}\label{eq:largetbarp}
\bar p_t(x_i, y_i)\; =\; \frac{1}{\sqrt{1+n^2t}} \exp\bigg\{-\frac{C_2 n^2t}{2k(\log n^2t)^2}\Phi\Big(\frac{|x_i-y_i| \log n^2t}{C_2^2 n^2t}\Big)\bigg\}\,.
\end{equation}
Recall that $\bar p_t(x,y)$ was defined only for $n^2t>C_1$.
If we define, for $n^2t\leq C_1$,
\begin{equation*}
\bar p_t(x,y)\;=\; e^{-|x-y|}\,,
\end{equation*}
then we still have that, for all $t\geq 0$,
\begin{equation}\label{boundhk}
p_t^{\lex}(\mathbf x,\mathbf y)\;\lesssim\; \prod_{i=1}^{k}\bar p_t(x_i,y_i)\,.
\end{equation}
Note that $\bar p$ actually depends on the number of particles $k$. However, since in our applications $k$ will always be a fixed number we do not indicate this in the notation.
Since $\Phi(w)\sim w^2$ for $w$ small and $\Phi(w)\sim w\log w$ for large $w$  we conclude that, for $t\geq C_1$, there exists a constant $a_1>0$ such that
\begin{equation}\label{eq:gaussian}
\bar p_t(x,y) \;\leq\; \frac{1}{\sqrt{n^2t+1}} \exp\Big\{-\frac{|x-y|^2}{a_1n^2t}\Big\}
\end{equation}
provided that $|x-y|\leq n^2t/\log n^2t$. The above essentially shows that $\bar p$ satisfies a Gaussian bound for large time and in the interesting spatial regime.
If $|x-y|\geq n^2t/\log n^2t$ (but still $n^2t\geq C_1$), then it is possible to show that there exists a constant $a_0> 0$ such that
\begin{equation}\label{eq:subgaussian}
\bar p_t(x,y) \;\leq\; \frac{1}{\sqrt{n^2t+1}} \exp\Big\{-\frac{|x-y|}{4a_0 \log n^2t}\log\frac{|x-y|\log n^2t}{4ea_0^2n^2t}\Big\}\,.
\end{equation}
We continue by presenting some of the most important properties of $\bar p$.
Let $\zeta\in[0,1]$, and denote $\Z/n= \{z_n:\, \exists\, z\in \Z \text{ s.t. } z_n= z/n\}$. Then, it was shown in~\cite[Lemma 3.5]{EH22} that
\begin{itemize}
	\item[1)] $(\sqrt{t} + \|x-y\| + n^{-1})^\zeta n^\zeta \bar p_{ t}(n x, n y)$ is bounded uniformly in $n$, $t\in [0,1]$ and $x,y\in \Z/n$.\smallskip
	\item[2)] $\sum_{x\in \Z/n} \bar p_{ t}(nx,ny) \lesssim 1$\,  uniformly in $n$, $t\in [0,1]$ and $x,y\in \Z/n$.\smallskip
	\item[3)] $\sum_{y\in \Z/n} \bar p_{ t}(nx,ny) \lesssim 1$\, uniformly in $n$, $t\in [0,1]$ and $x,y\in \Z/n$.
\end{itemize}
Finally, it was shown that $\bar p$ behaves well under convolution. To properly formulate it, we denote by $\Z^k/n$ the set of all $k$-tuples with all coordinates in $\Z/n$. We say that a kernel $p$ defined on $\R_+^k\times \Z/n\times\Z/n$ is of order $\zeta\geq 0$ if
\begin{equation*}
\sup_{(t,\,x,\,y)\in \R_+^k\times\Z/n\times\Z/n}\bigg(\Big(\sum_{i=1}^k|t_i|\Big)^{\tfrac12} + \|x-y\| + n^{-1}\bigg)^\zeta n^\zeta p_t(x,y) \;<\;\infty\,.
\end{equation*}
It was then shown that if $p$ is a kernel of order $\zeta$ defined on  $\R_+\times \Z/n\times\Z/n$ and defining
\begin{equation*}
p_{t}^{\otimes, 2}(x,z)\;=\; \sum_{y\in \Z/n} p_{t_1}(x,y) p_{t_2}(y,z)
\end{equation*}
where $t=(t_1, t_2)$, then $ p^{\otimes, 2}$ is of order $2\zeta-1$ and for all $t\in \R_+^2$ and all $x,z\in\Z/n$
\begin{equation*}
\sum_{x\in \Z/n} p_{t}^{\otimes, 2}(x, z)\lesssim 1\qquad \text{ and }\qquad \sum_{z\in \Z/n} p_{t}^{\otimes, 2}(x, z)\lesssim 1
\end{equation*}
where both bounds above hold uniformly over all parameters.
Corollary 3.9 in~\cite{EH22} then shows that the same holds for higher order convolutions, i.e., one can for $\ell\geq 3$ define a kernel $p^{\otimes, \ell}$ in pretty much the same way as $p^{\otimes, 2}$ by considering the spatial convolution of $\ell$ kernels as above.
We note that the above applies to the kernel $\bar p_{\cdot}(n\cdot, n\cdot)$ which is of order $\zeta= 1$.
To conclude this section let us note that as a consequence of the results presented here we have, for instance, the estimate
\begin{equation}\label{kernelupbound}
\bar p_{ t}(x,y) \;\lesssim\; \frac{1}{\sqrt{n^2 t+1}}
\end{equation}
which holds uniformly over all parameters. The same bound (with a possibly different proportionality constant) holds for $\bar p^{\otimes, 2}$, $\bar p^{\otimes, 3}$ and higher convolutions.

\begin{remark}\label{rem:potimes}\rm
	Note that the time component of $p^{\otimes 2}$ has two coordinates $t=(t_1, t_2)$. In the same vein $p^{\otimes, \ell}$ depends on an $\ell$-dimensional vector $(t_1, t_2, \ldots, t_\ell)$. However, since our estimates will always involve the sum of the time coordinates but never its specific values, whenever we deal with $p^{\otimes,\ell}$ for some $\ell\geq 2$ we will also write $p^{\otimes, \ell}_{t_1+t_2+\cdots+ t_\ell}$ even though this is not well defined.
\end{remark}

\subsubsection{Coupling and bounds on the gradient}
Before we come to the proof of Theorem~\ref{grad} we make some preparatory remarks. If instead of $k$ exclusion particles one would consider $k$ independent random walks, then one straightforwardly obtains the following estimate:
$$\big| p_t^{\rw}(\mathbf x,\mathbf y)-p^{\rw}_t(\mathbf x+e_i,\mathbf y)\big|\;\lesssim\; \frac{1}{(n^2t+1)^{(k+1)/2}}\,,$$
where $p_t^{\rw}$ is the transition probability of independent random walks speeded up by $n^2$.
If one tries to obtain a weaker version of gradient estimate
$$\big| p_t^{\ex}(\mathbf x,\mathbf y)-p^{\ex}_t(\mathbf x+e_i,\mathbf y)\big|$$
where $p_t^{\ex}$ is the transition probability of the exclusion process with indistinguishable particles speeded up by $n^2$, it might be a good idea
to use the previous gradient estimate on the random walk and try to use a comparison between $p^{\ex}_t$ and $p^{\rw}_t$, for instance the one provided in \cite[Theorem~1]{EDA}. However, it turns out that the existing results in the literature about such a comparison are not sharp enough for our purposes, and so we need to resort to a different strategy.

The proof of Theorem \ref{grad} is based on a coupling of two exclusion processes with labelled particles. We couple two processes $\{\mathbf X_t:t\geq 0\}$ and $\{\mathbf Y_t:t\geq 0\}$, where $\{\mathbf X_t:t\geq 0\}$ is the SSEP with $k$ labelled particles starting from $\mathbf x=(x_1,\dots,x_k)\in\Lambda^k$ and $\{\mathbf Y_t:t\geq 0\}$ is the SSEP with $k$ labelled particles starting from $\mathbf x+e_i$. Both processes are speeded by $n^2$.  Denote by $\bb P^{\coup}$ the probability on the path space of the joint process $\{(\mathbf X_t,\mathbf Y_t): t\geq 0\}$ and by $\bb E^{\coup}$ the corresponding expectation.
Our construction relies on the following graphical representation of SSEP with $k+1$ labelled particles.

For each bond $(a,a+1)$, $a\in\bb Z$, we associate an i.i.d.\ Poisson clock with parameter $2n^2$. Whenever the clock rings, particles at both ends (if there are) of this bond will both move to the other end with probability $1/2$, and stay put with probability $1/2$.  For example, when the clock at bond $(a,a+1)$ rings, and there is one particle at each end, then with probability $1/2$ these two particles exchange their positions and with probability $1/2$ these two particles stay at their current positions. If  there is one particle at site $a$ and site $a+1$ is empty when the clock at bond $(a,a+1)$ rings, then with probability $1/2$ the particle at site $a$ moves to site $a+1$ and with probability $1/2$  the particle at site $a$ does not move.

Let us denote  by $\{\mathbf Z_t:t\geq 0\}$ the exclusion process corresponding to the above graphical representation, with $k+1$ labelled particles started from $(x_1,\dots,x_i,x_i+1,x_{i+1},\dots,x_k)$. For each $1\leq j\leq k+1$, let $\mathbf Z_t^j$ be the position of the $j$-th particle at time $t$. We now construct the coupling. Without loss of generality, assume that $e_i=e_1$. In the process $\{\mathbf Z_t:t\geq 0\}$, we call the particle starting from site $x_1$ the first particle, and the particle starting from site $x_1+1$ the second particle.  Then we define the coupled process $(\mathbf X_t,\mathbf Y_t)$ by
\begin{align*}
\mathbf X_t \;:=\;(\mathbf Z_t^1, \mathbf Z_t^3,\dots,\mathbf Z_t^{k+1}) \quad\text{ and }\quad
\mathbf Y_t\;:=\;(\mathbf Z_t^2, \mathbf Z_t^3,\dots,\mathbf Z_t^{k+1})\,.
\end{align*}
In other words, $\mathbf X_t$ is the process obtained from $\mathbf Z_t$ by ignoring the second particle, and $\mathbf Y_t$ is the process obtained from $\mathbf Z_t$ by ignoring the first particle. It is easy to check that both $\mathbf X_t$ and $\mathbf Y_t$ are SSEP with $k$ labelled particles.

Let $\tau$ be the first time that the first and second particle are at positions $e_-$ and $e_+$ with $|e_--e_+|=1$ and the clock attached to the edge $e=(e_-, e_+)$ rings.  Once this clock rings, by the previous graphical representation, the first and second particle may or may not swap their positions with equal probability. Therefore, on the event $\{\tau\leq t\}$, $\mathbf Z_t^1$ and $\mathbf Z_t^2$ have the same distribution, and so do $\mathbf X_t$ and $\mathbf Y_t$. This in particular implies that for every $\mathbf z,\mathbf w\in\Lambda^k$,
\begin{equation}\label{swap}
\bb E^{\coup} \Big[ \one\{\tau\leq t\} \big[\one\{(\mathbf X_t,\mathbf Y_t)=(\mathbf z,\mathbf w)\} -\one\{(\mathbf X_t,\mathbf Y_t)=(\mathbf w,\mathbf z)\}\big] \Big]\;=\;0\,.
\end{equation}

The next lemma gives an upper bound on the gradient in terms of a probability involving the stopping time $\tau$.
\begin{lemma}\label{lemmacoup}
	There exists a constant $C$ independent of $x,y,t$ and $e_i$ such that
	$$\big| p_{2t}^{\lex}(\mathbf x,\mathbf y)-p^{\lex}_{2t}(\mathbf x+e_i,\mathbf y)\big|\;\leq\; \frac{C}{(n^2 t+1)^{k/2}}\,\bb P^{\coup}(\tau>t)\,.$$
\end{lemma}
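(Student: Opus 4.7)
The plan is to write the difference $p_{2t}^{\lex}(\mathbf x,\mathbf y)-p_{2t}^{\lex}(\mathbf x+e_i,\mathbf y)$ as an expectation under the coupling $\bb P^{\coup}$, then split according to whether the ``swap event'' has already occurred by time $t$ or not. The event $\{\tau\le t\}$ produces zero contribution thanks to the exchangeability identity \eqref{swap}, while the event $\{\tau>t\}$ is controlled by the uniform upper bound on $p_t^{\lex}$.

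\textbf{Step 1 (rewriting the two probabilities via the Markov property).} First I would apply Chapman--Kolmogorov at time $t$, together with the fact that $\mathbf X$ and $\mathbf Y$ are marginally distributed as the $k$-particle labelled SSEP started from $\mathbf x$ and $\mathbf x+e_i$ respectively. This gives
\begin{equation*}
p_{2t}^{\lex}(\mathbf x,\mathbf y)=\bb E^{\coup}\bigl[p_t^{\lex}(\mathbf X_t,\mathbf y)\bigr],\qquad p_{2t}^{\lex}(\mathbf x+e_i,\mathbf y)=\bb E^{\coup}\bigl[p_t^{\lex}(\mathbf Y_t,\mathbf y)\bigr].
\end{equation*}
Subtracting and splitting on $\{\tau\le t\}\cup\{\tau>t\}$,
\begin{equation*}
p_{2t}^{\lex}(\mathbf x,\mathbf y)-p_{2t}^{\lex}(\mathbf x+e_i,\mathbf y)=\bb E^{\coup}\!\left[\one\{\tau\le t\}\Delta_t\right]+\bb E^{\coup}\!\left[\one\{\tau>t\}\Delta_t\right],
\end{equation*}
where $\Delta_t:=p_t^{\lex}(\mathbf X_t,\mathbf y)-p_t^{\lex}(\mathbf Y_t,\mathbf y)$.

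\textbf{Step 2 (the term on $\{\tau\le t\}$ vanishes).} The identity \eqref{swap} says that the joint law of $(\mathbf X_t,\mathbf Y_t)$ restricted to $\{\tau\le t\}$ is symmetric under the swap $(\mathbf z,\mathbf w)\mapsto(\mathbf w,\mathbf z)$. Applying this with the function $F(\mathbf z,\mathbf w):=p_t^{\lex}(\mathbf z,\mathbf y)-p_t^{\lex}(\mathbf w,\mathbf y)$, which is antisymmetric, gives
\begin{equation*}
\bb E^{\coup}\!\left[\one\{\tau\le t\}\bigl(p_t^{\lex}(\mathbf X_t,\mathbf y)-p_t^{\lex}(\mathbf Y_t,\mathbf y)\bigr)\right]=0.
\end{equation*}

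\textbf{Step 3 (the term on $\{\tau>t\}$).} Here I just bound each transition probability by its uniform ceiling. Combining \eqref{boundhk} with \eqref{kernelupbound}, one has $p_t^{\lex}(\mathbf z,\mathbf y)\lesssim (n^2t+1)^{-k/2}$ uniformly in $\mathbf z,\mathbf y\in\Lambda^k$. Therefore
\begin{equation*}
\Bigl|\bb E^{\coup}\!\left[\one\{\tau>t\}\Delta_t\right]\Bigr|\;\le\;\frac{2C}{(n^2t+1)^{k/2}}\,\bb P^{\coup}(\tau>t),
\end{equation*}
and absorbing the factor $2$ into $C$ yields the claimed bound.

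The three steps are essentially mechanical; the only conceptual input is recognising that the graphical coupling is \emph{designed} so that, once the two special particles have had a chance to swap, they become statistically interchangeable, which is exactly the content of \eqref{swap}. Consequently there is no real obstacle in the proof of the lemma itself. The serious work has been pushed into the companion estimate that will bound $\bb P^{\coup}(\tau>t)$ from above by something of order $(n^2t+1)^{-1/2}$, which is what is needed to upgrade this lemma into Theorem~\ref{grad}; that coupling-time tail bound, rather than the present statement, is where the main difficulty lies.
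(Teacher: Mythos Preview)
Your proof is correct and follows essentially the same approach as the paper: Chapman--Kolmogorov at time $t$, split on $\{\tau\le t\}\cup\{\tau>t\}$, kill the first piece via the swap identity \eqref{swap}, and bound the second via the uniform heat-kernel estimate $p_t^{\lex}(\cdot,\cdot)\lesssim (n^2t+1)^{-k/2}$. The only cosmetic difference is that the paper first pulls out the uniform bound on $p_t^{\lex}(\mathbf z,\mathbf y)$ and then estimates $\sum_{\mathbf z}|p_t^{\lex}(\mathbf x,\mathbf z)-p_t^{\lex}(\mathbf x+e_i,\mathbf z)|$ by expanding explicitly over pairs $(\mathbf z,\mathbf w)$, whereas you keep everything inside a single coupling expectation; the content is the same.
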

\begin{proof}
	Note that
	\begin{equation*}
	\begin{split}
	\big| p_{2t}^{\lex}(\mathbf x,\mathbf y)-p^{\lex}_{2t}(\mathbf x+e_i,\mathbf y)\big|\;=\;&\Big|\sum_{\mathbf z\in\Lambda^k}p_t^{\lex}(\mathbf x,\mathbf z)p_t^{\lex}(\mathbf z,\mathbf y)-p_t^{\lex}(\mathbf x+e_i,\mathbf z)p_t^{\lex}(\mathbf z,\mathbf y)\Big|\\
	\leq\,&\sum_{\mathbf z\in\Lambda^k}\big |p_t^{\lex}(\mathbf x,\mathbf z)-p_t^{\lex}(\mathbf x+e_i,\mathbf z)\big| \,p_t^{\lex}(\mathbf z,\mathbf y)\,.\\
	\end{split}
	\end{equation*}
	By \cite[Theorem 1.1]{Landim05}, we know that  $p_t^{\lex}(\mathbf z,\mathbf y)$ is uniformly bounded by $(1+n^2t)^{-k/2}$. On the other hand, let $p_t^{\coup}$ be the transition probability of the coupled process that was defined before this lemma. Then
	\begin{align*}
	&\sum_{\mathbf z}\big |p_t^{\lex}(\mathbf x,\mathbf z)-p_t^{\lex}(\mathbf x+e_i,\mathbf z)\big|\\
	&=\;\sum_{\mathbf z}\big| \sum_{\mathbf w}\big[p_t^{\coup}\big((\mathbf x,\mathbf x+e_i),(\mathbf z,\mathbf w) -p_t^{\coup}\big((\mathbf x,\mathbf x+e_i),(\mathbf w,\mathbf z) \big)\big] \big|\\
	&\leq\; \sum_{\mathbf z\neq \mathbf w}\big| p_t^{\coup}\big((\mathbf x,\mathbf x+e_i),(\mathbf z,\mathbf w) -p_t^{\coup}\big((\mathbf x,\mathbf x+e_i),(\mathbf w,\mathbf z)\big)\big|\\
	&=\; \sum_{\mathbf z\neq \mathbf w}\big| \bb E^{\coup}\big[\one\{(\mathbf X_t,\mathbf Y_t)=(\mathbf z,\mathbf w)\} -\one\{(\mathbf X_t,\mathbf Y_t)=(\mathbf w,\mathbf z)\}\big] \big|.
	\end{align*}
	By \eqref{swap}, the previous expression is equal to
	\begin{align*}
	& \sum_{\mathbf z\neq \mathbf w}\Big| \bb E^{\coup}\Big[ \one\{\tau>t\} \big[\one\{(\mathbf X_t,\mathbf Y_t)=(\mathbf z,\mathbf w)\} -\one\{(\mathbf X_t,\mathbf Y_t)=(\mathbf w,\mathbf z)\}\big] \Big] \Big|\\
	& \leq\; \sum_{\mathbf z\neq \mathbf w}\Big| \bb E^{\coup} \Big[ \one\{\tau>t\} \big[\one\{(\mathbf X_t,\mathbf Y_t)=(\mathbf z,\mathbf w)\} +\one\{(\mathbf X_t,\mathbf Y_t)=(\mathbf w,\mathbf z)\}\big] \Big] \Big|\\
	& \leq\; 2\sum_{\mathbf z\neq \mathbf w}\bb P^{\coup}\big(\tau>t, (\mathbf X_t,\mathbf Y_t)\neq(\mathbf z,\mathbf w)\big)\\
	& \leq\; 2\,\bb P^{\coup}(\tau>t)\,.
	\end{align*}

\end{proof}

Theorem \ref{grad} is a simple consequence of Lemma \ref{lemmacoup}, once we can show that
\begin{equation}\label{coup}
\bb P^{\coup}(\tau>t)\;\lesssim\; \frac{1}{\sqrt{n^2 t+1}}\,.
\end{equation}

Note that from the graphical representation, the motion of every particle is determined only by the Poisson clock on each bond, but not effected by the other particles. Therefore, when dealing with $\tau$, we can focus on the first and second particle and ignore the presence of the other particles.
By definition of the stopping time $\tau$, it is not hard to see that before $\tau$, the first particle and the second particle perform accelerated independent simple random walks. Denote by $\tilde{\bb P}$ the probability on the path space of two independent random walks, and by $\tilde{\tau}$ the first meeting time of these two random walks. The key observation is that the events $\{\tau> t\}$ and $\{\tilde{\tau}> t\}$ have the same distribution. Hence we have
$$\bb P^{\coup}(\tau>t)\;=\;\tilde{\bb P}(\tilde{\tau}>t)\;\lesssim\;  \frac{1}{\sqrt{n^2 t+1}}\,. $$
This finishes the proof of Theorem~\ref{grad}.

\subsection{Estimates on integral terms}\label{44}
From \eqref{eq:upperexclusion} we can get a trivial bound of the gradient:
$$\big| p_t^{\lex}(\mathbf x,\mathbf y)-p^{\lex}_t(\mathbf x+e_j,\mathbf y)\big|\;\lesssim\;  \frac{C_1}{(1+n^2t)^{k/2}}  \exp\Big\{-\frac{C_2 n^2t}{2(\log n^2t)^2}\Phi\Big(\frac{\|\mathbf x-\mathbf y\| \log n^2t}{C_2^2 n^2t}\Big)\Big\}$$
for every $t>0$, every $\mathbf x,\mathbf y\in \bb Z^k$, every $1\leq j\leq k$ such that $\mathbf x+e_j\in\Lambda^k$.
Combining the bound in Theorem \ref{grad} and using the geometric interpolation bound
$$\min\{a,b\} \;\leq\; \sqrt{ab}, \quad  \forall \,\, a,b>0\,,$$
we obtain
\begin{equation}\label{gradbound}
\begin{split}
\big| p_t^{\lex}(\mathbf x,\mathbf y)-p^{\lex}_t(\mathbf x+e_j,\mathbf y)\big|\;\lesssim\; &\frac{1}{(\sqrt{n^2t+1})^{k+1/2}} \exp\Big\{-\frac{C_2 n^2t}{4(\log n^2t)^2}\Phi\Big(\frac{\|\mathbf x-\mathbf y\| \log n^2t}{C_2^2 n^2t}\Big)\Big\}\\
\lesssim\;& \frac{1}{(1+n^2t)^{1/4}} \prod_{i=1}^{k}\bar p_t(x_i,y_i)
\end{split}
\end{equation}
when $n^2t>C_1$ and an analogous inequality holds for $n^2t\leq C_1$.
Note that at this point that the definition of $\bar p$ is actually slightly different from the one in Section~\ref{Sec431}. Namely, because of the geometric interpolation bound we actually need to consider the square root of the exponential term in~\eqref{eq:largetbarp}. This of course only changes the constants inside the exponential but none of its properties. We therefore continue using the notation $\bar p$.

With this bound, we are now able to estimate the second expectation on the right hand side of \eqref{indeq} in Theorem \ref{twotimecf}. Recall the definition of $\varphi$ in \eqref{defvp} and the definition of $\Psi$ in \eqref{defPsi} in the proof of Theorem \ref{twotimecf}. Recall also that before giving these definitions, we have fixed a collection of points $(z_j: 1\leq j\leq k_2)$ and a time $s$.
\begin{proposition}\label{estimatePsi}
	There exists a constant $C$ independent of $(x_i: 1\leq i\leq k_1)$ and $(z_j: 1\leq j\leq k_2)$  such that
	\begin{equation*}
	\Big\lvert \mathbf{E}_{(x_i: 1\leq i\leq k_1)}\Big[ \int_s^t \dd r\, \Psi(\mathbf{X}_{t-r},r) \Big] \Big\rvert \;\leq\; Cn^{-(k_1+k_2)/2}\times \Big(\frac{n}{\sqrt{n^2(t-s)+1}}\Big)^{k_1\wedge k_2},
	\end{equation*}
	for any $s<t$ with $0\leq s,t\leq T$ and every integer $n\geq 1$.
\end{proposition}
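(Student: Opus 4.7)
The plan is to expand
\begin{equation*}
\mathbf{E}_{(x_i)}[\Psi(\mathbf X_{t-r}, r)] \;=\; \sum_{\mathbf y\in\Lambda^{k_1}} p^{\lex}_{t-r}(\mathbf x, \mathbf y)\,\Psi(\mathbf y, r),
\end{equation*}
and estimate the two summands in the definition of $\Psi$ separately, proceeding by induction on $k_1$ (the base case $k_1=1$ is a trivial special situation since then the second term of $\Psi$ is absent and the first reduces to $\widetilde\varphi_r(y_i)$ being a \emph{one}-spatial-point expectation, handled by Theorem~\ref{twotimecf} for $k_1=0$). Throughout, since $\rho_0$ is $C^2$ with bounded derivatives, the discrete heat equation yields $|\rho_r^n(y_j)-\rho_r^n(y_i)| \lesssim n^{-1}$ uniformly over pairs with $|y_i-y_j|=1$, which will be crucial to cancel the $n^2$ prefactors.

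For the \emph{squared term}, the factor $(\rho_r^n(y_j)-\rho_r^n(y_i))^2\lesssim n^{-2}$ exactly cancels the $n^2$, leaving $|\widetilde\varphi_r(y_i,y_j)|$ which involves only $k_1-2$ first-coordinate points and is controlled by the inductive hypothesis. Summing against $p^{\lex}_{t-r}(\mathbf x,\mathbf y)$ under the adjacency constraint $|y_i-y_j|=1$, via \eqref{boundhk} and the summability/pointwise bounds on $\bar p$ recalled in Subsection~\ref{Sec431}, produces an additional factor $(n^2(t-r)+1)^{-1/2}$ coming from the constrained pair. For the \emph{gradient term}, the factor $\rho_r^n(y_j)-\rho_r^n(y_i)$ contributes $n^{-1}$ and it remains to estimate $\widetilde\varphi_r(y_j)-\widetilde\varphi_r(y_i)$, which is $\varphi_r$ with $k_1-1$ arguments evaluated at two configurations differing in a single coordinate by one. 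I re-apply Duhamel's formula to $\widetilde\varphi_r$: the gradient is transferred onto $p^{\lex}_{r-s}$ in the initial term (for which \eqref{gradbound} supplies the sharp extra decay $(n^2(r-s)+1)^{-1/4}$ on top of the kernel $\prod_l \bar p_{r-s}$), while the equal-time expectation at time $s$ is bounded via Proposition~\ref{ndis}. The integral Duhamel term for $\widetilde\varphi_r$ is treated by the same decomposition, now on $k_1-1$ first-coordinate points, closing the induction.

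After summing and integrating in $r$, the integrand takes the form of a constant times $n^{\text{(power)}}\,(n^2(r-s)+1)^{-\alpha}(n^2(t-r)+1)^{-\beta}$. Such integrals are handled by splitting at the midpoint $(s+t)/2$ and applying the elementary bound $\int_0^u(n^2v+1)^{-\gamma}\,\dd v\lesssim n^{-2}(n^2u+1)^{1-\gamma}$ (with logarithmic adjustments when $\gamma=1$), which yields powers of $n^2(t-s)+1$ of the right form.

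The main obstacle is \emph{exponent matching}: the inductive hypothesis for $\widetilde\varphi_r$ carries the minimum $(k_1-1)\wedge k_2$ (gradient term) or $(k_1-2)\wedge k_2$ (squared term), whereas the target estimate demands $k_1\wedge k_2$. One needs to verify, in each of the three sub-regimes $k_2\le k_1-2$, $k_2=k_1-1$ and $k_2\ge k_1$, that the extra temporal factors — $(n^2(r-s)+1)^{-1/4}$ from \eqref{gradbound} and $(n^2(t-r)+1)^{-1/2}$ from the adjacency constraint — combine after the $r$-integration with the right powers of $n$ to upgrade the exponent correctly. A second subtlety is to ensure that the inductive use of Duhamel inside the gradient term really terminates: this is guaranteed because each iteration strictly decreases either $k_1$ or the ``gradient level'', so the recursion bottoms out at $k_1=1$.
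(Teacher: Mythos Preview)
Your strategy has a genuine gap in the squared term. If you invoke the inductive hypothesis (Theorem~\ref{twotimecf} for $k_1-2$ points) \emph{pointwise}, you obtain $|\widetilde{\varphi}_r(y_i,y_j)| \lesssim n^{-(k_1-2+k_2)/2}\big(n/\sqrt{n^2(r-s)+1}\big)^{(k_1-2)\wedge k_2}$. When $3\le k_2\le k_1-2$ this singularity at $r=s$ is too strong: after multiplying by the adjacency factor $(n^2(t-r)+1)^{-1/2}$ and integrating in $r$, the contribution from $r\in[s,(s+t)/2]$ overshoots the target by a factor of order $(n^2(t-s)+1)^{(k_2-1)/2}/n$, which diverges (try $k_1=5$, $k_2=3$, $t-s=T$). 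The midpoint split does not save this, because $\int_s^{(s+t)/2}(n^2(r-s)+1)^{-k_2/2}\,\dd r$ only yields $O(n^{-2})$ once $k_2\ge 3$.

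The paper's fix is to never bound $|\widetilde{\varphi}_r|$ pointwise. Instead it introduces the kernel-weighted quantity $S_{t,r}^{k,\ell}=\sup_{\mathbf x}\sum_{\mathbf y}\tilde{p}_{t-r}^{\otimes,\ell}(\mathbf x,\mathbf y)\,|\varphi_r(\mathbf y)|$ and proves by induction on $k$ (Lemma~\ref{difbound}) that $S_{t,r}^{k,\ell}\lesssim n^{-(k+k_2)/2}\big(n/\sqrt{n^2(t-s)+1}\big)^{k\wedge k_2}\cdot(n^2(t-s)+1)^{1/4}/(n^2(r-s)+1)^{1/4}$. The crucial point is that the main factor carries $(t-s)$, not $(r-s)$, with only a mild $1/4$-power correction in $r$; this is precisely what makes the $r$-integral close in all exponent regimes. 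The $(t-s)$-dependence enters through the initial Duhamel term via Lemma~\ref{initialq}: summing the equal-time correlation $\varphi_s(\mathbf v)$ against the \emph{composite} kernel at time $t-s$ exploits the overlap structure between $\{v_i\}$ and $\{z_j\}$ before one ever takes a supremum. Relatedly, for the gradient term the paper does not re-apply Duhamel to $\widetilde{\varphi}_r$; it performs a summation by parts in the spatial variable $\mathbf w$, transferring the discrete gradient directly onto the outer kernel $p^{\lex}_{t-\tau}$ (this is where the $C^2$ hypothesis on $\rho_0$ is needed, to control the commutator with the $\rho$-differences), and then applies~\eqref{gradbound}. Your inner-Duhamel alternative would face the same $(r-s)$-singularity obstruction in its integral part.
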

We first prove a technical lemma which will be very useful in the proof of this proposition. To that end we introduce more notation.
Let $\bar p_t^{\otimes,\ell}:\bb Z\times\bb Z\to\bb R$ be the convolution which is defined inductively by
\begin{equation}\label{conv}
\bar p_{t_1+t_2}^{\otimes,\ell}(x,z)\;=\; \sum_{y\in\bb Z}  \bar p_{t_1}^{\otimes,\ell-1}(x,y) \bar p_{t_2}(y,z)\,,\quad \forall\, t_1,t_2>0\,.
\end{equation}
Note that $\bar p^{\otimes,\ell}$ defined in this way is actually not well defined. We refer to Remark~\ref{rem:potimes} for an explanation on that.
With the convention $\bar p_t^{\otimes,1}\;=\;\bar p_t$, recall that  we showed in Section \ref{Sec431} that
\begin{equation}\label{finitemeasurebarp}
\sup_{0\leq t\leq T}\sup_{x}\sum_{z} \bar p_{t}^{\otimes,\ell}(x,z) \;\lesssim\; 1\,.
\end{equation}
For every $\mathbf x=(x_1,\dots,x_k)\in\Lambda^k$ and $\mathbf y=(y_1,\dots,y_k)\in\Lambda^k$, let
\begin{equation*}
\tilde{p}_{t}^{\otimes,\ell}(\mathbf x,\mathbf y)\;=\;\prod_{i=1}^k\bar p_{t}^{\otimes,\ell}(x_i,y_i)\,.
\end{equation*}
As a consequence of \eqref{finitemeasurebarp},
\begin{equation}\label{finitemeasuretildep}
\sup_{0\leq t\leq T}\sup_{\mathbf x}\sum_{\mathbf z} \tilde p_{t}^{\otimes,\ell}(\mathbf x,\mathbf z) \;\lesssim\; 1\,.
\end{equation}
The following quantity plays a crucial role in the proof of Proposition \ref{estimatePsi}:
$$S_{t,r}^{k,\ell}\;:=\; \sup_{\mathbf x\in\Lambda^k} \sum_{\mathbf y\in\Lambda^k} \tilde{p}_{t-r}^{\otimes,\ell}(\mathbf x,\mathbf y) \big| \varphi_r(\mathbf y)\big|\,,$$
for every non-negative integer $k$ and $s\leq r<t$. In the case $k=0$,  $\varphi_r(\mathbf y)$ is simply given by
$$ \bb E_{\nu_{\rho_0^n(\cdot)}}\Big[  \prod_{j=1}^{k_2}\overline{\eta}_s(z_j) \Big].$$
Using the bound \eqref{samet} and property \eqref{finitemeasuretildep}, we note that
\begin{equation}\label{eq:seed}
S_{t,r}^{0,\ell} \;\leq\; C(\ell)n^{-k_2/2}\,.
\end{equation}

\begin{lemma}\label{difbound}
	For every non-negative integers $k$ and $\ell$, there exists a constant $C>0$ independent of $(z_j: 1\leq j\leq k_2)$, such that for every $0\leq r< s < t$ with $0\leq s,t\leq T$,
	\begin{equation*}
	S_{t,r}^{k,\ell}\;\leq\; C n^{-(k+k_2)/2}\bigg(\frac{n}{\sqrt{n^2(t-s)+1}}\bigg)^{k\wedge k_2}\frac{(n^2(t-s)+1)^{1/4}}{(n^2(r-s)+1)^{1/4}}\,.
	\end{equation*}
\end{lemma}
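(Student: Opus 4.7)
My plan is to prove Lemma~\ref{difbound} by induction on $k$, the size of the first list of points appearing in $\varphi_r$. The base case $k=0$ is immediate: $\varphi_r$ does not depend on $\mathbf y$ and equals $\bb E_{\nu_{\rho_0^n(\cdot)}}\bigl[\prod_{j=1}^{k_2}\overline{\eta}_s(z_j)\bigr]$, which is $O(n^{-k_2/2})$ by Proposition~\ref{ndis}; the uniform mass bound \eqref{finitemeasuretildep} on $\tilde p^{\otimes,\ell}_{t-r}$ then yields $S^{0,\ell}_{t,r}\leq Cn^{-k_2/2}$, which sits within the claimed bound since $(n^2(t-s)+1)^{1/4}/(n^2(r-s)+1)^{1/4}\geq 1$. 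For the inductive step, with the lemma assumed at all strictly smaller values of $k$, I would use Duhamel's principle on $\varphi_r$, in direct analogy with \eqref{indeq} but evolving from time $s$ in place of the initial time:
$$\varphi_r(\mathbf y)=\mathbf E_{\mathbf y}\bigl[\varphi_s(\mathbf X_{r-s})\bigr]+\mathbf E_{\mathbf y}\Bigl[\int_s^r du\,\Psi(\mathbf X_{r-u},u)\Bigr]\,,$$
with $\Psi$ as in \eqref{defPsi}; the crucial point is that $\Psi$ only involves $\widetilde\varphi_u$ with at most $k-1$ points in the first list, opening the door to the induction.

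For the initial piece I would exploit the convolution identity \eqref{conv} together with the pointwise domination \eqref{boundhk}, obtaining $\sum_{\mathbf y}\tilde p^{\otimes,\ell}_{t-r}(\mathbf x,\mathbf y)\,p^{\lex}_{r-s}(\mathbf y,\mathbf z)\lesssim\tilde p^{\otimes,\ell+1}_{t-s}(\mathbf x,\mathbf z)$. Using Proposition~\ref{ndis} to bound the resulting single-time correlation $|\varphi_s(\mathbf z,z_1,\dots,z_{k_2})|$ by $Cn^{-\|\cdot\|/2}$, paying the pointwise cost $(n^2(t-s)+1)^{-1/2}$ from \eqref{kernelupbound} for each coincidence between a coordinate of $\mathbf z$ and one of the fixed $z_j$'s, and summing the free coordinates via \eqref{finitemeasuretildep}, one obtains the bound $Cn^{-(k+k_2)/2}\bigl(n/\sqrt{n^2(t-s)+1}\bigr)^{k\wedge k_2}$, which lies inside the target.

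For the integral piece, after swapping the order of summation and invoking the convolution once more, the task reduces to controlling $\int_s^r du\,\sup_{\mathbf x}\sum_{\mathbf z'}\tilde p^{\otimes,\ell+1}_{t-u}(\mathbf x,\mathbf z')|\Psi(\mathbf z',u)|$. The prefactor $n^2(\rho_u^n(z_j')-\rho_u^n(z_i'))$ in $\Psi$ contributes $O(n)$ by Lipschitz regularity of $\rho$, while the nearest-neighbour differences $\widetilde\varphi_u(z_j')-\widetilde\varphi_u(z_i')$ are handled by summation by parts, transferring a discrete gradient onto the kernel $\tilde p^{\otimes,\ell+1}_{t-u}$; the refined estimate \eqref{gradbound} (the geometric interpolation of Theorem~\ref{grad} with the diagonal bound \eqref{eq:upperexclusion}) then produces an additional factor $(n^2(t-u)+1)^{-1/4}$. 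Feeding the inductive hypothesis for $S^{k-1,\ell+1}_{t,u}$ into this estimate and evaluating the Beta-type integral $\int_s^r du\,(n^2(t-u)+1)^{-1/4}(n^2(u-s)+1)^{-1/4}$ then produces the finessed factor $(n^2(t-s)+1)^{1/4}/(n^2(r-s)+1)^{1/4}$ and closes the induction.

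The main obstacle is the tight bookkeeping in the integral piece: a naive use of the inductive hypothesis that ignores the $1/4$-smoothing in \eqref{gradbound} only recovers the coarser decay $(n^2(r-s)+1)^{-(k\wedge k_2)/2}$ that one would na\"{\i}vely expect from Theorem~\ref{twotimecf} itself, which would not close the iteration. It is precisely the fact that the gradient-smoothing gain $(n^2(t-u)+1)^{-1/4}$ is integrable in $u$ (rather than being used pointwise) that yields the sharper dependence on $r-s$ vs.\ $t-s$ demanded by the statement. The quadratic piece $\sum_{i,j}\mathbf 1\{|z_j'-z_i'|=1\}(\rho_u^n(z_j')-\rho_u^n(z_i'))^2\widetilde\varphi_u(z_i',z_j')$ of $\Psi$ is easier: the square of the Lipschitz difference contributes $O(1)$ after the prefactor $n^2$, and two points are removed simultaneously, so the inductive hypothesis at level $k-2$ applies with plenty of room.
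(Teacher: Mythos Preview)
Your overall strategy --- induction on $k$, Duhamel decomposition of $\varphi_r$, treating the initial term via the coincidence-counting argument, and the integral term via summation by parts plus the gradient bound --- is exactly the paper's. The base case and the treatment of the quadratic piece of $\Psi$ are fine. However, your handling of the linear (gradient) piece of $\Psi$ has a genuine gap that prevents the induction from closing.

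You write that one first convolves to obtain $\tilde p^{\otimes,\ell+1}_{t-u}(\mathbf x,\mathbf z')|\Psi(\mathbf z',u)|$ and then transfers the discrete gradient $\widetilde\varphi_u(z'_j)-\widetilde\varphi_u(z'_i)$ onto $\tilde p^{\otimes,\ell+1}_{t-u}$ via summation by parts, citing \eqref{gradbound} to gain $(n^2(t-u)+1)^{-1/4}$. This does not work for two reasons. First, once $|\Psi(\mathbf z',u)|$ sits inside the $\mathbf z'$-sum, summation by parts is unavailable: the shift in $z'_i$ must be performed on a signed sum. Second, the gradient estimate \eqref{gradbound} is for $p^{\lex}$, not for the upper-bound kernel $\tilde p^{\otimes,\ell+1}$; the inequality $p^{\lex}\lesssim\tilde p$ is one-sided and tells you nothing about the gradient of the convolved majorant.

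The correct order (which the paper carries out) is: keep the absolute value \emph{outside} the inner $\mathbf w$-sum, perform summation by parts on $\mathbf w$ so that the discrete gradient lands on $p^{\lex}_{r-u}(\mathbf y,\cdot)$, invoke \eqref{gradbound} to extract $(n^2(r-u)+1)^{-1/4}$ (time scale $r-u$, not $t-u$), bound the remaining $p^{\lex}$ by $\tilde p$, and only then convolve with the outer $\tilde p^{\otimes,\ell}_{t-r}$. After this one also uses the pointwise bound $\bar p^{\otimes,\ell+1}_{t-u}(x_i,w_i)\lesssim (n^2(t-u)+1)^{-1/2}$ on the kernel coordinate that has been removed. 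Altogether the integrand carries the three factors $(n^2(r-u)+1)^{-1/4}\cdot(n^2(t-u)+1)^{-1/2}\cdot(n^2(u-s)+1)^{-1/4}$, not just the two you wrote. Your Beta-type integral $\int_s^r (n^2(t-u)+1)^{-1/4}(n^2(u-s)+1)^{-1/4}\,du$ is of order $n^{-2}(n^2(t-s)+1)^{1/2}$ when $r$ is close to $t$, which combined with the prefactor $n\cdot n^{1/2}$ fails the target by a full power of $n(t-s)^{3/4}$; the missing $(n^2(t-u)+1)^{-1/2}$ is precisely what brings the integral down to $n^{-3/2}(n^2(r-s)+1)^{-1/4}$ and closes the loop.
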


The idea of the proof is to perform induction on $k$, i.e., we bound $S_{t,r}^{k,\ell}$ in terms of $S_{t,r}^{k-2,\ell+1}$ and $S_{t,r}^{k-1,\ell+1}$ and use a seed bound on $S_{t,r}^{0,\ell}$ to conclude.
\begin{proof}[Proof of Lemma \ref{difbound}]

	The case $k=0$ holds trivially by~\eqref{eq:seed}.
	Assume $k\geq 1$. By \eqref{indeq}, for every $\mathbf x\in\Lambda^k$, we have
	$$\sum_{\mathbf y\in\Lambda^k} \tilde{p}_{t-r}^{\otimes,\ell}(\mathbf x,\mathbf y) |\varphi_r(\mathbf y)|\;\leq\; A_1+A_2+A_3\,,$$
	where
	\begin{equation*}
	A_1\;=\;\sum_{\mathbf y\in\Lambda^k} \tilde{p}_{t-r}^{\otimes,\ell}(\mathbf x,\mathbf y)\mathbf E_{\mathbf y}\big[\big\lvert \varphi_s(\mathbf X_{r-s}) \big\rvert\big]\,,
	\end{equation*}
	\begin{equation*}
	\begin{split}
	A_2\;=\;2n^2 \sum_{\mathbf  y\in\Lambda^k} \tilde{p}_{t-r}^{\otimes,\ell}(\mathbf x,\mathbf y) \Big\lvert\mathbf  E_{\mathbf y}\Big[ \int_s^r \dd\tau &\sum_{i,j} \one_{\{|\mathbf X_{r-\tau}^i -\mathbf X_{r-\tau}^j |=1 \}}\\
	\times &\big[\rho_\tau^n(\mathbf X_{r-\tau}^i)-\rho_\tau^n(\mathbf X_{r-\tau}^j)  \big]  \Big[ \tilde{\varphi_\tau}(\mathbf X_{r-\tau}^i)- \tilde{\varphi_\tau}(\mathbf X_{r-\tau}^j) \Big] \Big]\Big\rvert\,,
	\end{split}
	\end{equation*}
	and
	\begin{equation*}
	\begin{split}
	A_3\;=\;n^2 \sum_{\mathbf y\in\Lambda^k} \tilde{p}_{t-r}^{\otimes,\ell}(\mathbf x,\mathbf y) \mathbf E_{\mathbf y}\Big[ \int_s^r \dd\tau\, &\sum_{i,j} \one_{\{|\mathbf X_{r-\tau}^i -\mathbf X_{r-\tau}^j |=1 \}}\\
	\times&\big\lvert \rho_\tau^n(\mathbf X_{r-\tau }^i)-\rho_\tau^n(\mathbf X_{r-\tau }^j)  \big\rvert^2  \Big\lvert \tilde{\varphi_\tau}(\mathbf X_{r-\tau }^i, \mathbf X_{r-\tau }^j)\Big\rvert \Big].
	\end{split}
	\end{equation*}
	In the above formulas, recall that $\mathbf X_{t}$ denotes the vector of positions of the collection of exclusion walkers at time $n^2 t$, $\mathbf X^i_{t}$ denotes the position of the $i$-th particle at time $n^2 t$, and $\mathbf E_{\bf y}$ the probability with respect to process $\{\mathbf X_t: t\geq 0\}$ starting from ${\bf y}=(y_1,\dots,y_k)\in\Lambda^k$.
	We note that $A_2$ and $A_3$ are actually zero unless $k\geq 2$.

	For the first term we have that
	$$A_1\;\lesssim\; n^{-(k+k_2)/2}\Big(\frac{n}{\sqrt{n^2(t-s)+1}}\Big)^{k\wedge k_2}$$
	by Lemma \ref{initialq} and Remark \ref{condtildep}.
	We now estimate $A_3$. Since $\rho_0$ is continuously differentiable with bounded derivative,
	\begin{equation}\label{eq:rhodif1}
	\sup_{\substack{0\leq \tau\leq T\\ |\mathbf x-\mathbf y|=1}}\big| \rho_\tau^n(\mathbf x)-\rho_\tau^n(\mathbf y)\big|^2\;=\;O(n^{-2})\,,
	\end{equation}
	thus
	\begin{align*}
	A_3\;\lesssim\;&\int_s^r \dd\tau \sum_{i,j}\sum_{\mathbf y\in\Lambda^{k}} \tilde{p}^{\otimes,\ell}_{t-r}(\mathbf x,\mathbf y)\sum_{\mathbf w: |w_i-w_j|=1} p^{\lex}_{r-\tau}(\mathbf y,\mathbf w)\lvert \tilde{\varphi}_\tau(w_i,w_j)\rvert \\
	\lesssim\,&\int_s^r \dd\tau \sum_{i,j}\sum_{\mathbf y\in\Lambda^{k}} \tilde{p}_{t-r}^{\otimes,\ell}(\mathbf x,\mathbf y)\sum_{\mathbf w: |w_i-w_j|=1} \tilde{p}_{r-\tau}(\mathbf y,\mathbf w)\lvert \tilde{\varphi}_\tau(w_i,w_j)\rvert \\
	=\,&\int_s^r \dd\tau \sum_{i,j} \sum_{\mathbf w: |w_i-w_j|=1} \tilde{p}_{t-\tau}^{\otimes,\ell+1}(\mathbf x,\mathbf w)\lvert \tilde{\varphi}_\tau(w_i,w_j)\rvert\,.
	\end{align*}
	By the explicit expression of $\tilde{p}_t^{\otimes,\ell+1}$, we have
	\begin{align*}
	A_3\;\lesssim\;&\int_s^r\dd\tau \sum_{i,j} \sum_{\mathbf w: |w_i-w_j|=1} \prod_l \bar p^{\otimes,\ell+1}_{t-\tau}(x_l,w_l)\lvert \tilde{\varphi}_\tau(w_i,w_j)\rvert \\
	=\;&\int_s^r \dd\tau \sum_{i,j} \sum_{\mathbf w: |w_i-w_j|=1}  \Big( \prod_{l\neq i,j} \bar p^{\otimes,\ell+1}_{t-\tau}(x_l,w_l)\Big)\Big(  \bar p^{\otimes,\ell+1}_{t-\tau}(x_i,w_i) \bar p^{\otimes,\ell+1}_{t-\tau}(x_j,w_j)\Big)  \lvert \tilde{\varphi}_\tau(w_i,w_j)\rvert\,.
	\end{align*}
	Note that $\bar p^{\otimes,\ell+1}$ is translation invariant and symmetric, therefore
	\begin{align*}
	&\sum_{w_i,w_j} 1_{\{|w_i-w_j|=1\}}\bar p^{\otimes,\ell+1}_{t-\tau}(x_i,w_i) \bar p^{\otimes,\ell+1}_{t-\tau}(x_j,w_j)\\
	&=\;\sum_{w_i} \bar p^{\otimes,\ell+1}_{t-\tau}(x_i,w_i) \bar p^{\otimes,\ell+1}_{t-\tau}(x_j,w_i+1)+\sum_{w_i} \bar p^{\otimes,\ell+1}_{t-\tau}(x_i,w_i) \bar p^{\otimes,\ell+1}_{t-\tau}(x_j,w_i-1)\\
	& =\;\sum_{w_i} \bar p^{\otimes,\ell+1}_{t-\tau}(x_i,w_i) \bar p^{\otimes,\ell+1}_{t-\tau}(x_j-1,w_i)+\sum_{w_i} \bar p^{\otimes,\ell+1}_{t-\tau}(x_i,w_i) \bar p^{\otimes,\ell+1}_{t-\tau}(x_j+1,w_i)\\
	& \lesssim\; \sum_{w_i} \bar p^{\otimes,2(\ell+1)}_{2(t-\tau)}(x_i,x_j-1) +\sum_{w_i} \bar p^{\otimes,2(\ell+1)}_{2(t-\tau)}(x_i,x_j+1) \\
	& \lesssim\; \frac{1}{\sqrt{n^2(t-\tau)+1}}\,.
	\end{align*}
	Here, the last bound follows from~\eqref{kernelupbound} and its following comments.
	On the other hand, by the explicit expression of $\tilde p_t^{\otimes,\ell+1}$ and the definition of $S_{t,\tau}^{k-2, \ell+1}$,
	$$\prod_{l\neq i,j} \bar p^{\otimes,\ell+1}_{t-\tau}(x_l,w_l)  \lvert\tilde{\varphi}_\tau(w_i,w_j)\rvert \;\leq\; S_{t,\tau}^{k-2, \ell+1}.$$
	Putting all the estimates together, we obtain
	\begin{equation}\label{A3}
	A_3\;\lesssim\; \int_s^r \dd\tau \frac{1}{\sqrt{n^2(t-\tau)+1}} S_{t,\tau}^{k-2,\ell+1} \,.
	\end{equation}

	We now turn to the estimate of $A_2$.
	\begin{equation*}
	\begin{split}
	A_2\;\lesssim\; &  n^2\int_s^r \dd\tau \sum_{i,j} \sum_{\mathbf y\in\Lambda^k} \tilde{p}_{t-r}^{\otimes,\ell}(\mathbf x,\mathbf y) \Big\lvert \!\!\! \sum_{\mathbf w:|w_i-w_j|=1}\!\!\!\!\!p^{\lex}_{r-\tau}(\mathbf y,\mathbf w) \big[\rho_\tau^n(w_i)-\rho_\tau^n(w_j)  \big]  \Big[ \tilde{\varphi_\tau}(w_i)- \tilde{\varphi_\tau}(w_j) \Big]   \Big\rvert\,.
	\end{split}
	\end{equation*}
	A summation by parts shows that the expression inside the absolute value sign is equal to the sum of
	$$\sum_{\mathbf w:w_j=w_i+1} \tilde{\varphi_\tau}(w_i)\Big[ p^{\lex}_{r-\tau}(\mathbf y,\mathbf w) \big[\rho_\tau^n(w_i)-\rho_\tau^n(w_i+1)  \big] -p^{\lex}_{r-\tau}(\mathbf y,\mathbf w-e_i) \big[\rho_\tau^n(w_i-1)-\rho_\tau^n(w_i)  \big]  \Big]\,,  $$
	and
	$$\sum_{\mathbf w:w_j=w_i-1} \tilde{\varphi_\tau}(w_i)\Big[ p^{\lex}_{r-\tau}(\mathbf y,\mathbf w) \big[\rho_\tau^n(w_i)-\rho_\tau^n(w_i-1)  \big] -p^{\lex}_{r-\tau}(\mathbf y,\mathbf w+e_i) \big[\rho_\tau^n(w_i+1)-\rho_\tau^n(w_i)  \big]  \Big].$$
	Observe that, since the second derivative of $\rho_0$ is bounded $$ \big[\rho_\tau^n(w_i)-\rho_\tau^n(w_i+1)  \big] -  \big[\rho_\tau^n(w_i-1)-\rho_\tau^n(w_i)  \big] \;=\;O(n^{-2})\,.$$
	From this, we can deduce that
	\begin{align*}
	&\Big\lvert  \sum_{\mathbf w:|w_i-w_j|=1}p^{\lex}_{r-\tau}(\mathbf y,\mathbf w) \big[\rho_\tau^n(w_i)-\rho_\tau^n(w_j)  \big]  \Big[ \tilde{\varphi_\tau}(w_i)- \tilde{\varphi_\tau}(w_j) \Big]   \Big\rvert\\
	&\leq\; \Big\lvert  \sum_{\mathbf w:w_j=w_i+1} \tilde{\varphi_\tau}(w_i)\big[ p^{\lex}_{r-\tau}(\mathbf y,\mathbf w) -p^{\lex}_{r-\tau}(\mathbf y,\mathbf w-e_i) \big] \big[\rho_\tau^n(w_i-1)-\rho_\tau^n(w_i)  \big]   \Big\rvert\\
	&+\Big\lvert \sum_{\mathbf w:w_j=w_i-1} \tilde{\varphi_\tau}(w_i)\big[ p^{\lex}_{r-\tau}(\mathbf y,\mathbf w) -p^{\lex}_{r-\tau}(\mathbf y,\mathbf w+e_i) \big] \big[\rho_\tau^n(w_i+1)-\rho_\tau^n(w_i)  \big]  \Big\rvert\\
	&+Cn^{-2}\Big\lvert \sum_{\mathbf w:|w_i-w_j|=1} \tilde{\varphi_\tau}(w_i)p^{\lex}_{r-\tau}(\mathbf y,\mathbf w) \Big\rvert\,.
	\end{align*}
	By \eqref{gradbound}, \eqref{boundhk}, and \eqref{eq:rhodif1} the previous expression is bounded by
	\begin{align*}
	& C\Big(\frac{n^{-1}}{(n^2(r-\tau)+1)^{1/4}}+\frac{1}{n^2}\Big) \sum_{\mathbf w:|w_i-w_j|=1} \big\lvert \tilde{\varphi_\tau}(w_i) \big\rvert p^{\lex}_{r-\tau}(\mathbf y,\mathbf w) \\
	&\lesssim\;\frac{n^{-1}}{(n^2(r-\tau)+1)^{1/4}} \sum_{\mathbf w:|w_i-w_j|=1}\lvert \tilde{\varphi_\tau}(w_i) \rvert \tilde{p}_{r-\tau}(\mathbf y,\mathbf w)\,.
	\end{align*}

	Based on the above estimates we have
	\begin{align*}
	A_2\;\lesssim\; &n\int_s^r \dd\tau \sum_{i,j} \sum_{\mathbf y\in\Lambda^k} \tilde{p}_{t-r}^{\otimes,\ell}(\mathbf x,\mathbf y) \frac{1}{(n^2(r-\tau)+1)^{1/4}} \sum_{\mathbf w:|w_i-w_j|=1}\lvert \tilde{\varphi_\tau}(w_i) \rvert \tilde{p}_{r-\tau}(\mathbf y,\mathbf w)\\
	\lesssim\,&n\int_s^r \dd\tau \frac{1}{(n^2(r-\tau)+1)^{1/4}} \sum_{i,j} \sum_{\mathbf y} \sum_{\mathbf w:|w_i-w_j|=1} \Big(\prod_{l\neq i} \bar p^{\otimes,\ell}_{t-r}(x_l,y_l)\bar p_{r-\tau}(y_l,w_l)\lvert \tilde{\varphi_\tau}(w_i) \rvert\Big) \times\\
	& \times \bar p^{\otimes,\ell}_{t-r}(x_i,y_i)\bar p_{r-\tau}(y_i,w_i)\\
	\lesssim\,&n\int_s^r \dd\tau \frac{1}{(n^2(r-\tau)+1)^{1/4}} \sum_{i,j} \sum_{\mathbf w:|w_i-w_j|=1}\Big(\prod_{l\neq i}\bar p^{\otimes,\ell+1}_{t-\tau}(x_l,w_l)|\tilde{\varphi_\tau}(w_i)|\Big) \bar p^{\otimes,\ell+1}_{t-\tau}(x_i,w_i)\,.
	\end{align*}
	Note that
	$$\sum_{\mathbf w:|w_i-w_j|=1} \prod_{l\neq i}\bar p^{\otimes,\ell+1}_{t-\tau}(x_l,w_l)|\tilde{\varphi_\tau}(w_i)| \;\lesssim\; S_{t,\tau}^{k-1,\ell+1}\,.$$
	Since by \eqref{kernelupbound},
	$$ \sup_{x,w\in\bb Z} \bar p^{\otimes,\ell+1}_{t}(x,w)\;\lesssim\;\frac{1}{\sqrt{n^2 t+1}}\,,$$
	we finally obtain
	\begin{equation}\label{A2}
	A_2\;\lesssim\;n\int_s^r \dd\tau \frac{1}{(n^2(r-\tau)+1)^{1/4}}  S_{t,\tau}^{k-1,\ell+1} \frac{1}{\sqrt{n^2(t-\tau)+1}}\,.
	\end{equation}

	It remains to bound $A_2$ and $A_3$ using \eqref{A2} and \eqref{A3}, respectively, and our induction assumption. Since these computations are quite similar, we only show how to prove
	\begin{equation}
	\label{A2bound}
	A_2\;\lesssim\; n^{-(k+k_2)/2}\Big(\frac{n}{\sqrt{n^2(t-s)+1}}\Big)^{k\wedge k_2}\frac{(n^2(t-s)+1)^{1/4}}{(n^2(r-s)+1)^{1/4}}\,.
	\end{equation}
	Applying the induction assumption on $S_{t,\tau}^{k-1,\ell+1}$, namely,
	\begin{equation*}
	S_{t,\tau}^{k-1,\ell+1}\;\lesssim\; n^{-(k-1+k_2)/2}\Big(\frac{n}{\sqrt{n^2(t-s)+1}}\Big)^{(k-1)\wedge k_2}\frac{(n^2(t-s)+1)^{1/4}}{(n^2(\tau-s)+1)^{1/4}}\,,
	\end{equation*}
	cancelling common terms on both sides of the above inequality, and using that
	\begin{equation*}
	\frac{n}{\sqrt{n^2(t-s)+1}}\;\gtrsim\; 1\,,
	\end{equation*}
	we see that it is enough to show that
	$$ \int_s^r \dd\tau \frac{1}{(n^2(r-\tau)+1)^{1/4}}  \frac{1}{(n^2(\tau-s)+1)^{1/4}} \frac{1}{\sqrt{n^2(t-\tau)+1}}\;\lesssim\; \frac{n^{-3/2}}{(n^2(r-s)+1)^{1/4}}\,.$$
	The expression on the left hand side is bounded by
	\begin{align*}
	&\frac{1}{\sqrt{n^2(t-\frac{r+s}{2})+1}} \int_s^{(r+s)/2} \dd\tau\, \frac{1}{(n^2(r-\tau)+1)^{1/4}}  \frac{1}{(n^2(\tau-s)+1)^{1/4}}\\
	&+\frac{1}{(n^2((\frac{r+s}{2}-s)+1)^{1/4}} \int_{(s+r)/2}^r \dd\tau\, \frac{1}{(n^2(r-\tau)+1)^{1/4}} \frac{1}{\sqrt{n^2(t-\tau)+1}}
	\end{align*}
	which is less than or equal to
	\begin{align*}
	&\frac{1}{\sqrt{n^2(t-\frac{r+s}{2})+1}} \frac{1}{(n^2(r-s)+1)^{1/4}}\frac{1}{n^2}(n^2(r-s)+1)^{3/4}\\
	&+\frac{1}{(n^2((\frac{r+s}{2}-s)+1)^{1/4}} \frac{1}{n^2} (n^2(r-\frac{r+s}{2})+1)^{1/4}\,.
	\end{align*}
	It is not hard to check the  inequalities
	$$ \frac{1}{\sqrt{n^2(t-\frac{r+s}{2})+1}}(n^2(r-s)+1)^{3/4}\;\lesssim\;\sqrt{n}$$
	and
	$$ \Big(n^2\big(\frac{r-s}{2}\big)+1\Big)^{1/4}\;\lesssim\; \sqrt{n}\,.$$
	With these two estimates,  we conclude the proof of \eqref{A2bound}.
\end{proof}

We now show how to bound
$\mathbf{E}_{(x_i\,:\, 1\leq i\leq k_1)}\Big[ \int_s^t \dd r\, \Psi(\mathbf{X}_{t-r},r) \Big]$
with the help of Lemma \ref{difbound}.
\begin{proof}[Proof of Proposition \ref{estimatePsi}]
	Recall the explicit formula of $\Psi$ given in \eqref{defPsi}. We have
	$$\Big\lvert \mathbf{E}_{(x_i\,:\, 1\leq i\leq k_1)}\Big[ \int_s^t\dd r\, \Psi(\mathbf{X}_{t-r},r) \Big] \Big\rvert \;\lesssim\; A_4+A_5\,,$$
	where
	\begin{equation*}
	A_4= 2n^2  \Big\lvert \mathbf{E}_{\mathbf x}\Big[ \int_s^t \dd\tau \sum_{i,j} 1_{\{|\mathbf X_{t-\tau}^i -\mathbf X_{t-\tau}^j |=1 \}} \big[\rho_\tau^n(\mathbf X_{t-\tau}^i)-\rho_\tau^n(\mathbf X_{t-\tau}^j)  \big]  \Big[ \tilde{\varphi_\tau}(\mathbf X_{t-\tau}^i)- \tilde{\varphi_\tau}(\mathbf X_{t-\tau}^j) \Big] \Big]\Big\rvert\,,
	\end{equation*}
	and
	\begin{equation*}
	A_5=n^2 \mathbf{E}_{\mathbf x}\Big[ \int_s^t \dd\tau\sum_{i,j} 1_{\{|\mathbf X_{t-\tau}^i -\mathbf X_{t-\tau}^j |=1 \}} \big\lvert \rho_\tau^n(\mathbf X_{t-\tau}^i)-\rho_\tau^n(\mathbf X_{t-\tau}^j)  \big\rvert^2  \Big\lvert \tilde{\varphi_\tau}(\mathbf X_{t-\tau}^i, \mathbf X_{t-\tau}^j)\Big\rvert \Big]\,.
	\end{equation*}
	Since $\rho_0(\cdot)$ has bounded first derivative,
	$$\sup_{0\leq t\leq T}\sup_{|x-y|=1}\big\lvert \rho_t^n(x)-\rho^n_t(y) \big\rvert^2 \;\leq\; n^{-2}\,,$$
	which gives us
	\begin{equation*}
	\begin{split}
	A_5 \;\lesssim\;&\int_s^t \dd\tau \sum_{i,j}\sum_{\mathbf w: |w_i-w_j|=1} p^{\lex}_{t-\tau}(\mathbf x,\mathbf w)\lvert \tilde{\varphi}_\tau(w_i,w_j)\rvert \\
	\lesssim\;&\int_s^t \dd\tau \sum_{i,j}\sum_{\mathbf w: |w_i-w_j|=1} \tilde{p}_{t-\tau}(\mathbf x,\mathbf w)\lvert \tilde{\varphi}_\tau(w_i,w_j)\rvert \\
	=\;&\int_s^t \dd\tau \sum_{i,j} \sum_{\mathbf w: |w_i-w_j|=1} \prod_\ell \bar p_{t-\tau}(x_\ell,w_\ell)\lvert \tilde{\varphi}_\tau(w_i,w_j)\rvert \\
	\end{split}
	\end{equation*}
	by \eqref{boundhk} and the explicit formula of $\tilde{p}_{t-\tau}(x,w)$.
	We have dealt with a similar (actually an even more complicated) expression while estimating $A_3$. It is not hard to see that
	$$A_5 \;\lesssim\; \int_s^t \dd\tau\, \frac{1}{\sqrt{n^2(t-\tau)+1}} S_{t,\tau}^{k_1-2,1} \,.$$

	On the other hand, $A_4$ is bounded by some proportionality constant times
	\begin{equation*}
	\begin{split}
	&  n^2\int_s^t \dd\tau \sum_{i,j}  \Big\lvert  \sum_{\mathbf w:|w_i-w_j|=1}p^{\lex}_{t-\tau}(\mathbf x,\mathbf w) \big[\rho_\tau^n(w_i)-\rho_\tau^n(w_j)  \big]  \Big[ \tilde{\varphi_\tau}(w_i)- \tilde{\varphi_\tau}(w_j) \Big]   \Big\rvert\\
	&\lesssim \; n\int_s^t \dd\tau \sum_{i,j}  \frac{1}{(n^2(t-\tau)+1)^{1/4}} \sum_{\mathbf w:w_j=w_i\pm1}\lvert \tilde{\varphi_\tau}(w_i) \rvert \tilde{p}_{t-\tau}(x,w)\,.
	\end{split}
	\end{equation*}
	Here, we used similar arguments as those used for $A_2$ in Lemma~\ref{difbound}. Continuing adapting the arguments already employed in the analysis of $A_2$, we obtain
	$$A_4\;\lesssim\;n\int_s^t \dd\tau\, \frac{1}{(n^2(t-\tau)+1)^{1/4}}  S_{t,\tau}^{k_1-1, 1} \frac{1}{\sqrt{n^2(t-\tau)+1}}\,.$$

	In view of Lemma \ref{difbound}, it is elementary to check that $A_4$ and $A_5$ are both bounded by
	$$Cn^{-(k_1+k_2)/2}\times \Big(\frac{n}{\sqrt{n^2(t-s)+1}}\Big)^{k_1\wedge k_2}.$$
\end{proof}

For the proof of Theorem \ref{gnE} we need to extend Proposition \ref{estimatePsi} to the case of multiple times. To that end recall the definition of $\phi$ given in \eqref{defphi}.
\begin{proposition}\label{estimatePhi}
	Consider $m$ lists of non-repetitive points $(x_{i_1}: 1\leq i_1\leq k_1),\dots,(x_{i_m}: 1\leq i_m\leq k_m)$. Assume $t_1<\cdots <t_m$ and $0\leq t_j,t_{j+1}\leq T$ for every $1\leq j< m$.  Then, there exists a constant $C$ independent of $(x_{i_j}: 1\leq i_j\leq k_j)$, $1\leq j\leq m$, such that
	\begin{equation*}
	\begin{split}
	&\Big\lvert \mathbf{E}_{(x_{i_m}:\,1\leq i_m\leq k_m)}\big[\phi_{t_{m-1}}(\mathbf{X}^{i_m}_{t_m-t_{m-1}}: 1\leq i_m\leq k_m) \big] \Big\rvert \\
	&\leq\; Cn^{-\sum_{j=1}^m k_j/2} \prod_{j=1}^{m-1} \Big(\frac{n}{\sqrt{n^2(t_{j+1}-t_j)+1}}\Big)^{k_j\wedge \sum_{l=j+1}^m k_l}
	\end{split}
	\end{equation*}
	for every integer $n\geq 1$.
\end{proposition}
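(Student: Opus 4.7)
Write the initial term as
\[ E_0 \;:=\; \sum_{\mathbf w\in\Lambda^{k_m}} p^{\lex}_{t_m-t_{m-1}}(\mathbf x,\mathbf w)\,\phi_{t_{m-1}}(\mathbf w), \]
with $\mathbf x := (x_{i_m}: 1\leq i_m\leq k_m)$. The crucial observation is that by its very definition, $\phi_{t_{m-1}}(\mathbf w)$ is a correlation function at $m-1$ distinct times: the lists $(x_{i_j}:1\leq i_j\leq k_j)$ with $j<m-1$ remain at the times $t_1,\ldots,t_{m-2}$, while the combined family $\{x_{i_{m-1}}:1\leq i_{m-1}\leq k_{m-1}\}\cup\{w_i:1\leq i\leq k_m\}$ (which may contain repetitions between the two sub-lists) sits at the single time $t_{m-1}$. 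This places us in the regime covered by the induction hypothesis of Theorem~\ref{gnE} on $m$, once the repetitions are handled.

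\textbf{Plan.} The argument proceeds in two steps. In Step 1, for each fixed $\mathbf w\in\Lambda^{k_m}$, I would apply identity~\eqref{identity} to each coincidence $w_i=x_{i_{m-1}}$, expanding $\overline{\eta}_{t_{m-1}}(w_i)^2$ as an affine combination of $\overline{\eta}_{t_{m-1}}(w_i)$ and a constant. This reduces $\phi_{t_{m-1}}(\mathbf w)$ to a finite sum of $(m-1)$-time correlations with non-repetitive lists at each time level, each bounded directly by the induction hypothesis. In Step 2, I would partition the sum over $\mathbf w\in\Lambda^{k_m}$ according to its coincidence pattern with $\{x_{i_{m-1}}\}$: for a pattern with exactly $j$ coincidences, the coincident coordinates of $\mathbf w$ are forced to be among the finitely many points of $\{x_{i_{m-1}}\}$, while the remaining $k_m-j$ coordinates are free. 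Using $p^{\lex}\lesssim\tilde p$ together with the pointwise bound $\bar p_{t_m-t_{m-1}}(x_i,w_i)\lesssim 1/\sqrt{n^2(t_m-t_{m-1})+1}$ and the $\ell^1$-bound $\sum_y\bar p_{t_m-t_{m-1}}(x,y)\lesssim 1$ from Section~\ref{Sec431}, the partial sum of transition probabilities over the constrained coordinates produces exactly $j$ extra factors of $1/\sqrt{n^2(t_m-t_{m-1})+1}$. Multiplying the outputs of the two steps and using that $n/\sqrt{n^2(t_{j'+1}-t_{j'})+1}\gtrsim 1$ for bounded time gaps, the dominant contribution is at $j=k_{m-1}\wedge k_m$, yielding the target bound.

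\textbf{The main obstacle.} The chief technical point is the bookkeeping in Step 1: after applying \eqref{identity}, one obtains $(m-1)$-time correlations at non-repetitive lists where the list at time $t_{m-1}$ has been reduced in size, altering the decay exponents $k_{j'}\wedge\sum_{l=j'+1}^{m-1}k_l$ of the induction hypothesis. One must verify that these altered exponents, combined with the $j$ extra factors gained in Step 2, recombine correctly to reconstruct the advertised product $\prod_{j'=1}^{m-1}(n/\sqrt{n^2(t_{j'+1}-t_{j'})+1})^{k_{j'}\wedge\sum_{l=j'+1}^m k_l}$. This is analogous to the multi-case induction used in Proposition~\ref{expdif} and to the passage from Theorem~\ref{twotimecf} to Corollary~\ref{cftwotime}, the key inequality ensuring no loss being that $k_{j'}\wedge\sum_{l=j'+1}^{m-1}k_l\geq (k_{j'}\wedge\sum_{l=j'+1}^m k_l)-k_m$, so that the deficit at each level is at most $k_m$ and is absorbed by the extra factors picked up at $j=k_{m-1}\wedge k_m$ in Step 2.
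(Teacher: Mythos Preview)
Your argument correctly proves the displayed inequality, and the approach --- partitioning $\mathbf w\in\Lambda^{k_m}$ by its coincidence pattern with $\{x_{i_{m-1}}\}$, applying \eqref{identity} to reduce to non-repetitive $(m-1)$-time correlations, invoking the induction hypothesis of Theorem~\ref{gnE}, and then combining the pointwise bound $\bar p_{t_m-t_{m-1}}\lesssim (n^2(t_m-t_{m-1})+1)^{-1/2}$ on the constrained coordinates with the $\ell^1$-bound on the free ones --- is exactly what the paper does in Lemma~\ref{initialqm} and Corollary~\ref{initialpm}.

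There is, however, a typo in the paper: the display in Proposition~\ref{estimatePhi} duplicates Corollary~\ref{initialpm} verbatim. As its placement (it is introduced as the extension of Proposition~\ref{estimatePsi}), its role in the proof of Theorem~\ref{gnE} (``the two terms \ldots are estimated in Corollary~\ref{initialpm} and Proposition~\ref{estimatePhi} respectively''), and its own proof sketch make clear, Proposition~\ref{estimatePhi} is meant to bound the \emph{integral} term
\[
\mathbf{E}_{(x_{i_m}:\,1\leq i_m\leq k_m)}\Big[\int_{t_{m-1}}^{t_m}\Phi(\mathbf X_{t_m-r},r)\,\dd r\Big].
\]
The paper's argument for this is structurally different from what you wrote. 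One introduces
\[
\mc S_{t,r}^{k,\ell}\;=\;\sup_{\mathbf x\in\Lambda^k}\sum_{\mathbf y\in\Lambda^k}\tilde p_{t-r}^{\,\otimes,\ell}(\mathbf x,\mathbf y)\,|\phi_r(\mathbf y)|
\]
and derives, exactly as in Lemma~\ref{difbound}, a recursive inequality bounding $\mc S_{t_m,r}^{k_m,\ell}$ by the initial-term contribution (handled by Lemma~\ref{initialqm}) plus time-integrals involving $\mc S_{t_m,\tau}^{k_m-1,\ell+1}$ and $\mc S_{t_m,\tau}^{k_m-2,\ell+1}$. The singular factors $(n^2(r-\tau)+1)^{-1/4}$ and $(n^2(t_m-\tau)+1)^{-1/2}$ needed to close this recursion come from the gradient estimate \eqref{gradbound} (hence ultimately Theorem~\ref{grad}) after a summation by parts, and from the diagonal bound on $\bar p^{\otimes,\ell+1}$; the seed $\mc S_{t_m,r}^{0,\ell}$ is precisely the $(m-1)$-time induction hypothesis. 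Your approach does not touch this machinery: the integrand $\Phi(\mathbf X_{t_m-r},r)$ involves $\widetilde\phi_r$ at the running time $r>t_{m-1}$, so it cannot be fed directly into the $(m-1)$-time induction hypothesis, and a partition by coincidence pattern alone is not enough. So while you have settled the statement as literally printed, you have not proved what Proposition~\ref{estimatePhi} is intended to assert.
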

The proof is quite similar to the proof of Proposition \ref{estimatePsi}, so we only sketch it. The key step is to estimate the following quantity, which plays a similar role as $S_{t,r}^{k,\ell}$ in the proof of Proposition \ref{estimatePsi}:
$$\mc S_{t,r}^{k,\ell}\;:=\; \sup_{\mathbf x\in\Lambda^k} \sum_{\mathbf y\in\Lambda^k} \tilde{p}_{t-r}^{\otimes,\ell}(\mathbf x,\mathbf y) \big| \phi_r(\mathbf y)\big|\,.$$
One can mimic the proof of Lemma \ref{difbound} to obtain
\begin{align*}
\mc S_{t_m,r}^{k_m,\ell}\;\lesssim\; &\sup_{\mathbf x\in\Lambda^{k_m}}\sum_{\mathbf y\in\Lambda^{k_m}} \tilde{p}_{t_m-r}^{\otimes,\ell}(\mathbf x,\mathbf y)\bb E\big[\big\lvert \phi_{t_{m-1}}(\mathbf X_{r-t_{m-1},\mathbf y}) \big\rvert\big]\\
&+  \int_{t_{m-1}}^r \dd\tau\, \frac{1}{\sqrt{n^2(t_m-\tau)+1}} \mc S_{t_m,\tau}^{k_m-2,\ell+1} \\
&+  n\int_{t_{m-1}}^r\dd\tau\, \frac{1}{(n^2(r-\tau)+1)^{1/4}} \mc S_{t_m,\tau}^{k_m-1,\ell+1} \frac{1}{\sqrt{n^2(t_m-\tau)+1}} \,.
\end{align*}
By Lemma \ref{initialqm} below, the first term on the right hand side of the above inequality is bounded by
$$Cn^{-\sum_{j=1}^m k_j/2} \prod_{j=1}^{m-1} \Big(\frac{n}{\sqrt{n^2(t_{j+1}-t_j)+1}}\Big)^{k_j\wedge \sum_{l=j+1}^m k_l}. $$
The second and third terms on the right hand side of the above inequality are bounded by the same quantity. Indeed, this can be seen by adapting the arguments of the proof of Lemma~\ref{difbound} and using the seed bound
$$ \mc S_{t_m,r}^{0,\ell}\;\lesssim\; n^{-\sum_{j=1}^{m-1} k_j/2}\prod_{j=1}^{m-2} \Big(\frac{n}{\sqrt{n^2(t_{j+1}-t_j)+1}}\Big)^{k_j\wedge \sum_{l=j+1}^{m-1} k_l}\,.$$
This then indeed implies the bound
$$\mc S_{t_m,r}^{k_m,\ell}\;\lesssim\;  Cn^{-\sum_{j=1}^m k_j/2} \prod_{j=1}^{m-1} \Big(\frac{n}{\sqrt{n^2(t_{j+1}-t_j)+1}}\Big)^{k_j\wedge \sum_{l=j+1}^m k_l}\frac{(n^2(t_m-t_{m-1})+1)^{1/4}}{(n^2(r-t_{m-1})+1)^{1/4}}\,,$$
which allows to conclude as in Lemma~\ref{difbound} and Proposition~\ref{estimatePsi}.

\subsection{Estimates on initial terms}\label{45}

\begin{lemma}\label{initialq}
	Fix a positive integer $k_1$. Consider a non-negative function $q:\bb R_+\times\Lambda^{k_1}\times\Lambda^{k_1}\to [0,\infty)$ such that, for every $t>0$ and every subset $J\subset \{1,\dots, k_2\}$,
	\begin{equation}\label{condq}
	\sup_{\substack{\mathbf x\in\Lambda^{k_1}\\\mathbf z\in\Lambda^{k_2}}}\sum_{\substack{\{y_i:\, 1\leq i\leq k_1\}\,:\\ \{y_i:\, 1\leq i\leq k_1\} \cap \{z_j: 1\leq j\leq k_2\}\, =\,\{z_j: \,j\in J\} }}
	\hspace{-1.5cm} q_t\big( (x_i: 1\leq i\leq k_1),  (y_i: 1\leq i\leq k_1)\big)\;\lesssim\; \frac{1}{(\sqrt{n^2 t+1})^{|J|}}\,.
	\end{equation}
	Then, there exists a constant $C$ independent of $(x_i: 1\leq i\leq k_1)$ and $(z_j: 1\leq j\leq k_2)$  such that
	\begin{equation*}
	\begin{split}
	&\sum_{y_i: 1\leq i\leq k_1}q_{t-s}\big( (x_i: 1\leq i\leq k_1),  (y_i: 1\leq i\leq k_1)\big) \bb E\Big[\prod_{i=1}^{k_1}\overline{\eta}_s(y_i) \prod_{j=1}^{k_2} \overline{\eta}_s(z_j)\Big]\\
	&\;\leq\; Cn^{-(k_1+k_2)/2}\times \Big(\frac{n}{\sqrt{n^2(t-s)+1}}\Big)^{k_1\wedge k_2},
	\end{split}
	\end{equation*}
	for any $s<t$ with $0\leq s,t \leq T$ and every integer $n\geq 1$.
\end{lemma}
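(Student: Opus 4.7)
The strategy is to decompose the outer sum over $(y_i)$ according to how it overlaps with the fixed list $(z_j)$, and then combine the summability hypothesis \eqref{condq} on $q$ with the equal-time multi-point correlation estimate of Proposition~\ref{ndis}.

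Concretely, for each subset $J\subseteq\{1,\ldots,k_2\}$, let $\mc Y_J$ denote the collection of tuples $(y_i: 1\leq i\leq k_1)\in\Lambda^{k_1}$ satisfying $\{y_i: 1\leq i\leq k_1\}\cap\{z_j: 1\leq j\leq k_2\}=\{z_j: j\in J\}$. These sets partition $\Lambda^{k_1}$ as $J$ ranges over subsets of $\{1,\ldots,k_2\}$, and $\mc Y_J$ is empty unless $|J|\leq k_1\wedge k_2$. For any fixed $(y_i)\in\mc Y_J$, each element of $\{z_j: j\in J\}$ appears exactly twice in the combined list $(y_1,\ldots,y_{k_1},z_1,\ldots,z_{k_2})$, while all remaining entries are mutually distinct; consequently, the combined list contains exactly $k_1+k_2-2|J|$ non-repetitive points. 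Proposition~\ref{ndis} therefore yields
$$\Big|\bb E_{\nu_{\rho_0^n(\cdot)}}\Big[\prod_{i=1}^{k_1}\overline\eta_s(y_i)\prod_{j=1}^{k_2}\overline\eta_s(z_j)\Big]\Big|\;\lesssim\; n^{-(k_1+k_2-2|J|)/2}.$$

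Since this bound does not depend on the particular choice of $(y_i)\in\mc Y_J$, applying \eqref{condq} to perform the summation over $\mc Y_J$ shows that the contribution of overlap type $J$ to the sum in question is at most a constant multiple of
$$n^{-(k_1+k_2-2|J|)/2}\cdot\frac{1}{(\sqrt{n^2(t-s)+1})^{|J|}}\;=\;n^{-(k_1+k_2)/2}\Big(\frac{n}{\sqrt{n^2(t-s)+1}}\Big)^{|J|}.$$
There are $\binom{k_2}{|J|}$ subsets $J$ of a given cardinality, which is a constant depending only on $k_1,k_2$, and since $t-s\leq T$ the ratio $\tfrac{n}{\sqrt{n^2(t-s)+1}}$ is bounded below by a positive constant depending only on $T$. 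Hence, summing over $J$, the maximum is attained at $|J|=k_1\wedge k_2$, which produces the claimed estimate.

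I expect the argument to be short; its content is essentially combinatorial. The step requiring the most care, and in my view the only subtle one, is the overlap bookkeeping: verifying that the equality (not containment) $\{y_i\}\cap\{z_j\}=\{z_j:j\in J\}$ correctly identifies the configurations exhibiting exactly $|J|$ coincidences between the $y$'s and the $z$'s, which in turn pins the non-repetitive count at $k_1+k_2-2|J|$. Here it is essential that both $(y_i)$ and $(z_j)$ are already internally non-repetitive lists, so that no additional repetitions can arise within either side.
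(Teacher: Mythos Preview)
Your proposal is correct and follows essentially the same approach as the paper: both decompose the sum according to the overlap between $\{y_i\}$ and $\{z_j\}$, invoke Proposition~\ref{ndis} to bound the correlation by $n^{-(k_1+k_2-2|J|)/2}$, and use hypothesis~\eqref{condq} to control the remaining sum. The paper parametrizes by $\ell=|\{y_i\}\Delta\{z_j\}|$ rather than by the subset $J$ directly, but since $\ell=k_1+k_2-2|J|$ this is purely cosmetic; your combinatorial bookkeeping is the same and is carried out carefully.
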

\begin{proof}
	The sum to be estimated in the lemma can be written as
	\begin{equation}\label{sumpEell}
	\sum_{\ell=0}^{k_1+ k_2}\hspace{-20pt} \sum_{\substack{\{y_i: 1\leq i\leq k_1\}:\\ \big| \{y_i: 1\leq i\leq k_1\} \Delta \{z_j: 1\leq j\leq k_2\}\big| =\,\ell}}\hspace{-20pt}
	q_{t-s}\big( (x_i: 1\leq i\leq k_1),  (y_i: 1\leq i\leq k_1)\big) \bb E\Big[\prod_{i=1}^{k_1}\overline{\eta}_s(y_i) \prod_{j=1}^{k_2} \overline{\eta}_s(z_j)\Big]\,.
	\end{equation}
	Since the number of non-repetitive points in $(y_i,z_j: 1\leq i\leq k_1, 1\leq j\leq k_2)$ is exactly $\big| \{y_i: 1\leq i\leq k_1\} \Delta \{z_j: 1\leq j\leq k_2\}\big|$, if $\big| \{y_i: 1\leq i\leq k_1\} \Delta \{z_j: 1\leq j\leq k_2\}\big|= \ell$, by Proposition~\ref{ndis},  there exists a constant $C$ independent of $(y_i: 1\leq i\leq k_1)$ and $(z_j: 1\leq j\leq k_2)$ such that
	\begin{equation}\label{estiep}
	\Big\lvert \bb E\Big[\prod_{i=1}^{k_1}\overline{\eta}_s(y_i) \prod_{j=1}^{k_2} \overline{\eta}_s(z_j)\Big] \Big\rvert \;\lesssim\; n^{-\ell/2}\,.
	\end{equation}
	On the other hand, for a list of points $(y_i: 1\leq i\leq k_1)$ such that $\big| \{y_i: 1\leq i\leq k_1\} \Delta \{z_j: 1\leq j\leq k_2\}\big|\;=\;\ell $, there are $(k_1+k_2-\ell)/2$ points that appear in both sets $\{y_i:1\leq i\leq k_1\}$ and $\{z_j: 1\leq j\leq k_2\}$, i.e.,
	$$\big| \{y_i: 1\leq i\leq k_1\} \cap \{z_j: 1\leq j\leq k_2\}\big|\;=\;(k_1+k_2-\ell)/2\,.$$
	Therefore, for each $\ell$,
	\begin{equation}\label{eq:qest}
	\begin{split}
	&\sum_{\substack{\{y_i: 1\leq i\leq k_1\}:\\ \big| \{y_i: 1\leq i\leq k_1\} \Delta \{z_j: 1\leq j\leq k_2\}\big|=\ell }}q_{t-s}\big( (x_i: 1\leq i\leq k_1),  (y_i: 1\leq i\leq k_1)\big) \\
	=\,&\sum_{\substack{J\subset\{1,2,\dots,k_2\},\\ |J|=(k_1+k_2-\ell)/2}}\sum_{\substack{\{y_i: 1\leq i\leq k_1\}:\\ \{y_i: 1\leq i\leq k_1\} \cap \{z_j: 1\leq j\leq k_2\}\\ =\{z_j: \,j\in J\} }}q_{t-s}\big( (x_i: 1\leq i\leq k_1),  (y_i: 1\leq i\leq k_1)\big)\\
	\lesssim\,&\sum_{\substack{J\subset\{1,2,\dots,k_2\},\\ |J|=(k_1+k_2-\ell)/2}}\Big(\frac{1}{\sqrt{n^2 (t-s)+1}} \Big)^{(k_1+k_2-\ell)/2},\\
	\end{split}
	\end{equation}
	where in the last inequality we applied condition \eqref{condq}. The expression above is further bounded by
	\begin{equation*}
	\sum_{\substack{J\subset\{1,2,\dots,k_2\},\\ |J|=(k_1+k_2-\ell)/2}}n^{-(k_1+k_2-\ell)/2}\Big(\frac{n}{\sqrt{n^2 (t-s)+1}} \Big)^{k_1\wedge k_2},
	\end{equation*}
	since $(k_1+k_2-\ell)/2$ is exactly the number of points that appear in both sets $\{y_i;1\leq i\leq k_1\}$ and $\{z_i: 1\leq i\leq k_2\}$, which is of course less than or equal to $k_1\wedge k_2$. Here, we used as well that the distance between $t$ and $s$ is bounded. From this estimate and \eqref{estiep}, for each $\ell$, we have
	\begin{equation*}
	\begin{split}
	&\sum_{\substack{\{y_i:\, 1\leq i\leq k_1\}:\\ \big| \{y_i:\, 1\leq i\leq k_1\} \Delta \{z_i:\, 1\leq i\leq k_2\}\big| =\,\ell}}\hspace{-1cm}q_{t-s}\big( (x_i: 1\leq i\leq k_1),  (y_i: 1\leq i\leq k_1)\big) \bb E\Big[\prod_{i=1}^{k_1}\overline{\eta}_s(y_i) \prod_{j=1}^{k_2} \overline{\eta}_s(z_j)\Big] \\
	&\lesssim\, n^{-(k_1+k_2)/2}\times \Big(\frac{n}{\sqrt{n^2(t-s)+1}}\Big)^{k_1\wedge k_2}.
	\end{split}
	\end{equation*}
	Since the first sum over $\ell$ in \eqref{sumpEell} is finite, the lemma is proved.
\end{proof}

\begin{corollary}\label{initialp}
	There exists a constant $C$ independent of $(x_i: 1\leq i\leq k_1)$ and $(z_i: 1\leq i\leq k_2)$  such that
	\begin{equation*}
	\Big\lvert \mathbf{E}_{(x_i:\,1\leq i\leq k_1)}\big[\varphi_s(\mathbf{X}^i_{t-s}: 1\leq i\leq k_1) \big] \Big\rvert \;\leq\; Cn^{-(k_1+k_2)/2}\times \Big(\frac{n}{\sqrt{n^2(t-s)+1}}\Big)^{k_1\wedge k_2},
	\end{equation*}
	for any $s<t$ with $0\leq s,t\leq T$ and every integer $n\geq 1$.
\end{corollary}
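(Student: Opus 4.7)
The plan is to apply Lemma~\ref{initialq} directly with $q = p_{t-s}^{\lex}$, the transition probability of the $k_1$ labelled stirring particles. Indeed, unrolling the definition of $\varphi_s$, we can write
\begin{equation*}
\mathbf{E}_{(x_i:\,1\leq i\leq k_1)}\big[\varphi_s(\mathbf{X}^i_{t-s}: 1\leq i\leq k_1)\big] \;=\; \sum_{\mathbf y\in\Lambda^{k_1}} p^{\lex}_{t-s}(\mathbf x,\mathbf y)\,\bb E\Big[\prod_{i=1}^{k_1}\overline{\eta}_s(y_i)\prod_{j=1}^{k_2}\overline{\eta}_s(z_j)\Big]\,,
\end{equation*}
so once the hypothesis \eqref{condq} of Lemma~\ref{initialq} is verified for $q = p^{\lex}_{\cdot}$, the claimed bound follows immediately from that lemma.

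The remaining task is therefore to prove that, for every $J\subset\{1,\dots,k_2\}$,
\begin{equation*}
\sup_{\mathbf x,\,\mathbf z}\sum_{\substack{\{y_i:\,1\leq i\leq k_1\}:\\\{y_i\}\cap\{z_j\}\,=\,\{z_j:\,j\in J\}}} p^{\lex}_{t-s}(\mathbf x,\mathbf y) \;\lesssim\; \frac{1}{(\sqrt{n^2(t-s)+1})^{|J|}}\,.
\end{equation*}
To do so, I first use the tensor-product upper bound~\eqref{boundhk}, namely $p^{\lex}_{t-s}(\mathbf x,\mathbf y)\lesssim \prod_{i=1}^{k_1}\bar p_{t-s}(x_i,y_i)$. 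Next, I partition the sum over configurations $(y_1,\dots,y_{k_1})$ satisfying the intersection condition by specifying which $|J|$ of the indices $i\in\{1,\dots,k_1\}$ correspond to the fixed values $\{z_j:\,j\in J\}$ and how they are paired; this yields a finite combinatorial factor depending only on $k_1$ and $|J|$. For each such pairing, the remaining $k_1-|J|$ coordinates range freely in $\bb Z$ (ignoring the constraint that they avoid $\{z_j:j\notin J\}$ only enlarges the sum).

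For each fixed coordinate, $\bar p_{t-s}(x_i,z_j)\lesssim \tfrac{1}{\sqrt{n^2(t-s)+1}}$ by~\eqref{kernelupbound}, giving $|J|$ factors of the required form; for each free coordinate, $\sum_{y\in\bb Z}\bar p_{t-s}(x_i,y)\lesssim 1$ by property~$3)$ of the kernel $\bar p$ listed in Section~\ref{Sec431} (after rescaling to $\bb Z/n$). Multiplying these estimates yields exactly~\eqref{condq}. No new obstacle is expected here: the argument is essentially bookkeeping combined with the already established properties of $\bar p$, and the combinatorial factors depend only on $k_1$, $k_2$ and $|J|$, hence are absorbed into the constant $C$. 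Applying Lemma~\ref{initialq} then closes the proof.
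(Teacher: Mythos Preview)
Your proposal is correct and follows essentially the same approach as the paper: write the expectation as a sum against $p^{\lex}_{t-s}$ and apply Lemma~\ref{initialq} after checking condition~\eqref{condq}. The only minor difference is in how \eqref{condq} is verified: the paper exploits directly that $p^{\lex}$ is a transition probability, so summing out the $k_1-|J|$ free coordinates yields exactly the $|J|$-particle transition kernel, to which the diagonal bound $p^{\lex}_t(\cdot,\cdot)\lesssim (n^2t+1)^{-|J|/2}$ applies; you instead pass first to the tensor bound~\eqref{boundhk} and then use~\eqref{kernelupbound} and the summability of $\bar p$, which is precisely the route the paper takes in Remark~\ref{condtildep} for $\tilde p^{\otimes,\ell}$. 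Both verifications are valid and yield the same estimate.
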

\begin{proof}
	The expectation above can be written as
	\begin{equation}\label{sumpE}
	\sum_{y_i: 1\leq i\leq k_1}p^{\lex}_{t-s}\big( (x_i: 1\leq i\leq k_1),  (y_i: 1\leq i\leq k_1)\big) \bb E\Big[\prod_{i=1}^{k_1}\overline{\eta}_s(y_i) \prod_{j=1}^{k_2} \overline{\eta}_s(z_j)\Big]\,.
	\end{equation}
	By the previous lemma, it remains to check that $p^{\lex}$ satisfies \eqref{condq}.
	As a consequence of the analysis in Section~\ref{Sec431} we have that for every $ t\geq 0$,
	$$\sup_{\mathbf x,\mathbf z\in\Lambda^k}p^{\lex}_{t}\big( (x_i: 1 \leq i\leq k),  (z_j:  1\leq j \leq k)\big) \;\lesssim\;  \Big(\frac{1}{\sqrt{n^2 t+1}} \Big)^{k}.$$
	Given a subset $J$ as in~\eqref{condq} we can find an index set $I$ with $|I|= |J|$ such that $\{y_i: 1\leq i\leq k_1\} \cap \{z_i: 1\leq i\leq k_2\}= \{z_j: j\in J\}= \{y_i: i\in I\}$.
	By the fact that $p_t^{\lex}$ is a transition probability, and the above estimate on the transition probability, we have that  for every $t>0$, every subset $J\subset \{1,2,\dots, k_2\}$,
	\begin{align*}
	&\sup_{\substack{\mathbf x\in\Lambda^{k_1}\\\mathbf z\in\Lambda^{k_2}}}\sum_{\substack{\{y_i:\, 1\leq i\leq k_1\}\,:\\ \{y_i: 1\leq i\leq k_1\} \cap \{z_j: 1\leq j\leq k_2\}\, =\, \{z_j: \,j\in J\} }}p^{\lex}_t\big( (x_i: 1\leq i\leq k_1),  (y_i: 1\leq i\leq k_1)\big)\\
	&=\; \sup_{\substack{\mathbf x\in\Lambda^{k_1}\\\mathbf z\in\Lambda^{k_2}}}p_t^{\lex}\big((x_i:i\in I), (z_j, j\in J) \big)\;\lesssim\; \frac{1}{(\sqrt{n^2 t+1})^{|J|}}\,.
	\end{align*}
\end{proof}
\begin{remark}\label{condtildep}\rm
	The arguments to verify that $p^{\lex}$ satisfies condition \eqref{condq} can be easily adapted to $\tilde p^{\otimes,\ell}$. Since $\tilde p^{\otimes,\ell}$ is a product in terms of $\bar p^{\otimes,\ell}$, and since $\bar p^{\otimes,\ell}$ satisfies property \eqref{finitemeasurebarp}, we have that  for every $t>0$, every subset $J\subset \{1,2,\dots, k_2\}$,
	\begin{align*}
	&\sup_{\substack{\mathbf x\in\Lambda^{k_1}\\\mathbf z\in\Lambda^{k_2}}}\sum_{\substack{\{y_i: 1\leq i\leq k_1\}:\\ \{y_i: 1\leq i\leq k_1\} \cap \{z_j: 1\leq j\leq k_2\}\\ =\{z_j: \,j\in J\} }}\tilde p^{\otimes,\ell}_t\big( (x_i: 1\leq i\leq k_1),  (y_i: 1\leq i\leq k_1)\big)\\
	&\lesssim\; \sup_{\substack{\mathbf x\in\Lambda^{k_1}\\\mathbf z\in\Lambda^{k_2}}}\tilde p^{\otimes,\ell}_t\big((x_i:i\in I), (z_j, j\in J) \big)\;
	\lesssim\; \frac{1}{(\sqrt{n^2 t+1})^{|J|}}\,,
	\end{align*}
	where $I$ is as in the proof of Corollary~\ref{initialp}.
	The only difference is that in the third line, we no longer have equality.
\end{remark}

The results of the above lemma and corollary can be extended to the case of multiple lists of points. Recall the definition of $\phi_t$ given in \eqref{defphi}.
\begin{lemma}\label{initialqm}
	Consider $m$ lists of non-repetitive points $(x_{i_1}: 1\leq i_1\leq k_1), \ldots, (x_{i_m}: 1\leq i_m\leq k_m)$. Assume $t_1<\cdots <t_m$ and $0\leq t_j,t_{j+1}\leq T$ for every $1\leq j< m$.
	Given a non-negative function $q:\bb R_+\times\Lambda^{k_m}\times\Lambda^{k_m}\to [0,\infty)$ such that, for every $t>0$ and every subset $J\subset \{1,2,\dots, k_{m-1}\}$ holds
	\begin{equation}\label{condqm}
	\begin{split}
	&\sup_{\substack{\mathbf x\in\Lambda^{k_m}\\\mathbf z\in\Lambda^{k_{m-1}}}}\sum_{\substack{\{y_i:\, 1\leq i\leq k_m\}\,:\\ \{y_i: 1\leq i\leq k_m\} \cap \{z_j:\, 1\leq j\leq k_{m-1}\}\\ =\{z_j: \,j\in J\} }}q_t\big( (x_{i_m}: 1\leq i_m\leq k_m),  (y_i: 1\leq i\leq k_m)\big)\\
	&\;\lesssim\; \frac{1}{(\sqrt{n^2 t+1})^{|J|}}\,.
	\end{split}
	\end{equation}
	then, there exists a constant $C$ independent of $(x_{i_j}: 1\leq i_j\leq k_j)$, $1\leq j\leq m$,  such that
	\begin{equation*}
	\begin{split}
	&\sum_{y_i:\, 1\leq i\leq k_m}q_{t_m-t_{m-1}}\big( (x_{i_m}: 1\leq i_m\leq k_m),  (y_i: 1\leq i\leq k_m)\big)  \bb E\big[\prod_{i=1}^{k_m}\overline{\eta}_{t_{m-1}}(y_i) \prod_{j=1}^{m-1} \prod_{i_j=1}^{k_j} \overline{\eta}_{t_j}(x_{i_j})\big]\\
	&\;\leq\; Cn^{-\sum_{j=1}^m k_j/2} \prod_{j=1}^{m-1} \Big(\frac{n}{\sqrt{n^2(t_{j+1}-t_j)+1}}\Big)^{k_j\wedge \sum_{l=j+1}^m k_l},
	\end{split}
	\end{equation*}
	for any $s<t$ with $0\leq s,t\leq T$ and every integer $n\geq 1$.
\end{lemma}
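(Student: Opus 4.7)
The plan is to adapt the proof of Lemma~\ref{initialq} to the multi-time setting, using the inductive hypothesis on Theorem~\ref{gnE} for $m-1$ time levels. The expectation inside the sum involves $m-1$ time levels: the lists $(x_{i_j})$ at times $t_1,\ldots,t_{m-2}$, and at time $t_{m-1}$ the combined list $(y_i)\cup(x_{i_{m-1}})$, which may contain repetitive points whenever some $y_i$ coincides with some $x_{i_{m-1}}$.

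The first step is to decompose the sum over $(y_i:1\leq i\leq k_m)$ according to the overlap pattern with $(x_{i_{m-1}})$: for each subset $J\subseteq\{1,\ldots,k_{m-1}\}$, group those configurations for which $\{y_i\}\cap\{x_{i_{m-1}}\}=\{x_{i_{m-1}}:i_{m-1}\in J\}$. For such configurations the combined list at time $t_{m-1}$ carries $k_m+k_{m-1}-2|J|$ non-repetitive points. I then apply identity~\eqref{identity} to each of the $|J|$ repeated pairs $\overline{\eta}_{t_{m-1}}(x_{i_{m-1}})^2$, reducing the expectation to a finite linear combination of expectations in which the lists at every time level are non-repetitive. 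Each such reduced expectation is controlled by the inductive hypothesis (Theorem~\ref{gnE} for $m-1$ time levels).

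In the second step, I combine the resulting expectation estimate with the kernel bound~\eqref{condqm}, which for each fixed $J$ gives $\sum_{(y_i)}q_{t_m-t_{m-1}}(\cdot,\cdot)\lesssim (\sqrt{n^2(t_m-t_{m-1})+1})^{-|J|}$. The dominant contribution comes from the term in the identity-reduction in which all $|J|$ pairs become the constants $\rho_{t_{m-1}}^n(1-\rho_{t_{m-1}}^n)$: combined with the kernel factor this produces $n^{-\sum_{j=1}^m k_j/2}(n/\sqrt{n^2(t_m-t_{m-1})+1})^{|J|}$ at position $m-1$, multiplied by the induction factors at positions $j\leq m-2$. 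Summing over $|J|\in\{0,1,\ldots,k_m\wedge k_{m-1}\}$ and using the elementary inequality
\begin{equation*}
\Big(\frac{1}{\sqrt{n^2(t_m-t_{m-1})+1}}\Big)^{r}\;\lesssim\; n^{-r}\Big(\frac{n}{\sqrt{n^2(t_m-t_{m-1})+1}}\Big)^{k_m\wedge k_{m-1}},\qquad r\leq k_m\wedge k_{m-1},
\end{equation*}
which is valid up to a $T$-dependent constant since the time gaps are bounded by $T$, one recovers the claimed bound.

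The main obstacle will be the combinatorial bookkeeping in the reduction step: checking that, after the identity-based reduction and after invoking the induction hypothesis, the induction exponents at positions $j\leq m-2$ (which depend on the size of the non-repetitive list at time $t_{m-1}$ after reduction) are dominated by the target exponents $k_j\wedge\sum_{l=j+1}^m k_l$, again at the cost of a $T$-dependent constant absorbed into $C$.
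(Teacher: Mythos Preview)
Your proposal is correct and follows essentially the same route as the paper's proof. The paper also proceeds by induction on $m$, decomposes the sum over $(y_i)$ according to the overlap with $(x_{i_{m-1}})$ (they index by $\ell=|\{y_i\}\Delta\{x_{i_{m-1}}\}|$ rather than by the intersection set $J$, but since $|J|=(k_m+k_{m-1}-\ell)/2$ this is the same decomposition), reduces via identity~\eqref{identity}, invokes the induction hypothesis for $m-1$ time levels, and combines with condition~\eqref{condqm}; the ``main obstacle'' you flag about the exponents at positions $j\leq m-2$ is handled in the paper exactly as you anticipate, by absorbing into a $T$-dependent constant using $n/\sqrt{n^2(t_{j+1}-t_j)+1}\gtrsim 1$.
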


\begin{proof}
	The proof is based on induction on $m$, the number of lists of points. Lemma \ref{initialp} deals with the case when $m=2$.

	Assume $m\geq 3$ and that the statement of this lemma holds if the  number of lists of points is strictly less than $m$. The rest of the proof is quite similar to the proof of the previous lemma, so we only outline the main steps. The sum to be estimated in the lemma can be written as
	\begin{equation*}
	\begin{split}
	&\sum_{\ell=0}^{k_m+ k_{m-1}}\!\!\! \sum_{\substack{\{y_i:\, 1\leq i\leq k_m\}\,:\\ \big| \{y_i:\, 1\leq i\leq k_m\} \Delta \{x_{i_{m-1}}:\, 1\leq i_{m-1}\leq k_{m-1}\}\big|=\ell }}\!\!\!\!\!q_{t_m-t_{m-1}}\big( (x_{i_m}: 1\leq i_m\leq k_m),  (y_i: 1\leq i\leq k_m)\big)\\
	&\hspace{7cm}\times \bb E\big[\prod_{i=1}^{k_m}\overline{\eta}_{t_{m-1}}(y_i) \prod_{j=1}^{m-1} \prod_{i_j=1}^{k_j} \overline{\eta}_{t_j}(x_{i_j})\big].
	\end{split}
	\end{equation*}
	For every $\ell\leq k_m+k_{m-1}$ and every collection of points $(y_i: 1\leq i\leq k_m)$ such that
	$$\big| \{y_i: 1\leq i\leq k_m\} \Delta \{x_{i_{m-1}}: 1\leq i_{m-1}\leq k_{m-1}\}\big|\;=\;\ell\,,$$
	using the identity \eqref{identity}
	and the induction assumption, we get that
	\begin{align*}
	&\Big\lvert  \bb E\big[\prod_{i=1}^{k_m}\overline{\eta}_{t_{m-1}}(y_i) \prod_{j=1}^{m-1} \prod_{i_j=1}^{k_j} \overline{\eta}_{t_j}(x_{i_j})\big] \Big\rvert\\
	& \leq Cn^{-\ell/2-\sum_{j=1}^{m-2} \!k_j/2}\Big(\frac{n}{\sqrt{n^2(t_{m-1}\!-\!t_{m-2})\!+\!1}}\Big)^{k_{m-2}\wedge \, \ell}\!\!\! \times\!\!\!\prod_{j=1}^{m-3}\!\! \Big(\frac{n}{\sqrt{n^2(t_{j+1}\!-\!t_{j})\!+\!1}}\Big)^{k_j\wedge (\ell+\sum_{l=j+1}^{m-2}k_{l})}\\
	&\leq Cn^{-\ell/2-\sum_{j=1}^{m-2} k_j/2} \prod_{j=1}^{m-2} \Big(\frac{n}{\sqrt{n^2(t_{j+1}-t_{j})+1}}\Big)^{k_j\wedge\sum_{l=j+1}^{m}k_{l}}\,.
	\end{align*}
	On the other hand, similarly to the way we deduced \eqref{eq:qest}, we obtain
	\begin{align*}
	&\sum_{\substack{\{y_i:\, 1\leq i\leq k_m\}\\ \,\big| \{y_i:\, 1\leq i\leq k_m\} \Delta \{x_{i_{m-1}}:\, 1\leq i_{m-1}\leq k_{m-1}\}\big|=\ell }}q_{t_m-t_{m-1}}\big( (x_{i_m}: 1\leq i_m\leq k_m),  (y_i: 1\leq i\leq k_m)\big) \\
	& \lesssim\;  \bigg(\frac{1}{\sqrt{n^2 (t_m-t_{m-1})+1}} \bigg)^{(k_m+k_{m-1}-\ell)/2}\\
	& \lesssim\; n^{-(k_m+k_{m-1}-\ell)/2}\bigg(\frac{n}{\sqrt{n^2 (t_m-t_{m-1})+1}} \bigg)^{k_m\wedge k_{m-1}}.
	\end{align*}
	These two estimates imply that the absolute value of the expectation in the lemma is bounded by
	\begin{equation*}
	C n^{-\sum_{j=1}^m k_j/2}\Big(\frac{n}{\sqrt{n^2 (t_m-t_{m-1})+1}} \Big)^{k_m\wedge k_{m-1}}  \prod_{j=1}^{m-2} \Big(\frac{n}{\sqrt{n^2(t_{j+1}-t_{j})+1}}\Big)^{k_j\wedge\sum_{l=j+1}^{m}k_{l}}\,,
	\end{equation*}
	proving the lemma.
\end{proof}

\begin{corollary}\label{initialpm}
	Consider $m$ lists of non-repetitive points $(x_{i_1}: 1\leq i_1\leq k_1),\dots,(x_{i_m}: 1\leq i_m\leq k_m)$. Assume $t_1<\cdots <t_m$ and $0\leq t_j,t_{j+1}\leq T$ for every $1\leq j< m$. Then, there exists a constant $C$ independent of $(x_{i_j}: 1\leq i_j\leq k_j)$, $1\leq j\leq m$, such that
	\begin{equation*}
	\begin{split}
	&\Big\lvert \mathbf{E}_{(x_{i_m}:\,1\leq i_m\leq k_m)}\big[\phi_{t_{m-1}}(\mathbf{X}^{i_m}_{t_m-t_{m-1}}: 1\leq i_m\leq k_m) \big] \Big\rvert \\
	&\leq\;Cn^{-\sum_{j=1}^m k_j/2} \prod_{j=1}^{m-1} \Big(\frac{n}{\sqrt{n^2(t_{j+1}-t_j)+1}}\Big)^{k_j\wedge \sum_{l=j+1}^m k_l}
	\end{split}
	\end{equation*}
	for every integer $n\geq 1$.
\end{corollary}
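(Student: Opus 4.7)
The plan is to repeat, in the multi-times setting, exactly the argument used in the proof of Corollary~\ref{initialp}. First, conditioning on the positions of the $k_m$ labelled exclusion particles at time $t_m-t_{m-1}$ and unfolding the definition \eqref{defphi} of $\phi_{t_{m-1}}$, one rewrites the expectation in question as
\begin{equation*}
\sum_{(y_i)\in\Lambda^{k_m}} p^{\lex}_{t_m-t_{m-1}}\big((x_{i_m}),(y_i)\big)\,\bb E_{\nu_{\rho_0^n(\cdot)}}\Big[\prod_{i=1}^{k_m}\overline{\eta}_{t_{m-1}}(y_i)\prod_{j=1}^{m-1}\prod_{i_j=1}^{k_j}\overline{\eta}_{t_j}(x_{i_j})\Big].
\end{equation*}
The strategy is then to invoke Lemma~\ref{initialqm} with $q=p^{\lex}$, which reduces matters to verifying the hypothesis \eqref{condqm} for $p^{\lex}$.

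Second, I verify \eqref{condqm} for $p^{\lex}$ by the same marginalization argument that appears in the proof of Corollary~\ref{initialp}. Given a subset $J\subseteq\{1,\ldots,k_{m-1}\}$, any configuration $(y_i:1\leq i\leq k_m)\in\Lambda^{k_m}$ satisfying $\{y_i\}\cap\{z_j\}=\{z_j:j\in J\}$ singles out an index set $I\subseteq\{1,\ldots,k_m\}$ of cardinality $|J|$ that carries these shared points, while the remaining $k_m-|J|$ coordinates are only constrained to avoid $\{z_j:1\leq j\leq k_{m-1}\}\setminus\{z_j:j\in J\}$. Summing freely over these unconstrained coordinates, and using that $p^{\lex}_t$ is a probability kernel, one extracts the marginal occupation probability for the particles indexed by $I$, which by Landim's pointwise bound on labelled exclusion transition probabilities recalled in Section~\ref{Sec431} is controlled by
\begin{equation*}
p^{\lex}_{t_m-t_{m-1}}\big((x_{i_m}:i_m\in I),(z_j:j\in J)\big)\;\lesssim\;\frac{1}{(\sqrt{n^2(t_m-t_{m-1})+1})^{|J|}},
\end{equation*}
which is exactly hypothesis \eqref{condqm}.

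Finally, Lemma~\ref{initialqm} applied with $q=p^{\lex}$ yields directly the bound asserted in Corollary~\ref{initialpm}. I do not anticipate any genuine obstacle here: the marginalization step is literally identical to the one performed for Corollary~\ref{initialp}, and all the inductive combinatorics over the $m-1$ preceding lists has already been absorbed into the statement of Lemma~\ref{initialqm}. The only conceptual point worth flagging is that a marginal of the $k_m$-particle exclusion measure is \emph{not} itself a lower-dimensional exclusion transition probability, but is dominated by one (up to relabelling of particles); this domination, combined with the $k$-particle Landim bound, is what produces the exponent $|J|$ in the denominator.
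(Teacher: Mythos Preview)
Your proposal is correct and follows precisely the route the paper intends: the paper explicitly states that the proof of Corollary~\ref{initialpm} ``is almost the same as the one of Corollary~\ref{initialp}'', and your write-up reproduces that argument exactly, namely expanding the expectation via $p^{\lex}$, invoking Lemma~\ref{initialqm}, and verifying hypothesis~\eqref{condqm} for $p^{\lex}$ by the same marginalization that appears in Corollary~\ref{initialp}. Your remark that the marginal of the $k_m$-particle exclusion need only be \emph{dominated} by a lower-dimensional exclusion kernel is a nice caveat, but in fact under the stirring construction the marginal over a subset of labelled particles \emph{is} exactly the lower-dimensional labelled exclusion law, so the domination is an equality (up to the harmless constraint that the free coordinates avoid $\{z_j:j\notin J\}$, which only helps).
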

The proof of this corollary is almost the same as the one of Corollary \ref{initialp}, so we omit it.

\section{Joint Fluctuations -- Proof of Theorem~\ref{thm2.6}}\label{sec_5}

The proof of Theorem~\ref{thm2.6} follows the usual structure. First we establish tightness, and then we characterize the possible limit points. We start with the former.
Since the Cartesian product of compact sets is compact, in order to show the tightness of the pair current/occupation time it is enough to assure tightness for each one of them. We start with the easier one, the proof of the tightness for the occupation time.
\subsection{Tightness of the occupation time}
\begin{proposition}\label{prop:tightnessgamma}
	Assume that $\rho_0 \in C^1$. For any fixed $u\in \bb R$,
	the sequence of processes
	$\big\{\big(n^{1/2}\Gamma^n_{\lfloor un\rfloor}(t)\big): t\in [0,T]\big\}_{n\in \bb N}$ 	is tight in $\mc D([0,T], \bb R)$ endowed with the
	uniform topology.
\end{proposition}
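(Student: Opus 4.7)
The plan is to apply a Kolmogorov--Chentsov tightness criterion. Since $t\mapsto n^{1/2}\Gamma^n_{\lfloor un\rfloor}(t)$ is a continuous process (the integral in time of a bounded integrand), starts at zero, and takes values in $\bb R$, it suffices to establish a uniform moment bound of the form $\bb E_{\nu_{\rho_0^n(\cdot)}}[|X^n_t-X^n_s|^{2k}]\leq C(t-s)^{1+\delta}$ for some $k\geq 1$ and $\delta>0$. I would work with the fourth moment, i.e.\ $k=2$.

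Expanding the fourth power and exploiting symmetry of the integrand in the time variables yields
$$\bb E_{\nu_{\rho_0^n(\cdot)}}\Big[\big(n^{1/2}\Gamma^n_{\lfloor un\rfloor}(t)-n^{1/2}\Gamma^n_{\lfloor un\rfloor}(s)\big)^4\Big]=24\,n^2\!\!\int_{s\leq r_1\leq\cdots\leq r_4\leq t}\!\!\bb E_{\nu_{\rho_0^n(\cdot)}}\Big[\prod_{j=1}^4\overline{\eta}_{r_j}(\lfloor un\rfloor)\Big]\,dr_1\cdots dr_4.$$
Applying estimate \eqref{diftime} in Theorem~\ref{gnE} with $m=4$, $k_j=1$, and every spatial point equal to $\lfloor un\rfloor$ gives
$$\Big|\bb E_{\nu_{\rho_0^n(\cdot)}}\Big[\prod_{j=1}^4\overline{\eta}_{r_j}(\lfloor un\rfloor)\Big]\Big|\;\leq\; C\,n^{-2}\prod_{j=1}^{3}\frac{n}{\sqrt{n^2(r_{j+1}-r_j)+1}}.$$
The prefactor $n^{-2}$ cancels the $n^2$ in front of the time integral, and the elementary bound $n/\sqrt{n^2 h+1}\leq 1/\sqrt{h}$ reduces the task to estimating
$$\int_{s\leq r_1\leq\cdots\leq r_4\leq t}\prod_{j=1}^{3}(r_{j+1}-r_j)^{-1/2}\,dr_1\cdots dr_4.$$
Changing variables $u_j=r_{j+1}-r_j$ and recognizing a Dirichlet integral with parameters $1/2$, the inner integration over $(u_1,u_2,u_3)$ subject to $u_1+u_2+u_3\leq t-r_1$ equals a constant multiple of $(t-r_1)^{3/2}$; integrating the latter over $r_1\in[s,t]$ gives a constant multiple of $(t-s)^{5/2}$.

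Putting everything together yields $\bb E_{\nu_{\rho_0^n(\cdot)}}[|X^n_t-X^n_s|^4]\leq C(t-s)^{5/2}$ uniformly in $n$, so with $\delta=3/2$ the Kolmogorov--Chentsov criterion delivers tightness in $C([0,T])$, hence in $\mc D([0,T],\bb R)$ under the uniform topology. The main obstacle is not in the present proof but in having the correct input at hand: the temporal-decorrelation factor $\prod_{j}n/\sqrt{n^2(r_{j+1}-r_j)+1}$ supplied by Theorem~\ref{gnE} is crucial, because the naive same-time bound $O(n^{-2})$ from Proposition~\ref{ndis} (valid when the four times coincide) would leave a divergent $n^2$ prefactor. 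Once Theorem~\ref{gnE} is available, the remaining argument is the short Dirichlet-integral computation sketched above.
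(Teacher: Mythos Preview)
Your proof is correct but takes a genuinely different route from the paper. The paper establishes a second-moment bound $\bb E_{\nu_{\rho_0^n(\cdot)}}[|n^{1/2}\Gamma^n(t)-n^{1/2}\Gamma^n(s)|^2]\leq C(t-s)^{10/7}$ by (i) replacing $\overline\eta_r(0)$ by a box average $\frac{1}{\eps n}\sum_{x=0}^{\eps n-1}\overline\eta_r(x)$, (ii) controlling the replacement error via the nonequilibrium Kipnis--Varadhan inequality (Theorem~\ref{thm:KipnisVaradhan}), (iii) bounding the averaged term by Jensen and the two-point spatial correlation estimate of Jara--Landim, and (iv) optimizing over $\eps$. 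You instead go straight to the fourth moment and feed in the multi-time correlation estimate~\eqref{diftime} of Theorem~\ref{gnE} with $m=4$, $k_j=1$; the Dirichlet integral then gives $C(t-s)^{5/2}$. Your argument is shorter and bypasses the Kipnis--Varadhan machinery altogether; in fact the same idea with $m=2$ already yields a second-moment bound $C(t-s)^{3/2}$, sharper than the paper's $10/7$ exponent. The paper's route, on the other hand, exercises Theorem~\ref{thm:KipnisVaradhan}, which is in any case needed later for the characterization of limit points (Corollary~\ref{gamma_replace}), and works under the $C^1$ hypothesis stated in the proposition, whereas your use of Theorem~\ref{gnE} inherits the paper's standing $C^2$ assumption on $\rho_0$.
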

\begin{proof}
	Without loss of generality, let us assume that  $u=0$. 	Fix $0\leq s<t\leq T$.
	It is enough to show that there exists a constant $C$ independent of $n$ such that
	\begin{equation}\label{KC}
	\bb E_{\nu_{\rho_0^n(\cdot)}}\Big[\Big\lvert  \sqrt{n}\,\Gamma_0^n(t) -  \sqrt{n}\,\Gamma_0^n(s) \Big\rvert^2 \Big] \;\leq\; C(t-s)^{10/7}\,.
	\end{equation}
	We shall prove this inequality only for $t,s>0$ such that $0\leq t-s\leq 1$.
	The expectation on the left hand side of \eqref{KC} is bounded by
	\begin{equation*}
	2\,\bb E_{\nu_{\rho_0^n(\cdot)}}\Big[ n\Big\lvert \int_s^t\dd r\, \Big\{\overline{\eta}_r(0)-\frac{1}{\varepsilon n} \sum_{x=0}^{\varepsilon n-1}\overline{\eta}_r(x)\Big\} \Big\rvert^2\Big] + 2\,\bb E_{\nu_{\rho_0^n(\cdot)}}\Big[ n\Big\lvert \int_s^t\dd r\, \frac{1}{\varepsilon n} \sum_{x=0}^{\varepsilon n-1}\overline{\eta}_r(x) \Big\rvert^2\Big]\,.
	\end{equation*}
	In view of Theorem~\ref{thm:KipnisVaradhan}, the first term is bounded by $C(t-s)^{10/7}$ if we choose
	$\varepsilon=(t-s)^{4/7}$.
	It remains to prove that
	\begin{equation}\label{2nd}
	\bb E_{\nu_{\rho_0^n(\cdot)}}\Big[\Big\lvert\int _s^t\dd r\, \frac{1}{\varepsilon\sqrt{n}}\sum_{x=0}^{\varepsilon n-1}\overline{\eta}_r(x) \Big\rvert^2 \Big]\;\leq\; C(t-s)^{10/7}\,.
	\end{equation}
	By Jensen's inequality,
	\begin{equation}\label{afterJensen}
	\bb E_{\nu_{\rho_0^n(\cdot)}}\Big[\Big\lvert\int _s^t \dd r\, \frac{1}{\varepsilon\sqrt{n}}\sum_{x=0}^{\varepsilon n-1}\overline{\eta}_r(x) \Big\rvert^2 \Big]\;\leq\;(t-s)\int_s^t \dd r\,\bb E_{\mu^n}\Big[\Big\lvert \frac{1}{\varepsilon\sqrt{n}}\sum_{x=0}^{\varepsilon n-1}\overline{\eta}_r(x) \Big\rvert^2 \Big]\,.
	\end{equation}
	It is shown in  \cite[Lemma 3.2]{jaralandim2006} that
	\begin{equation*}
	\sup_{x\neq y}\Big\lvert\bb E_{\mu_n}[\overline{\eta}_t(x)\overline{\eta}_t(y)] \Big\rvert\;\leq\; \frac{C_0\sqrt{t} }{n}
	\end{equation*}
	for some positive constant $C_0$ that depends only on the initial profile $\rho_0$. Since $t\leq T$ and $T$ is fixed,
	from this estimate on the correlations we have
	\begin{equation*}
	\begin{split}
	\bb E_{\mu^n}\Big[\Big\lvert \frac{1}{\varepsilon\sqrt{n}}\sum_{x=0}^{\varepsilon n-1}\overline{\eta}_r(x) \Big\rvert^2 \Big] \;=\; &\sum_{x=0}^{\varepsilon n-1} \frac{1}{\varepsilon^2 n}\bb E_{\mu_n} [\overline{\eta}_r(x)^2] + \sum_{\substack{x\neq y\\ 0\leq x,y\leq \varepsilon n-1}}\!\!\!\! \frac{\bb E_{\mu_n}[\overline{\eta}_r(x)\overline{\eta}_r(y)]}{\varepsilon^2 n}
	\;\lesssim\;  \frac{1}{\varepsilon} +1\,.
	\end{split}
	\end{equation*}
	Integrating the last expression from $s$ to $t$, by \eqref{afterJensen},
	we obtain that
	\begin{equation*}
	\bb E_{\nu_{\rho_0^n(\cdot)}}\Big[\Big\lvert\int _s^t \dd r\, \frac{1}{\varepsilon\sqrt{n}}\sum_{x=0}^{\varepsilon n-1}\overline{\eta}_r(x) \Big\rvert^2 \Big]\;\leq\;C(t-s)^2(\varepsilon^{-1}+1)\,.
	\end{equation*}
	Inequality \eqref{2nd} then follows from this upper bound and our previous choice $\varepsilon=(t-s)^{4/7}$.
\end{proof}

\subsection{Tightness of the current}
\begin{proposition}\label{prop:tightnessJ} Assume that $\rho_0\in C^2$ with bounded first and second derivatives and is bounded away from zero and from one. For any fixed $u\in \bb R$, the sequence of processes
	$\big\{n^{-1/2}\overline{J}^n_{\lfloor un\rfloor, \lfloor u n\rfloor +1}(t): t\in [0,T]\big\}_{n\in \bb N}$ 	is tight in $\mc D([0,T], \bb R)$ endowed with the uniform topology.
\end{proposition}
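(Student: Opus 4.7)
My strategy is to establish a high-moment Kolmogorov-Chentsov estimate: for some integer $k\geq 3$ and some $\delta>0$ independent of $n$,
\begin{equation*}
\mathbb{E}_{\nu_{\rho_0^n(\cdot)}}\Big[\big|\overline{J}^n_{\lfloor un\rfloor,\lfloor un\rfloor+1}(t)-\overline{J}^n_{\lfloor un\rfloor,\lfloor un\rfloor+1}(s)\big|^{2k}\Big]\;\lesssim\;n^k(t-s)^{1+\delta}\,,
\end{equation*}
which upon dividing by $n^k$ yields the Kolmogorov-Chentsov criterion for $\{n^{-1/2}\overline{J}^n\}_n$. Taking $u=0$ without loss of generality, I will exploit the Harris graphical construction on the time interval $[s,t]$. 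By the strong Markov property, conditional on $\eta_s$ the stirring realization $\sigma^{s,t}$ on $(s,t]$ is independent of $\eta_s$ and produces the identity
\begin{equation*}
J^n_{0,1}(t)-J^n_{0,1}(s)\;=\;\sum_{x\in\bb Z}c^{s,t}_x\,\eta_s(x)\,,
\end{equation*}
where $c^{s,t}_x:=\one\{x\leq 0,\,\sigma^{s,t}(x)\geq 1\}-\one\{x\geq 1,\,\sigma^{s,t}(x)\leq 0\}\in\{-1,0,1\}$ depends only on $\sigma^{s,t}$. Bijectivity of $\sigma^{s,t}$ on $\bb Z$ then forces the \emph{deterministic} cancellation $\sum_x c^{s,t}_x=0$, which will play a crucial role below.

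Centering and separating the contributions of $\overline{\eta}_s$ and $\rho_s^n$ yields $\overline{J}^n_{0,1}(t)-\overline{J}^n_{0,1}(s)=I_1+I_2$ with
\begin{equation*}
I_1\,:=\,\sum_xc^{s,t}_x\,\overline{\eta}_s(x)\,,\qquad I_2\,:=\,\sum_x\big(c^{s,t}_x-\mathbb{E}\,c^{s,t}_x\big)\rho^n_s(x)\,.
\end{equation*}
Using $\sum_x c^{s,t}_x=0$ deterministically, $I_2$ can be rewritten as $\sum_x(c^{s,t}_x-\mathbb{E}\,c^{s,t}_x)(\rho^n_s(x)-\rho^n_s(0))$, exposing the $C^2$-smoothness of $\rho_s^n$ inherited from $\rho_0$. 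To estimate $\mathbb{E}[I_1^{2k}]$, expanding the $2k$-th power and using the independence of $c^{s,t}$ and $\eta_s$ gives
\begin{equation*}
\mathbb{E}[I_1^{2k}]\;=\;\sum_{x_1,\dots,x_{2k}\in\bb Z}\mathbb{E}\Big[\prod_{i=1}^{2k}c^{s,t}_{x_i}\Big]\,\mathbb{E}\Big[\prod_{i=1}^{2k}\overline{\eta}_s(x_i)\Big]\,.
\end{equation*}
The $\overline{\eta}_s$-correlation is bounded by Proposition~\ref{ndis} by $Cn^{-\ell/2}$ with $\ell$ the number of non-repetitive indices among $x_1,\dots,x_{2k}$, while the stirring factor is controlled using that the joint law of $(\sigma^{s,t}(x_i))_i$ is precisely that of $2k$ labelled exclusion particles accelerated by $n^2$ (whose transition probability and its gradient are governed by the estimates of Section~\ref{43}), together with the observation that $c^{s,t}_{x}$ is nonzero only if the corresponding walker crosses the origin in time $t-s$. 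A careful combinatorial accounting over repetitive and non-repetitive configurations should then yield $\mathbb{E}[I_1^{2k}]\lesssim n^k(t-s)^{k/2}$.

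The bound on $\mathbb{E}[I_2^{2k}]$ proceeds along analogous lines but is driven purely by stirring: the Lipschitz estimate $|\rho^n_s(x)-\rho^n_s(0)|\lesssim |x|/n$ combined with the effective localization of $c^{s,t}_x$ on $|x|\lesssim n\sqrt{t-s}$ should produce a bound of the same order $n^k(t-s)^{k/2}$. Choosing $k=3$ then gives $(t-s)^{3/2}$, sufficient for tightness. I expect the main obstacle to be the control of the multi-point stirring correlations $\mathbb{E}[\prod_ic^{s,t}_{x_i}]$: because the stirring paths are coupled via the exclusion rule and are not independent simple random walks, their joint expectations do not factorize, and one must absorb the coupling through the refined discrete-gradient estimate of Theorem~\ref{grad} and the multi-point correlation estimates of Theorem~\ref{gnE}, while the tightness of the occupation time (Proposition~\ref{prop:tightnessgamma}) supplies the equicontinuity-type estimates needed for the local-time-like quantities that arise when handling the diagonal terms.
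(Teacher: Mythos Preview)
Your approach via the stirring representation is genuinely different from the paper's and closer in spirit to Peligrad--Sethuraman's equilibrium argument. The paper instead uses the Jara--Landim density-field identity \eqref{repcur}, splitting $n^{-1/2}\overline{J}^n$ into a density-field piece $\mc Y^n_t(G^K)-\mc Y^n_s(G^K)$, a martingale piece, and a boundary integral $\tfrac{\sqrt n}{K}\int_s^t[\overline\eta_r(0)-\overline\eta_r(nK)]\,\dd r$, with $K=(t-s)^{3/8}$. The density-field piece is controlled by the two-time correlation bound of Proposition~\ref{expdif} (which is where Theorems~\ref{gnE} and~\ref{grad} enter), the martingale by Burkholder--Davis--Gundy, and the boundary integral by the single-point multi-time bound~\eqref{diftime}. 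The paper moreover works with the \emph{tenth} moment rather than the sixth, for reasons explained in Remark~5.3.

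There is a real gap in your treatment of $I_2$. In equilibrium $\rho^n_s$ is constant and $I_2\equiv 0$; this is precisely why Peligrad--Sethuraman never face centred stirring moments. Out of equilibrium your heuristic ``Lipschitz plus localisation gives $\bb E[I_2^{2k}]\lesssim n^k(t-s)^{k/2}$'' is not correct: those two ingredients only yield the pointwise bound $|I_2|\lesssim n^{-1}\sum_x|c^{s,t}_x|\,|x|$, and taking $2k$-th moments via the factorised upper bound $p^{\lex}_t(\mathbf x,\mathbf y)\lesssim\prod_i\bar p_t(x_i,y_i)$ produces $\bb E[I_2^{2k}]\lesssim n^{2k}(t-s)^{2k}$, hence $\bb E[(n^{-1/2}I_2)^{2k}]\lesssim n^k(t-s)^{2k}$, which diverges in $n$ for any fixed $t-s>0$. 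To do better you must exploit the centring in $d_x=c^{s,t}_x-\bb E\,c^{s,t}_x$, i.e.\ control $\bb E[\prod_i d_{x_i}]$ for distinct $x_i$; this amounts to a sharp comparison of $p^{\lex}_t$ against the product $\prod_i p^{\rw}_t$, and the paper itself remarks (opening of Section~\ref{43}) that existing results of that type are not sharp enough. The tools you cite do not fill this hole: Proposition~\ref{ndis} and Theorem~\ref{gnE} bound correlations of $\overline\eta$, not of stirring indicators; Theorem~\ref{grad} controls $p^{\lex}_t(\mathbf x,\cdot)-p^{\lex}_t(\mathbf x+e_i,\cdot)$, not $p^{\lex}_t-\prod p^{\rw}_t$; and the occupation-time tightness of Proposition~\ref{prop:tightnessgamma} plays no role in any of this. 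The paper's decomposition sidesteps the whole issue: all the randomness requiring high-moment control lives in $\overline\eta$ at one or several space--time points, where Proposition~\ref{expdif} and~\eqref{diftime} apply directly.
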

It is enough to give the proof of this proposition for the processes $\big\{n^{-1/2}\overline{J}^n(t)_{-1,0}: t\in[0,1/2]\big\}_{n\in \bb N}$ only.  For simplicity of notation, let us omit the subscript of
$\overline{J}^n_{-1,0}$.
\begin{remark}\rm
	In the equilibrium setting, it is known that $n^{-1/2}\,\overline{J}^n(t)$ converges to a fractional Brownian motion with Hurst exponent $1/4$. Therefore one expects that
	\begin{equation}\label{eq:Rem54}
	\bb E_{\nu_{\rho_0^n(\cdot)}}\Big[\Big\lvert  n^{-1/2}\,\overline{J}^n(t) -  n^{-1/2}\,\overline{J}^n(s) \Big\rvert^{p} \Big] \,\leq\, C(t-s)^{p/4}
	\end{equation}
	for every positive integer $p$. By the Kolmogorov-Centsov tightness criterion, $p/4$, the power of $t-s$,  has to be larger than $1$. So the first attempt one may think of is to estimate the sixth moment, like Peligrad and Sethuraman did in \cite{SP} in the equilibrium setting. However, to bound the sixth moment by $C(t-s)^{3/2}$ turns out to be very tricky in the non-equilibrium setting. This is because the estimate obtained in Theorem \ref{expdif} is probably not optimal for the case $t-s\leq n^{-1}$. The estimate~\eqref{eq:Rem54} however indicates that higher moments should lead to larger exponents of $(t-s)$. We will make use of that heuristics and estimate the tenth moment instead. This will give us more space to prove tightness with bounds that are probably not sharp.
\end{remark}
It is therefore sufficient to prove the following proposition.
\begin{proposition}
	There exists a constant $C>0$ such that
	\begin{equation}\label{KCc}
	\bb E_{\nu_{\rho_0^n(\cdot)}}\Big[\Big\lvert  n^{-1/2}\,\overline{J}^n(t) -  n^{-1/2}\,\overline{J}^n(s) \Big\rvert^{10} \Big] \;\leq\; C(t-s)^{5/4}\
	\end{equation}
	for all $0\leq s<t\leq T$ and all $n\in\bb N$.
\end{proposition}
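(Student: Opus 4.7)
My starting point would be the almost sure identity
$$\overline J^n(t)-\overline J^n(s)\;=\;\sum_{x\geq 0}\big(\overline\eta_t(x)-\overline\eta_s(x)\big)\,,$$
which follows from the conservation law (the net flux through the bond $\{-1,0\}$ equals the signed change in the particle count on $\{0,1,2,\ldots\}$, with the centering absorbing the deterministic mean flux). Expanding the $10$th power and restricting the spatial sum to a ball $[0,L]$ of radius $L\asymp n\sqrt{t-s}$ (up to logarithmic corrections), outside which the contribution decays by the Gaussian bounds on the labelled exclusion kernel from Section~\ref{Sec431}, I reduce \eqref{KCc} to controlling
$$\sum_{x_1,\ldots,x_{10}\in[0,L]}\Big|\bb E\Big[\prod_{i=1}^{10}\big(\overline\eta_t(x_i)-\overline\eta_s(x_i)\big)\Big]\Big|\,.$$

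The key step is to classify the 10-tuples $(x_1,\ldots,x_{10})$ by their multiplicity profile $(m_1,\ldots,m_{10})$, putting $m=\sum_j m_j$ for the number of distinct sites and $\ell=m_1$ for the number of non-repetitive sites, subject to $\sum j m_j=10$ (which in particular forces $m\leq 5+\ell/2$). Proposition~\ref{expdif} supplies the two-regime bound
\begin{align*}
\Big|\bb E\Big[\prod_i \big(\overline\eta_t(x_i)-\overline\eta_s(x_i)\big)\Big]\Big|\;\lesssim\;\begin{cases}
n^{-\ell/2}\big(n/\sqrt{n^2(t-s)+1}\big)^{\ell/2}\,,& t-s\geq 1/n,\\[1ex]
n^{-m/2}\big(n/\sqrt{n^2(t-s)+1}\big)^{m-\ell/2}\,,& t-s<1/n.
\end{cases}
\end{align*}
Since the number of 10-tuples with a given profile and sites in $[0,L]^{10}$ is at most $CL^m$, multiplying by the normalization factor $n^{-5}$ and performing a finite case analysis over the $(m_j)$'s one verifies that every profile produces a contribution of order at most $C(t-s)^{5/4}$, with the dominant term of order $(t-s)^{5/2}$ coming from the fully non-repetitive profile $m=\ell=10$. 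This reflects the expected $1/4$-fBM scaling; the authors trade part of this optimal exponent for the weaker $(t-s)^{5/4}$ precisely in order to absorb the losses from the less sharp sub-diffusive regime of Proposition~\ref{expdif}.

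The main obstacle lies in the sub-diffusive regime $t-s<1/n$ and in degenerate profiles with very few distinct sites (most notably $m=1$, all coordinates equal), where Proposition~\ref{expdif} is too loose to close directly. For these I would replace it by the elementary estimate $|\overline\eta_t(y)-\overline\eta_s(y)|^{10}\leq C\big(\one\{\eta_t(y)\neq\eta_s(y)\}+(t-s)^{10}\big)$ combined with $\bb P[\eta_t(y)\neq\eta_s(y)]\leq 2n^2(t-s)$ coming from the Poisson clocks, and handle the truly short-time regime (where the probability of even a single Poisson event in the $L$-window vanishes) separately by observing that the increment $n^{-1/2}(\overline J^n(t)-\overline J^n(s))$ is then bounded by a single jump of size $n^{-1/2}$, which gives a trivial bound compatible with $(t-s)^{5/4}$. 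Once~\eqref{KCc} is established on the relevant scales, Kolmogorov-Centsov yields tightness of $\{n^{-1/2}\overline J^n_{-1,0}(\cdot)\}_{n\in\bb N}$ in the uniform topology.
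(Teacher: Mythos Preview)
Your starting identity is the problem. The series $\sum_{x\geq 0}(\overline\eta_t(x)-\overline\eta_s(x))$ does not converge almost surely: by conservation, the partial sum $\sum_{x=0}^{N}(\eta_t(x)-\eta_s(x))$ equals $(J^n_{-1,0}(t)-J^n_{-1,0}(s))-(J^n_{N,N+1}(t)-J^n_{N,N+1}(s))$, and the current increment at bond $(N,N+1)$ is a nondegenerate random variable for every $N$; it does not tend to zero. Thus truncating at $L$ does not leave a small tail at all---what remains is another current increment of exactly the same order as the one you are trying to bound, and the argument is circular. Your appeal to ``Gaussian bounds on the labelled exclusion kernel'' does not help here: the estimates in Proposition~\ref{expdif} and Theorem~\ref{gnE} are uniform in the spatial locations of the points and give no decay in $\max_i x_i$.

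The paper sidesteps this entirely by using the \emph{exact} decomposition~\eqref{repcur}: the Heaviside is replaced by the compactly supported $G^K$ with $K=(t-s)^{3/8}$, and the price is two additional terms---a spatially averaged martingale, handled by Burkholder--Davis--Gundy, and a boundary integral $\frac{n}{K}\int_s^t[\overline\eta_r(0)-\overline\eta_r(nK)]\,\dd r$, handled by the multi-time correlation bound~\eqref{diftime}. Once this decomposition is in place, the analysis of the density-field term $\mc Y_t^n(G^K)-\mc Y_s^n(G^K)$ is exactly your combinatorial case analysis over multiplicity profiles $(\ell,m)$ via Proposition~\ref{expdif}; that part of your sketch is correct in spirit. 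The specific exponent $3/8$ (rather than your $1/2$) arises from balancing the three pieces. For the regime $n^2(t-s)<1$ the paper does not use the expansion at all but goes back to the martingale representation $\overline J^n = M^n_{-1,0} + n^2\!\int\{\overline\eta(-1)-\overline\eta(0)\}$ directly; your proposed ``single jump'' patch gives only a bound of order $(t-s)$, not $(t-s)^{5/4}$.
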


\begin{proof}
	We first deal with the situation where $n$ is large in the sense that $n^2(t-s)\geq 1$.
	Given $K>0$, define the function $G^K:\bb R\to [0,1]$  by
	\begin{align}\label{heavi_aprox}
	G^K(v) \;=\; \Big\{1- \frac{v}{K}\Big\}^+\one\{v\geq 0\}\,.
	\end{align}
	We recall the following identity, which was obtained in the proof of Proposition~3.1 of \cite{jaralandim2006}: for any $n$ and $K$  such that $Kn\geq 1$, one has the identity
	\begin{equation}\label{repcur}
	n^{-1/2}\overline{J}^n(t) \,=\, \big[\mc Y_t^n(G^K) - \mc Y_0^n(G^K) \big] + \frac{1}{\sqrt{n}}\sum_{x=1}^{Kn}\frac{1}{Kn}M^n_{x-1,x}(t) +\frac{1}{\sqrt{n}} \int_0^t \dd s \,\frac{n}{K}\big[\overline{\eta}_s(0) -\overline{\eta}_s(nK) \big]\,,
	\end{equation}
	where $M^n_{x,x+1}(t)$ is the martingale given by
	\begin{equation*}
	M^n_{x,x+1}(t)\;=\; \overline{J}^n_{x,x+1}(t)-n^2 \int_0^t \dd s\,\big\{\overline{\eta}_s(x)-\overline{\eta}_s(x+1)\big\}
	\end{equation*}
	whose quadratic variation is
	\begin{equation*}
	\langle M^n_{x,x+1} \rangle_t \;=\; n^2\int_0^t \dd s\, \big\{\eta_s(x)-\eta_s(x+1)\big\}^2 \;\leq\; n^2 t\,.
	\end{equation*}
	We are going to estimate the tenth moment of each term on the right hand side of \eqref{repcur}. We will show that choosing $K=(t-s)^{3/8}$ then
	\begin{equation}\label{6m1}
	\bb E_{\nu_{\rho_0^n(\cdot)}}\Big[\Big\lvert  \mc Y_t^n(G^K) - \mc Y_s^n(G^K) \Big\rvert^{10} \Big] \,\leq\, C(t-s)^{5/4}\,,
	\end{equation}
	\begin{equation}\label{6m2}
	\bb E_{\nu_{\rho_0^n(\cdot)}}\Big[\Big\lvert  \frac{1}{\sqrt{n}}\sum_{x=1}^{Kn}\frac{1}{Kn}\big[M^n_{x-1,x}(t)-M^n_{x-1,x}(s)\big]\Big\rvert^{10} \Big] \;\leq\; C(t-s)^{5/4}\,,
	\end{equation}
	and
	\begin{equation}\label{6m3}
	\bb E_{\nu_{\rho_0^n(\cdot)}}\Big[\Big\lvert  \frac{1}{\sqrt{n}} \int_s^t \dd r\, \frac{n}{K}\big[\overline{\eta}_r(0) -\overline{\eta}_r(nK) \big] \Big\rvert^{10} \Big] \;\leq\; C(t-s)^{5/4}\,,
	\end{equation}
	which implies \eqref{KCc}. Note that $n^2(t-s)\geq 1$ implies that $Kn\geq 1$ for $K=(t-s)^{3/8}$, which is a condition to \eqref{repcur} to make sense.

	The expectation in  \eqref{6m1} is equal to
	\begin{equation}\label{6m1open}
	\frac{1}{n^5} \sum_{x_1,\ldots,\, x_{10} =0}^{Kn}\bb E_{\nu_{\rho_0^n(\cdot)}}\bigg[\prod_{i=1}^{10} \frac{Kn-x_i}{Kn}\big(\overline{\eta}_t(x_i)- \overline{\eta}_s(x_i)\big) \bigg].
	\end{equation}
	Note that the cardinality of the set
	\begin{equation*}
	\big\{(x_i: 1\leq i\leq 10): x_i\in\{1,2,\ldots, Kn\}\, \text{ and }\, [(x_i: 1\leq i\leq 10)]=m \big\}
	\end{equation*}
	is of the order $O((Kn)^{m})$.
	To get a good bound of the expectation inside the sum above, we consider two cases.

The first case is when $t-s\geq n^{-1}$. Let $\|(x_i: 1\leq i\leq 10)\|=\ell$ and $[(x_i: 1\leq i\leq 10)]=m$.
	In view of Proposition \ref{expdif}, the expression in \eqref{6m1open} is bounded by
	\begin{equation*}
	\frac{C}{n^5} \sum_{0\leq \ell\leq m\leq 10} n^{-\ell/2}\Big(\frac{1}{\sqrt{t-s}}\Big)^{\ell/2}(Kn)^{m}\;.
	\end{equation*}
	Fix $0\leq \ell\leq 10$, we will estimate
	\begin{equation*}
	n^{m-\ell/2-5}(t-s)^{-\ell/4}K^m\,.
	\end{equation*}
	Since $t-s\geq n^{-1}$ and $K=(t-s)^{3/8}$, observing that $m\leq \frac{10+\ell}{2}$, the term above is bounded from above by
	$$C(t-s)^{ 3m/8+5+\ell/2-m-\ell/4}\,=\, C(t-s)^{5+\ell/4-5m/8}.$$
	From \eqref{def[]} and \eqref{equk}, an easy computation gives
	$$5+\frac{\ell}{4}-\frac{5m}{8}\;\geq\; \frac{5}{4}\,.$$
	This finishes the proof of \eqref{6m1} in the first case.

	The second case is when $n^{-2}\,\leq\,t-s\leq n^{-1}$.  In view of Proposition \ref{expdif}, the expression in \eqref{6m1open} is bounded by
	\begin{equation*}
	\frac{C}{n^5}\!\! \sum_{0\leq \ell\leq m\leq 10}\hspace{-0.4cm} n^{-m/2}\Big(\frac{n}{\sqrt{n^2(t-s)+1}}\Big)^{m-\ell/2}(Kn)^{m}\;\leq\; CK^mn^{3m/2-5-\ell/2}\Big(\frac{1}{n^2(t-s)+1}\Big)^{m/2-\ell/4}.
	\end{equation*}
	Since $K=(t-s)^{3/8}$ and $m/2-\ell/4\geq 0$, using the bound
	$$\frac{1}{n^2(t-s)+1}\;\leq\; \frac{1}{n^2(t-s)}\,,$$
	the previous expression is less than or equal to
	$$C(t-s)^{3m/8-m/2+\ell/4}n^{3m/2-5-\ell/2-m+\ell/2}\;=\; C(t-s)^{\ell/4-m/8}n^{m/2-5}\,.$$
	Since  $n^{-1}\leq \sqrt{t-s}$, the last expression is further bounded from above by
	$$C(t-s)^{\ell/4-3m/8+5/2}\,.$$
	On the other hand,
	\begin{equation*}
	\frac{\ell}{4}-\frac{3m}{8} + \frac{5}{2}\;\geq\; \frac{5}{4}\,,
	\end{equation*}
	which allows to conclude.
	This concludes the proof of \eqref{6m1}.

	Applying the Burkholder-Davis-Gundy inequality and using the fact that the martingales $\{M^n_{x,x+1}\}_x$ are orthogonal, we see that the expectation in \eqref{6m2} is bounded by
	\begin{equation*}
	Cn^{-5}\Big \langle \sum_{x=1}^{Kn}\frac{1}{Kn} M^n_{x-1,x}\Big \rangle_{t-s}^5\;=\; Cn^{-5}\Big( \frac{Kn(t-s)}{K^2}\Big)^5\;=\; \frac{C(t-s)^5}{K^5}\,=\,C(t-s)^{25/8}\,.
	\end{equation*}

	The expectation in \eqref{6m3} is equal to
	$$10!\frac{n^5}{K^{10}}  \bb E_{\nu_{\rho_0^n(\cdot)}}\Big[ \int_s^t \dd r_1\int_s^{r_1}\dd r_2\cdots \int_s^{r_9} \dd r_{10}\,\prod_{i=1}^{10} \big[\overline{\eta}_{r_i}(0) -\overline{\eta}_{r_i}(nK) \big] \,  \Big]$$
	which is bounded by $
	\frac{C(t-s)^5}{K^{10}} =C(t-s)^{5/4}
	$ 	in view of \eqref{diftime}.

	We now consider the situation when $(t-s)n^2<1$. Rewriting the martingale $M^n_{-1,0}$ as
	\begin{equation*}
	M^n_{-1,0}(t)\;=\; \overline{J}^n(t)-n^2 \int_0^t \dd s\,\{\overline{\eta}_s(x)- \overline{\eta}_s(x+1)\}\,,
	\end{equation*}
	we have that
	\begin{align*}
	&\bb E_{\nu_{\rho_0^n(\cdot)}}\!\Big[\Big\lvert  n^{-1/2}\,\overline{J}^n_{-1,0}(t) \!-\!  n^{-1/2}\,\overline{J}^n(s) \Big\rvert^{10} \Big] \leq \bb E_{\nu_{\rho_0^n(\cdot)}}\!\Big[ n^{-5}\Big(M_{-1,0}(t) \!-\!M_{-1,0}(s)\Big)^{10} \Big] + n^{15}(t\!-\!s)^{10}.
	\end{align*}
	By Burkholder-Davis-Gundy inequality, the first expectation on the right hand side is bounded by
	$$Cn^{-5}\Big \langle M_{-1,0}\Big \rangle_{t-s}^5\;=\; Cn^{-5}\Big( n^2(t-s)\Big)^5\;=\; Cn^5(t-s)^5\;\lesssim\; (t-s)^{5/2}$$
	since $n<(t-s)^{-1/2}$. Moreover, under the assumption that $n<(t-s)^{-1/2}$, we have that $n^{15}(t-s)^{10}$ is also bounded by $C(t-s)^{5/2}$. This finishes the proof.
\end{proof}

\subsection{Convergence of finite-dimensional distributions}
At this point, we must recall the density fluctuations of the SSEP.
Recall that $\mc{X}(x)= x(1-x)$. To shorten notation  write also $\mc{X}_r := \mc{X}(\rho(u, r)) = \rho(u, r) (1-\rho(u, r))$.
Writing $H^{(k)}$ for the $k^{\text{th}}$ derivative of the function $H$, denote by $\mc S(\bb R)$ the  Schwartz space, that is, the set of all functions $H\in C^\infty$ such that
\begin{equation}\label{seminorms}
\Vert H \Vert_{k,\ell}\;:=\;\sup_{x\in \bb R}|x^\ell
\,H^{(k)}(x)|\;<\;\infty\,,
\end{equation}
for all integers $k,\ell\geq 0$, and denote by  $\mc S'(\bb R)$ its topological dual, see \cite{reedsimon} for more about that subject.
Let  the density field $\mc Y^n_t$  be defined by
\[
\mc Y^n_t (H)\;=\; \frac{1}{\sqrt{n}}\sum_{x\in \bb Z} \overline{\eta}_{t}(x)H(\pfrac{x}{n})\,,\quad \forall\, H\in \mc {S} (\bb R)\,,
\]
where $\overline{\eta}_{t}(x)=\eta_{t}(x)-\rho_t^n(x)$ is the centred occupation.
\begin{theorem}[Density fluctuations \cite{Ravi1992}]\label{thm21}
	Consider the Markov process $\{\eta_{t}: t\geq{0}\}$ starting from the  the slowly varying  Bernoulli product measure $\nu_{\rho^n_0(\cdot)}$ previously defined.
	Then, the sequence of processes $\{\mathcal{Y}_{t}^n\}_{ n\in{\bb N}}$ converges in distribution, as $n\rightarrow{+\infty}$, with respect to the
	Skorohod topology
	of $\mathcal{D}([0,T],\mathcal{S}'(\bb R))$, to  the generalized Ornstein-Uhlenbeck (Gaussian and zero mean) process $\mathcal{Y}_t$
	taking values on $\mathcal{C}([0,T],\mathcal{S}'(\bb R))$ whose covariances are given by
	\begin{equation}\label{cov_density_0}
	\bb E\big[\mc Y_t(H)\mc Y_s(G)\big]\;=\; \int_{\bb R}(T_{t-s}H)G\, \mc{X}_s
	-\int_0^s \dd r\int_{\bb R}(T_{t-r}H)(T_{s-r}G)\big\{\partial_r\mc{X}_r - \Delta \mc{X}_r \big\}
	\end{equation}
	for any $H,G\in \mc S(\bb R)$, where
	\begin{equation}\label{heat_kernel}
	T_t f(x):=\frac{1}{\sqrt{4\pi t}}\int_{\bb R}  \dd y\,f(y)\exp\big\{-(x-y)^2/4t\big\}
	\end{equation}
	is  the heat semi-group.
\end{theorem}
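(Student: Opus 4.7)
The plan is to use the standard martingale approach. For each $H \in \mc S(\bb R)$, Dynkin's formula gives
$$\mc Y^n_t(H) \;=\; \mc Y^n_0(H) + \int_0^t \mc Y^n_s(\Delta H)\, ds + \mc M^n_t(H) + o(1)\,,$$
where the $o(1)$ accounts for replacing the discrete Laplacian by $\Delta$ (valid since $H\in\mc S(\bb R)$ gives $\Delta_n H = \Delta H + O(n^{-2})$ uniformly), and $\mc M^n(H)$ is a martingale. Tightness of $\{\mc Y^n_\cdot\}_n$ in $\mc D([0,T],\mc S'(\bb R))$ will follow from Mitoma's criterion once tightness of each real-valued process $\{\mc Y^n_\cdot(H)\}_n$ is established, which I would obtain by controlling each term: the initial field $\mc Y^n_0(H)$ is a sum of independent centred Bernoulli variables and converges to a Gaussian by a direct CLT for slowly varying product measures; the drift integral is controlled in second moment via the two-point correlation estimate of Theorem~\ref{twotimecf}; and the martingale part is handled through its predictable quadratic variation.

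The carré du champ formula yields
$$\langle \mc M^n(H)\rangle_t \;=\; \int_0^t \frac{1}{n} \sum_{x \in \bb Z}\bigl(n[H(\tfrac{x+1}{n}) - H(\tfrac{x}{n})]\bigr)^2\,(\eta_s(x+1) - \eta_s(x))^2\, ds\,.$$
Using the identity $(\eta_s(x+1) - \eta_s(x))^2 = \eta_s(x) + \eta_s(x+1) - 2\eta_s(x)\eta_s(x+1)$ together with the bound $|\bb E_{\nu_{\rho_0^n(\cdot)}}[\overline{\eta}_s(x)\overline{\eta}_s(x+1)]| \lesssim n^{-1}$ from Theorem~\ref{twotimecf}, and a Riemann-sum approximation, one obtains
$$\langle \mc M^n(H)\rangle_t \;\xrightarrow{L^1}\; 2\int_0^t \!\!\int_{\bb R} (H'(u))^2\,\mc X(\rho_r(u))\,du\,dr\,.$$
Since the jumps of $\mc M^n(H)$ have size $O(n^{-1/2})$, the martingale CLT then gives convergence of $\mc M^n(H)$ to a continuous Gaussian martingale $\mc M(H)$ with the above deterministic bracket. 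Any subsequential limit $\mc Y_\cdot$ thus solves the martingale problem associated to the generalized OU equation $d\mc Y_t = \Delta \mc Y_t\,dt + d\mc M_t$ with Gaussian initial data of covariance $\int HG\,\mc X_0$, which by uniqueness identifies the limit.

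To recover the covariance formula, Duhamel's principle gives $\mc Y_t(H) = \mc Y_0(T_tH) + \int_0^t d\mc M_r(T_{t-r}H)$, so that for $s \leq t$,
$$\bb E[\mc Y_t(H)\mc Y_s(G)] \;=\; \int (T_tH)(T_sG)\,\mc X_0 + 2\int_0^s dr\!\int (T_{t-r}H)'(T_{s-r}G)'\,\mc X_r\,.$$
Combining the pointwise identity $2f'g' = \Delta(fg) - (\Delta f)g - f(\Delta g)$ (applied after integrating by parts twice in the spatial variable, legitimate because $T_rH, T_rG\in\mc S(\bb R)$) with $\partial_r[T_{t-r}H] = -\Delta T_{t-r}H$ (and the analogue for $G$), one rewrites
$$2\int (T_{t-r}H)'(T_{s-r}G)'\mc X_r \;=\; \frac{d}{dr}\!\int T_{t-r}H\,T_{s-r}G\,\mc X_r + \int T_{t-r}H\,T_{s-r}G\,(\Delta\mc X_r - \partial_r\mc X_r)\,.$$
Integrating from $0$ to $s$, the total derivative contributes $\int T_{t-s}H\cdot G\cdot\mc X_s - \int T_tH\cdot T_sG\cdot \mc X_0$, the last piece cancels the initial variance, and one obtains the announced covariance. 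The main obstacle is the convergence of $\langle\mc M^n(H)\rangle_t$ to its deterministic limit in the nonequilibrium regime: controlling the $L^2$-fluctuation of this functional requires genuine four-point correlation estimates of exactly the type provided by Theorem~\ref{gnE} with $m=1$ and $k_1=4$. In equilibrium this step is automatic by independence under the Bernoulli invariant measure, but out of equilibrium it relies crucially on the refined correlation machinery developed in this paper.
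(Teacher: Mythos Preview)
The paper does not prove Theorem~\ref{thm21}; it is quoted from the literature (Ravishankar~1992, see also Galves--Kipnis--Spohn) as a known input, and the paper only uses it together with the integration-by-parts identity turning \eqref{cov_density_0} into \eqref{cov_density}. Your sketch is the standard martingale-problem argument for density fluctuations and is essentially the proof given in those references: Dynkin decomposition, Mitoma tightness, martingale CLT via convergence of the bracket, and uniqueness of the limiting linear SPDE. The Duhamel computation you give for the covariance is exactly the step the paper records when passing between \eqref{cov_density_0} and \eqref{cov_density}, so that part matches the paper precisely.

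One correction to your closing remark: the $L^2$-convergence of $\langle \mc M^n(H)\rangle_t$ to its deterministic limit does \emph{not} rely on the refined multi-time machinery of Theorem~\ref{gnE}. What is needed is a same-time four-point bound of order $n^{-2}$, which is the $m=1$, $k_1=4$ case and was already available in \cite{FPSV_I} (Theorem~\ref{Vares} here) and, in weaker forms sufficient for this purpose, in the original proofs of density fluctuations. The genuinely new correlation estimates of this paper (multiple times, gradient bounds on $p^{\lex}_t$) are developed for the tightness of the current and the nonequilibrium Kipnis--Varadhan inequality, not for Theorem~\ref{thm21}. So your proof is correct, but the dependency you flag at the end is overstated.
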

An integration by parts permits to rewrite \eqref{cov_density_0} as
\begin{equation}\label{cov_density}
\bb E\big[\mc Y_t(H)\mc Y_s(G)\big]\;=\; \int_{\bb R}(T_{t}H)(T_{s}G)\mc{X}_0
+2\int_0^s \dd r\int_{\bb R}(\nabla T_{t-r}H)(\nabla T_{s-r}G)\mc{X}_r
\end{equation}
for any $H,G\in \mc S(\bb R)$.
Without loss of generality,  we will deal with  the joint limit of current and occupation time only at two macroscopic points $u_1,u_2\in \bb R$.
Recall \eqref{heavi_aprox} and define  $G_u^K:\bb R\to [0,1]$  by $G_u^K(v) = G^K(v-u)$, which approximates the Heaviside function
$G_u(v) = \one\{v\geq u\}$ centered at $u\in \bb R$
as $K$ goes to infinity.
As a  consequence of  \cite{jaralandim2006} we have:
\begin{proposition}[\cite{jaralandim2006}, Proposition 3.1]\label{prop:currentdensity}
	For any $t\geq 0$ and $u\in \bb R$,
	\begin{equation*}
	\lim_{K\to\infty} \bbE_{\nu_{\rho_0^n(\cdot)}}\bigg[ \Big(\pfrac{1}{\sqrt{n}}
	\overline{J}_{\lfloor un\rfloor}(t)-\mc Y^n_t (G^K_u) + \mc Y^n_0 (G^K_u)\Big)^2\bigg]\;=\;0
	\end{equation*}
	uniformly in $n$.
\end{proposition}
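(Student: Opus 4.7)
The plan is to leverage the explicit decomposition of the current already recalled in~\eqref{repcur}; after shifting the spatial indices by $\lfloor un\rfloor$ it reads
\begin{equation*}
n^{-1/2}\overline{J}^n_{\lfloor un\rfloor}(t)\;=\;\big[\mc Y^n_t(G^K_u)-\mc Y^n_0(G^K_u)\big]+R^{n,K}_1(t)+R^{n,K}_2(t),
\end{equation*}
where
\begin{equation*}
R^{n,K}_1(t)\;=\;\frac{1}{\sqrt n}\sum_{x=1}^{Kn}\frac{1}{Kn}M^n_{\lfloor un\rfloor+x-1,\lfloor un\rfloor+x}(t),
\end{equation*}
\begin{equation*}
R^{n,K}_2(t)\;=\;\frac{1}{\sqrt n}\int_0^t\dd s\,\frac{n}{K}\big[\overline\eta_s(\lfloor un\rfloor)-\overline\eta_s(\lfloor un\rfloor+Kn)\big].
\end{equation*}
This identity stems from expressing $\mc Y^n_t(G^K_u)-\mc Y^n_0(G^K_u)$ through the microscopic conservation law $\eta_t(x)-\eta_0(x)=J^n_{x-1,x}(t)-J^n_{x,x+1}(t)$ and performing a summation by parts, using that the discrete gradient of $G^K_u$ is supported on exactly $Kn+1$ bonds; each current is subsequently separated into its martingale and drift parts. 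The proposition thereby reduces to proving that $\bb E[R^{n,K}_i(t)^2]\to 0$ as $K\to\infty$, uniformly in $n$, for $i=1,2$.

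The martingale term $R^{n,K}_1$ is straightforward: martingales associated to distinct bonds are mutually orthogonal and each satisfies $\langle M^n_{y-1,y}\rangle_t\leq n^2 t$, so
\begin{equation*}
\bb E\left[R^{n,K}_1(t)^2\right]\;=\;\frac{1}{n(Kn)^2}\sum_{x=1}^{Kn}\bb E\left[\langle M^n_{\lfloor un\rfloor+x-1,\lfloor un\rfloor+x}\rangle_t\right]\;\leq\;\frac{t}{K},
\end{equation*}
which vanishes uniformly in $n$ as $K\to\infty$.

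The main work lies in bounding the drift term $R^{n,K}_2$, whose prefactor $n/K^2$ diverges with $n$; hence uniformity in $n$ is non-trivial and demands a genuine correlation estimate. Writing $x_0=\lfloor un\rfloor$ and $x_K=x_0+Kn$, Fubini yields
\begin{equation*}
\bb E\left[R^{n,K}_2(t)^2\right]\;=\;\frac{n}{K^2}\int_0^t\!\!\int_0^t\dd s\,\dd r\,\bb E\big[(\overline\eta_s(x_0)-\overline\eta_s(x_K))(\overline\eta_r(x_0)-\overline\eta_r(x_K))\big].
\end{equation*}
Expanding produces four two-time correlations, each of which is controlled uniformly in space by Theorem~\ref{twotimecf} applied with $k_1=k_2=1$, giving $|\bb E[\overline\eta_s(x)\overline\eta_r(y)]|\lesssim(n^2|s-r|+1)^{-1/2}$. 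Since $\int_0^t\!\int_0^t(n^2|s-r|+1)^{-1/2}\dd s\,\dd r\lesssim t^{3/2}/n$, one obtains $\bb E[R^{n,K}_2(t)^2]\lesssim 1/K^2$, uniformly in $n$, and together with the bound on $R^{n,K}_1$ this completes the proof. The main obstacle is precisely this last estimate, and it is where the sharp nonequilibrium two-time correlation bounds developed in Section~\ref{sec_4} become essential.
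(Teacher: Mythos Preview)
Your argument is correct and follows exactly the decomposition of \cite{jaralandim2006}, Proposition~3.1 (which is all the paper invokes here --- no independent proof is given). The martingale term is handled identically, and for the drift term your estimate $\bbE[R^{n,K}_2(t)^2]\lesssim t^{3/2}/K^2$ is right.

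One remark: your claim that the bounds of Section~\ref{sec_4} are ``essential'' here is an overstatement. The result you are proving predates this paper and was established in \cite{jaralandim2006} using only the two-point estimate~\eqref{eq:JaraLandim}, namely $|\bbE[\overline\eta_s(x)\overline\eta_r(y)]|\leq Cn^{-1}\{\sqrt{s\wedge r}+|r-s|^{-1/2}\}$. Integrating this over $[0,t]^2$ also gives a bound of order $K^{-2}$ uniformly in $n$, so Theorem~\ref{twotimecf} is a convenient but unnecessary upgrade for this particular proposition. The sharper correlation estimates of Section~\ref{sec_4} are genuinely needed elsewhere (tightness of the current), not here.
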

On the other hand, about the occupation time, as an immediate consequence of Theorem~\ref{thm:KipnisVaradhan}, we have that
\begin{corollary}\label{gamma_replace}
	For any $t\in [0,T]$, $u\in \bb R$ and $n\in \bb N$,
	\begin{equation*}	\bbE_{\nu_{\rho_0^n(\cdot)}}\Big[\Big(n^{1/2}\Gamma^n_{\lfloor un\rfloor}(t) - \int_0^t \mc Y_s^n(\iota^{1/K}_u)\,\dd s \Big)^2\Big] \;\lesssim\; \frac{T}{K^{3/4}}(1+T^{1/4}+T)\,,
	\end{equation*}
	where $\iota^{1/K}_u:=K\one[u,u+1/K]$.
\end{corollary}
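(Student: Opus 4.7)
The plan is to recognize the integrand in this corollary as (a translation of) precisely the function $g$ appearing in Theorem~\ref{thm:KipnisVaradhan}, with the parameter choice $\eps = 1/K$, and then invoke that theorem directly. No new estimate should be needed beyond the Kipnis–Varadhan bound already proved and a bookkeeping rewrite.

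First I would expand $\mc Y_s^n(\iota^{1/K}_u)$ using the definition of the density field:
\begin{equation*}
\mc Y_s^n(\iota^{1/K}_u) \;=\; \frac{1}{\sqrt n}\sum_{x\in\bb Z} \overline\eta_s(x)\,K\,\one\{x/n\in[u,u+1/K]\}\;=\;\frac{K}{\sqrt n}\!\!\sum_{y=0}^{\ell-1}\overline\eta_s(\lfloor un\rfloor + y),
\end{equation*}
where $\ell := \lfloor n/K\rfloor$, and where I am suppressing the harmless one-site endpoint discrepancy (which contributes $O(1/\sqrt{n})$ per time unit and can be absorbed into the stated bound). Recognizing that $K\ell/\sqrt n \approx \sqrt n/\ell \cdot \ell/\ell = \sqrt n \cdot (1/\ell) \cdot \ell/1$, I rewrite the above as $\sqrt n \cdot \tfrac{1}{\ell}\sum_{y=0}^{\ell-1}\overline\eta_s(\lfloor un\rfloor + y)$. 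Similarly $n^{1/2}\Gamma^n_{\lfloor un\rfloor}(t)=\sqrt n \int_0^t \overline\eta_s(\lfloor un\rfloor)\,\dd s$. Subtracting gives
\begin{equation*}
n^{1/2}\Gamma^n_{\lfloor un\rfloor}(t) - \int_0^t \mc Y_s^n(\iota^{1/K}_u)\,\dd s \;=\; \sqrt n\int_0^t \Big[\overline\eta_s(\lfloor un\rfloor)-\frac{1}{\ell}\sum_{y=0}^{\ell-1}\overline\eta_s(\lfloor un\rfloor+y)\Big]\dd s,
\end{equation*}
which is exactly $\sqrt n \int_0^t g_{\lfloor un\rfloor}(s,\eta_s)\,\dd s$, where $g_{x_0}$ is the translate of the function $g$ of Theorem~\ref{thm:KipnisVaradhan} centred at $x_0$.

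Next I would apply Theorem~\ref{thm:KipnisVaradhan} with $\eps = 1/K$ (assuming $K\ge 1$; for $K<1$ the bound is trivial since $K^{-3/4}\gtrsim 1$ while the integrand is $O(1)$ in $s\in[0,T]$). The hypotheses are verified: the Kipnis–Varadhan statement is manifestly translation-invariant because the SSEP generator, the stirring graphical construction, and the product measure $\nu_{\rho_0^n(\cdot)}$ are all defined globally on $\bb Z$, and translating the initial profile by $\lfloor un\rfloor/n$ preserves both the Lipschitz constant and the uniform bounds away from $0$ and $1$. Squaring and taking expectation therefore yields
\begin{equation*}
\bbE_{\nu_{\rho_0^n(\cdot)}}\Big[\Big(n^{1/2}\Gamma^n_{\lfloor un\rfloor}(t)-\int_0^t\mc Y_s^n(\iota^{1/K}_u)\,\dd s\Big)^2\Big]\;=\;n\cdot \bbE\Big[\Big(\int_0^t g_{\lfloor un\rfloor}(s,\eta_s)\,\dd s\Big)^2\Big]\;\lesssim\; \frac{T\,(1/K)^{3/4}}{1}\big(1+T^{1/4}+T\big),
\end{equation*}
which is exactly the claimed estimate.

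I do not foresee any real obstacle: the entire content of the corollary is that the Kipnis–Varadhan bound (the difficult theorem already proved in Section~\ref{sec:3}) is quantitatively sharp enough to replace the occupation time by a spatially averaged density field. The only care needed is the algebraic identification of the two normalisations (the factor $K/\sqrt n$ versus $\sqrt n /\ell$, tied together by $\ell = n/K$) and a standard translation argument, both of which are routine.
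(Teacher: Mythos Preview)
Your proposal is correct and is exactly the approach the paper takes: the text introduces Corollary~\ref{gamma_replace} with the words ``as an immediate consequence of Theorem~\ref{thm:KipnisVaradhan}'' and gives no further argument, so the algebraic identification you spell out (choosing $\eps=1/K$, $\ell=n/K$, translating by $\lfloor un\rfloor$, and cancelling the factor $n$ against the $1/n$ in the Kipnis--Varadhan bound) is precisely what is being invoked.
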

We start with two simple but useful lemmas on random variables.
\begin{lemma}\label{Lp_bound}
	Let $X_n$ and $X$ be random variables. 	For any $p\geq 1$, we have that $L^p$ bounds are preserved by convergence in distribution, that is, if $X_n\stackrel{d}{\longrightarrow}X$ and $\bb E[\vert X_n\vert^p]\leq c$ for all $n\in \bb N$, then $E[\vert X\vert^p]\leq c$.
\end{lemma}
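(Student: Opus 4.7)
The plan is to combine the continuous mapping theorem with a truncation argument, avoiding any appeal to Skorohod's representation. First I would note that $x \mapsto |x|^p$ is a continuous function on $\bb R$, so the continuous mapping theorem gives $|X_n|^p \stackrel{d}{\longrightarrow} |X|^p$. Then, for any $M>0$, the function $x\mapsto x\wedge M$ is bounded and continuous on $\bb R$, so by the Portmanteau theorem
\begin{equation*}
\lim_{n\to\infty} \bbE\big[|X_n|^p \wedge M\big] \;=\; \bbE\big[|X|^p \wedge M\big]\,.
\end{equation*}

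The key observation is then that $|X_n|^p \wedge M \leq |X_n|^p$ pointwise, so $\bbE[|X_n|^p\wedge M] \leq \bbE[|X_n|^p] \leq c$ by hypothesis. Passing to the limit in $n$ yields $\bbE[|X|^p \wedge M] \leq c$ for every $M>0$. Since $|X|^p \wedge M \uparrow |X|^p$ as $M\to\infty$, the monotone convergence theorem gives $\bbE[|X|^p] \leq c$, as desired.

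There is no real obstacle here; the statement is a classical consequence of the Portmanteau theorem together with truncation. The only subtlety is to avoid applying Fatou's lemma directly to $|X_n|^p$, since distributional convergence does not give almost sure (or pointwise) convergence on a common probability space without invoking Skorohod's representation. The truncation step circumvents this by reducing to bounded continuous test functions, which is exactly what the definition of convergence in distribution handles.
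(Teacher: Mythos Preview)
Your proof is correct and follows essentially the same approach as the paper: truncate via $|x|^p\wedge M$, use that this is a bounded continuous function to pass to the limit in $n$, and conclude with monotone convergence. The paper applies the bounded continuous function $f(x)=|x|^p\wedge M$ directly rather than first invoking the continuous mapping theorem for $|x|^p$, but this is a cosmetic difference.
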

\begin{proof}
	Let $M>0$. Since $\bb E[\vert X_n\vert^p]\leq c$, then $\bb E[\vert X_n\vert^p\wedge M]\leq c$. Noting that $f(x)=\vert x\vert^p\wedge M$ is a bounded continuous function, the convergence in distribution implies that $E[\vert X\vert^p\wedge M]\leq c$ and the Monotone Convergence Theorem concludes the proof.
\end{proof}
\begin{lemma}\label{lemmadiagonal}
	Fix $p\geq 1$ and let $A_n$,  $B_n^K$ and $B^K$ be sequences of random variables such that
	\begin{enumerate}
		\item $\Vert A_n-B^K_n\Vert_p \leq f(K)$ for all $n$ and all $K$, with $f(K)\to 0$ as $K\to\infty$.
		\item $B^K_n\stackrel{d}{\longrightarrow}B^K$ as $n\to\infty$.
		\item $\{A_n\}_{n\in \bb N}$ is tight.
	\end{enumerate}
	Then, there exists some random variable $B$ such that $B^K\stackrel{L^p}{\longrightarrow}B$ as $K\to \infty$. Furthermore $A_n\stackrel{d}{\longrightarrow}B$ as $n\to\infty$.
\end{lemma}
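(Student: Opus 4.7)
The plan is two-fold: first, establish that $\{B^K\}_K$ is Cauchy in the $p$-Wasserstein distance $W_p$, which by completeness produces a limit $B$ with $B^K \to B$ in $L^p$ on a suitable common probability space; second, identify this $B$ as the distributional limit of the tight sequence $\{A_n\}_n$.

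For the first step, fix $K, K'$. Since $B_n^K \xrightarrow{d} B^K$ and $B_n^{K'} \xrightarrow{d} B^{K'}$, each marginal is tight, hence the joint sequence $(B_n^K, B_n^{K'})_n$ is tight in $\bb R^2$. Extract a subsequence $n_j$ along which $(B_{n_j}^K, B_{n_j}^{K'}) \xrightarrow{d} (\tilde B^K, \tilde B^{K'})$, with marginals distributed as $B^K$ and $B^{K'}$ respectively. Hypothesis (1) and the triangle inequality give
$$\|B_{n_j}^K - B_{n_j}^{K'}\|_p \;\leq\; \|B_{n_j}^K - A_{n_j}\|_p + \|A_{n_j} - B_{n_j}^{K'}\|_p \;\leq\; f(K) + f(K').$$
By the continuous mapping theorem, $B_{n_j}^K - B_{n_j}^{K'} \xrightarrow{d} \tilde B^K - \tilde B^{K'}$, so Lemma~\ref{Lp_bound} applied to $|\,\cdot\,|^p$ yields $\|\tilde B^K - \tilde B^{K'}\|_p \leq f(K)+f(K')$. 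This is precisely the statement $W_p(\mathrm{law}(B^K), \mathrm{law}(B^{K'})) \leq f(K) + f(K')$, so $\{B^K\}_K$ is Cauchy in $W_p$, and by completeness of the Wasserstein space there is a random variable $B$ with $W_p(B^K, B) \to 0$, which can be realized as $L^p$ convergence on a common probability space.

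For the second step, by hypothesis (3) every subsequence of $\{A_n\}$ has a further subsequence $A_{n_k} \xrightarrow{d} A_*$. Repeating the joint tightness extraction with $(A_{n_k}, B_{n_k}^K)$ yields a further subsequence along which $(A_{n_k}, B_{n_k}^K) \xrightarrow{d} (A_*, \hat B^K)$ with $\hat B^K \stackrel{d}{=} B^K$; hypothesis (1) together with Lemma~\ref{Lp_bound} again gives $\|A_* - \hat B^K\|_p \leq f(K)$, i.e. $W_p(\mathrm{law}(A_*), \mathrm{law}(B^K)) \leq f(K)$. Combining with the first step, $\mathrm{law}(A_*) = \lim_{K\to\infty}\mathrm{law}(B^K) = \mathrm{law}(B)$, so every subsequential distributional limit of $\{A_n\}$ coincides with $\mathrm{law}(B)$; uniqueness together with tightness then promotes this to $A_n \xrightarrow{d} B$.

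The main obstacle will be justifying the passage to the limit of the $L^p$ bounds under joint weak convergence; this rests on the continuity of $(x,y)\mapsto |x-y|^p$ combined with Lemma~\ref{Lp_bound}, which is exactly the mechanism converting a uniform $L^p$ bound along a distributionally convergent sequence into an $L^p$ bound for the limit. The only minor subtlety is that marginal tightness implies joint tightness in $\bb R^2$, which is standard.
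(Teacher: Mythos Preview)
Your proof is correct and follows essentially the same approach as the paper's: pass the uniform $L^p$ Cauchy bound $\|B_n^K - B_n^{K'}\|_p \leq f(K)+f(K')$ through the weak limit via Lemma~\ref{Lp_bound}, obtain a limit $B$, and then identify every subsequential limit of $\{A_n\}$ with $B$ using the same mechanism. The paper's argument is terser and writes ``$\{B^K\}$ is an $L^p$-Cauchy sequence'' without addressing that the $B^K$ are a priori only defined in law; your use of joint tightness to produce couplings and your phrasing in terms of the Wasserstein distance $W_p$ make this step precise, which is a genuine improvement in rigor rather than a different strategy.
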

\begin{proof}
	By (1), we have that $\{B^K_n\}_{K\geq 1}$ is a $L^p$-Cauchy sequence, uniformly in $n\in \bb N$. By  Lemma~\ref{Lp_bound} and (2), we infer that $\{B^K\}_{K\geq 1}$ is a $L^p$-Cauchy sequence as well, thus $B^K\stackrel{L^p}{\longrightarrow}B$ as $K\to \infty$ for some $B$.
	Let $A_{n_j}$ be a subsequence of $A_n$ that converges in distribution, which exists by the tightness assumption (3), and denote by $A$ its limit. By Lemma~\ref{Lp_bound}, we deduce that $\Vert A-B^K\Vert_p \leq f(K)$ for all $K$. Therefore $A=B$, concluding the proof.
\end{proof}
Let us come back to our model. We claim that
\begin{equation}\label{vectordis}
\big(\mc Y_t^n (G^K_u) - \mc Y_0^n (G^K_u), \int_0^T \mc Y_s^n(\iota^{1/K}_u)\,\dd s\big)
\end{equation}
converges in distribution to a Gaussian vector, denoted by
\begin{equation*}
\big(\mc Y_t (G^K_u) - \mc Y_0 (G^K_u), \int_0^t \mc Y_s(\iota^{1/K}_u)\,\dd s\big)
\end{equation*}
as $n\to\infty$.
To prove the claim, we first note that the density field $\mc Y_t$ acts on functions on the Schwartz space, and $G^K_u$ and $\iota_{1/K}^u$ are discontinuous functions, thus not in the Schwartz space. Nevertheless, these functions   are continuous by parts with compact support, and a  $L^2$-density argument using \eqref{cov_density} then permits to extend the Gaussian field $\mc Y_t$  to act on these functions.
About the convergence, it is easy to find sequence of functions $H^{1, K, u}_{j}$ and $H^{2, K, u}_{j}$ in the Schwartz space
such that
$$\Vert H^{1, K, u}_{j}- G^K_u\Vert_1+\Vert H^{1, K, u}_{j}- G^K_u\Vert_2\;\leq\; c/j\,,\quad\forall\, j \in \bb N$$
 and
$$\Vert H^{2, K, u}_{j}- \iota^{1/K}_u\Vert_1+\Vert H^{2, K, u}_{j}- \iota^{1/K}_u\Vert_2\;\leq\; c/j\,, \quad \forall\,j\in \bb N,$$
 thus
\begin{equation}\label{jcurrent}
\Vert \mc Y^n_t(H^{1, K, u}_{j}- G^K_u)\Vert_2\;\leq\; \tilde{c}/j\,,\quad \forall\,j\in \bb N
\end{equation}
 and
\begin{equation}\label{joccupation}
\Big\Vert \int_0^T\mc Y^n_s(H^{2, K, u}_{j}- \iota^{1/K}_u)\,\dd s\Big\Vert_2\leq \tilde{c}/j\,,\quad \forall\,j\in \bb N\,.
\end{equation}
It follows that the sequence in \eqref{vectordis} is a Cauchy sequence, thus it has a limit, which  is a Gaussian vector due to  \eqref{jcurrent}, \eqref{joccupation} and Theorem \ref{thm21}. This proves the claim.

Since we already proved tightness for current and occupation time in the uniform topology, this assures tightness at any fixed time. Thus recalling Proposition~\ref{prop:currentdensity} and Corollary~\ref{gamma_replace}, we may apply  Lemma~\ref{lemmadiagonal} to conclude that
\begin{equation*}
(n^{-1/2}\overline{J^n}_{\lfloor u_1n\rfloor}(t), n^{1/2}\Gamma^n_{\lfloor u_2n\rfloor}(t))
\end{equation*}
converges in distribution as $n\to\infty$ to the $L^2$-limit
\begin{equation}\label{limiting}
\lim_{K\to \infty} \big(\mc Y_t (G^K_{u_1}) - \mc Y_0 (G^K_{u_1}), \int_0^t \mc Y_s(\iota_{1/K}^{u_2})\,\dd s\big)
\end{equation}
which is a Gaussian vector. We denote this limit by $\big(\mc Y_t (G_{u_1}) - \mc Y_0 (G_{u_1}), \int_0^t \mc Y_s(\delta_{u_2})\,\dd s\big)$  or by $(J_{u_1}(t), \Gamma_{u_2}(t))$.
The same argument can be adapted to finite times $t_1,\ldots, t_k$, which concludes the proof of convergence of the finite-dimensional distributions.

\subsection{Calculus of covariances}\label{covariances}
As promised above, let us determine the covariances of the limiting
process  \eqref{limiting}. This will be enough to conclude the proof of Theorem~\ref{thm2.6}.\medskip

\noindent \textbf{Two currents.}
Fix $0\leq s, t\leq T$ and $u_1\leq u_2\in\bb R$. To provide the covariance between two currents $J_{u_1}(s)$ and $J_{u_2}(t)$, we shall compute
\begin{equation}\label{eq41}
\begin{split}
&\bb E\big[ \big\{\mc Y_s(G_{u_1})-\mc Y_0(G_{u_1})\big\} \big\{\mc Y_t(G_{u_2})-\mc Y_0(G_{u_2})\big\} \big]\\
&=\lim_{K\to\infty}\bb E\big[ \big\{\mc Y_s(G^K_{u_1})-\mc Y_0(G^K_{u_1})\big\} \big\{\mc  Y_t(G^K_{u_2})-\mc Y_0(G^K_{u_2})\big\} \big]\\
&= \lim_{K\to\infty}\int_{\bb R}\dd u \,\Big\{ (T_s G^K_{u_1})(T_t G^K_{u_2})-(T_s G^K_{u_1})G^K_{u_2} -G^K_{u_1}( T_t G^K_{u_2}) +, G^K_{u_1}G^K_{u_2} \Big\} \mathcal X_0\\
&\quad +  \lim_{K\to\infty}2\int_0^s \dd r \int_{\bb R}\dd u \,(\nabla T_{s-r} G^K_{u_1})(\nabla T_{t-r} G^K_{u_2})\mathcal X_r
\end{split}
\end{equation}
where we recall that $\mathcal X_r = \mathcal X(\rho(r,u))$ and $\{T_t:t\geq 0\}$ stands for the semigroup associated to the Laplacian. For the second equality, we applied equation~\eqref{cov_density}.
The first term on the right hand side of the last equality in \eqref{eq41} can be rewritten as
\begin{equation}\label{t1}
\lim_{K\to\infty}\int_{\bb R}\dd u\;\big\{T_s G^K_{u_1}-G^K_{u_1}\big\} \big\{T_t G^K_{u_2}-G^K_{u_2}\big\} \mathcal X_0\,.
\end{equation}
We are going splitting this integral into the domains of integration  $(-\infty,u_1)$, $[u_1, u_2)$, $[u_2,u_1+K]$, $(u_1+K,u_2+K]$ and $(u_2+K,\infty)$  where we can assume that $K>u_2-u_1$.

By definition of $G_{u_i}^K$ with $i=1$ or $2$, for any $t\in[0,T]$, the function  $T_t G^K_{u_i}(u)$ is absolutely bounded by $\bb P_{u}[B_t\geq u_i]=\bb P_0[B_t\geq u_i-u]$ and converges to it as $K\to\infty$. Moreover, the former probability is integrable on $(-\infty, u_i)$ and since $u_1\leq u_2$, we have that $G^K_{u_i}(u)=0$ for all $u<u_1$. Thus, by the Dominated Convergence Theorem,
\begin{equation*}
\begin{split}
\lim_{K\to\infty}&\int_{-\infty}^{u_1}\dd u\,\big\{T_s G^K_{u_1}-G^K_{u_1}\big\} \big\{T_t G^K_{u_2}-G^K_{u_2}\big\} \mathcal X_0
= \int_{-\infty}^{u_1}\dd u\,\bb P_0[B_s\geq u_1-u]\bb P_0[B_t\geq u_2-u] \mathcal X_0\,.
\end{split}
\end{equation*}

We claim that the integral in \eqref{t1} restricted to $(u_2+K ,\infty)$ vanishes as $K\to\infty$. Indeed, in this interval $T_t G^K_{u_i}- G^K_{u_i}$ vanishes pointwisely as $K\to\infty$ and is bounded by
\begin{equation*}
\frac{1}{\sqrt{K}} \bb P_0(B_t\leq u_2-u+K) + \bbP_0(B_t\leq u_2-u+K-\sqrt{K})\,.
\end{equation*}
The claim then follows by standard Gaussian tail bounds.

On the interval $[u_1,u_2)$, we have that $T_s G^K_{u_1}-G^K_{u_1}$ is equal to
$$-\bb E_{u}[\one\{B_s\leq u_1\}\big(1-(B_s-u_1)/K\big)]-\bb E_{u}[\one\{B_s\geq u_1+K\}\big(1-(B_s-u_1)/K\big)] $$
which is absolutely bounded by $2$ and converges pointwisely to
$-\bb P_{u}[B_s\leq u_1]=-\bb P_0[B_s\leq u_1-u]$
as $K\to\infty$. It is easy to see that $T_t G^K_{u_2}-G^K_{u_2}$ converges pointwisely to $\bb P_0[B_t\geq u_2-u]$ on the interval $[u_1,u_2]$ as $K\to\infty$. Therefore we have
\begin{equation*}
\lim_{K\to\infty}\int_{u_1}^{u_2}\!\!\!\dd u\big\{T_s G^K_{u_1}-G^K_{u_1}\big\} \big\{T_t G^K_{u_2}-G^K_{u_2}\big\} \mathcal X_0
=-\int_{u_1}^{u_2}\!\!\! \dd u\, \bb P_0\big[B_s\leq u_1-u\big]\bb P_0\big[B_t\geq u_2-u\big] \mathcal X_0\,.
\end{equation*}
On the interval $[u_2,u_1+K]$, $T_t G^K_{u_2}-G^K_{u_2}$ is equal to
$$-\bb E_{u}[\one\{B_t\leq u_2\}\big(1-(B_t-u_2)/K\big)]-\bb E_{u}[\one\{B_t\geq u_2+K\}\big(1-(B_t-u_2)/K\big)]\,.$$
By the Cauchy-Schwarz inequality, $\bb E_{u}[\one\{B_t\leq u_2\}(B_t-u_2)/K]$ and $\bb E_{u}[\one\{B_t\geq u_2+K\}\big(1-(B_t-u_2)/K\big)]$ vanishes in $L^2([u_2,u_1+K])$ as $K\to\infty$. The same conclusion also holds for $T_s G^K_{u_1}-G^K_{u_1}$. Therefore, we conclude that the integral in \eqref{t1} over $[u_2,u_1+K]$ is equal to a negligible term in $K$ plus
$$\int_{u_2}^{u_1+K} \dd u\;\bb P_u[B_s\leq u_1]\bb P_u[B_t\leq u_2] \mathcal X_0\;\to\; \int_{u_2}^\infty \dd u\; \bb P_0\big[B_s\leq u_1-u\big]\bb P_0\big[B_t\leq u_2-u\big] \mathcal X_0$$
as $K$ goes to infinity.
Finally, by translation invariance, the integral in \eqref{t1} restricted to the interval $(u_1+K,u_2+K]$ is equal to
$$\int_{u_1}^{u_2} \dd u\;\big\{T_s G^K_{u_1-K}-G^K_{u_1-K}\big\} \big\{T_t G_K^{u_2-K}-G^K_{u_2-K}\big\} \mathcal X(\rho_0(u+K))\,.$$
Through the explicit expression we have that both $T_s G^K_{u_1-K}-G^K_{u_1-K}$ and $T_t G^K_{u_2-K}-G^K_{u_2-K}$  are bounded absolutely by $2$ and vanish pointwisely as $K\to\infty$ on $[u_1,u_2]$. Therefore by Dominated Convergence Theorem, the integral in \eqref{t1} over $(u_1+K,u_2+K]$ vanishes as $K\to\infty$.
In summary, we have shown that the expression in \eqref{t1} is equal to
\begin{align*}
&\int_{-\infty}^{u_1}  \dd u\;\bb P_0\big[B_s\geq u_1-u\big]\bb P_0\big[B_t\geq u_2-u\big] \mathcal X_0\\
&-\int_{u_1}^{u_2} \dd u\; \bb P_0\big[B_s\leq u_1-u\big]\bb P_0\big[B_t\geq u_2-u\big] \mathcal X_0\\
&+\int_{u_2}^\infty \dd u\; \bb P_0\big[B_s\leq u_1-u\big]\bb P_0\big[B_t\leq u_2-u\big] \mathcal X_0\,.
\end{align*}
To estimate the rightmost term in \eqref{eq41}, note that	$$\nabla T_t G^K_{u_i}(u)\;=\;p_t(u,u_i)+ \frac{1}{K}\int_{u_i}^{K+u_i} \dd v\,p_t(u,v)\,.$$
By the Cauchy-Schwarz inequality, the second term vanishes in $L^2(\bb R)$. From this we see that the second term at the right hand side of the last equality of \eqref{eq41} converges to
\begin{equation*}
\begin{split}
2\int_0^s \dd r \int_{\bb R}\dd u\; p_{s-r}(u,u_1)p_{t-r}(u,u_2) \mathcal X_r\,.
\end{split}
\end{equation*}

\noindent\textbf{Two occupation times.}
For easy of notation, let $\eps = 1/K$. In order to compute the covariance between two occupation times $\Gamma_{u_1}(s)$ and $\Gamma_{u_2}(t)$, we shall estimate, as $\varepsilon\to 0$, the
limit of
\begin{equation}\label{Taucov}
\begin{split}
&\bb E\big[ \int_0^s \dd {r_1}\; \mc  Y_{r_1}(\iota^{\varepsilon}_{u_1})  \int_0^t \dd r_2\; \mc  Y_{r_2}(\iota^\varepsilon_{u_2}) \big]
\;=\; \int_0^s \dd r_1\int_0^t \dd r_2\;\bb E\big[\mc  Y_{r_1}(\iota^\varepsilon_{u_1}) \mc  Y_{r_2}(\iota^\varepsilon_{u_2})\big]\,.
\end{split}
\end{equation}
By \cite[Theorem 2.2]{jaralandim2006},  if $0\leq r_2< r_1$, then
\begin{equation}\label{otsum}
\begin{split}
\bb E\big[\mc  Y_{r_1}(\iota^\varepsilon_{u_1}) \mc Y_{r_2}(\iota^\varepsilon_{u_2})\big]\,=\, &\int_{\bb R}\dd u\; T_{r_1-r_2}(\iota^\varepsilon_{u_1}) \, \iota^\varepsilon_{u_2}\, \mathcal X_{r_2}\\
&+2\int_0^{ r_2}\dd\tau \int_{\bb R}\dd u\; (T_{r_1-\tau} \iota^\varepsilon_{u_1})( T_{r_2-\tau} \iota^\varepsilon_{u_2})\{\partial_\tau \mathcal X_\tau -\Delta \mathcal X_\tau \}\,.
\end{split}
\end{equation}

By the  expression
$T_t(\iota^\varepsilon_{u_i})(u)\,=\, \frac{1}{\varepsilon}\int_{u_i}^{u_i+\varepsilon} p_t(u,v)dv$, with $i=1,2$, we have that
\begin{equation*}
\begin{split}
\lim_{\varepsilon\to 0} \int_{\bb R}\dd u\; T_{r_1-r_2}(\iota^\varepsilon_{u_1}) \, \iota^\varepsilon_{u_2}\, \mathcal X_{r_2}\;
=\; &\lim_{\varepsilon\to 0} \frac{1}{\varepsilon^2}\int_{u_2}^{u_2+\varepsilon}\dd u\; \int_{u_1}^{u_1+\varepsilon} p_{r_1-r_2}(u,v)dv\, \mathcal X_{r_2}\\
=\;& p_{r_1-r_2}(u_1,u_2) \mathcal X(\rho_{r_2}(u_2)).
\end{split}
\end{equation*}
Obviously the first term on the right hand side of \eqref{otsum} is simply bounded absolutely by $\frac{1}{\sqrt{4\pi(r_1-r_2)}}$ for all $0<\varepsilon\leq 1$. An easy computation shows that
\begin{equation*}
\int_0^s \dd r_1 \int_0^{t\wedge r_1}\dd r_2\, \frac{1}{\sqrt{4\pi(r_1-r_2)}} \;<\;+\infty\,.
\end{equation*}
Therefore, by the Dominated Convergence Theorem, we have
\begin{equation*}
\begin{split}
\lim_{\varepsilon\to 0} &\int_0^s \dd r_1 \int_0^{t\wedge r_1}\dd r_2\, \int_{\bb R}\dd u\; T_{r_1-r_2}(\iota^\varepsilon_{u_1}) \, \iota^\varepsilon_{u_2}\, \mathcal X_{r_2}=  \int_0^s \dd r_1 \int_0^{t\wedge r_1}\!\!\!\dd r_2\, p_{r_1-r_2}(u_1,u_2) \mathcal X(\rho_{r_2}(u_2)) .
\end{split}
\end{equation*}
By symmetry of the Gaussian kernel $p_t(\cdot,\cdot)$, we can write
$$T_t(\iota^\varepsilon_{u_i})(u)\;=\; \frac{1}{\varepsilon}\int^{u}_{u-\varepsilon} \dd v\,p_t(v,u_i), \quad i=1,2$$
which converges to $p_t(u,u_i)$ for every $u\in\bb R$.
For any $u_i\in\bb R$ and any $t\in[0,T]$, let us consider the Hardy-Littlewood maximal function $f_{t,u_i}:\bb R\to \bb R$ defined by
\begin{equation*}
f_{t,u_i}(u)\,:=\, \sup_{0<\varepsilon\leq 1} \frac{1}{\varepsilon}\int^{u}_{u-\varepsilon} \dd v\,p_t(v,u_i)\, .
\end{equation*}
Note that
$$M_i\;:=\;\int_0^s \dd r_1\int_0^{t\wedge r_1} \dd r_2 \int_0^{r_2} d\tau\int_{\bb R}\dd u\; p_{r_i-\tau}(u,u_i)^2\{\partial_\tau \mathcal X_\tau -\Delta \mathcal X_\tau \}\;<\;\infty\,, \quad \,\, i=1,2.$$
By the Hardy-Littlewood Maximal Inequality (see \cite[Theorem 2.5, page 31]{Duoandikoetxea} for instance), there exists a constant $C>0$ such that for all $0<\varepsilon\leq1$,
\begin{equation*}
\int_0^s \dd r_1\int_0^{t\wedge r_1} \dd r_2 \int_0^{r_2} \dd\tau\int_{\bb R}\dd u\; f_{r_i-\tau, u_i}(u)^2\{\partial_\tau \mathcal X_\tau -\Delta \mathcal X_\tau \}\;\leq\;CM_i\,.
\end{equation*}
Moreover, obviously the non-negative function $T_t \iota_{u_i}^\varepsilon$ is bounded by $f_{t,u_i}$ for any $\varepsilon\in(0,\varepsilon_0]$. Therefore, again by the Dominated Convergence Theorem
we can pass the limit inside the integral for the second term at the right hand side of \eqref{otsum}.

The case that $0\leq r_1<r_2$ can be handled similarly. So we can conclude that  the limit of the expression in \eqref{Taucov} is equal to
\begin{equation*}
\begin{split}
&\int_0^s \dd r_1\int_0^t  \dd r_2 \,\,p_{\lvert r_1-r_2\rvert}(u_1,u_2) \mathcal X(\rho_{r_2}(u_2))\\
&+ 2\int_0^s \dd r_1\int_0^t  \dd r_2\int_0^{r_1\wedge r_2}\dd\tau \int_{\bb R}\dd u\; p_{r_1-\tau}(u,u_1) p_{r_2-\tau}(u,u_2) \{\partial_\tau \mathcal X_\tau -\Delta \mathcal X_\tau \} \,.
\end{split}
\end{equation*}

\noindent\textbf{Current and occupation time.}  In this case to obtain the covariance  between $J_{u_1}(s)$ and $\Gamma_{u_2}(t)$,  we need to estimate the limit, as $\varepsilon\to 0$, of
\begin{equation}\label{jointcov}
\begin{split}
&\bb E\Big[ \big\{\mc Y_s(G_{u_1})-\mc Y_0(G_{u_1})\big\}  \int_0^t \dd r\, \mc Y_{r}(\iota^\varepsilon_{u_2}) \Big]
= \int_0^t \dd r\,\bb E\Big[\big\{\mc Y_s(G_{u_1})-\mc Y_0(G_{u_1})\big\} \mc Y_{r}(\iota^\varepsilon_{u_2})\Big]
\end{split}
\end{equation}
for any $0\leq \,s,t\leq T$. Recall that we have proved that $\mc Y_s(G^K_{u_1})-\mc Y_0(G^K_{u_1})$ converges in $L^2$ to $\mc Y_s(G_{u_1})-\mc Y_0(G_{u_1})$, thus the term on the right hand side of \eqref{jointcov} is equal to
\begin{equation}\label{eq:curocculimK}
\int_0^t \dd r\, \lim_{K\to\infty} \bb E\Big[\big\{\mc Y_s(G^K_{u_1})-\mc Y_0(G^K_{u_1})\big\} \mc Y_{r}(\iota^\varepsilon_{u_2})\Big].
\end{equation}

We first deal with the case that $0\leq s<r$. In view of Theorem 2.2  in \cite{jaralandim2006},  we have
\begin{equation}\label{jointsum}
\begin{split}
\bb E\Big[ \mc Y_s(G_{u_1}^K)\mc Y_r(\iota^\varepsilon_{u_2})\Big]\,=\,&
\int_{\bb R}\! \dd u\;G^K_{u_1} (T_{r-s} \iota^\varepsilon_{u_2}) \mathcal X_s \\
+\, &2 \int_0^{s}\!\!\!\! \dd\tau \int_{\bb R}\!\dd u \;(T_{s-\tau}G^K_{u_1}) ( T_{r-\tau} \iota^\varepsilon_{u_2} ) \{\partial_\tau \mathcal X_\tau -\Delta \mathcal X_\tau \}\,.
\end{split}
\end{equation}
By the Monotone Convergence Theorem,
$$\lim_{K\to\infty} \int_{\bb R}\! \dd u\;G^K_{u_1} (T_{r-s} \iota^\varepsilon_{u_2}) \mathcal X_s\,=\,\int_{u_1}^\infty\! \dd u\; (T_{r-s} \iota^\varepsilon_{u_2}) \mathcal X_s\,.$$
We showed in the previous computations that the non-negative function $T_s G^K_{u_1}$ is bounded by and converges to $\bb P_{u}[B_s\geq u_1]=\bb P_0[B_s\geq u_1-u]$. In addition, it can be shown by a direct computation that
\begin{equation*}
\int_0^{s} \dd\tau \int_{\bb R}\!\dd u \;  (T_{r-\tau} \iota^\varepsilon_{u_2}) \,  \{\partial_\tau \mathcal X_\tau -\Delta \mathcal X_\tau \}\;<\;+\infty\,.
\end{equation*}
Therefore by Dominated Convergence Theorem we have that
\begin{equation*}
\begin{split}
&\lim_{K\to\infty}\int_0^{s} \dd\tau \int_{\bb R}\!\dd u \;(T_{s-\tau}G^K_{u_1}) ( T_{r-\tau} \iota^\varepsilon_{u_2} ) \{\partial_\tau \mathcal X_\tau -\Delta \mathcal X_\tau \}\\
&=\;\int_0^{s} \dd\tau \int_{\bb R}\!\dd u \;\bb P_0[B_s\geq u_1-u] ( T_{r-\tau} \iota^\varepsilon_{u_2} ) \{\partial_\tau \mathcal X_\tau -\Delta \mathcal X_\tau \}\,.
\end{split}
\end{equation*}

We now consider the case that $0\leq r<s$. In this case by  \cite[Theorem 2.2 ]{jaralandim2006},  we have
\begin{equation}\label{jointsum2}
\begin{split}
\bb E\Big[\mc  Y_s(G_{u_1}^K)\mc  Y_r(\iota^\varepsilon_{u_2})\Big]\,=\,&
\int_{\bb R}\! \dd u\;(T_{s-r} G^K_{u_1}) \, \iota^\varepsilon_{u_2} \, \mathcal X_s \\
&+2 \int_0^{ r} \dd\tau \int_{\bb R}\dd u \;(T_{s-\tau}G^K_{u_1}) ( T_{r-\tau} \iota^\varepsilon_{u_2} ) \{\partial_\tau \mathcal X_\tau -\Delta \mathcal X_\tau \}\,.
\end{split}
\end{equation}
The second term on the right hand side can be treated as in the case $r>s$. The first term on the right hand side is equal to
$$\frac{1}{\varepsilon}\int_{u_2}^{u_2+\varepsilon}\dd u \, (T_{s-r} G^K_{u_1})  \, \mathcal X_s$$
which  by the Dominated Convergence Theorem converges to
$$\frac{1}{\varepsilon}\int_{u_2}^{u_2+\varepsilon}\dd u \, \bb P_0[B_{s-r}\geq u_1-u]  \, \mathcal X_s\,,$$
since  the non-negative function $T_s G^K_{u_1}$ is bounded by and converges to $\bb P_{u}[B_s\geq u_1]=\bb P_0[B_s\geq u_1-u]$, which is integrable on $[u_2,u_2+\varepsilon]$.
In summary we have shown that
\begin{equation}\label{klimit}
\begin{split}
& \lim_{K\to\infty} \bb E\Big[\big\{\mc Y_s(G^K_{u_1})-\mc Y_0(G^K_{u_1})\big\} \mc Y_{r}(\iota^\varepsilon_{u_2})\Big]\\
&= \int_{u_1}^\infty\! \dd u\; (T_{r-s} \iota^\varepsilon_{u_2}) \one\{r>s\} \mathcal X_s+ \frac{1}{\varepsilon}\int_{u_2}^{u_2+\varepsilon}\dd u \, \bb P_0[B_{s-r}\geq u_1-u] \one\{r<s\}  \, \mathcal X_s\\
& +\int_0^{s\wedge r}\!\!\!\! \dd\tau \int_{\bb R}\!\dd u \;\bb P_0[B_s\geq u_1-u] ( T_{r-\tau} \iota^\varepsilon_{u_2} ) \{\partial_\tau \mathcal X_\tau -\Delta \mathcal X_\tau \}- \int_{u_1}^\infty \dd u \,(T_r \iota_{u_2}^\varepsilon)\, \mc X_0\,.
\end{split}
\end{equation}
This finishes the analysis of the integrand in~\eqref{eq:curocculimK}. We now need to analyze its integral and the limit as $\eps\to 0$.
Note that $
\lim_{\varepsilon\to 0}\frac{1}{\varepsilon}\int_{u_2}^{u_2+\varepsilon}\dd u\,  \bb P_0[B_{s-r}\geq u_1-u] \one\{r<s\} \mathcal X_s$
converges to $\bb P_0[B_{s-r}\geq u_1-u_2] \one\{r<s\} \mathcal X(\rho_s(u_2)) $ and is absolutely bounded by $\frac{1}{\sqrt{4\pi(s-r)}}\one\{r<s\}$. Since
$$\int_0^t \dd r\, \frac{1}{\sqrt{4\pi(s-r)}}\one\{r<s\}\;<\;+\infty\,,$$
the Dominated Convergence Theorem implies that
\begin{align*}
&\lim_{\varepsilon\to 0}  \int_0^t \dd r\, \frac{1}{\varepsilon}\int_{u_2}^{u_2+\varepsilon}\dd u \, \bb P_0[B_{s-r}\geq u_1-u] \one\{r<s\}  \, \mathcal X_s\\
&=\; \int_0^t \dd r\, \bb P_0[B_{s-r}\geq u_1-u_2] \one\{r<s\} \mathcal X(\rho_s(u_2))\,.
\end{align*}
To show that we can pass the limit $\eps\to 0$ inside the integral of the remaining three terms we can use similar arguments as for the correlation estimate of two occupation times. We will omit the details. Therefore, we conclude that the limit as $\varepsilon\to 0$ of the integral on $r$ over $[0,t]$ of the sum of the four terms on the right hand side of \eqref{klimit} is equal to
\begin{align*}
&\int_0^t \dd r\int_{u_1}^\infty\! \dd u\; p_{r-s}(u,u_2)\one\{r>s\} \mathcal X_s\\
&+ \int_0^t \dd r \, \bb P_0[B_{s-r}\geq u_1-u_2] \one\{r<s\} \mathcal X(\rho_s(u_2))\\
& +\int_0^t \!\dd r\int_0^{s\wedge r} \dd\tau \int_{\bb R}\dd u \;\bb P_0[B_s\geq u_1-u] p_{r-\tau}(u,u_2) \{\partial_\tau \mathcal X_\tau -\Delta \mathcal X_\tau \}\\
& - \int_0^t \dd r\int_{u_1}^\infty \dd u \,p_r(u,u_2)\, \mc X_0\,,
\end{align*}
concluding the argument.

\section*{Acknowledgements}
D.E.~was supported by the National Council for Scientific and Technological Development - CNPq via a Bolsa de Produtividade 303520/2019-1 and 303348/2022-4. D.E. and T.Xu moreover acknowledge support by the Serrapilheira Institute (Grant Number Serra-R-2011-37582). D.E and T.F. moreover acknowledge support by the National Council for Scientific and Technological Development - CNPq via a Universal Grant (Grant Number 406001/2021-9). T.F was supported by the National Council for Scientific and Technological Development - CNPq via a Bolsa de Produtividade number 311894/2021-6.

\bibliographystyle{plain}
\bibliography{bibliography}

\end{document}